\newenvironment{sageexample}%
{\verbatim\small}%
{\endverbatim}
\newtheorem{theorem}{Theorem}[section]
\newtheorem{lemma}[theorem]{Lemma}
\newtheorem{proposition}[theorem]{Proposition}
\newtheorem{corollary}[theorem]{Corollary}
\newtheorem{definition}[theorem]{Definition}
\newtheorem{problem}[theorem]{Problem}
\newtheorem{exercise}[theorem]{Exercise}
\newtheorem{conjecture}[theorem]{Conjecture}
\newtheorem{remark}[theorem]{Remark}
\theoremstyle{definition}
\newtheorem{example}[theorem]{Example}
\numberwithin{equation}{section}
\newcommand{\card}{\operatorname{Card}}
\newcommand{\cont}{\operatorname{cont}}
\newcommand{\joins}{\operatorname{Joins}}
\newcommand{\id}{\operatorname{id}}
\newcommand{\im}{\operatorname{im}}
\newcommand{\longest}{\pi}
\newcommand{\K}{\mathbb{K}}
\newcommand{\ZZ}{\mathbb{Z}}
\newcommand{\len}{\ell}
\newcommand{\meet}{\vee}
\newcommand{\opi}{\overline{\pi}}
\newcommand{\rad}{\operatorname{rad}}
\newcommand{\sg}[1][n]{{\mathfrak{S}_{#1}}}
\newcommand{\suchthat}{\mid}
\newcommand{\tMonoid}{M}
\newcommand{\N}{\mathbb{N}}
\newcommand{\RR}{\mathcal{R}}
\newcommand{\LL}{\mathcal{L}}
\newcommand{\JJ}{\mathcal{J}}
\newcommand{\HH}{\mathcal{H}}
\newcommand{\BB}{\mathcal{B}}
\newcommand{\KK}{\mathcal{K}}
\newcommand{\lfix}[1]{\operatorname{lfix}(#1)}
\newcommand{\rfix}[1]{\operatorname{rfix}(#1)}
\newcommand{\idempMon}[1][\tMonoid]{E(#1)} 
\newcommand{\raut}[1]{\operatorname{rAut}(#1)}
\newcommand{\laut}[1]{\operatorname{lAut}(#1)}
\newcommand{\biheckemonoid}[1][]{M}
\newcommand{\OR}{\mathcal{OR}}
\newcommand{\NDPF}{{\operatorname{NDPF}}}
\newcommand{\unitribool}{\mathcal U}
\newcommand{\lex}{{\operatorname{lex}}}
\newcommand{\irr}[1][\tMonoid]{\operatorname{Irred}(#1)}
\newcommand{\cirr}[1][\tMonoid]{\operatorname{c-Irred}(#1)}
\newcommand{\quiv}[1][\tMonoid]{\operatorname{Q}(#1)}
\newcommand{\edge}{\!\!\rightarrow\!\!}
\newcommand{\TODO}[2][To do: ]{\textcolor{red}{\textbf{#1#2}}}
\newcommand{\TODO}[2][]{}
\begin{document}

\title{On the representation theory of finite $\JJ$-trivial monoids}

\author{Tom Denton}
\address{Department of Mathematics, University of California, One
 Shields Avenue, Davis, CA 95616, U.S.A.}
\email{sdenton@math.ucdavis.edu}

\author{Florent Hivert}
\address{LITIS (EA 4108), Universit\'e de Rouen,
  Avenue de l'Universit\'e  BP12
  76801 Saint-Etienne du Rouvray, France and
  Institut Gaspard Monge (UMR 8049), France}
\email{florent.hivert@univ-rouen.fr}

\author{Anne Schilling}
\address{Department of Mathematics, University of California, One
 Shields Avenue, Davis, CA 95616, U.S.A.}
\email{anne@math.ucdavis.edu}

\author{Nicolas M.~Thi\'ery}
\address{Univ Paris-Sud, Laboratoire de Math\'ematiques d'Orsay,
  Orsay, F-91405; CNRS, Orsay, F-91405, France}
\email{Nicolas.Thiery@u-psud.fr}

\maketitle

\begin{abstract}
In 1979, Norton showed that the representation theory of the $0$-Hecke
algebra admits a rich combinatorial description. Her constructions
rely heavily on some triangularity property of the product, but do not
use explicitly that the $0$-Hecke algebra is a monoid algebra.

The thesis of this paper is that considering the general setting of
monoids admitting such a triangularity, namely $\JJ$-trivial monoids,
sheds further light on the topic. This is a step in an ongoing effort
to use representation theory to automatically extract combinatorial
structures from (monoid) algebras, often in the form of posets and
lattices, both from a theoretical and computational point of view, and
with an implementation in Sage.

Motivated by ongoing work on related monoids associated to Coxeter
systems, and building on well-known results in the semi-group
community (such as the description of the simple modules or the
radical), we describe how most of the data associated to the
representation theory (Cartan matrix, quiver) of the algebra of any
$\JJ$-trivial monoid $M$ can be expressed combinatorially by counting
appropriate elements in $M$ itself. As a consequence, this data does
not depend on the ground field and can be calculated in $O(n^2)$, if
not $O(nm)$, where $n=|M|$ and $m$ is the number of generators. Along
the way, we construct a triangular decomposition of the identity into
orthogonal idempotents, using the usual Möbius inversion formula in
the semi-simple quotient (a lattice), followed by an algorithmic
lifting step.

Applying our results to the $0$-Hecke algebra (in all finite types),
we recover previously known results and additionally provide an
explicit labeling of the edges of the quiver. We further explore
special classes of $\JJ$-trivial monoids, and in particular monoids of
order preserving regressive functions on a poset, generalizing known
results on the monoids of nondecreasing parking functions.
\end{abstract}

\tableofcontents

\section{Introduction}

The representation theory of the $0$-Hecke algebra (also called
\emph{degenerate Hecke algebra}) was first studied by P.-N.~Norton~\cite{Norton.1979}
in type A and expanded to other types by
Carter~\cite{Carter.1986}. Using an analogue of Young symmetrizers, they
describe the simple and indecomposable projective modules together with the
Cartan matrix. An interesting combinatorial application was then found by Krob
and Thibon~\cite{Krob_Thibon.NCSF4.1997} who explained how induction and
restriction of these modules gives an interpretation of the products and
coproducts of the Hopf algebras of noncommutative symmetric functions and
quasi-symmetric functions. Two other important steps were further made by
Duchamp--Hivert--Thibon~\cite{Duchamp_Hivert_Thibon.2002} for type $A$ and
Fayers~\cite{Fayers.2005} for other types, using the Frobenius structure to
get more results, including a description of the Ext-quiver.  More recently, a
family of minimal orthogonal idempotents was described
in~\cite{Denton.2010.FPSAC,Denton.2010}. Through divided difference (Demazure
operator), the $0$-Hecke algebra has a central role in Schubert calculus and
also appeared has connection with $K$-theory
\cite{Demazure.1974,Lascoux.2001,Lascoux.2003,Miller.2005,
  Buch_Kresch_Shimozono.2008,Lam_Schilling_Shimozono.2010}.

Like several algebras whose representation theory was studied in
recent years in the algebraic combinatorics community (such as degenerated
left regular bands, Solomon-Tits algebras, ...), the $0$-Hecke
algebra is the algebra of a finite monoid endowed with special
properties. Yet this fact was seldom used (including by the authors),
despite a large body of literature on finite semi-groups, including
representation theory
results~\cite{Putcha.1996,Putcha.1998,Saliola.2007,Saliola.2008,Margolis_Steinberg.2008,Schocker.2008,Steinberg.2006.Moebius,Steinberg.2008.MoebiusII,AlmeidaMargolisVolkov05,Almeida_Margolis_Steinberg_Volkov.2009,Ganyushkin_Mazorchuk_Steinberg.2009,Izhakian.2010.SemigroupsRepresentationsOverSemirings}. From
these, one can see that much of the representation theory of a
semi-group algebra is combinatorial in nature (provided the
representation theory of groups is known).  One can expect, for
example, that for aperiodic semi-groups (which are semi-groups which
contain only trivial subgroups) most of the numerical information
(dimensions of the simple/projective indecomposable modules,
induction/restriction constants, Cartan matrix) can be
computed without using any linear algebra.
In a monoid with partial inverses, one finds (non-trivial) local groups and 
an understanding of the representation theory of these groups is necessary 
for the full representation theory of the monoid.  
In this sense, the notion of aperiodic monoids is orthogonal to that 
of groups as they contain only trivial group-like structure (there are no 
elements with partial inverses).  On the same token, their 
representation theory is orthogonal to that of groups.

The main goal of this paper is to complete this program for the class of
$\JJ$-trivial monoids (a monoid $M$ is \emph{$\JJ$-trivial} provided
that there exists a partial ordering $\leq$ on $M$ such that for all
$x,y\in M$, one has $xy \leq x$ and $xy \leq y$). In this case, we
show that most of the combinatorial data of the representation theory,
including the Cartan matrix and the quiver can be
expressed by counting particular elements in the monoid itself.
A second goal is to provide a self-contained introduction to the
representation theory of finite monoids, targeted at the algebraic
combinatorics audience, and focusing on the simple yet rich case of
$\JJ$-trivial monoids.

The class of $\JJ$-trivial monoids is by itself an active subject of
research (see
e.g.~\cite{Straubing.Therien.1985,Henckell_Pin.2000,Vernitski.2008}),
and contains many monoids of interest, starting with the $0$-Hecke
monoid.  Another classical $\JJ$-trivial monoid is that of
nondecreasing parking functions, or monoid of order preserving
regressive functions on a chain.
Hivert and Thiéry~\cite{Hivert_Thiery.HeckeSg.2006,Hivert_Thiery.HeckeGroup.2007}
showed that it is a natural quotient of the $0$-Hecke monoid and used
this fact to derive its complete representation theory. It is also a
quotient of Kiselman's monoid which is studied
in~\cite{Kudryavtseva_Mazorchuk.2009} with some representation theory
results. Ganyushkin and Mazorchuk~\cite{Ganyushkin_Mazorchuk.2010}
pursued a similar line with a larger family of quotients of both the
$0$-Hecke monoid and Kiselman's monoid.

The extension of the program to larger classes of monoids, like
$\RR$-trivial or aperiodic monoids, is the topic of a forthcoming
paper. Some complications
necessarily arise since the simple modules are not necessarily 
one-dimensional in the latter case.
The approach taken there is to
suppress the dependence upon specific properties of orthogonal
idempotents. Following a complementary line, Berg, Bergeron, Bhargava,
and Saliola~\cite{Berg_Bergeron_Bhargava_Saliola.2010} have very
recently provided a construction for a decomposition of the identity
into orthogonal idempotents for the class of $\RR$-trivial monoids.

The paper is arranged as follows. In Section~\ref{sec:bgnot} we recall
the definition of a number of classes of monoids, including the
$\JJ$-trivial monoids, define some running examples of $\JJ$-trivial
monoids, and establish notation.

In Section~\ref{sec:jrep} we establish the promised results on the
representation theory of $\JJ$-trivial monoids, and illustrates them
on several examples including the $0$-Hecke monoid.  We describe the
radical, construct combinatorial models for the projective and simple
modules, give a lifting construction to obtain orthogonal idempotents,
and describe the Cartan matrix and the quiver, with an explicit
labelling of the edges of the latter. We briefly comment on the
complexity of the algorithms to compute the various pieces of
information, and their implementation in \texttt{Sage}.
All the constructions and proofs involve only combinatorics in the
monoid or linear algebra with unitriangular matrices. Due to this, the
results do not depend on the ground field $\K$. In fact, we have
checked that all the arguments pass to $\K=\ZZ$ and therefore to any
ring (note however that the definition of the quiver that we took
comes from~\cite{Auslander.Reiten.Smaloe.1997}, where it is assumed
that $\K$ is a field). It sounds likely that the theory would apply
mutatis-mutandis to semi-rings, in the spirit
of~\cite{Izhakian.2010.SemigroupsRepresentationsOverSemirings}.

Finally, in Section~\ref{sec:NPDF}, we examine the monoid of order
preserving regressive functions on a poset $P$, which generalizes the
monoid of nondecreasing parking functions on the set $\{1, \ldots,
N\}$. We give combinatorial constructions for idempotents in the
monoid and also prove that the Cartan matrix is upper triangular. In
the case where $P$ is a meet semi-lattice (or, in particular, a
lattice), we establish an idempotent generating set for the monoid,
and present a conjectural recursive formula for orthogonal idempotents
in the algebra.

\subsection{Acknowledgments}

We would like to thank Chris Berg, Nantel Bergeron, Sandeep Bhargava,
Sara Billey, Jean-Éric Pin, Franco Saliola, and Benjamin Steinberg for
enlightening discussions.  We would also like to thank the referee for
detailed reading and many remarks that improved the paper.
This research was driven by computer exploration, using the
open-source mathematical software \texttt{Sage}~\cite{Sage} and its
algebraic combinatorics features developed by the
\texttt{Sage-Combinat} community~\cite{Sage-Combinat}, together with
the \texttt{Semigroupe} package by Jean-Éric Pin~\cite{Semigroupe}.

TD and AS would like to thank the Universit\'e Paris Sud, Orsay for hospitality.
NT would like to thank the Department of Mathematics at UC Davis for hospitality.
TD was in part supported by NSF grants DMS--0652641, DMS--0652652, by VIGRE NSF
grant DMS--0636297, and by a Chateaubriand fellowship from the French Embassy in the US.
FH was partly supported by ANR grant 06-BLAN-0380.
AS was in part supported by NSF grants DMS--0652641, DMS--0652652, and DMS--1001256.
NT was in part supported by NSF grants DMS--0652641, DMS--0652652.

\section{Background and Notation}
\label{sec:bgnot}

A \emph{monoid} is a set $M$ together with a binary operation $\cdot :
M\times M \to M$ such that we have \emph{closure} ($x \cdot y\in M$
for all $x,y \in M$), \emph{associativity} ( $(x\cdot y) \cdot z = x
\cdot ( y \cdot z)$ for all $x,y,z \in M$), and the existence of an
\emph{identity} element $1\in M$ (which satistfies $1\cdot x = x \cdot
1 = x$ for all $x\in M$). 
In this paper, unless explicitly mentioned, all monoids are \emph{finite}.
We use the convention that $A\subseteq B$ denotes $A$ a subset of $B$, and
$A\subset B$ denotes $A$ a proper subset of $B$.

Monoids come with a far richer diversity of features than groups, but
collections of monoids can often be described as \emph{varieties}
satisfying a collection of algebraic identities and closed under
subquotients and finite products (see e.g.~\cite{Pin.1986,Pin.2009}
or~\cite[Chapter VII]{Pin.2009}).  Groups are an example of a variety
of monoids, as are all of the classes of monoids described in this
paper.  In this section, we recall the basic tools for monoids, and
describe in more detail some of the varieties of monoids that are
relevant to this paper. A summary of those is given in
Figure~\ref{fig.monoids}.

\begin{figure}
  \includegraphics[scale=0.75]{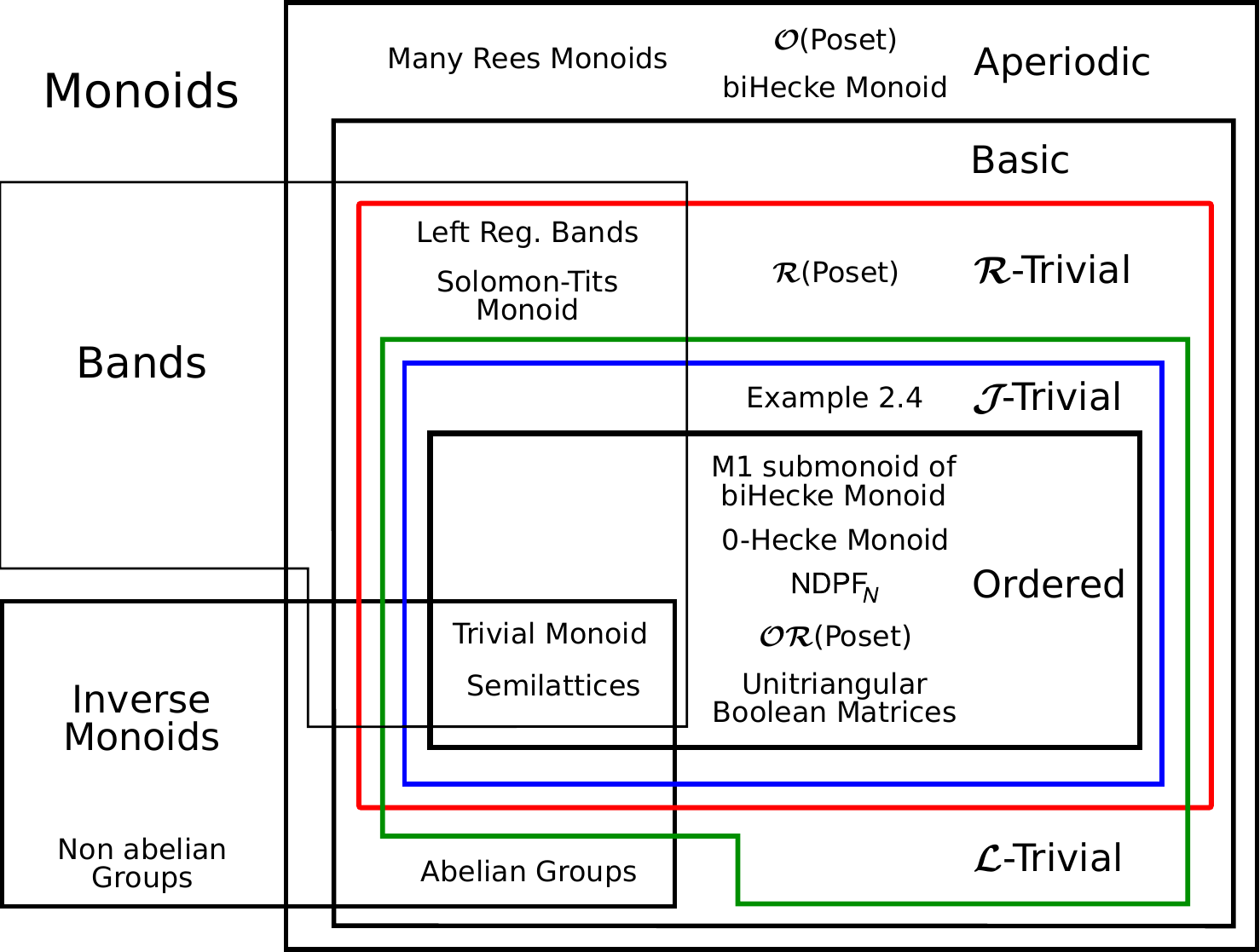}
  \caption{Classes of finite monoids, with examples}
  \label{fig.monoids}
\end{figure}

In 1951 Green introduced several preorders on monoids which are essential for the study of
their structures (see for example~\cite[Chapter V]{Pin.2009}). Let $M$ be a monoid and 
define $\le_\RR, \le_\LL, \le_\JJ, \le_\HH$ for $x,y\in M$ as follows:
\begin{equation*}
\begin{split}
	&x \le_\RR y \quad \text{if and only if $x=yu$ for some $u\in M$}\\
	&x \le_\LL y \quad \text{if and only if $x=uy$ for some $u\in M$}\\
	&x \le_\JJ y \quad \text{if and only if $x=uyv$ for some $u,v\in M$}\\
	&x \le_\HH y \quad \text{if and only if $x\le_\RR y$ and $x\le_\LL y$.}
\end{split}
\end{equation*}
These preorders give rise to equivalence relations:
\begin{equation*}
\begin{split}
	&x \; \RR \; y \quad \text{if and only if $xM = yM$}\\
	&x \; \LL \; y \quad \text{if and only if $Mx = My$}\\
	&x \; \JJ \; y \quad \text{if and only if $MxM = MyM$}\\
	&x \; \HH \; y \quad \text{if and only if $x \; \RR \; y$ and $x \; \LL \; y$.}
\end{split}
\end{equation*}

We further add the relation $\le_\BB$ (and its associated equivalence
relation $\BB$) defined as the finest preorder such that $x\leq_\BB
1$, and
\begin{equation}
\label{equation.bruhat}
  \text{$x\le_\BB y$ implies that $uxv\le_\BB uyv$ for all $x,y,u,v\in M$.}
\end{equation}
(One can view $\le_\BB$ as the intersection of all preorders with the above
property; there exists at least one such preorder, namely $x\le y$ for all $x,y\in M$).

Beware that $1$ is the largest element of these (pre)-orders. This is
the usual convention in the semi-group community, but is the converse
convention from the closely related notions of left/right/Bruhat order
in Coxeter groups.

\begin{definition}
A monoid $M$ is called $\KK$-\emph{trivial} if all $\KK$-classes are
of cardinality one, where $\KK\in \{\RR,\LL,\JJ,\HH,\BB\}$.
\end{definition}

An equivalent formulation of $\KK$-triviality is given in terms of
\emph{ordered} monoids. A monoid $M$ is called:
\begin{equation*}
\begin{aligned}
  & \text{\emph{right ordered}} && \text{if $xy\le x$ for all $x,y\in M$}\\
  & \text{\emph{left ordered}} && \text{if $xy\le y$ for all $x,y\in M$}\\
  & \text{\emph{left-right ordered}} && \text{if $xy\leq x$ and $xy\leq y$ for all $x,y\in M$}\\
  & \text{\emph{two-sided ordered}} && \text{if $xy=yz \leq y$ for all $x,y,z\in M$ with $xy=yz$}\\
  & \text{\emph{ordered with $1$ on top}} && \text{if $x\leq 1$ for all $x\in M$, and $x\le y$}\\ 
  & && \text{implies $uxv\le uyv$ for all $x,y,u,v\in M$}
\end{aligned}
\end{equation*}
for some partial order $\le$ on $M$.

\begin{proposition}
  \label{proposition.ordered}
  $M$ is right ordered (resp. left ordered, left-right ordered, two-sided ordered,
  ordered with $1$ on top) if and only if $M$ is $\RR$-trivial
  (resp. $\LL$-trivial, $\JJ$-trivial, $\HH$-trivial, $\BB$-trivial).

  When $M$ is $\KK$-trivial for $\KK\in \{\RR,\LL,\JJ,\HH,\BB\}$, then
  $\leq_\KK$ is a partial order, \emph{called
    $\KK$-order}. Furthermore, the partial order $\le$ is finer than
  $\le_\KK$: for any $x, y\in M$, $x \le_\KK y$ implies $x \leq y$.
\end{proposition}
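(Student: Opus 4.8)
The plan is to treat the five cases in parallel: for each $\KK\in\{\RR,\LL,\JJ,\HH,\BB\}$ I will show that the Green preorder $\le_\KK$ (with $\le_\BB$ understood as in the text) is a partial order exactly when $M$ is $\KK$-trivial, that in that case it witnesses the corresponding ordered property, and that it is contained in every witnessing partial order — which yields the ``$x\le_\KK y\Rightarrow x\le y$'' clause. Since $\RR$ and $\LL$ are mirror images, only $\RR$ needs to be written out.

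For the direction \emph{$\KK$-trivial $\Rightarrow$ ordered} (which also gives that $\le_\KK$ is a partial order): each of $\le_\RR,\le_\LL,\le_\JJ,\le_\HH$ is a preorder (reflexivity from the identity, e.g.\ $x=x\cdot1$; transitivity is immediate), and $\le_\BB$ is a preorder by construction. Now $\KK$-triviality says precisely that no pair $x\ne y$ satisfies both $x\le_\KK y$ and $y\le_\KK x$ — for $\KK\in\{\RR,\LL,\JJ,\HH\}$ because, e.g., $xM=yM$ is the same as $x\le_\RR y$ and $y\le_\RR x$, and for $\KK=\BB$ by the definition of the equivalence relation $\BB$ — i.e.\ that $\le_\KK$ is antisymmetric, hence a partial order. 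The required inequalities then hold for $\le_\KK$: $xy=x\cdot y\in xM$ gives $xy\le_\RR x$ and symmetrically $xy\le_\LL y$; reading $xy=1\cdot x\cdot y=x\cdot y\cdot1$ gives $xy\le_\JJ x$ and $xy\le_\JJ y$; if $xy=yz$ then $xy=yz\in yM$ and $xy=x\cdot y\in My$ give $xy\le_\HH y$; and $x\le_\BB 1$ together with~\eqref{equation.bruhat} are built into the definition of $\le_\BB$.

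For the converse \emph{ordered $\Rightarrow$ $\KK$-trivial} (and the ``finer'' clause), fix a witnessing partial order $\le$. For $\KK=\RR$: $x\le_\RR y$ means $x=yu$, so $x=yu\le y$ by right-orderedness; if moreover $x\,\RR\,y$ then also $y=xv\le x$, and antisymmetry forces $x=y$. For $\KK=\JJ$: $x\le_\JJ y$ means $x=uyv=(uy)v$, so $x\le uy\le y$ (any product is $\le$ its left factor, then $\le$ its right factor); the same antisymmetry argument gives $\JJ$-triviality. For $\KK=\HH$: from $x\,\HH\,y$ write $x=y\mu$, $y=x\nu$ (using $x\,\RR\,y$) and $x=\sigma y$, $y=\tau x$ (using $x\,\LL\,y$); then $\sigma y=x=y\mu$ and $\tau x=y=x\nu$ are instances of the pattern $pq=qr$ with repeated middle factor, so the two-sided order axiom yields $x=\sigma y\le y$ and $y=\tau x\le x$, whence $x=y$; for a single $x\le_\HH y$ one still gets $x=y\mu$ and $x=\sigma y$, so $\sigma y=y\mu$ and thus $x\le y$. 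For $\KK=\BB$: the witnessing $\le$ is a preorder satisfying $z\le1$ for all $z$ and~\eqref{equation.bruhat}, so by the description of $\le_\BB$ as the intersection of all such preorders one has ${\le_\BB}\subseteq{\le}$; hence $x\le_\BB y$ implies $x\le y$, and $x\,\BB\,y$ implies $x=y$.

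The only step that is more than bookkeeping will be the $\HH$ case: the two-sided order axiom constrains only products with a repeated middle factor, so the point is to notice that combining an $\RR$-witness ($x=y\mu$) and an $\LL$-witness ($x=\sigma y$) of an $\HH$-relation produces exactly such an equality, namely $\sigma y=y\mu$. Everything else is a routine unwinding of Green's preorders and of the definition of $\le_\BB$, plus one application of antisymmetry.
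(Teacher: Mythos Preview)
Your proof is correct and follows the same strategy as the paper: use $\le_\KK$ itself as the witnessing partial order in one direction, and use antisymmetry of any witnessing order in the other. The paper only spells out the $\RR$ case and asserts the rest are similar; you go further by actually handling the $\HH$ case, where the two-sided order axiom only constrains equalities of the form $xy=yz$, and your observation that an $\RR$-witness $x=y\mu$ combined with an $\LL$-witness $x=\sigma y$ produces precisely such an equality $\sigma y=y\mu$ is the right (and not entirely trivial) point that the paper leaves to the reader.
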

\begin{proof}
  We give the proof for right-order as the other cases can be proved in a similar fashion.

  Suppose $M$ is right ordered and that $x,y\in M$ are in the same
  $\RR$-class. Then $x=ya$ and $y=xb$ for some $a,b\in M$. This
  implies that $x\leq y$ and $y\leq x$ so that $x=y$.

  Conversely, suppose that all $\RR$-classes are singletons. Then
  $x\le_\RR y$ and $y\le_\RR x$ imply that $x=y$, so that the
  $\RR$-preorder turns into a partial order. Hence $M$ is
  right ordered using $xy \le_\RR x$.
\end{proof}

\subsection{Aperiodic and $\RR$-trivial monoids}

The class of $\HH$-trivial monoids coincides with that of
\emph{aperiodic} monoids (see for example~\cite[Proposition
4.9]{Pin.2009}): a monoid is called \emph{aperiodic} if for any $x\in
M$, there exists some positive integer $N$ such that
$x^{N}=x^{N+1}$. The element $x^\omega := x^{N}=x^{N+1}=x^{N+2}=\cdots$
is then an idempotent (the idempotent $x^\omega$ can in fact be
defined for any element of any monoid~\cite[Chapter VI.2.3]{Pin.2009},
even infinite monoids; however, the period $k$ such that $x^N = x^{N+k}$ need no
longer be $1$). We write $\idempMon := \{x^\omega \mid x\in M\}$
for the set of idempotents of $M$.

Our favorite example of a monoid which is aperiodic, but not
$\RR$-trivial, is the biHecke monoid studied
in~\cite{Hivert_Schilling_Thiery.BiHeckeMonoid.2010,Hivert_Schilling_Thiery.BiHeckeMonoidRepresentation.2010}. 
This is the submonoid of functions from a finite Coxeter group $W$ to itself
generated simultaneously by the elementary bubble sorting and
antisorting operators $\overline{\pi}_i$ and $\pi_i$
\begin{equation}
\label{equation.bihecke}
   \biheckemonoid(W) :=
   \langle \pi_1, \pi_2, \ldots, \pi_n, \opi_1, \opi_2, \ldots, \opi_n \rangle\,.
\end{equation}
See~\cite[Definition 1.1]{Hivert_Schilling_Thiery.BiHeckeMonoid.2010}
and~\cite[Proposition 3.8]{Hivert_Schilling_Thiery.BiHeckeMonoid.2010}.

The smaller class of $\RR$-trivial monoids coincides with the class of
so-called \emph{weakly ordered monoids} as defined by
Schocker~\cite{Schocker.2008}. Also, via the right regular
representation, any $\RR$-trivial monoid can be represented as a monoid
of regressive functions on some finite poset $P$ (a function $f: P\to
P$ is called \emph{regressive} if $f(x) \le x$ for every $x\in P$);
reciprocally any such monoid is $\RR$-trivial.  We now present an
example of a monoid which is $\RR$-trivial, but not $\JJ$-trivial.
\begin{example}
  Take the free left regular band $\mathcal{B}$ generated by two idempotents
  $a,b$.  Multiplication is given by concatenation taking into account the
  idempotent relations, and then selecting only the two left factors (see for
  example~\cite{Saliola.2007}).  So $\mathcal{B} = \{1,a,b,ab,ba\}$ and
  $1\mathcal{B}=\mathcal{B}$, $a\mathcal{B} = \{a,ab\}$, $b\mathcal{B} =\{b,
  ba\}$, $ab \mathcal{B} = \{ab\}$, and $ba\mathcal{B} = \{ba\}$. This shows
  that all $\RR$-classes consist of only one element and hence $\mathcal{B}$
  is $\RR$-trivial.

  On the other hand, $\mathcal{B}$ is not $\LL$-trivial since
  $\{ab,ba\}$ forms an $\LL$-class since $b\cdot ab = ba$ and $a\cdot
  ba = ab$. Hence $\mathcal{B}$ is also not $\JJ$-trivial.
\end{example}

\subsection{$\JJ$-trivial monoids}

The most important for our paper is the class of $\JJ$-trivial
monoids.  In fact, our main motivation stems from the fact that the
submonoid $M_1=\{ f\in M \mid f(1) =1\} $ of the biHecke monoid $M$
in~\eqref{equation.bihecke} of functions that fix the identity, is
$\JJ$-trivial
(see~\cite[Corollary~4.2]{Hivert_Schilling_Thiery.BiHeckeMonoid.2010}
and~\cite{Hivert_Schilling_Thiery.BiHeckeMonoidRepresentation.2010}).

\begin{example} \label{example.J_trivial}
The following example of a $\JJ$-trivial monoid is given in~\cite{Straubing.Therien.1985}. 
Take $M=\{1,x,y,z,0\}$ with relations 
$x^2=x$, $y^2=y$, $xz=zy=z$, and all other products are equal to $0$. Then $M1M = M$, 
$MxM=\{x,z,0\}$, $MyM=\{y,z,0\}$, $MzM=\{z,0\}$, and $M0M=\{0\}$, which shows that $M$ is 
indeed $\JJ$-trivial. Note also that $M$ is left-right ordered with the order $1>x>y>z>0$,
which by Proposition~\ref{proposition.ordered} is equivalent to $\JJ$-triviality.
\end{example}

\subsection{Ordered monoids (with $1$ on top)}
Ordered monoids $M$ with $1$ on top form a subclass of $\JJ$-trivial
monoids.  To see this suppose that $x,y\in M$ are in the same
$\RR$-class, that is $x=ya$ and $y=xb$ for some $a,b\in M$. Since
$a\le 1$, this implies $x=ya \le y$ and $y=xb\le x$ so that
$x=y$. Hence $M$ is $\RR$-trivial. By analogous arguments, $M$ is also
$\LL$-trivial.  Since $M$ is finite, this implies that $M$ is
$\JJ$-trivial (see~\cite[Chapter V, Theorem 1.9]{Pin.2009}).

The next example shows that ordered monoids with 1 on top form a proper
subclass of $\JJ$-trivial monoids.
\begin{example}
The monoid $M$ of Example~\ref{example.J_trivial} is not ordered.
To see this suppose that $\le$ is an order on $M$ with maximal element $1$. 
The relation $y\le 1$ implies $0=z^2\le z = xzy \le xy =0$ which contradicts $z\neq 0$.
\end{example}

It was shown by Straubing and Th\'erien~\cite{Straubing.Therien.1985} and Henckell and 
Pin~\cite{Henckell_Pin.2000} that every $\JJ$-trivial monoid is a quotient of an ordered
monoid with $1$ on top.

In the next two subsections we present two important examples of
ordered monoids with $1$ on top: the $0$-Hecke monoid and the monoid
of regressive order preserving functions, which generalizes nondecreasing parking functions.

\subsection{$0$-Hecke monoids}
Let $W$ be a finite Coxeter group. It has a presentation
\begin{equation}
  W = \langle\, s_i \; \text{for} \; i\in I\ \suchthat\  (s_is_j)^{m(s_i,s_j)},\ \forall i,j\in I\,\rangle\,,
\end{equation}
where $I$ is a finite set, $m(s_i,s_j) \in \{1,2,\dots,\infty\}$, and $m(s_i,s_i)=1$.
The elements $s_i$ with $i\in I$ are called \emph{simple reflections}, and the
relations can be rewritten as:
\begin{equation}
  \begin{alignedat}{2}
    s_i^2 &=1 &\quad& \text{ for all $i\in I$}\,,\\
    \underbrace{s_is_js_is_js_i \cdots}_{m(s_i,s_j)} &=
    \underbrace{s_js_is_js_is_j\cdots}_{m(s_i,s_j)} && \text{ for all $i,j\in I$}\, ,
  \end{alignedat}
\end{equation}
where $1$ denotes the identity in $W$. An expression $w=s_{i_1}\cdots s_{i_\ell}$ for $w\in W$ 
is called \emph{reduced} if it is of minimal length $\ell$.
See~\cite{bjorner_brenti.2005, humphreys.1990} for further details on Coxeter groups.

The Coxeter group of type $A_{n-1}$ is the symmetric group $\sg[n]$ with generators
$\{s_1,\dots,s_{n-1}\}$ and relations:
\begin{equation}
  \begin{alignedat}{2}
    s_i^2           & = 1                &     & \text{ for } 1\leq i\leq n-1\,,\\
    s_i s_j         & = s_j s_i            &     & \text{ for } |i-j|\geq2\,, \\
    s_i s_{i+1} s_i & = s_{i+1} s_i s_{i+1} &\quad& \text{ for } 1\leq i\leq n-2\,;
  \end{alignedat}
\end{equation}
the last two relations are called the \emph{braid relations}.

\begin{definition}[\textbf{$0$-Hecke monoid}]
The $0$-Hecke monoid $H_0(W) = \langle \pi_i \mid i \in I \rangle$ of a Coxeter group $W$ 
is generated by the \emph{simple projections} $\pi_i$ with relations
\begin{equation}
  \begin{alignedat}{2}
    \pi_i^2 &=\pi_i &\quad& \text{ for all $i\in I$,}\\
    \underbrace{\pi_i\pi_j\pi_i\pi_j\cdots}_{m(s_i,s_{j})} &=
    \underbrace{\pi_j\pi_i\pi_j\pi_i\cdots}_{m(s_i,s_{j})} && \text{ for all $i,j\in I$}\ .
  \end{alignedat}
\end{equation}
Thanks to these relations, the elements of $H_0(W)$ are canonically
indexed by the elements of $W$ by setting $\pi_w :=
\pi_{i_1}\cdots\pi_{i_k}$ for any reduced word $i_1 \dots i_k$ of $w$.
\end{definition}

\emph{Bruhat order} is a partial order defined on any Coxeter group $W$
and hence also the corresponding $0$-Hecke monoid $H_0(W)$.  Let
$w=s_{i_1} s_{i_2} \cdots s_{i_\ell}$ be a reduced expression for
$w\in W$. Then, in Bruhat order $\le_B$,
\begin{equation*}
	u\le_B w \quad \begin{array}[t]{l}
	\text{if there exists a reduced expression $u = s_{j_1} \cdots s_{j_k}$}\\
	\text{where $j_1 \ldots j_k$ is a subword of $i_1 \ldots i_\ell$.} \end{array}
\end{equation*}
In Bruhat order, $1$ is the minimal element. Hence, it is not hard to check that, with
reverse Bruhat order, the $0$-Hecke monoid is indeed an ordered monoid with $1$
on top.

In fact, the orders $\le_\LL$, $\le_\RR$, $\le_\JJ$, $\le_\BB$ on
$H_0(W)$ correspond exactly to the usual (reversed) left, right,
left-right, and Bruhat order on the Coxeter group $W$.

\subsection{Monoid of regressive order preserving functions}

For any partially ordered set $P$, there is a particular $\JJ$-trivial monoid
which has some very nice properties and that we investigate further in
Section~\ref{sec:NPDF}. Notice that we use the right action in this paper, so
that for $x\in P$ and a function $f:P\to P$ we write $x.f$ for the value of
$x$ under $f$.

\begin{definition}[\textbf{Monoid of regressive order preserving functions}]
  Let $(P, \leq_P)$ be a poset. The set $\OR(P)$ of functions $f: P \to P$ which are
  \begin{itemize}
  \item \emph{order preserving}, that is, for all $x,y\in P,\ x\leq_P y$ implies
    $x.f\leq_P y.f$
  \item \emph{regressive}, that is, for all $x\in P$ one has $x.f \leq_P x$
  \end{itemize}
  is a monoid under composition.
\end{definition}
\begin{proof}
  It is trivial that the identity function is order preserving and regressive and that
  the composition of two order preserving and regressive functions is as well.
\end{proof}

According to~\cite[14.5.3]{Ganyushkin_Mazorchuk.2009}, not much is
known about these monoids.

When $P$ is a chain on $N$ elements, we obtain the monoid $\NDPF_N$ of
nondecreasing parking functions on the set $\{1, \ldots, N\}$ (see
e.g.~\cite{Solomon.1996}; it also is described under the notation
$\mathcal C_n$ in e.g.~\cite[Chapter~XI.4]{Pin.2009} and, together
with many variants, in~\cite[Chapter~14]{Ganyushkin_Mazorchuk.2009}).
The unique minimal set of generators for $\NDPF_N$ is given by the
family of idempotents $(\pi_i)_{i\in\{1,\dots,n-1\}}$, where each
$\pi_i$ is defined by $(i+1).\pi_i:=i$ and $j.\pi_i:=j$ otherwise. The
relations between those generators are given by:
\begin{gather*}
  \pi_i\pi_j = \pi_j\pi_i \quad \text{ for all $|i-j|>1$}\,,\\
  \pi_i\pi_{i-1}=\pi_i\pi_{i-1}\pi_i=\pi_{i-1}\pi_i\pi_{i-1}\,.
\end{gather*}
It follows that $\NDPF_n$ is the natural quotient of $H_0(\sg[n])$ by
the relation $\pi_i\pi_{i+1}\pi_i = \pi_{i+1}\pi_i$, via the quotient
map $\pi_i\mapsto
\pi_i$~\cite{Hivert_Thiery.HeckeSg.2006,Hivert_Thiery.HeckeGroup.2007,
  Ganyushkin_Mazorchuk.2010}. Similarly, it is a natural quotient of
Kiselman's
monoid~\cite{Ganyushkin_Mazorchuk.2010,Kudryavtseva_Mazorchuk.2009}.

To see that $\OR(P)$ is indeed a subclass of ordered monoids with $1$
on top, note that we can define a partial order by saying $f\le g$ for
$f,g\in \OR(P)$ if $x.f \le_P x.g$ for all $x\in P$. By
regressiveness, this implies that $f\le \id$ for all $f\in \OR(P)$ so
that indeed $\id$ is the maximal element. Now take $f,g,h \in \OR(P)$
with $f\le g$.  By definition $x.f \le_P x.g$ for all $x\in P$ and
hence by the order preserving property $(x.f).h \le_P (x.g).h$, so
that $fh\le gh$. Similarly since $f\le g$, $(x.h).f \le_P (x.h).g$ so
that $hf\le hg$. This shows that $\OR(P)$ is ordered.

The submonoid $M_1$ of the biHecke monoid~\eqref{equation.bihecke},
and $H_0(W)\subset M_1$, are submonoids of the monoid of regressive
order preserving functions acting on the Bruhat poset.

\subsection{Monoid of unitriangular Boolean matrices}

Finally, we define the $\JJ$-trivial monoid $\unitribool_n$ of
\emph{unitriangular Boolean matrices}, that is of $n\times n$ matrices
$m$ over the Boolean semi-ring which are unitriangular: $m[i,i]=1$ and
$m[i,j]=0$ for $i>j$. Equivalently (through the adjacency matrix),
this is the monoid of the binary reflexive relations contained in the
usual order on $\{1,\dots,n\}$ (and thus antisymmetric), equipped with
the usual composition of relations. Ignoring loops, it is convenient
to depict such relations by acyclic digraphs admitting $1,\dots,n$ as
linear extension. The product of $g$ and $h$ contains the edges of
$g$, of $h$, as well as the transitivity edges $i\edge k$ obtained
from one edge $i\edge j$ in $g$ and one edge $j\edge k$ in $h$. Hence,
$g^2=g$ if and only if $g$ is transitively closed.

The family of monoids $(\unitribool_n)_n$ (resp. $(\NDPF_n)_n$) plays a
special role, because any $\JJ$-trivial monoid is a subquotient of
$\unitribool_n$ (resp. $\NDPF_n$) for $n$ large
enough~\cite[Chapter~XI.4]{Pin.2009}. In particular, $\NDPF_n$ itself
is a natural submonoid of $\unitribool_n$.
\begin{remark}
  We now demonstrate how $\NDPF_n$ can be realized as a submonoid of relations.
  For simplicity of notation, we consider the monoid $\OR(P)$
  where $P$ is the reversed chain $\{1>\dots>n\}$. Otherwise said,
  $\OR(P)$ is the monoid of functions on the chain $\{1<\dots<n\}$
  which are order preserving and extensive ($x.f\geq x$). Obviously,
  $\OR(P)$ is isomorphic to $\NDPF_n$.

  The monoid $\OR(P)$ is isomorphic to the submonoid of the
  relations $A$ in $\unitribool_n$ such that $i\edge j \in A$ implies
  $k\edge l\in A$ whenever $i\geq k\geq l\geq j$ (in
  the adjacency matrix: $(k,l)$ is to the south-west of $(i,j)$ and
  both are above the diagonal). The isomorphism is given by the map $A
  \mapsto f_A\in \OR(P)$, where
  \begin{equation*}
    u\cdot f_A := \max\{v\suchthat u\,\edge\,v\in A\}\,.
  \end{equation*}
  The inverse bijection $f\in \OR(P) \mapsto A_f\in \unitribool_n$ is given by
  \begin{equation*}
    u\,\edge\,v \in A_f \text{ if and only if } u\cdot f \leq v\,.
  \end{equation*}
  For example, here are the elements of $\OR(\{1>2>3\})$ and the
  adjacency matrices of the corresponding relations in
  $\unitribool_3$:
  \begin{equation*}
    \begin{array}{ccccc}
      \begin{tikzpicture}[->,baseline=(current bounding box.east)]
        \matrix (m) [matrix of math nodes, row sep=.5em, column sep=1.5em]{
          1      & 1 \\
          2      & 2 \\
          3      & 3 \\
        };
        \draw (m-1-1) -> (m-1-2);
        \draw (m-2-1) -> (m-2-2);
        \draw (m-3-1) -> (m-3-2);
      \end{tikzpicture}&
      \begin{tikzpicture}[->,baseline=(current bounding box.east)]
        \matrix (m) [matrix of math nodes, row sep=.5em, column sep=1.5em]{
          1      & 1 \\
          2      & 2 \\
          3      & 3 \\
        };
        \draw (m-1-1) -> (m-2-2);
        \draw (m-2-1) -> (m-2-2);
        \draw (m-3-1) -> (m-3-2);
      \end{tikzpicture}&
      \begin{tikzpicture}[->,baseline=(current bounding box.east)]
        \matrix (m) [matrix of math nodes, row sep=.5em, column sep=1.5em]{
          1      & 1 \\
          2      & 2 \\
          3      & 3 \\
        };
        \draw (m-1-1) -> (m-1-2);
        \draw (m-2-1) -> (m-3-2);
        \draw (m-3-1) -> (m-3-2);
      \end{tikzpicture}&
      \begin{tikzpicture}[->,baseline=(current bounding box.east)]
        \matrix (m) [matrix of math nodes, row sep=.5em, column sep=1.5em]{
          1      & 1 \\
          2      & 2 \\
          3      & 3 \\
        };
        \draw (m-1-1) -> (m-2-2);
        \draw (m-2-1) -> (m-3-2);
        \draw (m-3-1) -> (m-3-2);
      \end{tikzpicture}&
      \begin{tikzpicture}[->,baseline=(current bounding box.east)]
        \matrix (m) [matrix of math nodes, row sep=.5em, column sep=1.5em]{
          1      & 1 \\
          2      & 2 \\
          3      & 3 \\
        };
        \draw (m-1-1) -> (m-3-2);
        \draw (m-2-1) -> (m-3-2);
        \draw (m-3-1) -> (m-3-2);
      \end{tikzpicture}\\\\
      \begin{pmatrix}
        1 & 0 & 0\\
        0 & 1 & 0\\
        0 & 0 & 1\\
      \end{pmatrix} &
      \begin{pmatrix}
        1 & 1 & 0\\
        0 & 1 & 0\\
        0 & 0 & 1\\
      \end{pmatrix} &
      \begin{pmatrix}
        1 & 0 & 0\\
        0 & 1 & 1\\
        0 & 0 & 1\\
      \end{pmatrix} &
      \begin{pmatrix}
        1 & 1 & 0\\
        0 & 1 & 1\\
        0 & 0 & 1\\
      \end{pmatrix} &
      \begin{pmatrix}
        1 & 1 & 1\\
        0 & 1 & 1\\
        0 & 0 & 1\\
      \end{pmatrix} \; .
    \end{array}
  \end{equation*}
\end{remark}

\section{Representation theory of $\JJ$-trivial monoids}
\label{sec:jrep}

In this section we study the representation theory of $\JJ$-trivial
monoids $\tMonoid$, using the $0$-Hecke monoid $H_0(W)$ of a finite
Coxeter group as running example. In Section~\ref{ss.simple.radical}
we construct the simple modules of $\tMonoid$ and derive a description
of the radical $\rad\K\tMonoid$ of the monoid algebra of
$\tMonoid$. We then introduce a star product on the set $\idempMon$ of
idempotents in Theorem~\ref{theorem.star} which makes it into a
semi-lattice, and prove in
Corollary~\ref{corollary.triangular-radical} that the semi-simple
quotient of the monoid algebra $\K\tMonoid/\rad \K\tMonoid$ is the
monoid algebra of $(\idempMon,\star)$. In
Section~\ref{ss.orthogonal.idempotents} we construct orthogonal
idempotents in $\K\tMonoid/\rad \K\tMonoid$ which are lifted to a
complete set of orthogonal idempotents in $\K\tMonoid$ in
Theorem~\ref{theorem.idempotents.lifting} in Section~\ref{ss.lifting}.
In Section~\ref{ss.cartan} we describe the Cartan matrix of
$\tMonoid$. We study several types of factorizations in
Section~\ref{ss.factorizations}, derive a combinatorial description of the quiver 
of $\tMonoid$ in Section~\ref{ss.quiver}, and apply it
in Section~\ref{ss.quiver.examples} to several examples. Finally, in
Section~\ref{subsection.implementation_complexity}, we briefly comment
on the complexity of the algorithms to compute the various pieces of
information, and their implementation in \texttt{Sage}.

\subsection{Simple modules, radical, star product, and semi-simple quotient}
\label{ss.simple.radical}

The goal of this subsection is to construct the simple modules of the
algebra of a $\JJ$-trivial monoid $\tMonoid$, and to derive a
description of its radical and its semi-simple quotient. The proof
techniques are similar to those of Norton~\cite{Norton.1979} for the
$0$-Hecke algebra. However, putting them in the context of
$\JJ$-trivial monoids makes the proofs more transparent. In fact, most
of the results in this section are already known and admit natural
generalizations in larger classes of monoids ($\RR$-trivial, ...). For
example, the description of the radical is a special case of
Almeida-Margolis-Steinberg-Volkov~\cite{Almeida_Margolis_Steinberg_Volkov.2009},
and that of the simple modules
of~\cite[Corollary~9]{Ganyushkin_Mazorchuk_Steinberg.2009}.

Also, the description of the semi-simple quotient is often derived
alternatively from the description of the radical, by noting that it
is the algebra of a monoid which is $\JJ$-trivial and idempotent
(which is equivalent to being a semi-lattice; see
e.g.~\cite[Chapter VII, Proposition 4.12]{Pin.2009}).
\begin{proposition}\label{proposition.simple}
  Let $\tMonoid$ be a $\JJ$-trivial monoid and $x\in \tMonoid$. Let $S_x$ be the
  $1$-dimensional vector space spanned by an element $\epsilon_x$, and define the
  right action of any $y\in \tMonoid$ by
  \begin{equation}
    \epsilon_x y =
    \begin{cases}
      \epsilon_x & \text{if $xy=x$,}\\
      0          & \text{otherwise.}
    \end{cases}
  \end{equation}
  Then $S_x$ is a right $\tMonoid$-module. Moreover, any simple module is isomorphic
  to $S_x$ for some $x \in \tMonoid$ and is in particular one-dimensional.
\end{proposition}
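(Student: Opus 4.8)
The plan is to verify the module axioms directly for $S_x$, and then to classify the simple modules by a minimality argument carried out inside $\tMonoid$ itself.

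For the module axioms, I would note $\epsilon_x 1 = \epsilon_x$ is immediate from $x1=x$, and then check associativity: fixing $y,z\in\tMonoid$, I distinguish whether $xy=x$. If $xy=x$, both $(\epsilon_x y)z$ and $\epsilon_x(yz)$ equal $\epsilon_x$ or $0$ according to whether $xz=(xy)z=x(yz)=x$, hence agree. If $xy\neq x$, then $(\epsilon_x y)z=0$, and I claim $\epsilon_x(yz)=0$ as well, i.e.\ $x(yz)\neq x$: indeed $x(yz)=(xy)z\le_\RR xy$, so $x(yz)\le_\JJ xy\le_\JJ x$, and $x(yz)=x$ would force $xy\,\JJ\,x$, hence $xy=x$ by $\JJ$-triviality, a contradiction. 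This shows $S_x$ is a module (and one sees in passing that $S_1$ is the trivial module).

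For the classification, let $S$ be a simple right $\tMonoid$-module; it is finite dimensional since it is a simple module over the finite dimensional algebra $\K\tMonoid$. Write $\rho(m)$ for the operator by which $m\in\tMonoid$ acts on $S$, and put $T=\{m\in\tMonoid\mid \rho(m)\neq 0\}$, which is nonempty as $1\in T$. Using that $\tMonoid$ is finite and that $\le_\JJ$ is a partial order (Proposition~\ref{proposition.ordered}), choose $x\in T$ that is minimal for $\le_\JJ$. The key point is that $S\cdot x=\im\rho(x)$ is a submodule: for $m\in\tMonoid$ we have $xm\le_\JJ x$, so either $xm=x$, giving $(S\cdot x)\cdot m=S\cdot x$, or $xm<_\JJ x$, in which case minimality of $x$ forces $\rho(xm)=0$ and hence $(S\cdot x)\cdot m=0$. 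Since $x\in T$, this submodule is nonzero, so $S\cdot x=S$ by simplicity; thus $\rho(x)$ is surjective, and being an endomorphism of a finite dimensional space, it is bijective. Then $\rho(x^k)=\rho(x)^k\neq 0$ for all $k\ge 1$, so $x^\omega\in T$; as $x^\omega\le_\JJ x$, minimality gives $x^\omega=x$, so $x$ is idempotent, and $\rho(x)=\rho(x)^2$ together with bijectivity yields $\rho(x)=\id_S$.

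Finally I would bootstrap from $\rho(x)=\id_S$ to determine the whole action. For any $m\in\tMonoid$, since $\rho(x)=\id_S$ we get $\rho(m)=\rho(xmx)$, and $xmx\le_\JJ x$; if $xmx\neq x$ then $xmx<_\JJ x$ and minimality gives $\rho(m)=\rho(xmx)=0$, while if $xmx=x$ then $\rho(m)=\rho(x)=\id_S$. Moreover $xmx=x$ is equivalent to $xm=x$ (one direction is clear, and conversely $xmx\le_\JJ xm\le_\JJ x$ shows $xmx=x$ forces $xm=x$). Hence each $\rho(m)$ is $0$ or $\id_S$, so every line in $S$ is a submodule, and simplicity forces $\dim S=1$. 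Choosing a nonzero $\epsilon\in S$, the assignment $\epsilon\mapsto\epsilon_x$ is then an isomorphism $S\cong S_x$, since $\epsilon m=\epsilon$ holds exactly when $\rho(m)=\id_S$, i.e.\ exactly when $xm=x$. I expect the only genuinely delicate step to be the choice of a $\le_\JJ$-minimal element of $T$ and the verification that $S\cdot x$ is a submodule; the rest is bookkeeping with Green's relations and $\JJ$-triviality.
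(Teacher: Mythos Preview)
Your proof is correct, but it follows a genuinely different route from the paper. The paper argues globally: it picks a linear extension $(x_1,\dots,x_n)$ of $\le_\JJ$, sets $F_i=\K\{x_j\mid j\le i\}$, observes that this is a composition series of the regular module with successive quotients $F_i/F_{i-1}\cong S_{x_i}$, and concludes by the standard fact that every simple module occurs as a composition factor of the regular representation. Your argument instead works locally inside a given simple $S$: you pick a $\le_\JJ$-minimal $x$ with $\rho(x)\ne 0$, show $S\cdot x$ is a submodule and hence all of $S$, deduce that $x$ is idempotent with $\rho(x)=\id_S$, and then force every $\rho(m)$ to be $0$ or $\id_S$. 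The paper's approach is shorter and simultaneously exhibits the composition series of $\K\tMonoid$ (which is reused later), whereas yours is more self-contained, makes the module axioms explicit, and already singles out an \emph{idempotent} $x$ with $S\cong S_x$---something the paper only obtains in the subsequent Proposition~\ref{proposition.basis.radical}.
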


Note that some $S_x$ may be isomorphic to each other, and that the
$S_x$ can be similarly endowed with a left $\tMonoid$-module structure.

\begin{proof}
  Recall that, if $\tMonoid$ is $\JJ$-trivial, then $\leq_\JJ$ is a
  partial order called $\JJ$-order (see
  Proposition~\ref{proposition.ordered}). Let $(x_1, x_2, \ldots,
  x_n)$ be a linear extension of $\JJ$-order, that is an enumeration
  of the elements of $\tMonoid$ such that $x_i \leq_\JJ x_j$ implies
  $i\leq j$. For $0<i\leq
  n$, define $F_i = \K\{x_j \suchthat j\leq i\}$ and set
  $F_0=\{0_\K\}$.  Clearly the $F_i$'s are ideals of $\K\tMonoid$
  such that the sequence
  \begin{equation*}
    F_0 \subset F_1 \subset F_2 \subset \cdots \subset F_{n-1} \subset F_n
  \end{equation*}
  is a composition series for the regular representation
  $F_n=\K\tMonoid$ of $\tMonoid$. Moreover, for any $i>0$, the
  quotient $F_i/F_{i-1}$ is a one-dimensional $\tMonoid$-module
  isomorphic to $S_{x_i}$. Since any simple $\tMonoid$-module must
  appear in any composition series for the regular representation, it
  has to be isomorphic to $F_i/F_{i-1}\cong S_{x_i}$ for some $i$.
\end{proof}

\begin{corollary}
  Let $\tMonoid$ be a $\JJ$-trivial monoid. Then, the quotient of its
  monoid algebra $\K\tMonoid$ by its radical is commutative.
\end{corollary}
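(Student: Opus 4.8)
The plan is to deduce this corollary directly from the structure of the simple modules established in Proposition~\ref{proposition.simple}. The key observation is that the quotient $\K\tMonoid / \rad\K\tMonoid$ is a semisimple algebra, and by Artin--Wedderburn it is isomorphic to a product of matrix algebras over division rings, one factor for each isomorphism class of simple modules. Since Proposition~\ref{proposition.simple} tells us that every simple $\tMonoid$-module is one-dimensional, every such matrix algebra is $1\times 1$, i.e.\ a copy of the ground field $\K$ (the relevant division ring being $\End_{\tMonoid}(S_x) = \K$, as $S_x$ is a $\K$-line and the action is by scalars in $\{0,1\}\subseteq\K$). A finite product of copies of $\K$ is commutative, which gives the claim.

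Concretely, I would phrase it as follows. Let $S_{x_1}, \ldots, S_{x_k}$ be a set of representatives of the isomorphism classes of simple $\tMonoid$-modules; by Proposition~\ref{proposition.simple} each is one-dimensional. The canonical map $\K\tMonoid \to \prod_{j=1}^k \End_\K(S_{x_j})$ has kernel exactly $\rad\K\tMonoid$, and it is surjective by the Jacobson density theorem (or simply because the semisimple quotient is a product of the $\End_\K(S_{x_j})$ by Wedderburn). Since $\dim_\K S_{x_j} = 1$, each $\End_\K(S_{x_j}) \cong \K$, so $\K\tMonoid/\rad\K\tMonoid \cong \K^k$ as algebras, which is visibly commutative.

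There is essentially no obstacle here: the corollary is an immediate consequence of one-dimensionality of the simple modules together with standard semisimple algebra theory. The one point requiring a word of care, if one wants to avoid invoking Wedderburn over a general field, is that the simple modules remain absolutely simple / the endomorphism rings are exactly $\K$ rather than a larger division algebra --- but this is automatic since a one-dimensional module over any algebra has endomorphism ring equal to the ground field. So the proof is just: "By Proposition~\ref{proposition.simple} all simple $\K\tMonoid$-modules are one-dimensional; hence $\K\tMonoid/\rad\K\tMonoid$, being semisimple with all simple modules one-dimensional, is a product of copies of $\K$ and in particular commutative." Alternatively, and perhaps more in the spirit of the later sections, one could note that the semisimple quotient will be shown to be the monoid algebra of the commutative idempotent monoid $(\idempMon,\star)$ (Corollary~\ref{corollary.triangular-radical}), but that is a heavier result and logically downstream, so the direct argument via Proposition~\ref{proposition.simple} is the right one to give here.
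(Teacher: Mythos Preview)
Your proposal is correct and matches the paper's intent: the corollary is stated there without proof, as an immediate consequence of Proposition~\ref{proposition.simple}, and your argument via one-dimensionality of the simple modules and Artin--Wedderburn is precisely the standard way to make that immediate. You are also right that invoking Corollary~\ref{corollary.triangular-radical} would be circular (or at least logically downstream), so the direct route through Proposition~\ref{proposition.simple} is the appropriate one.
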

Note that the radical $\rad \K\tMonoid$ is not necessarily generated
as an ideal by $\{gh - hg \suchthat g,h\in \tMonoid\}$. For example,
in the commutative monoid $\{1, x, 0\}$ with $x^2=0$, the radical is
$\K (x - 0)$. However, thanks to the following this is true if
$\tMonoid$ is generated by idempotents (see
Corollary~\ref{corollary.rad.idemp}).

The following proposition gives an alternative description of the radical of
$\K\tMonoid$.
\begin{proposition}\label{proposition.basis.radical}
  Let $\tMonoid$ be a $\JJ$-trivial monoid. Then
  \begin{equation}
    \{ x-x^\omega \suchthat x\in \tMonoid\backslash\idempMon \}
  \end{equation}
  is a basis for $\rad\K\tMonoid$.

  Moreover $(S_e)_{e \in \idempMon}$ is a complete set of pairwise non-isomorphic 
  representatives of isomorphism classes of simple
  $\tMonoid$-modules.
\end{proposition}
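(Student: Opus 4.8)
The plan is to establish both assertions by combining Proposition~\ref{proposition.simple} with a counting/dimension argument driven by $\JJ$-order. First I would observe that for any $x \in \tMonoid$, the idempotent $x^\omega = x^N$ satisfies $x \, x^\omega = x^\omega \, x^\omega = x^\omega$, and more importantly that $x^\omega \leq_\JJ x$ with $x^\omega \JJ x$ holding if and only if $x$ is itself idempotent (since in a $\JJ$-trivial monoid each $\JJ$-class is a singleton, $x \JJ x^\omega$ forces $x = x^\omega$). This gives the key fact that each $x - x^\omega$ for $x \notin \idempMon$ is a genuine nonzero element, and that these are linearly independent (they involve distinct basis vectors $x$ of $\K\tMonoid$ with coefficient $1$), so the proposed set has cardinality $|\tMonoid| - |\idempMon|$.

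Next I would show this set lies in $\rad \K\tMonoid$. The cleanest route is via the simple modules: an element $a \in \K\tMonoid$ lies in the radical if and only if it acts as zero on every simple module, and by Proposition~\ref{proposition.simple} every simple module is some $S_y$ with $\epsilon_y \cdot a$ computed from the right action. For $a = x - x^\omega$, one has $\epsilon_y x = \epsilon_y$ iff $yx = y$, and $\epsilon_y x^\omega = \epsilon_y$ iff $y x^\omega = y$; I claim these two conditions are equivalent. Indeed if $yx = y$ then $yx^\omega = y x^N = y$ by induction; conversely if $y x^\omega = y$ then $y = y x^\omega = (y x^\omega) x^\omega = \dots$, and writing $x^\omega = x^N$ we need $yx = y$ — here I would argue that $yx \leq_\JJ y$ while $y = yx^\omega \leq_\JJ yx \leq_\JJ y$ forces $yx \JJ y$, hence $yx = y$ by $\JJ$-triviality. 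So $\epsilon_y(x - x^\omega) = 0$ for all $y$, proving $x - x^\omega \in \rad\K\tMonoid$; thus the span of the proposed set is contained in $\rad\K\tMonoid$.

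For the reverse inclusion, I would use a dimension count. The composition series $F_0 \subset F_1 \subset \cdots \subset F_n$ from the proof of Proposition~\ref{proposition.simple} shows that the multiplicity of $S_x$ as a composition factor of $\K\tMonoid$ is exactly $|\{i : S_{x_i} \cong S_x\}|$; in particular $\K\tMonoid / \rad\K\tMonoid \cong \bigoplus (S_e)^{\oplus ?}$, and since each $S_e$ is one-dimensional its dimension equals the number of \emph{distinct} isomorphism classes among the $S_{x_i}$. I would then prove that $S_x \cong S_e$ for a \emph{unique} idempotent $e$, namely by showing $S_x \cong S_{x^\omega}$ always (the action of any $y$ on $\epsilon_x$ and on $\epsilon_{x^\omega}$ agree, by the same equivalence $xy = x \iff x^\omega y = x^\omega$ proved with roles swapped — note $x^\omega y \leq_\JJ x^\omega$ and one direction is $xy=x \Rightarrow x^\omega y = x^\omega$ since $x^\omega = $ a power of $x$ commutes appropriately... this needs the symmetric version), and that distinct idempotents $e \neq f$ give non-isomorphic $S_e \not\cong S_f$ (test the action of $e$: $\epsilon_e \cdot e = \epsilon_e$ since $ee = e$, whereas if $\epsilon_f \cdot e = \epsilon_f$ then $fe = f$, and symmetrically $ef = f$... so one must check $ef = f$ and $fe = f$ cannot both hold with $e \neq f$ — actually they can in general, so instead I distinguish $S_e, S_f$ by finding \emph{some} generator acting differently, using minimality of $e$ versus $f$ in $\JJ$-order, or more robustly: $S_e \cong S_f$ would force $fe = f$ and $ef = e$, i.e. $e \JJ f$, hence $e = f$). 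Therefore $\dim \K\tMonoid/\rad\K\tMonoid = |\idempMon|$, so $\dim \rad\K\tMonoid = |\tMonoid| - |\idempMon|$, matching the cardinality of our linearly independent family inside the radical; equality of dimensions forces it to be a basis. The same bookkeeping shows $(S_e)_{e \in \idempMon}$ is a complete irredundant list of simple modules.

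The main obstacle is the pair of equivalences "$xy = x \iff x^\omega y = x^\omega$" and its left-handed analogue needed for $S_x \cong S_{x^\omega}$; the forward directions are immediate from $x^\omega$ being a power of $x$, but the converse directions are exactly where $\JJ$-triviality (singleton $\JJ$-classes, so that $a \leq_\JJ b \leq_\JJ a \Rightarrow a = b$, via Proposition~\ref{proposition.ordered}) does the essential work, and one must be careful that $x^\omega y$ is a \emph{suffix}-type product so that $x^\omega y \leq_\JJ x^\omega$ and sandwiching applies. Once that lemma is in hand, the dimension count and the identification of the simples with idempotents are routine.
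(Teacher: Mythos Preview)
Your argument that $x - x^\omega$ annihilates every $S_y$ is correct, as is the linear-independence claim and the proof that $S_e \not\cong S_f$ for distinct idempotents $e,f$. These are exactly the ingredients the paper uses.

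However, the claim $S_x \cong S_{x^\omega}$ is \emph{false} in general, and the equivalence $xy = x \Leftrightarrow x^\omega y = x^\omega$ that would support it fails in the backward direction. The sandwiching trick you used for the other side does not transfer: from $x^\omega y = x^\omega$ you only get $x^\omega \le_\JJ xy \le_\JJ x$, and since $x^\omega <_\JJ x$ for non-idempotent $x$ this does not pin down $xy$. Concretely, in $H_0(\sg[3])$ take $x = \pi_1\pi_2$; then $x^\omega = \pi_1\pi_2\pi_1$ is the longest element, so $x^\omega \pi_1 = x^\omega$, but $x\pi_1 = \pi_1\pi_2\pi_1 \ne x$. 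Thus $\pi_1$ acts as the identity on $S_{x^\omega}$ and as zero on $S_x$, so $S_x \not\cong S_{x^\omega}$. (The correct statement, which you do not need here, is $S_x \cong S_{\rfix{x}}$.)

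The good news is that this detour is unnecessary. Once you know the $(S_e)_{e\in\idempMon}$ are pairwise non-isomorphic one-dimensional simples, you already have $\dim\bigl(\K\tMonoid/\rad\K\tMonoid\bigr) \ge |\idempMon|$, hence $\dim\rad\K\tMonoid \le |\tMonoid| - |\idempMon|$. Combined with your $|\tMonoid| - |\idempMon|$ linearly independent elements inside the radical, equality is forced, and it follows \emph{a posteriori} that every simple is isomorphic to some $S_e$. This is precisely how the paper closes the argument; just drop the attempt to identify $S_x$ with $S_{x^\omega}$ directly.
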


\begin{proof}
  For any $x,y \in \tMonoid$, either $yx=y$ and then $yx^\omega =y$,
  or $yx <_\JJ y$ and then $yx^\omega <_\JJ y$. Therefore $x-x^\omega$
  is in $\rad\K\tMonoid$ because for any $y$ the product
  $\epsilon_y(x-x^\omega)$ vanishes. Since $x^\omega \leq x$, by
  triangularity with respect to $\JJ$-order, the family
  \begin{equation*}
    \{ x-x^\omega \suchthat x\in \tMonoid\backslash \idempMon\} \cup \idempMon
  \end{equation*}
  is a basis of $\K\tMonoid$. There remains to show that the radical
  is of dimension at most the number of non-idempotents in $\tMonoid$,
  which we do by showing that the simple modules $(S_e)_{e\in
    \idempMon}$ are not pairwise isomorphic. Assume that $S_e$ and
  $S_f$ are isomorphic. Then, since $\epsilon_e e = \epsilon_e$, it
  must be that $\epsilon_e f = \epsilon_e$ so that $ef=e$. Similarly
  $fe=f$, so that $e$ and $f$ are in the same $\JJ$-class and
  therefore equal.
\end{proof}

The following theorem elucidates the structure of the semi-simple quotient of
the monoid algebra $\K\tMonoid$.
\begin{theorem} \label{theorem.star} 
  Let $\tMonoid$ be a $\JJ$-trivial monoid. Define a product $\star$ on
  $\idempMon$ by: 
  \begin{equation}
    e \star f := (e f)^\omega\,.
  \end{equation}
  Then, the restriction of $\leq_\JJ$ on $\idempMon$ is a lattice such
  that
  \begin{equation}
    e \wedge_\JJ f = e \star f\,,
  \end{equation}
  where $e \wedge_\JJ f$ is the meet or infimum of $e$ and $f$ in the
  lattice. In particular $(\idempMon, \star)$ is an idempotent
  commutative $\JJ$-trivial monoid.
\end{theorem}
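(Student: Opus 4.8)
The plan is to verify that $\star$ is associative, commutative, and idempotent on $\idempMon$, and then to identify it with the meet operation of $\leq_\JJ$ restricted to idempotents. I would proceed in three main stages.

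\textbf{Stage 1: $\star$ is well-defined and $(\idempMon,\star)$ is a commutative idempotent monoid.} First note $e\star f = (ef)^\omega\in\idempMon$, so $\star$ is a binary operation on $\idempMon$. Idempotency is immediate: $e\star e = (ee)^\omega = e^\omega = e$ since $e$ is already idempotent. For commutativity and associativity it is cleanest to first establish the key lemma that for idempotents $e,f$, the element $g := (ef)^\omega$ satisfies $ge = eg = gf = fg = g$; indeed $g\leq_\JJ ef \leq_\JJ e$ gives $ge \le_\JJ g$ and $eg\le_\JJ g$ (as $\le_\JJ$ is compatible with multiplication) while on the other hand one checks $g = g\cdot g = (ef)^\omega (ef)^\omega$ expands to end in $f$ and begin with $e$ up to the idempotent power, so $eg = g$ and $gf = g$; combining with the $\JJ$-order inequalities (which are now antisymmetric since $M$ is $\JJ$-trivial) forces $ge=g$ and $fg=g$ as well. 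Actually the slick route: since $x^\omega\le_\JJ x$ always and $x^\omega$ is the unique idempotent in the cyclic submonoid generated by $x$, one shows $(ef)^\omega$ is the $\le_\JJ$-largest idempotent that is $\le_\JJ$-below both $e$ and $f$. Granting the fixing property, commutativity follows because both $(ef)^\omega$ and $(fe)^\omega$ are then seen to be idempotents fixed on the left and right by both $e$ and $f$, hence equal by the uniqueness in Stage 2; associativity follows the same way, comparing $((ef)^\omega h)^\omega$ and $(e(fh)^\omega)^\omega$, both of which are idempotents fixed by $e$, $f$, and $h$.

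\textbf{Stage 2: the crucial characterization.} The heart of the argument is the claim that for $g,e\in\idempMon$ one has $g\le_\JJ e$ if and only if $ge = eg = g$. The direction ($\Leftarrow$) is trivial. For ($\Rightarrow$): if $g\le_\JJ e$ then $g = uev$ for some $u,v\in M$; I want to conclude $eg=g$. Here I would use $\JJ$-triviality more carefully: from $g = uev$ we get $ge = uev e$ and $eg = e uev$; since $ge\le_\JJ g$ and (from $g = g^\omega$ applied to $g=uev$, or by iterating) one can sandwich $g \le_\JJ ge \le_\JJ g$ in $\JJ$-order, hence $g\;\JJ\;ge$, hence $g = ge$ by $\JJ$-triviality; symmetrically $g = eg$. (This sandwiching is exactly the computation "$yx=y$ or $yx<_\JJ y$" used in the proof of Proposition~\ref{proposition.basis.radical}, applied with $y$ and $x$ idempotents.) Once this is in hand, $(ef)^\omega$ is characterized intrinsically: it is $\le_\JJ$-below $e$ and $f$ (clear, since $(ef)^\omega\le_\JJ ef\le_\JJ e$ and $\le_\JJ f$), and if $h\in\idempMon$ with $h\le_\JJ e$ and $h\le_\JJ f$ then $he = eh = h$ and $hf = fh = h$, so $h(ef) = h f = h$ and likewise... more to the point $h\cdot ef\cdots ef = h$, giving $h\le_\JJ$... rather $h = h^\omega = (h(ef))^\omega$; and since $h\le_\JJ ef$ we get $h\le_\JJ (ef)^\omega$ — wait, I need $(ef)^\omega$ as an \emph{upper} bound of such $h$. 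The clean statement: among idempotents, $h\le_\JJ e$ and $h\le_\JJ f$ $\iff$ $h\le_\JJ (ef)^\omega$, because $h$ commutes with and is absorbed by $e$ and $f$, hence $h = h(ef)h\cdots$ shows $h\le_\JJ ef$ and then $h = h^\omega \le_\JJ (ef)^\omega$ by monotonicity of $x\mapsto x^\omega$ with respect to $\le_\JJ$ — this last monotonicity itself needs the sandwich lemma. So $(ef)^\omega$ is precisely the meet of $e$ and $f$ in the poset $(\idempMon,\le_\JJ)$.

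\textbf{Stage 3: conclude it is a lattice.} Since $1\in\idempMon$ is the top element and every pair $e,f\in\idempMon$ has a meet $e\wedge_\JJ f = (ef)^\omega$ by Stage 2, the finite poset $(\idempMon,\le_\JJ)$ is a meet-semilattice with a maximum, hence a lattice (joins are defined as meets of the sets of common upper bounds). That $(\idempMon,\star)$ is an idempotent commutative $\JJ$-trivial monoid is then automatic: a commutative monoid whose operation is the meet of a lattice is exactly a meet-semilattice, and semilattices are $\JJ$-trivial. The main obstacle is Stage 2 — specifically proving that $g\le_\JJ e$ with both idempotent forces $eg = ge = g$; everything else is bookkeeping built on that fixing/absorption property and the antisymmetry of $\le_\JJ$ coming from $\JJ$-triviality. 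I would present that lemma first and prominently, then let commutativity, associativity, and the lattice structure fall out as corollaries.
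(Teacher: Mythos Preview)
Your approach is essentially the paper's: the key lemma you isolate in Stage~2 (for $g\in\idempMon$ and $y\in\tMonoid$, $g\le_\JJ y \iff gy=g \iff yg=g$) is exactly the paper's Lemma~\ref{lemma.j.idemp}, and once that is in hand the identification of $(ef)^\omega$ as the meet and the passage to a lattice (your Stage~3) match the paper verbatim. Stage~1 is, as you yourself notice, redundant once Stage~2 is done.

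Two places need tightening. First, your proof sketch of the key lemma does not actually justify the sandwich $g\le_\JJ ge$: from $g=uev$ you cannot read this off directly, and ``iterating'' is not enough as stated. The paper isolates a preliminary observation (Lemma~\ref{lemma.idem_factor}): if an idempotent factors as $e=ab$, then $e=ea=be=ae=eb$, proved by $e=e^3=eabe$ and $\JJ$-triviality. Applying this twice to $g=uev$ (once as $u\cdot(ev)$, once as $(ue)\cdot v$) gives $g=gu=be$, hence $geg=g(uev)g=g^3=g$, and one more application yields $ge=eg=g$. Second, your route from $h\le_\JJ e$, $h\le_\JJ f$ to $h\le_\JJ (ef)^\omega$ via ``monotonicity of $x\mapsto x^\omega$'' is an unnecessary and unjustified detour. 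You already have, by the key lemma, $he=h$ and $hf=h$, hence $h(ef)=h$, hence $h(ef)^N=h$ for all $N$, hence $h(ef)^\omega=h$; now apply the key lemma backward. This is exactly what the paper does, in one line.
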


We start with two preliminary easy lemmas (which are consequences of
e.g.~\cite[Chapter VII, Proposition~4.10]{Pin.2009}).
\begin{lemma}
\label{lemma.idem_factor}
  If $e \in \idempMon$ is such $e = ab$ for some $a,b\in \tMonoid$, then 
  \[
  e=ea=be=ae=eb\,.
  \]
\end{lemma}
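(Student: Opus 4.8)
The plan is to establish the four identities $e=ea$, $e=be$, $e=ae$, $e=eb$ in two stages: first prove $ea=e$ and $be=e$ directly from $\JJ$-triviality, and then deduce the remaining two by a one-line substitution exploiting $e=e^2$ and $e=ab$.

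For the first stage, recall from Proposition~\ref{proposition.ordered} that in a $\JJ$-trivial monoid $\le_\JJ$ is a genuine partial order, and that $uv\le_\JJ u$ and $uv\le_\JJ v$ for all $u,v\in\tMonoid$ (since $uv\le_\RR u$ and $uv\le_\LL v$). I would apply this twice to $ea$: on one hand $ea\le_\JJ e$; on the other hand, using $e=e^2$ and $e=ab$, we have $e=e\cdot e=e(ab)=(ea)b\le_\JJ ea$. Antisymmetry of $\le_\JJ$ then forces $ea=e$. Running the symmetric argument with left and right exchanged — namely $be\le_\JJ e$, while $e=e^2=(ab)e=a(be)\le_\JJ be$ — yields $be=e$. (Equivalently, one may argue with $\RR$-triviality for the first and $\LL$-triviality for the second, observing that $e$ and $ea$ are $\RR$-equivalent, and $e$ and $be$ are $\LL$-equivalent.)

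For the second stage, substitute the identities just obtained: $ae=a(be)=(ab)e=e\cdot e=e$, and $eb=(ea)b=e(ab)=e\cdot e=e$. Together with $ea=e$ and $be=e$ this gives all four equalities.

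I do not expect a real obstacle here. The only point deserving a moment's care is that the factorization $e=ab$ must be used \emph{both} on the left and on the right — once written as $e=(ea)b$ and once as $e=a(be)$ — so as to sandwich $ea$ (resp. $be$) between $e$ and itself in $\le_\JJ$-order; this short observation is the actual content of the lemma, and everything around it is routine manipulation with associativity and idempotency.
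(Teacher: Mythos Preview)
Your proof is correct and essentially the same as the paper's. The paper writes $e=e^3=eabe$ to obtain the sandwich $e\le_\JJ ea\le_\JJ e$ (and symmetrically for $be$), whereas you use $e=e^2=(ea)b$; both are the same triviality-argument, and your second stage coincides verbatim with the paper's deduction of $eb$ and $ae$.
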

\begin{proof}
  For $e\in \idempMon$, one has $e=e^3$ so that $e=eabe$. As a
  consequence, $e\leq_\JJ ea\leq_\JJ e$ and $e\leq_\JJ be\leq_\JJ e$,
  so that $e=ea=be$.  In addition $e=e^2=eab=eb$ and $e=e^2=abe=ae$.
\end{proof}

\begin{lemma}\label{lemma.j.idemp}
  For $e\in \idempMon$ and $y \in \tMonoid$, the following three statements are
  equivalent:
  \begin{equation}
    e \leq_\JJ y, \qquad\qquad e = ey, \qquad\qquad e = ye \;.
  \end{equation}
\end{lemma}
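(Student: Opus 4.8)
The plan is to establish that the three statements are equivalent by proving the cycle of implications
\[
  e = ye \;\Longrightarrow\; e \leq_\JJ y \;\Longrightarrow\; e = ey \;\Longrightarrow\; e = ye\,,
\]
where the first and third implications are essentially formal and the middle one carries the actual content. The implication $e = ye \Rightarrow e\leq_\JJ y$ is immediate, since $e = 1\cdot y\cdot e$ exhibits $e$ as an element of $\tMonoid y\tMonoid$. For $e = ey \Rightarrow e = ye$, one applies Lemma~\ref{lemma.idem_factor} to the factorization $e = e\cdot y$ (that is, with $a=e$ and $b=y$): among its conclusions is $e = be = ye$. Note that this also shows $e=ey$ and $e=ye$ play symmetric roles, so it will suffice to obtain just one of them from $e\leq_\JJ y$.

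For the heart of the argument, $e\leq_\JJ y \Rightarrow e=ey$, write $e = uyv$ with $u,v\in\tMonoid$ and apply Lemma~\ref{lemma.idem_factor} twice, re-bracketing the product in between. First view $e = (uy)\,v$ as a factorization $e = ab$ with $a = uy$; the lemma yields $e = ea = euy$. Now re-bracket $e = euy$ as $(eu)\,y$, a factorization $e = a'b'$ with $a' = eu$ and $b' = y$; applying the lemma again gives $e = eb' = ey$, as desired. Combined with the symmetry noted above, this also yields $e = ye$ directly, closing the cycle.

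The argument is short, so the only point that requires any thought is this re-factorization step: one must recognize that, after absorbing $u$ on the left to reach $e = euy$, regrouping the product as $(eu)\cdot y$ is exactly what makes Lemma~\ref{lemma.idem_factor} applicable a second time and lets the spurious factor $u$ disappear. I expect the only thing worth double-checking is that each invocation of Lemma~\ref{lemma.idem_factor} is legitimate — it requires only that $e\in\idempMon$ (which holds throughout) together with \emph{some} expression of $e$ as a product of two elements of $\tMonoid$, with no further hypotheses — so both applications go through routinely.
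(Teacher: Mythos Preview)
Your proof is correct and follows essentially the same approach as the paper: both arguments reduce everything to repeated applications of Lemma~\ref{lemma.idem_factor}, differing only in the order of re-bracketing. The paper first derives $eye = e$ (by using the lemma to get $ea = e$ and $be = e$ from $e = ayb$, whence $eye = eaybe = eee = e$) and then applies the lemma once more to the factorizations $e = (ey)e = e(ye)$, whereas you go directly via $e = euy$ and then $e = ey$; the difference is purely cosmetic.
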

\begin{proof}
  Suppose that $e,y$ are such that $e \leq_\JJ y$. Then $e=ayb$ for some $a,b\in
  \tMonoid$. Applying Lemma~\ref{lemma.idem_factor} we obtain $e=ea=be$ so that 
  $eye = eaybe = eee = e$ since $e\in \idempMon$. A second application of 
  Lemma~\ref{lemma.idem_factor} shows that
  $ey = eye =e$ and $ye = eye = e$.
  The converse implications hold by the definition of $\leq_\JJ$.
\end{proof}

\begin{proof}[Proof of Theorem~\ref{theorem.star}]
  We first show that, for any $e,f \in \idempMon$ the product $e\star
  f$ is the greatest lower bound $e\wedge_\JJ f$ of $e$ and $f$ so that
  the latter exists. It is clear that $(ef)^\omega \leq_\JJ e$ and
  $(ef)^\omega \leq_\JJ f$. Take now $z\in \idempMon$ satisfying
  $z\leq_\JJ e$ and $z\leq_\JJ f$. Applying Lemma~\ref{lemma.j.idemp},
  $z = ze = zf$, and therefore $z = z(ef)^\omega$. Applying
  Lemma~\ref{lemma.j.idemp} backward, $z\leq_\JJ (ef)^\omega$, as
  desired.

  Hence $(\idempMon, \leq_\JJ)$ is a meet semi-lattice with a greatest
  element which is the unit of $\tMonoid$. It is therefore a lattice
  (see e.g.~\cite{Stanley.1999.EnumerativeCombinatorics1,Wikipedia.Lattice}). Since lower bound is a
  commutative associative operation, $(\idempMon, \star)$ is a
  commutative idempotent monoid.
\end{proof}

We can now state the main result of this section.
\begin{corollary} 
  \label{corollary.triangular-radical}
  Let $\tMonoid$ be a $\JJ$-trivial monoid. Then, $(\K\idempMon,
  \star)$ is isomorphic to $\K\tMonoid/\rad\K\tMonoid$ and $\phi: x
  \mapsto x^\omega$ is the canonical algebra morphism associated to
  this quotient.
\end{corollary}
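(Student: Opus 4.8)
The plan is to assemble the corollary from the pieces already established, the point being that almost all the work has been done. By Proposition~\ref{proposition.basis.radical}, the set $\{x - x^\omega \mid x \in \tMonoid \setminus \idempMon\}$ is a basis of $\rad\K\tMonoid$, and $\idempMon$ itself therefore maps to a basis of the quotient $\K\tMonoid/\rad\K\tMonoid$; equivalently, the images $\{\overline{x} \mid x \in \tMonoid\}$ in the quotient satisfy $\overline{x} = \overline{x^\omega}$, and the distinct classes are indexed by $\idempMon$. So as a $\K$-vector space the quotient is $\K\idempMon$, and the composite map $\phi\colon \K\tMonoid \twoheadrightarrow \K\tMonoid/\rad\K\tMonoid$ sends $x$ to (the class of) $x^\omega$.

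First I would make $\phi$ into the linear map $\K\tMonoid \to \K\idempMon$ defined on the monoid basis by $x \mapsto x^\omega$, and observe that its kernel is exactly the span of $\{x - x^\omega\}$, i.e.\ $\rad\K\tMonoid$ by Proposition~\ref{proposition.basis.radical}, while it is clearly surjective (it restricts to the identity on $\idempMon$). The remaining point is that $\phi$ is an algebra morphism when $\K\idempMon$ is given the product $\star$; since $\rad\K\tMonoid$ is a two-sided ideal, it suffices to check multiplicativity on the basis, i.e.\ that $\phi(x y) = \phi(x) \star \phi(y) = (x^\omega y^\omega)^\omega$ for all $x,y \in \tMonoid$. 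Equivalently, I must show $(xy)^\omega = (x^\omega y^\omega)^\omega$ in the semilattice $(\idempMon,\star)$, which by Theorem~\ref{theorem.star} means $(xy)^\omega = x^\omega \wedge_\JJ y^\omega$ in the $\JJ$-order on idempotents.

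For this identity I would argue with the $\JJ$-order directly. Since $xy \leq_\JJ x$ we get $(xy)^\omega \leq_\JJ x$, and as $(xy)^\omega$ is an idempotent below $x$, Lemma~\ref{lemma.j.idemp} (or a short direct computation using $x^N = x^{N+1}$) gives $(xy)^\omega \leq_\JJ x^\omega$; symmetrically $(xy)^\omega \leq_\JJ y^\omega$, so $(xy)^\omega$ is a lower bound of $x^\omega$ and $y^\omega$, hence $(xy)^\omega \leq_\JJ x^\omega \wedge_\JJ y^\omega = (x^\omega y^\omega)^\omega$. Conversely, write $e := (x^\omega y^\omega)^\omega$; since $e \leq_\JJ x^\omega$ and $e \leq_\JJ y^\omega \leq_\JJ y$, Lemma~\ref{lemma.j.idemp} yields $e = e x^\omega = e y$, and iterating, $e = e x^\omega y = e x^\omega y \cdot x^\omega y = \cdots$, so $e = e (x^\omega y)^\omega$; but $x^\omega y \leq_\JJ xy$ gives $(x^\omega y)^\omega \leq_\JJ xy$, whence $e \leq_\JJ xy$ and thus $e \leq_\JJ (xy)^\omega$ by Lemma~\ref{lemma.j.idemp} again. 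Combining the two inequalities and antisymmetry of $\leq_\JJ$ (Proposition~\ref{proposition.ordered}) gives $(xy)^\omega = (x^\omega y^\omega)^\omega$, which is exactly the multiplicativity of $\phi$, completing the proof. I expect the only mildly delicate point to be this last $\JJ$-order juggling; everything else is bookkeeping on top of Proposition~\ref{proposition.basis.radical} and Theorem~\ref{theorem.star}.
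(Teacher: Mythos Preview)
Your argument is correct, but it takes a longer detour than the paper's own proof. The paper starts from the canonical quotient morphism $\psi\colon \K\tMonoid \to \K\tMonoid/\rad\K\tMonoid$, which is an algebra morphism \emph{for free} since the radical is an ideal. Using Proposition~\ref{proposition.basis.radical} it observes that $\psi(x)=\psi(x^\omega)$ for every $x$ and that $\{\psi(e)\mid e\in\idempMon\}$ is a basis of the quotient; the only remaining check is then on \emph{idempotent} basis elements, where the one-line computation $\psi(e)\psi(f)=\psi(ef)=\psi((ef)^\omega)=\psi(e\star f)$ suffices. By contrast, you define $\phi$ first as a linear map into $(\K\idempMon,\star)$ and must then verify multiplicativity on \emph{all} pairs $x,y\in\tMonoid$, which forces you to prove the identity $(xy)^\omega=(x^\omega y^\omega)^\omega$ directly via the $\JJ$-order gymnastics with Lemma~\ref{lemma.j.idemp}. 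That identity is true and your proof of it is fine, but the paper's route sidesteps it entirely. What your approach buys is an explicit, self-contained proof that $x\mapsto x^\omega$ is a monoid morphism $\tMonoid\to(\idempMon,\star)$, a fact the paper only obtains as a by-product; the cost is the extra paragraph of order-theoretic reasoning that the paper's two-line argument avoids. (As a small stylistic note: your remark ``since $\rad\K\tMonoid$ is a two-sided ideal, it suffices to check multiplicativity on the basis'' is slightly misleading---the reduction to basis elements is just bilinearity and does not use the ideal property.)
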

\begin{proof}
  Denote by $\psi\ :\ \K\tMonoid \to\K\tMonoid/\rad\K\tMonoid$
  the canonical algebra morphism. It follows from
  Proposition~\ref{proposition.basis.radical} that, for any $x$
  (idempotent or not), $\psi(x) = \psi(x^\omega)$ and that
  $\{\psi(e)\suchthat e\in \idempMon\}$ is a basis for the
  quotient. Finally, $\star$ coincides with the product in the
  quotient: for any $e,f\in \idempMon$,
  \begin{displaymath}
    \psi(e)\psi(f) = \psi(ef) = \psi((ef)^\omega) = \psi(e\star f)\,.\qedhere
  \end{displaymath}
\end{proof}

\begin{corollary}
  \label{corollary.rad.idemp}
  Let $\tMonoid$ be a $\JJ$-trivial monoid generated by
  idempotents. Then the radical $\rad\K\tMonoid$ of its monoid algebra is
  generated as an ideal by 
  \begin{equation}
    \{gh - hg \suchthat g,h\in \tMonoid\}\,.
  \end{equation}
\end{corollary}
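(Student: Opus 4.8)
The plan is to show the two inclusions of ideals separately, the nontrivial one being that $\rad\K\tMonoid$ is contained in the ideal $I := \langle gh-hg \mid g,h\in\tMonoid\rangle$ generated by the commutators. One inclusion is free: since the semi-simple quotient $\K\tMonoid/\rad\K\tMonoid \cong (\K\idempMon,\star)$ is commutative by Corollary~\ref{corollary.triangular-radical}, every commutator $gh-hg$ maps to $0$ there, hence lies in $\rad\K\tMonoid$; therefore $I \subseteq \rad\K\tMonoid$.

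For the reverse inclusion, by Proposition~\ref{proposition.basis.radical} it suffices to show $x - x^\omega \in I$ for every non-idempotent $x\in\tMonoid$. Since $\tMonoid$ is generated by idempotents, write $x = e_1 e_2 \cdots e_k$ with each $e_i\in\idempMon$, and induct on $k$. Working modulo $I$ we may freely permute the factors $e_i$, since $e_i e_{i+1} \equiv e_{i+1} e_i \pmod I$ (and more generally any rearrangement of a product is congruent mod $I$, as the commutator ideal lets us transpose adjacent factors). So modulo $I$ the element $x$ is congruent to any reordering of $e_1\cdots e_k$; in particular, grouping equal generators and using $e_i^2 = e_i$, we may assume all $e_i$ are distinct. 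The key point is then that $x \equiv e_1\star e_2\star\cdots\star e_k \pmod I$: indeed, the algebra map $\K\tMonoid \to \K\tMonoid/I$ is surjective onto a commutative algebra, and one checks by induction that the image of a product of idempotents equals the image of their $\star$-product, using that $(ef)^\omega$ is a polynomial-free limit $ef e f\cdots$ which mod $I$ collapses (each new factor $e$ or $f$ past the first two is absorbed once $ef$ has been made to "stabilize" — more carefully, $(ef)^\omega$ is some power $(ef)^N$, and $(ef)^N \equiv e^N f^N = ef \pmod I$ only after sufficient commuting, so one must verify $(ef)^N \equiv (ef)^\omega \pmod I$ directly from $\JJ$-triviality). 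Concretely: $e_1\star\cdots\star e_k$ is an idempotent $f\in\idempMon$, and $x = e_1\cdots e_k$ satisfies $x^\omega = f$ by Theorem~\ref{theorem.star} applied repeatedly (the $\star$-product of the $e_i$ equals $(e_1\cdots e_k)^\omega$). So it remains to show $x \equiv x^\omega \pmod I$, i.e.\ $x^m \equiv x^{m+1}\pmod I$ for the relevant powers; but $x^{m+1} = e_1\cdots e_k \cdot e_1 \cdots e_k \cdot(\text{stuff})$, and mod $I$ we can bring the two copies of $e_1$ together, then $e_2$, etc., collapsing $x^2 \equiv x \pmod I$, whence $x \equiv x^\omega\pmod I$, completing the induction.

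The main obstacle is making the "collapse mod $I$" step fully rigorous: one must be careful that permuting factors in $x^2 = e_1\cdots e_k e_1\cdots e_k$ to the form $e_1^2 e_2^2\cdots e_k^2 = e_1\cdots e_k$ is legitimate modulo $I$, which it is because $I$ is a two-sided ideal and adjacent transpositions $ab \equiv ba \pmod I$ generate all permutations of a word; combined with the idempotent relations this gives $x^2 \equiv x$, hence $x \equiv x^\omega \pmod I$ as $x^\omega$ is a power of $x$. Once this is in hand, the two inclusions together give $\rad\K\tMonoid = I$, as claimed. One should double-check the base case $k=1$ (where $x=e_1$ is itself idempotent, so there is nothing to prove) and that the reduction to distinct generators does not change $x^\omega$.
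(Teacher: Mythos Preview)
Your approach is the same as the paper's, and the core argument in your final paragraph is correct: writing $x = e_1\cdots e_k$ with each $e_i$ idempotent, one has modulo $I$ that $x = e_1^2\cdots e_k^2 \equiv e_1\cdots e_k\,e_1\cdots e_k = x^2$, whence $x \equiv x^\omega$ and so $x - x^\omega \in I$. Together with the easy inclusion $I \subseteq \rad\K\tMonoid$ (from commutativity of the semi-simple quotient), this finishes the proof.

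However, your middle paragraph is an unnecessary and somewhat muddled detour. You never actually use the induction on $k$, the reduction to distinct $e_i$ buys nothing, and the attempt to identify $x^\omega$ with $e_1\star\cdots\star e_k$ via Theorem~\ref{theorem.star}---while true---requires its own small argument and is irrelevant once you have $x\equiv x^2\pmod I$ directly. The paper dispenses with all of this: its entire proof is the single displayed congruence $x \equiv e_1^2\cdots e_n^2 \equiv e_1\cdots e_n e_1\cdots e_n \equiv x^2 \pmod{\mathcal C}$, which is exactly what you arrive at in your last paragraph. Strip out the $\star$-product discussion and the phantom induction, and your proof matches the paper's.
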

\begin{proof}
  \newcommand{\Com}{\mathcal{C}}
  Denote by $\Com$ the ideal generated by $\{gh - hg \suchthat g,h\in
  \tMonoid\}$. Since $\rad\K\tMonoid$ is the linear span of
  $(x-x^\omega)_{x\in\tMonoid}$, it is sufficient to show that for any
  $x\in\tMonoid$ one has $x\equiv x^2 \pmod \Com$. Now write $x=e_1\cdots e_n$
  where $e_i$ are all idempotent. Then,
  \begin{equation*}
    x \equiv e_1^2\cdots e_n^2 \equiv e_1\cdots e_ne_1\cdots e_n 
    \equiv x^2 \pmod \Com\,.\qedhere
  \end{equation*}
\end{proof}

\begin{example}[Representation theory of $H_0(W)$]
\label{example.zero.hecke}
Consider the $0$-Hecke monoid $H_0(W)$ of a finite Coxeter group $W$, with index set $I=\{1, 2, \ldots, n\}$. 
For any $J \subseteq I$, we can consider the parabolic submonoid $H_0(W_J)$ generated by
$\{\pi_i \mid i\in J\}$.  Each parabolic submonoid contains a unique longest element $\longest_J$.  
The collection $\{\longest_J \mid J\subseteq I\}$ is exactly the set of idempotents in $H_0(W)$.

For each $i\in I$, we can construct the \emph{evaluation maps} $\Phi_i^+$ and $\Phi_i^-$ defined 
on generators by:
\begin{eqnarray*}
\Phi_i^+ &:& \mathbb{C}H_0(W) \rightarrow \mathbb{C}H_0(W_{I\setminus \{i\}}) \\
\Phi_i^+(\pi_j) &=&     \begin{cases}
      1          & \text{if $i=j$,}\\
      \pi_j & \text{if $i \neq j$,}
    \end{cases}
\end{eqnarray*}
and
\begin{eqnarray*}
\Phi_i^- &:& \mathbb{C}H_0(W) \rightarrow \mathbb{C}H_0(W_{I\setminus \{i\}}) \\
\Phi_i^-(\pi_j) &=&     \begin{cases}
      0          & \text{if $i=j$,}\\
      \pi_j & \text{if $i \neq j$.}
    \end{cases}
\end{eqnarray*}
One can easily check that these maps extend to algebra morphisms from 
$H_0(W)\rightarrow H_0(W_{I\setminus \{i\}})$.  For any $J$, define $\Phi_J^+$ as the composition 
of the maps $\Phi_i^+$ for $i\in J$, and define $\Phi_J^-$ analogously (the map $\Phi_J^+$ is 
the \emph{parabolic map} studied by Billey, Fan, and Losonczy~\cite{Billey_Fan_Lsonczy.1999}).  
Then, the simple representations of $H_0(W)$ are given by the maps 
$\lambda_J = \Phi_J^+ \circ \Phi_{\hat{J}}^-$, where $\hat{J}=I\setminus J$.  
This is clearly a one-dimensional representation.
\end{example}

\subsection{Orthogonal idempotents}
\label{ss.orthogonal.idempotents}

We describe here a decomposition of the identity of the semi-simple
quotient into minimal orthogonal idempotents.  We include a proof for
the sake of completeness, though the result is classical. It appears
for example in a combinatorial context in~\cite[Section
3.9]{Stanley.1999.EnumerativeCombinatorics1} and in the context of
semi-groups in~\cite{Solomon.1967,Steinberg.2006.Moebius}.

For $e\in \idempMon$, define
\begin{equation} \label{equation.g}
  g_e:=\sum_{e'\leq_\JJ e} \mu_{e',e} e'\,, 
\end{equation}
where $\mu$ is the M\"obius function of $\leq_\JJ$, so that 
\begin{equation} \label{equation.e}
  e=\sum_{e'\leq_\JJ e} g_{e'}\,.
\end{equation}
\begin{proposition}
  The family $\{g_e\suchthat e\in \idempMon\}$ is the unique maximal
  decomposition of the identity into orthogonal idempotents for $\star$ that
  is in $\K\tMonoid/\rad\K\tMonoid$.
\end{proposition}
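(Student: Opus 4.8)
The plan is to work entirely inside the commutative algebra $A := \K\tMonoid/\rad\K\tMonoid$, which by Corollary~\ref{corollary.triangular-radical} is identified with $(\K\idempMon, \star)$, a monoid algebra of a finite meet-semilattice (Theorem~\ref{theorem.star}). The statement is then the classical fact that the monoid algebra of a finite lattice decomposes via Möbius inversion into orthogonal idempotents indexed by the elements. First I would verify that each $g_e$ is idempotent and that distinct $g_e$ are orthogonal. The cleanest way is to compute the products $e \star f$ in terms of the $g$'s: since $e \star f = e \wedge_\JJ f$ and $e = \sum_{e' \leq_\JJ e} g_{e'}$, $f = \sum_{f' \leq_\JJ f} g_{f'}$, one gets, after substituting and using that $e \wedge_\JJ f = \sum_{h \leq_\JJ e \wedge_\JJ f} g_h = \sum_{h \leq_\JJ e,\ h \leq_\JJ f} g_h$, a linear system relating $\{g_{e'} \star g_{f'}\}$ to $\{g_h\}$; Möbius inversion (the defining property of $\mu$) forces $g_e \star g_f = \delta_{e,f}\, g_e$. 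Equivalently, and perhaps more transparently, I would invoke the fact recorded in equation~\eqref{equation.e} that the change of basis between $(e)_{e\in\idempMon}$ and $(g_e)_{e\in\idempMon}$ is unitriangular with respect to $\leq_\JJ$, hence invertible, so $(g_e)$ is a basis of $A$; then orthogonality is checked on this basis directly by the substitution above.

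Next I would check that $\sum_{e\in\idempMon} g_e = 1$: this is immediate from~\eqref{equation.e} applied to $e = 1$, the top element of the lattice $(\idempMon, \leq_\JJ)$ (Theorem~\ref{theorem.star}), since every $e' \in \idempMon$ satisfies $e' \leq_\JJ 1$. So $\{g_e\}$ is a decomposition of $1$ into orthogonal idempotents. For \emph{maximality} (equivalently, that each $g_e$ is a primitive idempotent of the commutative algebra $A$, so the decomposition cannot be refined), I would argue as follows: $A$ is commutative of dimension $|\idempMon|$, and the $g_e$ form a basis of orthogonal idempotents summing to $1$, so $A \cong \prod_{e} \K g_e \cong \K^{|\idempMon|}$ as algebras; a decomposition of $1$ into orthogonal idempotents in $\K^N$ has at most $N$ nonzero terms, with equality exactly when each idempotent is a primitive one $(0,\dots,1,\dots,0)$. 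Since our family already has $|\idempMon| = \dim A$ nonzero terms, it is maximal, and any maximal decomposition has exactly this size.

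Finally, \emph{uniqueness}: in a commutative ring, the primitive idempotents are uniquely determined (an idempotent of $\K^N$ is a $0/1$ vector, and the primitive ones are exactly the standard basis vectors), so the maximal decomposition of $1$ into orthogonal idempotents of $A$ is unique — it must consist of all the primitive idempotents, which are precisely the $g_e$. I expect the only mildly delicate point to be phrasing the maximality/uniqueness cleanly: the substance is the standard structure theory of $\K^N$, but one should be careful to note that "orthogonal idempotents" here are not required a priori to be a basis, so the argument genuinely needs the dimension count $|\idempMon| = \dim_\K A$ coming from Corollary~\ref{corollary.triangular-radical} together with the fact that $\sum g_e = 1$ with all $g_e \neq 0$. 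No step requires any hypothesis on $\K$, consistent with the paper's field-independence remarks.
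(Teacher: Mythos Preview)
Your proposal is correct and takes essentially the same approach as the paper: orthogonality via the M\"obius/lattice structure of $(\idempMon,\leq_\JJ)$, the identity decomposition from~\eqref{equation.e} applied at the top element $1$, and uniqueness/maximality from the fact that the semi-simple quotient has all simple modules one-dimensional (equivalently, your identification $A\cong\K^{|\idempMon|}$). The only organizational difference is that the paper packages the orthogonality computation by \emph{defining} an auxiliary product $\bullet$ on $\K\idempMon$ via $g_u\bullet g_v:=\delta_{u,v}g_u$ and then checking on the basis $(e)_{e\in\idempMon}$ that $u\bullet v=\sum_{w'\leq_\JJ u\wedge_\JJ v}g_{w'}=u\wedge_\JJ v=u\star v$, so that $\bullet=\star$; this sidesteps having to invert the double M\"obius sum explicitly, but is otherwise exactly your second (``more transparent'') variant.
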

\begin{proof}
  First note that $1_\tMonoid = \sum_e g_e$ by~\eqref{equation.e}.

  Consider now the new product $\bullet$ on $\K\idempMon
  = \K\{g_e \suchthat e\in \idempMon\}$ defined by
  $g_u\bullet g_v = \delta_{u,v} g_u$.
  Then,
  \begin{displaymath}
    u \bullet v = \sum_{u'\leq_\JJ u} g_{u'} \bullet \sum_{v'\leq_\JJ v}
    g_{v'} = \sum_{w'\leq u \wedge_\JJ v} g_{w'} =
    u \wedge_\JJ v = u \star v\,.
  \end{displaymath}
  Hence the product $\bullet$ coincides with $\star$.
  
  Uniqueness follows from semi-simplicity and the fact that all simple modules
  are one-dimensional.
\end{proof}

\subsection{Lifting the idempotents}
\label{ss.lifting}

In the following we will need a decomposition of the identity in the algebra
of the monoid with some particular properties. The goal of this section is to
construct such a decomposition. The idempotent lifting is a well-known
technique (see~\cite[Chapter 7.7]{curtis_reiner.1962}), however we prove the result from scratch in order to obtain a lifting with particular properties. Moreover, the proof
provided here is very constructive.
\begin{theorem}
\label{theorem.idempotents.lifting}
  Let $\tMonoid$ be a $\JJ$-trivial monoid. There exists a family $(f_e)_{e\in
    \idempMon}$ of elements of $\K\tMonoid$ such that
  \begin{itemize}
  \item $(f_e)$ is a decomposition of the identity of $\K\tMonoid$ into
    orthogonal idempotents:
    \begin{equation}
      1 = \sum_{e\in \idempMon} f_e
      \qquad\text{with}\qquad f_ef_{e'} = \delta_{e,e'} f_e\,.
    \end{equation}
  \item $(f_e)$ is compatible with the semi-simple quotient:
    \begin{equation}
      \phi(f_e) = g_e \quad \text{with $\phi$ as in Corollary~\ref{corollary.triangular-radical}.}
    \end{equation}
  \item $(f_e)$ is uni-triangular with respect to the $\JJ$-order of $\tMonoid$:
    \begin{equation}
      f_e = e + \sum_{x<_\JJ e} c_{x,e} x
    \end{equation}
    for some scalars $c_{x,e}$.
  \end{itemize}
\end{theorem}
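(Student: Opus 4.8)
The plan is to build the family $(f_e)$ by induction on the $\JJ$-order, processing idempotents from the bottom up, and at each stage correcting a partial decomposition so that a new orthogonal idempotent splits off. First I would fix a linear extension $e_1, e_2, \dots, e_N$ of $\JJ$-order restricted to $\idempMon$, and look for the $f_{e_i}$ one at a time. The key invariant I want to maintain is that after step $k$ we have an orthogonal family $f_{e_1}, \dots, f_{e_k}$ of idempotents, each uni-triangular of the form $f_{e_i} = e_i + \sum_{x <_\JJ e_i} c_{x,e_i} x$, each satisfying $\phi(f_{e_i}) = g_{e_i}$, and such that $1 - \sum_{i\le k} f_{e_i}$ is an idempotent that is orthogonal (on both sides) to all the $f_{e_i}$ already constructed. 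The base case $k=0$ is $1$ itself, which is idempotent.

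For the inductive step, suppose $h := 1 - \sum_{i<k} f_{e_i}$ is the current ``remainder'' idempotent. The naive guess for $f_{e_k}$ is $h e_k h$, which one should first check is not yet idempotent in general; the standard idempotent-lifting trick is to replace a near-idempotent $a$ by a polynomial in $a$ (e.g. $3a^2 - 2a^3$, or iterate $a \mapsto a + a - a^2$ composed with itself enough times) to obtain a genuine idempotent $f_{e_k}$ lying in the subalgebra generated by $a$. Here the crucial point making everything terminate and stay triangular is the nilpotence coming from $\JJ$-triviality: modulo the span of elements strictly below $e_k$ in $\JJ$-order, $h e_k h$ is congruent to $e_k$ (using Lemma 3.9: $h$ has $\phi(h) = 1 - \sum_{i<k} g_{e_i}$, and $e_k$ is not $\le_\JJ$ any $e_i$ with $i<k$, so $\phi(h)\phi(e_k)\phi(h) = g_{e_k}$), hence $h e_k h - e_k$ and $(h e_k h)^2 - h e_k h$ both lie in the nilpotent ideal spanned by $\{x - x^\omega\}$ intersected with the span of elements $\le_\JJ$ something $<_\JJ e_k$. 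So the idempotent-producing polynomial stabilizes after finitely many steps and the resulting $f_{e_k}$ is automatically uni-triangular with leading term $e_k$, and satisfies $\phi(f_{e_k}) = g_{e_k}$ since $\phi$ is an algebra morphism and $g_{e_k}$ is already idempotent in the quotient.

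Orthogonality is the part to be careful about. By construction $f_{e_k}$ is a polynomial in $h e_k h$ with zero constant term, so $f_{e_k} = h e_k h \cdot (\text{stuff}) = h \cdot (\cdots) \cdot h$; since $h f_{e_i} = f_{e_i} h = 0$ for $i<k$ by the inductive orthogonality, we get $f_{e_k} f_{e_i} = f_{e_i} f_{e_k} = 0$ for all $i < k$. Then set $f_{e_k}' := h f_{e_k} h$... — actually since $f_{e_k}$ already absorbs $h$ on both sides we may take $f_{e_k}$ as is, and one checks $h' := h - f_{e_k} = 1 - \sum_{i\le k} f_{e_i}$ is again idempotent (because $f_{e_k} \le_{\text{subalgebra}} h$ and $f_{e_k}$ is idempotent: $h'^2 = h^2 - h f_{e_k} - f_{e_k} h + f_{e_k}^2 = h - f_{e_k} - f_{e_k} + f_{e_k} = h'$, using $h f_{e_k} = f_{e_k} h = f_{e_k}$) and orthogonal to all $f_{e_i}$, $i \le k$. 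After $N$ steps, $1 - \sum_e f_e$ is an idempotent with $\phi$-image $1 - \sum_e g_e = 0$, hence nilpotent, hence zero; this gives $1 = \sum_e f_e$ and completes the induction.

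The main obstacle I anticipate is packaging the "correct a near-idempotent to a true idempotent while preserving uni-triangularity" step cleanly: one must verify that the relevant quantity $h e_k h - (h e_k h)^2$ lies in a nilpotent ideal contained in $\operatorname{span}\{x : x <_\JJ e_k\}$, so that the lifting polynomial terminates and does not disturb the leading term $e_k$ nor the already-constructed lower $f_{e_i}$. Everything else — the algebra-morphism computations with $\phi$, and the bookkeeping of orthogonality — is routine given Lemmas 3.9–3.10 and Corollary 3.7.
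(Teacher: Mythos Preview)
Your approach is essentially the same as the paper's: sandwich by the remainder idempotent $h = 1 - \sum_{j<k} f_{e_j}$, apply an idempotent-lifting polynomial with zero constant term, and then check orthogonality, compatibility with $\phi$, uni-triangularity, and that the final remainder is both idempotent and in the radical hence zero. The only notable difference is that the paper sandwiches $g_{e_k}$ (the M\"obius-inverted element) rather than $e_k$ itself, which lets the enumeration of $\idempMon$ be arbitrary instead of a linear extension of $\le_\JJ$; your use of $e_k$ works precisely because you process idempotents from the bottom up, so that every monoid element appearing in $h e_k h$ other than $e_k$ is forced strictly below $e_k$.
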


This theorem will follow directly from Proposition~\ref{proposition.unitriangular} below.
In the proof, we will use the following proposition:
\begin{proposition}\label{proposition.idempotents.lifting}
  Let $A$ be a finite-dimensional $\K$-algebra and $\phi$ the canonical algebra
  morphism from $A$ to $A/\rad A$. Let $x\in A$ be such that $e=\phi(x)$ is
  idempotent. Then, there exists a polynomial $P\in x\ZZ[x]$ (i.e. without
  constant term) such that $y=P(x)$ is idempotent and $\phi(y) = e$. Moreover,
  one can choose $P$ so that it only depends on the dimension of $A$ (and not
  on $x$ or $A$).
\end{proposition}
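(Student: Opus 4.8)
The plan is to iterate a single idempotent-improving step. Suppose $x \in A$ satisfies $\phi(x^2 - x) \in \rad A$, i.e. $e := \phi(x)$ is idempotent. The basic observation is that $x^2 - x$ commutes with $x$, so that the subalgebra $\K[x] \subseteq A$ is commutative, and its image in $A/\rad A$ is spanned by the idempotent $e$ (since $\phi(x) = e$ and $\phi(x)$ generates the image). Hence $x^2 - x$ lies in the nilpotent ideal $\rad A \cap \K[x]$ of $\K[x]$, so there is an integer $N$ with $(x^2 - x)^N = 0$; since $\rad A$ is nilpotent and $\dim \rad A < \dim A =: d$, we may take $N = d$. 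The improving step is the classical one: set $x' := 3x^2 - 2x^3$ (equivalently $x' = x^2(3 - 2x)$, which has no constant term as a polynomial in $x$). A direct computation gives $x'^2 - x' = -(x^2 - x)^2(3 - 4x)(\ldots)$ — more precisely one checks the identity $x'{}^2 - x' = (x^2-x)^2 \, q(x)$ for an explicit polynomial $q$ — so the "defect" $x^2 - x$ gets squared at each step. Moreover $\phi(x') = 3e^2 - 2e^3 = 3e - 2e = e$, so the image in the semisimple quotient is preserved.

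First I would make the commutativity and nilpotency remarks precise: $\K[x]$ is commutative, $\phi(x^2-x)=e^2-e=0$ so $x^2-x \in \rad A$, and $(\rad A)^d = 0$ since $\rad A$ is a nilpotent ideal of strictly smaller dimension than $A$. Next I would verify the two key properties of the substitution $x \mapsto \sigma(x) := 3x^2 - 2x^3$: that $\sigma$ has no constant term (clear), that $\phi(\sigma(x)) = e$ whenever $\phi(x) = e$ is idempotent (immediate since $\sigma(t) \equiv t$ modulo $(t^2-t)$, concretely $\sigma(t) - t = -(2t+1)(t^2-t)$ up to sign — I would just record the factorization $\sigma(t) - t = (t^2-t)\,r(t)$), and the crucial quadratic convergence $\sigma(x)^2 - \sigma(x) = (x^2-x)^2 \, q(x)$ for some $q \in \ZZ[t]$; this last is a polynomial identity in one variable that one checks once and for all in $\ZZ[t]$. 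Then I would set $P_0(t) = t$ and $P_{k+1}(t) = \sigma(P_k(t))$, so that $P_k \in t\ZZ[t]$ and, writing $x_k = P_k(x)$, one has $\phi(x_k) = e$ and $x_k^2 - x_k = (x^2-x)^{2^k} \, Q_k(x)$ for some $Q_k \in \ZZ[t]$. Choosing $k$ with $2^k \ge d$ forces $x_k^2 - x_k = 0$, so $y := x_k = P_k(x)$ is the desired idempotent, and $P := P_k$ depends only on $d = \dim A$ (via the choice $2^k \ge d$) and not on $x$ or $A$. Taking $k = \lceil \log_2 d \rceil$ works.

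The main obstacle — really the only point requiring genuine care — is establishing the quadratic-convergence identity $\sigma(x)^2 - \sigma(x) = (x^2 - x)^2 q(x)$ and checking that it composes correctly so that after $k$ iterations the defect is a multiple of $(x^2-x)^{2^k}$; this is a routine but slightly delicate polynomial manipulation in $\ZZ[t]$, and one must be careful that $q(x)$ and the iterated $Q_k(x)$ genuinely lie in $\ZZ[t]$ (so the construction is field-independent, as the paper emphasizes). Everything else — commutativity of $\K[x]$, nilpotency of $\rad A$, preservation of $\phi$, and the counting bound $N = d$ resp. $2^k \ge d$ — is immediate. I would also remark that the Newton-type iteration $x \mapsto 3x^2 - 2x^3$ is exactly Hensel lifting for the equation $t^2 = t$, which is why it converges quadratically; stating it this way makes the uniform bound on $P$ transparent.
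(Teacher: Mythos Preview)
Your proof is correct and follows essentially the same strategy as the paper: iterate a polynomial map in $t\ZZ[t]$ that squares the defect $x^2-x$, then take $\lceil \log_2(\dim A)\rceil$ iterations so the defect vanishes. The only difference is cosmetic: you use the classical Newton/Hensel step $\sigma(t)=3t^2-2t^3$, for which $\sigma(t)^2-\sigma(t)=(t^2-t)^2(4t^2-4t-3)$, whereas the paper uses $\tau(t)=1-(1-t^2)^2=2t^2-t^4$, for which $\tau(t)^2-\tau(t)=t^2(t-1)^2(t+1)^2(t^2-2)$; both give quadratic convergence, preserve the image $e$ under $\phi$, and have integer coefficients with no constant term, so the argument and the final bound $k=\lceil \log_2(\dim A)\rceil$ are identical.
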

Let us start with two lemmas, where we keep the same assumptions as in 
Proposition~\ref{proposition.idempotents.lifting}, namely $x\in A$ such that $\phi(x)=e$
is an idempotent:
\begin{lemma}\label{lemma.x.nilpotent}
  $x(x-1)$ is nilpotent: $(x(x-1))^u = 0$ for some $u$.
\end{lemma}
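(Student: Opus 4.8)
The plan is to observe that $x(x-1)$ lies in the radical and then invoke nilpotency of the radical of a finite-dimensional algebra. First I would note that, since $\phi$ is an algebra morphism and $1\in A$ maps to the identity of $A/\rad A$, we have
\begin{equation*}
  \phi\bigl(x(x-1)\bigr) = \phi(x)\bigl(\phi(x)-1\bigr) = e(e-1) = e^2 - e = 0\,,
\end{equation*}
using that $e$ is idempotent. Hence $x(x-1) \in \ker\phi = \rad A$.

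Next I would recall the standard fact that the radical of a finite-dimensional $\K$-algebra is a nilpotent ideal: there is an integer $N$ (which one may take to be at most $\dim_\K A$) with $(\rad A)^N = 0$. Since $x(x-1)\in \rad A$, it follows that $\bigl(x(x-1)\bigr)^N \in (\rad A)^N = 0$, so the lemma holds with $u = N$, and in particular with a bound on $u$ depending only on $\dim A$. This last point is what will be used later when building the polynomial $P$ in Proposition~\ref{proposition.idempotents.lifting}, so it is worth recording the uniform bound rather than just existence of $u$.

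There is essentially no obstacle here; the only thing to be careful about is citing (or noting) the nilpotency of $\rad A$ for finite-dimensional algebras and keeping track that the nilpotency index is bounded by $\dim_\K A$, so that the resulting $u$ does not depend on $x$ or on the particular algebra beyond its dimension.
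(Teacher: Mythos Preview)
Your proof is correct and follows exactly the paper's approach: compute $\phi(x(x-1))=e(e-1)=0$ to place $x(x-1)$ in $\rad A$, then invoke nilpotency of the radical of a finite-dimensional algebra. Your explicit remark that $u$ can be bounded by $\dim_\K A$ is a useful addition that the paper only makes later, in the proof of Proposition~\ref{proposition.idempotents.lifting}.
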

\begin{proof}
  $e=\phi(x)$ is idempotent so that $e(e-1) = 0$. Hence $x(x-1)\in\rad A$ and
  is therefore nilpotent.
\end{proof}
For any number $a$ denote by $\lceil a\rceil$ the smallest integer larger than
$a$.
\begin{lemma}
  Suppose that $(x(x-1))^u=0$ and define $y := 1 - (1-x^2)^2 = 2x^2-x^4$. Then
  $(y(y-1))^v=0$ with $v=\lceil\frac{u}{2}\rceil$.
\end{lemma}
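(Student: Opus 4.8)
The plan is to compute $y(y-1)$ explicitly as a polynomial in $x$, exhibit it as $\bigl(x(x-1)\bigr)^2$ times another polynomial, and then raise it to the power $v$; the asserted vanishing is then forced by the hypothesis $(x(x-1))^u=0$ together with the inequality $2v\ge u$. (This lemma is the "halving step" that will be iterated to prove Proposition~\ref{proposition.idempotents.lifting}, but for the lemma itself only the nilpotency claim is needed.)

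First I would put $y$ and $y-1$ in factored form. Expanding, $y = 1-(1-x^2)^2 = 2x^2-x^4 = x^2(2-x^2)$, while $y-1 = 2x^2-x^4-1 = -(1-x^2)^2$; the only point requiring a moment's care is the arithmetic identity $2x^2-x^4-1 = -(1-2x^2+x^4) = -(1-x^2)^2$. Hence
\begin{equation*}
  y(y-1) = -\,x^2(2-x^2)(1-x^2)^2\,.
\end{equation*}
Next, using $1-x^2 = (1-x)(1+x)$, I would regroup the factor $x^2(1-x^2)^2 = x^2(1-x)^2(1+x)^2 = \bigl(x(1-x)\bigr)^2(1+x)^2 = \bigl(x(x-1)\bigr)^2(1+x)^2$, which yields
\begin{equation*}
  y(y-1) = -\,\bigl(x(x-1)\bigr)^2\,(1+x)^2\,(2-x^2)\,.
\end{equation*}

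Since every factor on the right is a polynomial in $x$, they all commute with one another inside $A$, so taking the $v$-th power gives
\begin{equation*}
  \bigl(y(y-1)\bigr)^v = (-1)^v\,\bigl(x(x-1)\bigr)^{2v}\,(1+x)^{2v}\,(2-x^2)^v\,.
\end{equation*}
Now $v = \lceil u/2\rceil$ satisfies $2v\ge u$, so $\bigl(x(x-1)\bigr)^{2v} = \bigl(x(x-1)\bigr)^u\,\bigl(x(x-1)\bigr)^{2v-u} = 0$ by hypothesis, whence $\bigl(y(y-1)\bigr)^v = 0$, as claimed.

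I do not expect any serious obstacle here: the argument is just a factorization of commuting polynomials in $x$ followed by bookkeeping with the nilpotency index. The mildest subtlety is recording the doubling $u\mapsto\lceil u/2\rceil$ correctly, i.e. checking $2\lceil u/2\rceil\ge u$ for every positive integer $u$, which is immediate from $\lceil a\rceil \ge a$.
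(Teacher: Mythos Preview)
Your proof is correct and follows essentially the same approach as the paper: factor $y(y-1)$ as $x^2(x-1)^2(x+1)^2(x^2-2)$ (your expression $-\bigl(x(x-1)\bigr)^2(1+x)^2(2-x^2)$ is the same thing), then use $2v\ge u$ to conclude that $(y(y-1))^v$ is a multiple of $(x(x-1))^u=0$. You have simply written out in detail what the paper condenses into one line.
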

\begin{proof}
  It suffices to expand and factor
  $y(y-1) = x^2 (x - 1)^2 (x + 1)^2 (x^2 - 2)$.
  Therefore $(y(y-1))^v$ is divisible by $(x(x-1))^u$ and must vanish.
\end{proof}
\begin{proof}[Proof of Proposition~\ref{proposition.idempotents.lifting}]
  Define $y_0:=x$ and $y_{n+1}:= 1-(1-y_n^2)^2$. Then by Lemma~\ref{lemma.x.nilpotent}
  there is a $u_0$ such that $(y_0(y_0-1))^{u_0} = 0$. Define
  $u_{n+1}=\lceil\frac{u_n}{2}\rceil$. Clearly there is an $N$ such that $u_N
  = 1$. Then let $y=y_N$. Clearly $y$ is a polynomial in $x$ and $y(y-1) = 0$
  so that $y$ is idempotent. Finally if $\phi(y_n) = e$ then
  \begin{equation}
    \phi(y_{n+1}) = \phi(1-(1-y_n^2)^2) = 1 -(1-e^2)^2 = 1-(1-e^2) = e\,,
  \end{equation}
  so that $\phi(y) = e$ by induction.

  Note that the nilpotency order $u_0$ is smaller than the dimension of the
  algebra. Hence the choice $N=\lceil\log_2(\dim(A))\rceil$ is correct for
  all $x\in A$.
\end{proof}

In practical implementations, the given bound is much too large. A better method
is to test during the iteration of $y_{n+1}:= 1-(1-y_n^2)^2$ whether $y_n^2=y_n$ and
to stop if it holds.

For a given $\JJ$-trivial monoid, we choose $P$ according to the size of the
monoid and therefore, for a given $x$, denote by $P(x)$ the corresponding
idempotent.

Recall that in the semi-simple quotient, Equation~\eqref{equation.g} defines a
maximal decomposition of the identity $1 = \sum_{e\in\idempMon} g_e$ using the
M\"obius function. Furthermore, $g_e$ is uni-triangular and moreover by Lemma
\ref{lemma.j.idemp} $g_e = eg_e=g_ee$.
\medskip

Now pick an enumeration (that is a total ordering) of the set of
idempotents:
\begin{equation}
  \idempMon = \{e_1, e_2, \dots, e_k\} \qquad \text{and} \qquad g_i := g_{e_i}\,.
\end{equation}
Then define recursively
\begin{gather}
  f_1 := P(g_1),\quad  f_2 := P\left((1-f_1)g_2(1-f_1)\right),\quad \dots \\
  \text{and for $i>1$,}\quad 
  f_i := P\left((1-\sum_{j<i} f_j)g_i(1-\sum_{j<i} f_j)\right).
\end{gather}

We are now in position to prove Theorem~\ref{theorem.idempotents.lifting}:
\begin{proposition}
\label{proposition.unitriangular}
  The $f_i$ defined above form a uni-triangular decomposition of the identity
  compatible with the semi-simple quotient.
\end{proposition}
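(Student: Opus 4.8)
The plan is to verify the three claimed properties of the $f_i$ by induction on $i$, using at each step the lifting from Proposition~\ref{proposition.idempotents.lifting} together with the properties $g_e = e g_e = g_e e$ and the uni-triangularity of the $g_e$ recalled just above. The key auxiliary claim to carry through the induction is that $\phi(f_i) = g_i$ and that $f_i$ is uni-triangular with leading term $e_i$ (with respect to $\JJ$-order), since everything else follows from these together with orthogonality.

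First I would set $h_i := (1-\sum_{j<i} f_j)\, g_i\, (1-\sum_{j<i} f_j)$, so that $f_i = P(h_i)$. To apply Proposition~\ref{proposition.idempotents.lifting} I must check that $\phi(h_i)$ is idempotent. Assuming inductively that $\phi(f_j) = g_j$ for $j<i$, and using that $(g_j)_{e\in\idempMon}$ is a family of orthogonal idempotents for $\star$ with $g_j \star g_i = \delta_{i,j} g_i$ (this is the Proposition in Section~\ref{ss.orthogonal.idempotents}), I get $\phi(h_i) = (1-\sum_{j<i} g_j)\, g_i\, (1-\sum_{j<i} g_j) = g_i$, which is indeed idempotent. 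Hence $f_i = P(h_i)$ is a genuine idempotent of $\K\tMonoid$ and $\phi(f_i) = \phi(h_i) = g_i$, giving the second bullet of Theorem~\ref{theorem.idempotents.lifting}. For orthogonality: since $P$ has no constant term, $f_i = P(h_i) \in h_i \K\tMonoid$ and also $f_i \in \K\tMonoid\, h_i$; but $h_i$ is left- and right-divisible by $1-\sum_{j<i}f_j$, which is a two-sided annihilator-up-to-the-$f_j$ in the sense that $f_j(1-\sum_{k<i}f_k) = 0 = (1-\sum_{k<i}f_k)f_j$ for $j<i$ (using orthogonality of the $f_j$ among themselves, already known by induction). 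Therefore $f_j f_i = 0 = f_i f_j$ for all $j<i$. Together with $f_i^2 = f_i$ this gives $f_i f_j = \delta_{i,j} f_i$ for all $i,j$; and then $\phi(\sum_i f_i) = \sum_i g_i = 1$ forces $\sum_i f_i = 1$ because $\phi$ is injective on the span of a set of orthogonal idempotents summing appropriately — more carefully, $1 - \sum_i f_i$ is an idempotent (orthogonal to every $f_i$) lying in $\rad\K\tMonoid$, hence nilpotent, hence zero. This yields the first bullet.

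It remains to check uni-triangularity (the third bullet). Here I would argue that each $f_i = e_i + \sum_{x <_\JJ e_i} c_{x,e_i} x$. The point is that $g_i = e_i + (\text{lower terms in }\JJ\text{-order})$ by the recalled property of the M\"obius construction, that $1 - \sum_{j<i} f_j = 1 + (\text{lower-order terms})$ — wait, that is not quite uni-triangular in the needed sense, so instead I track the $\JJ$-order of $h_i$ directly: every term of $\sum_{j<i} f_j$ is $\leq_\JJ e_j$ for some $j<i$, and multiplying $g_i$ on either side by such elements produces, by the defining property $xy \leq_\JJ x$ and $xy\leq_\JJ y$ of $\JJ$-triviality, only elements $\leq_\JJ e_i$, and in fact $<_\JJ e_i$ unless the factor is $1$ (one checks the cross terms cannot reproduce $e_i$ since that would force $e_i \leq_\JJ e_j$ with $j<i$ and $e_j \neq e_i$, contradicting that $e_i$ appears with its natural coefficient). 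Hence $h_i = e_i + \sum_{x<_\JJ e_i} (\cdots) x$, and since $P$ has no constant term and $P(h_i)$ is built from powers of $h_i$, which are themselves uni-triangular with leading term $e_i^{\,k} = e_i$, the result $f_i = e_i + \sum_{x<_\JJ e_i}(\cdots)x$ follows; one verifies the leading coefficient stays $1$ because $P(t) \equiv t \pmod{t^2}$ is not automatic, but $P(e_i) = e_i$ in the idempotent quotient forces the coefficient of $e_i$ in $P(h_i)$ to be $1$ after reducing modulo strictly lower terms.

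The main obstacle I anticipate is the bookkeeping in the uni-triangularity step: one must be careful that the cross-terms arising from expanding $(1-\sum_{j<i}f_j)g_i(1-\sum_{j<i}f_j)$ and then applying the polynomial $P$ never reintroduce $e_i$ or any element not $\leq_\JJ e_i$ with a wrong coefficient. This is a purely combinatorial check using only that $\leq_\JJ$ is compatible with multiplication ($ab \leq_\JJ a, ab\leq_\JJ b$) and that the $g_j$ and the partial sums $1-\sum_{j<i}f_j$ are already uni-triangular, so while slightly tedious it presents no real difficulty; the algebraic content (idempotency, orthogonality, compatibility with $\phi$) is entirely handled by Proposition~\ref{proposition.idempotents.lifting} and the orthogonality of the $g_e$ in the semi-simple quotient.
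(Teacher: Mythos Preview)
Your arguments for orthogonality, compatibility with $\phi$, and the decomposition of the identity are essentially identical to the paper's and are correct.

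For uni-triangularity, however, your direct combinatorial argument has a real gap. You claim the cross terms in $h_i = (1-\sum_{j<i}f_j)\,g_i\,(1-\sum_{j<i}f_j)$ cannot reproduce $e_i$, on the grounds that this would force $e_i \leq_\JJ e_j$ for some $j<i$ with $e_j\neq e_i$. But the enumeration $e_1,\dots,e_k$ of the idempotents fixed before the proposition is explicitly \emph{arbitrary}, not a linear extension of $\JJ$-order; so $e_i <_\JJ e_j$ with $j<i$ is entirely possible, and in that case $e_j e_i = e_i$ does appear as a cross term. Your parenthetical justification is therefore invalid, and the assertion ``$h_i = e_i + \sum_{x<_\JJ e_i}(\cdots)x$'' is not established by what precedes it.

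The paper sidesteps this bookkeeping entirely. Since $P$ has no constant term, one writes $f_i = A\,g_i\,B$ for some $A,B \in \K\tMonoid$, and then uses $g_i = e_i g_i$ to get $f_i = A\,e_i\,g_i\,B$; every monoid element in the support of $f_i$ is then automatically $\leq_\JJ e_i$ by the definition of $\JJ$-order, with no need to analyse $h_i$ or its powers. For the leading coefficient, the paper uses exactly the $\phi$-based argument you reach at the very end: since $\phi(f_i)=g_i$ has coefficient $1$ on $e_i$, and since $\phi(x)=x^\omega <_\JJ e_i$ for every $x<_\JJ e_i$ in the support of $f_i$, the coefficient of $e_i$ in $f_i$ is forced to be $1$. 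So your fallback is the right idea and does complete the proof; the intermediate attempt to control the coefficient of $e_i$ in $h_i$ combinatorially is both unnecessary and, as written, incorrect.
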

\begin{proof}
  First it is clear that the $f_i$ are pairwise orthogonal idempotents.
  Indeed, since $P$ has no constant term one can write $f_i$ as
  \begin{equation}
    f_i = (1-\sum_{j<i} f_j)U\,.
  \end{equation}
  Now, assuming that the $(f_j)_{j<i}$ are orthogonal, the product $f_k f_i$ with $k<i$
  must vanish since $f_k(1-\sum_{j<i} f_j) = f_k - f_k = 0$. Therefore one obtains
  by induction that for all $j< i$, $f_j f_i = 0$. The same reasoning shows
  that $f_i f_j = 0$ with $j<i$.

  Next, assuming that $\phi(f_j) = g_j$ holds for all $j<i$, one has
  \begin{equation}
    \phi\left((1-\sum_{j<i} f_j)g_i(1-\sum_{j<i} f_j)\right) =
    (1-\sum_{j<i} g_j)g_i(1-\sum_{j<i} g_j) = g_i\,.
  \end{equation}
  As a consequence $\phi(f_i) = \phi(P(g_i)) = P(\phi(g_i)) = g_i$. So that again
  by induction $\phi(f_i) = g_i$ holds for all $i$. Now $\phi(\sum_i f_i) = \sum_i g_i
  = 1$. As a consequence $1 - \sum_i f_i$ lies in the radical and must
  therefore be nilpotent. But, by orthogonality of the $f_i$ it must be
  idempotent as well:
  \begin{multline}
    (1 - \sum_i f_i)^2 = 1 - 2\sum_i f_i + (\sum_i f_i)^2
    = 1 - 2\sum_i f_i + \sum_i f_i^2 =\\
    1 - 2\sum_i f_i + \sum_i f_i = 1 - \sum_i f_i\,.
  \end{multline}
  The only possibility is that $1 - \sum_i f_i = 0$.

  It remains to show triangularity. Since the polynomial $P$ has no
  constant term $f_i$ is of the form $f_i = Ag_iB$ for $A,B\in
  \K\tMonoid$. One can therefore write $f_i = Ae_ig_iB$. By definition of the
  $\JJ$-order, any element of the monoid appearing with a nonzero coefficient
  in $f_i$ must be smaller than or equal to $e_i$. Finally, using $\phi$ one shows that the
  coefficient of $e_i$ in $f_i$ must be $1$ because the coefficient of $e_i$
  in $g_i$ is $1$ and that if $x <_\JJ e_i$ then $\phi(x) = x^{\omega}<_\JJ e_i$.
\end{proof}

\subsection{The Cartan matrix and indecomposable projective modules}
\label{ss.cartan}

In this subsection, we give a combinatorial description of the Cartan
invariants of a $\JJ$-trivial monoid as well as its left and right
indecomposable projective modules. The main ingredient is the notion of
$\operatorname{lfix}$ and $\operatorname{rfix}$ which generalize left and
right descent classes in $H_0(W)$.

\begin{proposition}
  \label{proposition.aut}
  For any $x\in \tMonoid$, the set 
  \begin{equation}
    \raut{x}:=\{u\in \tMonoid \suchthat xu=x\}
  \end{equation}
  is a submonoid of $\tMonoid$. Moreover, its $\JJ$-smallest element
  $\rfix{x}$ is the unique idempotent such that
  \begin{equation}
    \raut{x} = \{u\in \tMonoid \suchthat \rfix{x} \leq_\JJ u\}\,.
  \end{equation}
  The same holds for the left: there exists a unique idempotent $\lfix{x}$
  such that
  \begin{equation}
    \label{eq.laut}
    \laut{x} := \{u\in \tMonoid \suchthat ux=x\}
              = \{u\in \tMonoid \suchthat \lfix{x} \leq_\JJ u\}\,.
  \end{equation}
\end{proposition}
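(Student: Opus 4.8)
The plan is to dispatch the structural claim first, then extract the distinguished idempotent by a product-of-all-elements device. That $\raut{x}$ is a submonoid is immediate: $x1=x$ gives $1\in\raut{x}$, and from $xu=x$, $xv=x$ one gets $x(uv)=(xu)v=xv=x$. For the rest, I would enumerate $\raut{x}=\{u_1,\dots,u_k\}$ and consider $p:=u_1u_2\cdots u_k$. Since $\raut{x}$ is a submonoid we have $p\in\raut{x}$, and writing $p=(u_1\cdots u_{i-1})\,u_i\,(u_{i+1}\cdots u_k)$ shows $p\le_\JJ u_i$ for every $i$; as $\tMonoid$ is $\JJ$-trivial, $\le_\JJ$ is a partial order (Proposition~\ref{proposition.ordered}), so $p$ is the unique $\le_\JJ$-minimum of $\raut{x}$, and I would set $\rfix{x}:=p$. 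To see $\rfix{x}$ is idempotent, note that $x\,\rfix{x}=x$ forces $x\,\rfix{x}^{\,n}=x$ for every $n\geq 1$, hence $x\,\rfix{x}^{\,\omega}=x$, i.e. $\rfix{x}^{\,\omega}\in\raut{x}$; since $\rfix{x}^{\,\omega}\le_\JJ\rfix{x}$ and $\rfix{x}$ is the minimum, this forces $\rfix{x}=\rfix{x}^{\,\omega}\in\idempMon$.

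Next I would establish the equality $\raut{x}=\{u\in\tMonoid\mid\rfix{x}\le_\JJ u\}$. The inclusion $\subseteq$ is exactly the minimality just proved. For $\supseteq$, suppose $\rfix{x}\le_\JJ u$; since $\rfix{x}$ is idempotent, Lemma~\ref{lemma.j.idemp} gives $\rfix{x}=\rfix{x}\,u$, and combining with $x\,\rfix{x}=x$ yields $xu=(x\,\rfix{x})u=x(\rfix{x}\,u)=x\,\rfix{x}=x$, so $u\in\raut{x}$. Uniqueness of the idempotent is then formal: if $e'\in\idempMon$ satisfies $\raut{x}=\{u\mid e'\le_\JJ u\}$, then $e'\in\raut{x}$ (take $u=e'$) while $e'\le_\JJ u$ for all $u\in\raut{x}$, so $e'$ is the $\le_\JJ$-minimum of $\raut{x}$, whence $e'=\rfix{x}$ by uniqueness of minima in a poset.

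The statement for $\laut{x}$ and $\lfix{x}$ is proved symmetrically, reading each product $xu$ as $ux$ and using that both $\le_\JJ$ and the equivalences of Lemma~\ref{lemma.j.idemp} are two-sided. I do not expect any step to be a serious obstacle: the only place that needs an idea rather than a one-line manipulation is exhibiting a single element lying simultaneously in $\raut{x}$ and $\le_\JJ$-below all of $\raut{x}$, and the product-of-all-elements trick (available because $\raut{x}$ is a finite submonoid of a $\JJ$-trivial monoid) takes care of it; idempotency and the upward-closed description then fall out of $\JJ$-triviality and Lemma~\ref{lemma.j.idemp}.
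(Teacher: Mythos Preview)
Your proof is correct and follows essentially the same approach as the paper: both exploit the finite submonoid $\raut{x}$ by forming the product of all its elements to exhibit a $\JJ$-minimum. The only cosmetic difference is that the paper sets $r:=\bigl(\prod_{u\in\raut{x}} u\bigr)^{\!\omega}$ directly, obtaining idempotency for free, whereas you take $p:=\prod u$ first and then deduce $p=p^\omega$ from minimality; you are also more explicit than the paper in verifying the reverse inclusion $\{u\mid \rfix{x}\le_\JJ u\}\subseteq\raut{x}$ via Lemma~\ref{lemma.j.idemp} and in spelling out uniqueness.
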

\begin{proof}
  The reasoning is clearly the same on the left and on the right. We write
  the right one. The fact that $\raut{x}$ is a submonoid is clear. Pick a
  random order on $\raut{x}$ and define
  \begin{equation}
    r := \left(\prod_{u\in\raut{x}} u\right)^\omega\,.
  \end{equation}
  Clearly, $r$ is an idempotent which belongs to $\raut{x}$. Moreover, by the
  definition of $r$, for any $u\in\raut{x}$, the inequality $r\leq_\JJ u$ holds.
  Hence $\rfix{x} = r$ exists. Finally it is unique by antisymmetry of $\leq_\JJ$
  (since $\tMonoid$ is $\JJ$-trivial).
\end{proof}
Note that, by Lemma~\ref{lemma.j.idemp}, 
\begin{align}
  \label{eq.rfix}
  \rfix{x} &= \min \{e\in \idempMon \suchthat xe=x\}\,, \\
  \label{eq.lfix}
  \lfix{x} & = \min \{e\in \idempMon \suchthat ex=x\}\,,
\end{align}
the $\min$ being taken for the $\JJ$-order. These are called the
\textit{right} and \textit{left symbol} of $x$, respectively.

We recover some classical properties of descents:
\begin{proposition}
  \label{proposition.lfix.decreasing}
  $\operatorname{lfix}$ is decreasing for the $\RR$-order. Similarly,
  $\operatorname{rfix}$ is decreasing for the $\LL$-order.
\end{proposition}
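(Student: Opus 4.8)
The plan is to reduce the statement to the following concrete assertion and its left–right mirror image: if $x \leq_\RR y$ in $\tMonoid$, then $\lfix{x} \leq_\JJ \lfix{y}$, and if $x \leq_\LL y$ then $\rfix{x}\leq_\JJ\rfix{y}$. The only ingredient I need is Proposition~\ref{proposition.aut}, which tells us that $\laut{x}=\{u\in\tMonoid \suchthat \lfix{x}\leq_\JJ u\}$ (and dually for $\raut{x}$); in particular $\lfix{x}$ itself lies in $\laut{x}$, i.e.\ $\lfix{x}\,x=x$, and likewise $x\,\rfix{x}=x$.

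First I would unwind $x\leq_\RR y$: by definition $x=yv$ for some $v\in\tMonoid$. The key observation is that $\laut{y}\subseteq\laut{x}$: if $u\in\laut{y}$, i.e.\ $uy=y$, then $ux=uyv=yv=x$, so $u\in\laut{x}$. Applying this to $u=\lfix{y}$, which belongs to $\laut{y}$ by the remark above, we get $\lfix{y}\in\laut{x}=\{u\suchthat\lfix{x}\leq_\JJ u\}$, and hence $\lfix{x}\leq_\JJ\lfix{y}$, as desired. (Note that the stabilising submonoid $\laut{x}$ shrinks as $x$ moves up in $\RR$-order, which is the sense in which $\operatorname{lfix}$ is \emph{decreasing}; with the usual Coxeter convention, where the $\RR$-order is reversed, this is the familiar statement that left descent sets only grow along weak order in $H_0(W)$.)

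For $\operatorname{rfix}$ and $\leq_\LL$ the argument is the formal left–right dual: from $x=uy$ one gets $\raut{y}\subseteq\raut{x}$, since $yv=y$ forces $xv=uyv=uy=x$; then $\rfix{y}\in\raut{y}\subseteq\raut{x}=\{v\suchthat\rfix{x}\leq_\JJ v\}$ by the right-hand part of Proposition~\ref{proposition.aut}, whence $\rfix{x}\leq_\JJ\rfix{y}$. I do not expect any real obstacle here: the whole proof is the one-line multiplication $u(yv)=(uy)v$ together with the stabiliser description of $\lfix{}$ and $\rfix{}$ already established; the only thing to be careful about is the bookkeeping of conventions, namely that the Green containments $x\tMonoid\subseteq y\tMonoid$ and $\tMonoid x\subseteq\tMonoid y$ reverse under passage to the fixing submonoids.
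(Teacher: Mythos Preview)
Your argument is correct and essentially identical to the paper's: both observe that $\lfix{y}\cdot x = \lfix{y}\cdot yv = yv = x$, hence $\lfix{y}\in\laut{x}$, and conclude via Proposition~\ref{proposition.aut}. The only cosmetic difference is that the paper writes the conclusion as $\lfix{ab}\le_\RR\lfix{a}$ rather than $\le_\JJ$, but since $\lfix{x}$ is idempotent these orderings coincide by Lemma~\ref{lemma.j.idemp}.
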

\begin{proof}
  By definition, $\lfix{a}ab = ab$, so that $\lfix{a}\in\laut{ab}$. One
  concludes that $\lfix{ab}\le_\RR\lfix{a}$.
\end{proof}
\bigskip

\subsubsection{The Cartan matrix}
We now can state the key technical lemma toward the construction of
the Cartan matrix and indecomposable projective modules.
\begin{lemma}
  For any $x\in \tMonoid$, the tuple $(\lfix{x}, \rfix{x})$ is the unique
  tuple $(i,j)$ in $\idempMon \times \idempMon$ such that $f_i x$ and $x f_j$ have a nonzero
  coefficient on $x$.
\end{lemma}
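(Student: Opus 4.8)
The plan is to exploit the uni-triangularity of the $f_i$ established in Theorem~\ref{theorem.idempotents.lifting}, together with the characterization of $\lfix{x}$ and $\rfix{x}$ as the $\JJ$-smallest idempotents fixing $x$ on the left and right. First I would handle the existence half on, say, the right side: I claim that $x f_{\rfix{x}}$ has a nonzero coefficient on $x$. Writing $j = \rfix{x}$, we know $x j = x$ by definition of $\rfix{x}$, and $f_j = j + \sum_{y <_\JJ j} c_{y,j}\, y$ by triangularity. Multiplying on the left by $x$, the leading term contributes $xj = x$, while every other term $x y$ with $y <_\JJ j$ satisfies $xy \leq_\JJ x$; I must argue that no such term can equal $x$. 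Indeed if $xy = x$ then $y \in \raut{x}$, hence $j = \rfix{x} \leq_\JJ y$ by Proposition~\ref{proposition.aut}, contradicting $y <_\JJ j$. So the only way to produce $x$ in the expansion of $x f_j$ is via the leading term, and its coefficient is $1 \neq 0$. The left side is symmetric, using $\lfix{x}$ and $\laut{x}$.

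Next, the uniqueness half. Suppose $j' \in \idempMon$ is such that $x f_{j'}$ has a nonzero coefficient on $x$. Expanding $x f_{j'} = x j' + \sum_{y <_\JJ j'} c_{y,j'}\, x y$, every monomial $x y$ appearing is $\leq_\JJ x$, and the coefficient of $x$ in the sum is a $\K$-linear combination of those $c_{y,j'}$ for which $xy = x$. The cleanest route is to pass to the semi-simple quotient: apply the algebra morphism $\phi$ of Corollary~\ref{corollary.triangular-radical}. Since $\phi(f_{j'}) = g_{j'}$ and $\phi(x) = x^\omega$, and since $\phi$ sends the basis element $x$ either to itself (if $x$ is idempotent) or to $x^\omega <_\JJ x$, I need to track the coefficient of $x$ under $\phi$ carefully; here it is actually simpler to argue directly in $\K\tMonoid$. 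Directly: the coefficient of $x$ in $x f_{j'}$ is nonzero only if $x j' = x$ (contributing $1$) or if some $y <_\JJ j'$ with $c_{y,j'} \neq 0$ has $x y = x$. In the first case $j' \in \raut{x}$ so $\rfix{x} \leq_\JJ j'$; in the second case again $y \in \raut{x}$, so $\rfix{x} \leq_\JJ y <_\JJ j'$. Either way $\rfix{x} \leq_\JJ j'$. To get equality I invoke the orthogonality $f_{j} f_{j'} = \delta_{j,j'} f_j$: I would show that if $\rfix{x} <_\JJ j'$ then in fact $x f_{j'}$ has coefficient $0$ on $x$, by using that $x = x f_{\rfix{x}}$ (from the existence half, since multiplication by $f_{\rfix{x}}$ on the right fixes the leading term and $x f_{\rfix{x}} \equiv x$ modulo lower terms — but more is needed to get exact equality) hence $x f_{j'} = x f_{\rfix{x}} f_{j'} = 0$.

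The main obstacle, and the step deserving the most care, is upgrading "$x f_{\rfix{x}}$ has coefficient $1$ on $x$" to the exact identity $x f_{\rfix{x}} = x$, which is what makes the orthogonality argument go through. This requires a triangularity/induction argument on the $\JJ$-order: writing $f_{\rfix{x}} = \rfix{x} + (\text{lower})$ is not enough by itself because the lower terms $xy$ could still be linear combinations summing to something involving $x$'s $\JJ$-lower neighbors. The right way is a downward induction on $x$ in $\JJ$-order: for $\JJ$-maximal $x$ (i.e. $x=1$) it is clear, and in general one writes $x f_{\rfix{x}} = x + \sum_{z <_\JJ x} d_z z$ and must kill the correction terms. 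In fact the lemma as stated only asserts a nonzero coefficient, not exact equality, so I would present the proof at exactly that level: establish existence (coefficient $1$ on $x$ for the pair $(\lfix{x},\rfix{x})$) and uniqueness (any other $i$ or $j$ forces strict $\JJ$-inequality $\rfix{x} <_\JJ j'$ resp. $\lfix{x} <_\JJ i'$, and then $f_{j'} = f_{j'} f_{\rfix{x}} \cdot(\text{something})$ fails — so instead I conclude uniqueness from the fact that among idempotents $j'$ with $xj' = x$, only $\rfix{x}$ is minimal, combined with $f_{\rfix{x}}$ absorbing: $x f_{j'} = x f_{\rfix{x}} f_{j'} = \delta_{\rfix{x}, j'}\, x f_{\rfix{x}}$, which is $0$ unless $j' = \rfix{x}$). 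This last chain does rely on $x f_{\rfix{x}} = x$, so I would prove that identity first as a separate claim by the downward $\JJ$-induction sketched above, then the rest is immediate.
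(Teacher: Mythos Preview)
Your existence argument is correct and essentially matches the paper's computation that the coefficient of $x$ in $x f_{\rfix{x}}$ equals $1$. The gap is in the uniqueness half: the identity $x f_{\rfix{x}} = x$ that you propose to prove by downward $\JJ$-induction is \emph{false} in general. For instance, in $H_0(\sg[3])$ with $x=\pi_1\pi_2$ one has $\rfix{x}=\pi_2$, $f_{\pi_2}=\pi_2-\pi_1\pi_2\pi_1$, and $x f_{\pi_2} = \pi_1\pi_2 - \pi_1\pi_2\pi_1 \neq x$. Indeed the very next corollary in the paper records that $b_x = f_{\lfix{x}} x f_{\rfix{x}}$ equals $x$ only \emph{modulo lower $\JJ$-terms}. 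So the orthogonality chain $x f_{j'} = x f_{\rfix{x}} f_{j'} = 0$ cannot be run inside $\K\tMonoid$.

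The missing idea, which you nearly reached and then set aside, is that the linear functional $\chi(y) := (\text{coefficient of $x$ in $xy$})$ is multiplicative on $\tMonoid$: it sends $y\mapsto 1$ if $xy=x$ and $y\mapsto 0$ otherwise, and one checks $xy=x$ and $xy'=x$ imply $xyy'=x$, while $xy\neq x$ forces $xy<_\JJ x$ and hence $xyy'\neq x$. Thus $\chi$ is exactly the one-dimensional representation $S_x$. The paper phrases this as ``the coefficient of $x$ in $xy$ equals the coefficient of $\epsilon_x$ in $\epsilon_x y$''. Now the orthogonality argument goes through verbatim in $S_x$: one has $\epsilon_x f_{\rfix{x}} = \epsilon_x$ (your own computation shows $\chi(f_{\rfix{x}})=1$), and then for any $e\neq\rfix{x}$, $\epsilon_x f_e = \epsilon_x f_{\rfix{x}} f_e = 0$. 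This is the correct substitute for the false identity $x f_{\rfix{x}} = x$: it holds in the quotient module $S_x$, not in $\K\tMonoid$.
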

\begin{proof}
  By Proposition~\ref{proposition.simple}, for any $y\in\K\tMonoid$, the
  coefficient of $x$ in $xy$ is the same as the coefficient of $\epsilon_x$ in
  $\epsilon_xy$. Now since $S_x$ is a simple module, the action of $y$ on it
  is the same as the action of $\phi(y)$. As a consequence, $\epsilon_x
  f_{\rfix{x}} = \epsilon_x g_{\rfix{x}}$. Now $\epsilon_x \rfix{x} =
  \epsilon_x$, and $\epsilon_x e=0$ for any $e<_\JJ\rfix{x}$, so that
  $\epsilon_x g_{\rfix{x}} = \epsilon_x$ and $\epsilon_x f_{\rfix{x}} =
  \epsilon_x$.

  It remains to prove the unicity of $f_j$. We need to prove that for any
  $e\neq\rfix{x}$, the coefficient of $x$ in $x f_e$ is zero. Since this
  coefficient is equal to the coefficient of $\epsilon_x$ in $\epsilon_x f_e$
  it must be zero because $\epsilon_x f_e = \epsilon_x f_{\rfix{x}} f_e =
  \epsilon_x 0 = 0$
  by the orthogonality of the $f_i$.
\end{proof}

During the proof, we have seen that the coefficient is actually $1$:
\begin{corollary}\label{corollary.bx.triang}
  For any $x\in \tMonoid$, we denote $b_x := f_{\lfix{x}} x f_{\rfix{x}}$. Then,
  \begin{equation}
    b_x = x + \sum_{y <_\JJ x} c_y y\,,
  \end{equation}
  with $c_y\in \K$. Consequently, $(b_x)_{x\in \tMonoid}$ is a basis for $\K\tMonoid$.
\end{corollary}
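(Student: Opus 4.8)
The plan is to build on the key technical Lemma that precedes this corollary, which asserts that $(\lfix{x},\rfix{x})$ is the unique pair $(i,j)$ of idempotents for which $f_i x$ and $x f_j$ each have a nonzero coefficient on $x$, and that moreover (as remarked immediately after the proof) this coefficient is exactly $1$. First I would expand $b_x = f_{\lfix{x}}\, x\, f_{\rfix{x}}$ using the uni-triangularity of the $f_e$ from Theorem~\ref{theorem.idempotents.lifting}: writing $f_{\lfix{x}} = \lfix{x} + \sum_{u <_\JJ \lfix{x}} c_{u} u$ and similarly for $f_{\rfix{x}}$, every term of the product $f_{\lfix{x}} x f_{\rfix{x}}$ is of the form $u\,x\,v$ where $u \leq_\JJ \lfix{x}$ and $v \leq_\JJ \rfix{x}$ in $\tMonoid$. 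Since $uxv \leq_\JJ x$ for all $u,v\in\tMonoid$ by $\JJ$-triviality, $b_x$ lies in the span of $\{y \in \tMonoid \suchthat y \leq_\JJ x\}$, which already gives $b_x = \sum_{y \leq_\JJ x} c_y\, y$ with $c_y \in \K$.

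The second step is to pin down the coefficient $c_x$ of $x$ itself. This is precisely the content of the technical Lemma together with the observation that the coefficient is $1$: the coefficient of $x$ in $f_{\lfix{x}}\, x\, f_{\rfix{x}}$ equals the coefficient of $\epsilon_x$ in $\epsilon_x f_{\lfix{x}}\, x\, f_{\rfix{x}}$ acting on the simple module $S_x$ (using Proposition~\ref{proposition.simple} on both sides), which in turn equals $\epsilon_x \phi(f_{\lfix{x}})\,\phi(x)\,\phi(f_{\rfix{x}})$-action; and the Lemma's proof shows $\epsilon_x f_{\rfix{x}} = \epsilon_x$ and $\epsilon_x f_{\lfix{x}}=\epsilon_x$, so the coefficient is the coefficient of $\epsilon_x$ in $\epsilon_x x = \epsilon_x$, namely $1$. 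Hence $c_x = 1$ and $b_x = x + \sum_{y <_\JJ x} c_y\, y$ as claimed.

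For the final assertion that $(b_x)_{x\in\tMonoid}$ is a basis of $\K\tMonoid$: the change-of-basis matrix from $(x)_{x\in\tMonoid}$ to $(b_x)_{x\in\tMonoid}$, with rows and columns indexed by a linear extension of $\JJ$-order, is uni-triangular (ones on the diagonal, nonzero entries only for $y <_\JJ x$). Such a matrix is invertible over any ring, so $(b_x)_{x\in\tMonoid}$ spans $\K\tMonoid$ and, having the right cardinality $|\tMonoid| = \dim \K\tMonoid$, is a basis. I expect no real obstacle here — the only point requiring care is making sure the triangularity is stated with respect to $\JJ$-order of $\tMonoid$ (not some coarser order), which is exactly what the combination of uni-triangularity of the $f_e$ and the inequality $uxv \leq_\JJ x$ delivers.
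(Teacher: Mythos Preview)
Your overall strategy matches the paper's: use the uni-triangularity of the $f_e$ to see that every monomial in $b_x$ is $\leq_\JJ x$, invoke the preceding Lemma to pin the leading coefficient at $1$, and conclude by uni-triangularity of the change-of-basis matrix. Steps 1, 3, and 5 are fine.

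There is, however, a genuine slip in your Step~4. You assert that $\epsilon_x f_{\lfix{x}} = \epsilon_x$ in the \emph{right} simple module $S_x$. This is false in general: in the right module, $\epsilon_x \cdot m = \epsilon_x$ if and only if $xm = x$, and the Lemma's proof shows precisely that $\epsilon_x f_e = 0$ for every idempotent $e \ne \rfix{x}$. So whenever $\lfix{x} \ne \rfix{x}$ your computation gives $0$, not $\epsilon_x$. The Lemma treats $f_i x$ via the \emph{left} module $S_x$ (where $m\cdot\epsilon_x = \epsilon_x$ iff $mx = x$) and $x f_j$ via the right module; you cannot run both through the same one-sided module.

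The fix is easy and in fact already implicit in your expansion. From the uni-triangularity of the $f_e$, the only pairs $(u,v)$ appearing are with $u \leq_\JJ \lfix{x}$ and $v \leq_\JJ \rfix{x}$. If $uxv = x$ then $x = uxv \leq_\JJ ux \leq_\JJ x$ forces $ux = x$, so $u\in\laut{x}$ and hence $\lfix{x}\leq_\JJ u$, giving $u=\lfix{x}$; symmetrically $v=\rfix{x}$. Thus the sole contribution to the coefficient of $x$ is $1\cdot\lfix{x}\,x\,\rfix{x}\cdot 1 = x$, and $c_x=1$. Alternatively, argue in two stages: the Lemma (left version) gives $f_{\lfix{x}}x = x + (\text{terms }<_\JJ x)$, then the Lemma (right version) applied to each term gives $f_{\lfix{x}} x f_{\rfix{x}} = x + (\text{terms }<_\JJ x)$, since anything already $<_\JJ x$ stays $<_\JJ x$ after right multiplication. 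Either route repairs the argument and aligns it with the paper's ``the coefficient is actually~$1$''.
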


\begin{theorem}
  The Cartan matrix of $\K\tMonoid$ defined by $c_{i,j} :=
  \dim(f_i \K\tMonoid f_j)$ for $i,j\in \idempMon$ is given by $c_{i,j} =
  |C_{i,j}|$, where
  \begin{equation}
    C_{i,j} := \{x\in \tMonoid \suchthat i=\lfix{x}\text{ and } j=\rfix{x} \}\,.
  \end{equation}
\end{theorem}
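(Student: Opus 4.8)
The plan is to use the basis $(b_x)_{x\in\tMonoid}$ from Corollary~\ref{corollary.bx.triang} to show that $(b_x)_{x\in C_{i,j}}$ is a basis for $f_i\K\tMonoid f_j$, which immediately gives $c_{i,j}=|C_{i,j}|$. First I would observe that, since $f_i$ and $f_j$ are idempotents and $f_if_{i'}=\delta_{i,i'}f_i$, for any $x\in\tMonoid$ we have $f_i b_x f_j = f_i f_{\lfix{x}} x f_{\rfix{x}} f_j = \delta_{i,\lfix{x}}\,\delta_{j,\rfix{x}}\, b_x$. Hence $b_x\in f_i\K\tMonoid f_j$ precisely when $x\in C_{i,j}$, and applying the projector $y\mapsto f_i y f_j$ to the basis $(b_x)_{x\in\tMonoid}$ of $\K\tMonoid$ sends each $b_x$ either to itself (if $x\in C_{i,j}$) or to $0$. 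Since these projectors, summed over all pairs $(i,j)$, give a decomposition of the identity of $\K\tMonoid$ (because $1=\sum_i f_i$ and the $f_i$ are orthogonal idempotents), the space $\K\tMonoid$ decomposes as the direct sum of the $f_i\K\tMonoid f_j$, and the image of the basis $(b_x)_x$ under this decomposition shows that $(b_x)_{x\in C_{i,j}}$ spans $f_i\K\tMonoid f_j$.

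To finish I need linear independence of $(b_x)_{x\in C_{i,j}}$, but this is inherited for free: the $b_x$ are a basis of the whole algebra by Corollary~\ref{corollary.bx.triang}, so any subfamily is linearly independent. Therefore $\dim(f_i\K\tMonoid f_j) = |C_{i,j}| = c_{i,j}$, which is exactly the claim. I would spell out why $C_{i,j}$ for $(i,j)$ ranging over $\idempMon\times\idempMon$ partitions $\tMonoid$: this is precisely the content of the Lemma preceding Corollary~\ref{corollary.bx.triang}, namely that every $x$ has a well-defined pair $(\lfix{x},\rfix{x})$, so the sets $C_{i,j}$ are disjoint and cover $\tMonoid$. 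This consistency check is what makes the dimension count add up correctly (the total is $\sum_{i,j}|C_{i,j}| = |\tMonoid| = \dim\K\tMonoid$).

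The only genuinely delicate point, and the one I expect to be the main obstacle to state cleanly, is the identity $f_i b_x f_j = \delta_{i,\lfix{x}}\delta_{j,\rfix{x}} b_x$: it requires knowing that $b_x = f_{\lfix{x}} x f_{\rfix{x}}$ is genuinely fixed on both sides by its designated idempotents, i.e. $f_{\lfix{x}} b_x = b_x = b_x f_{\rfix{x}}$, which follows since $f_{\lfix{x}}^2 = f_{\lfix{x}}$ and $f_{\rfix{x}}^2 = f_{\rfix{x}}$, and that $f_i b_x = 0$ for $i\neq\lfix{x}$, which follows from orthogonality $f_i f_{\lfix{x}} = 0$. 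Everything else is bookkeeping with the direct-sum decomposition $\K\tMonoid = \bigoplus_{i,j} f_i\K\tMonoid f_j$ induced by $1=\sum_i f_i$. So the proof reduces to: (1) the Peirce decomposition of $\K\tMonoid$ along the orthogonal idempotents $(f_i)$, (2) the observation that the triangular basis $(b_x)$ is compatible with this decomposition via the Lemma, and (3) counting.
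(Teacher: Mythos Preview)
Your proof is correct and follows essentially the same approach as the paper: both use that $(b_x)_{x\in\tMonoid}$ is a basis of $\K\tMonoid$, that $b_x\in f_i\K\tMonoid f_j$ precisely when $x\in C_{i,j}$, and the Peirce decomposition $\K\tMonoid=\bigoplus_{i,j} f_i\K\tMonoid f_j$ to conclude that $(b_x)_{x\in C_{i,j}}$ is a basis of each summand. Your write-up is simply more explicit about the mechanics (the identity $f_ib_xf_j=\delta_{i,\lfix{x}}\delta_{j,\rfix{x}}b_x$ and the partition of $\tMonoid$ into the $C_{i,j}$), which the paper leaves implicit.
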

\begin{proof}
  For any $i,j\in \idempMon$ and $x\in C_{i,j}$, it is clear that $b_x$
  belongs to $f_i\K\tMonoid f_j$. Now because $(b_x)_{x\in \tMonoid}$ is a
  basis of $\K\tMonoid$ and since $\K\tMonoid =\bigoplus_{i,j\in
    \idempMon} f_i\K\tMonoid f_j$, it must be true that $(b_x)_{x\in
    C_{i,j}}$ is a basis for $f_i\K\tMonoid f_j$.
\end{proof}

\begin{example}[Representation theory of $H_0(W)$, continued]
  Recall that the left and right descent sets and content of $w\in
  W$ can be respectively defined by:
  \begin{eqnarray*}
    D_L(w) &=& \{ i\in I \mid \ell(s_iw) < \ell(w) \}, \\
    D_R(w) &=& \{ i\in I \mid \ell(w s_i) < \ell(w) \}, \\
    \cont(w) &=& \{i \in I \mid \text{$s_i$ appears in some reduced word for $w$} \},
  \end{eqnarray*}
  and that the above conditions on $s_iw$ and $ws_i$ are respectively
  equivalent to $\pi_i \pi_w =\pi_w$ and $\pi_w\pi_i = \pi_w$.
  Furthermore, writing $w_J$ for the longest element of the parabolic
  subgroup $W_J$, so that $\pi_J=\pi_{w_J}$, one has
  $\cont(\longest_J)=D_L(w_J)$, or equivalently
  $\cont(\longest_J)=D_R(w_J)$. Then, for any $w\in W$, we have
  $\pi_w^\omega=\longest_{\cont(w)}$,
  $\lfix{\pi_w}=\longest_{D_L(w)}$, and
  $\rfix{\pi_w}=\longest_{D_R(w)}$.

  Thus, the entry $a_{J,K}$ of the Cartan matrix is given by the
  number of elements $w\in W$ having those left and right descent
  sets.
\end{example}

\bigskip
\subsubsection{Projective modules}

By the same reasoning we have the following corollary:
\begin{corollary}
  The family $\{b_x \suchthat \lfix{x} = e\}$ is a basis for the right
  projective module associated to $S_e$.
\end{corollary}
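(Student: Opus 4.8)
The plan is to parallel the proof of the Cartan matrix theorem just established. Recall the decomposition $\K\tMonoid = \bigoplus_{e\in\idempMon} \K\tMonoid f_e$ of the right regular module into indecomposable projective modules, where the summand $P_e := \K\tMonoid f_e$ is the projective cover of the simple module $S_e$ (this follows from $\phi(f_e) = g_e$ together with the fact that $(S_e)_{e\in\idempMon}$ is a complete irredundant list of simples, by Proposition~\ref{proposition.basis.radical}; since $g_e$ is a primitive idempotent in the semi-simple quotient projecting onto $S_e$, its lift $f_e$ is primitive in $\K\tMonoid$ with $P_e f_e/\rad \cong S_e$).

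First I would observe that $b_x = f_{\lfix{x}}\, x\, f_{\rfix{x}}$ lies in $\K\tMonoid f_{\rfix{x}}$, so that for a fixed $e\in\idempMon$ the set $\{b_x \suchthat \rfix{x} = e\}$ is contained in $P_e = \K\tMonoid f_e$. Next, by Corollary~\ref{corollary.bx.triang}, $(b_x)_{x\in\tMonoid}$ is a basis of $\K\tMonoid$ that is uni-triangular with respect to the $\JJ$-order, hence each $b_x$ records the ``leading term'' $x$; since the direct sum decomposition $\K\tMonoid = \bigoplus_{e} \K\tMonoid f_e$ is compatible with this basis (each $b_x$ lands entirely in the block indexed by $\rfix{x}$), counting dimensions forces $\{b_x \suchthat \rfix{x} = e\}$ to be a basis of $\K\tMonoid f_e = P_e$. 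Finally, $P_e$ is the projective module whose head is $S_e$: indeed $\phi(P_e) = \phi(\K\tMonoid) g_e = (\K\idempMon,\star) g_e$, which is one-dimensional and isomorphic to $S_e$ as the top quotient.

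The only genuine subtlety — and the step I expect to require the most care — is pinning down exactly which simple module sits on top of $P_e = \K\tMonoid f_e$, i.e.\ checking that it is $S_e$ and not some other $S_{e'}$. This is handled by applying $\phi$: since $\phi(f_e) = g_e$ and $g_u\star g_v = \delta_{u,v} g_u$ in the semi-simple quotient (Proposition in Section~\ref{ss.orthogonal.idempotents}), we get $\phi(\K\tMonoid f_e) = \K g_e$, and the natural surjection $P_e \twoheadrightarrow P_e/\rad(\K\tMonoid)\cdot P_e \cong \K g_e$ carries the module structure of $S_e$ (because $\epsilon_e$ acts on $g_e$ the same way it acts on itself, by the computation in the key lemma: $\epsilon_e f_e = \epsilon_e g_e = \epsilon_e$). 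Everything else is the same triangularity-plus-dimension-count argument used for the Cartan matrix, now restricted to a single column indexed by $e = \rfix{x}$ rather than to a single $(i,j)$ cell; the statement as written uses $\lfix{x} = e$ because the corollary is phrased for the \emph{left} projective modules, where by the symmetric construction $b_x \in f_{\lfix{x}}\K\tMonoid$ and one counts over $\{x \suchthat \lfix{x} = e\}$.
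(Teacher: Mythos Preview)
Your core argument---triangularity of the basis $(b_x)$ together with the direct sum decomposition forcing a dimension count---is exactly the paper's ``same reasoning'' and is correct. However, you have the left/right conventions reversed throughout. For a $\K$-algebra $A$ with idempotent $e$, the module $eA$ is a \emph{right} $A$-module (via right multiplication) and $Ae$ is a \emph{left} $A$-module. Thus the right projective module associated to $S_e$ is $P_e = f_e\,\K\tMonoid$, not $\K\tMonoid\, f_e$ as you write; your ``decomposition of the right regular module'' $\K\tMonoid = \bigoplus_e \K\tMonoid f_e$ is in fact a decomposition into left submodules. Consequently, since $b_x = f_{\lfix{x}}\, x\, f_{\rfix{x}} \in f_{\lfix{x}}\,\K\tMonoid$, the set $\{b_x \suchthat \lfix{x}=e\}$ lands in $f_e\,\K\tMonoid$, which \emph{is} the right projective---so the corollary is stated correctly and is not, as you claim in your last sentence, secretly about left projectives. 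Once you swap all your lefts and rights, your proof is the paper's.
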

Actually one can be more precise: the projective modules are
combinatorial.
\begin{theorem}
  \label{theorem.projective_modules}
  For any idempotent $e$ denote by $R(e) = eM$,
  \begin{equation*}
    R_=(e) = \{x\in eM\suchthat \lfix{x} = e\}
    \quad\text{and}\quad 
    R_<(e) = \{x\in eM\suchthat \lfix{x} <_\RR e\} \, .
  \end{equation*}
  Then, the projective module $P_e$ associated to $S_e$ is isomorphic
  to $\K R(e)/\K R_<(e)$. In particular, the projective module $P_e$
  is combinatorial: taking as basis the image of $R_=(e)$ in the
  quotient, the action of $m\in\tMonoid$ on $x\in R_=(e)$ is given by:
  \begin{equation}
    x \cdot m =
    \begin{cases}
      xm &\text{if $\lfix{xm} = e$,}\\
      0  & \text{otherwise}.
    \end{cases}
  \end{equation}
\end{theorem}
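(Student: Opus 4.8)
The plan is to exhibit an explicit isomorphism between the right ideal $f_e\K\tMonoid$ (which is the projective module $P_e$ associated to $S_e$, since $f_e$ is a primitive idempotent lifting $g_e$) and the quotient $\K R(e)/\K R_<(e)$, and then to read off the combinatorial action from the basis. First I would note that $eM = R(e)$ is a right ideal of $\tMonoid$, so $\K R(e)$ is a right $\K\tMonoid$-module, and that $R_<(e) = \{x \in eM \suchthat \lfix{x} <_\RR e\}$ is a submodule: indeed if $\lfix{x} <_\RR e$ and $m\in\tMonoid$, then by Proposition~\ref{proposition.lfix.decreasing} $\lfix{xm} \le_\RR \lfix{x} <_\RR e$, so either $xm \notin eM$ (impossible, as $eM$ is a right ideal) — rather, $xm \in eM$ but with $\lfix{xm} <_\RR e$, so $xm \in R_<(e)$; one must also check $\lfix{xm} \ne e$, which holds since $\lfix{xm} \le_\RR \lfix{x} <_\RR e$ forces $\lfix{xm} \ne e$ by antisymmetry of $\le_\RR$. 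Hence the formula for $x\cdot m$ in the statement is exactly the induced action on the quotient, with $R_=(e)$ mapping to a basis of $\K R(e)/\K R_<(e)$.

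Next I would establish the isomorphism $P_e \cong \K R(e)/\K R_<(e)$. The natural candidate map is $\K R(e) \to f_e\K\tMonoid$ sending $x \mapsto f_e x$ for $x\in eM$ (this is a right-module map, clearly surjective onto $f_e\K\tMonoid$ since for any $x\in\tMonoid$, $f_e x = f_e e x = f_e\cdot(ex)$ and $ex\in eM$, using $\phi(f_e)=g_e = eg_e$ hence $f_e = e f_e$ up to radical corrections — more carefully, $f_e$ is unitriangular with leading term $e$, so $f_e x = f_e(ex)$ needs the identity $f_e e = f_e$, which follows from $\phi(f_e) = g_e$ and $g_e e = g_e$ together with the uni-triangularity: $f_e e - f_e \in \rad\K\tMonoid$ and is itself of the form $f_e e - f_e = (f_e)(e-1)$ with $\phi$ killing it; by idempotency $f_e(f_e e - f_e) = f_e e - f_e$... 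I would nail this down via Lemma~\ref{lemma.j.idemp} applied in the algebra, or simply use $b_x = f_{\lfix x} x f_{\rfix x}$ and Corollary~\ref{corollary.bx.triang}). The cleaner route, and the one I would actually take, is to use the basis $(b_x)_{x\in\tMonoid}$ of $\K\tMonoid$ from Corollary~\ref{corollary.bx.triang}: the right projective $f_e\K\tMonoid$ has basis $\{b_x \suchthat \lfix{x} = e\}$ by the Corollary preceding the theorem, and $\{x\in\tMonoid \suchthat \lfix{x}=e\}$ equals $R_=(e)$ because $\lfix{x}=e$ forces $ex=x$, i.e. $x\in eM$. So $|R_=(e)| = \dim P_e = \dim \K R(e) - \dim\K R_<(e)$, and it remains to check that the module map $\theta: \K R(e) \to P_e$, $x\mapsto b_x$ for $x\in eM$ (extended... but $b_x$ need not be "$x\mapsto$" a module map directly) — so instead I would define $\theta(x) = f_e x$ and show $\theta$ is a surjective module morphism with kernel exactly $\K R_<(e)$, the kernel computation being done by triangularity: $\theta(x) = f_e x$ expands triangularly as $\sum_{y\le_\JJ x} (\cdots) b_y$-type terms, and $f_e x$ picks out precisely the $b_y$ with $\lfix y = e$; when $\lfix x = e$ the leading term $b_x$ survives, when $\lfix x <_\RR e$ everything cancels into $R_<(e)$'s image.

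The main obstacle I anticipate is the kernel computation: showing $\ker\theta = \K R_<(e)$ precisely rather than just an inclusion. The containment $\K R_<(e) \subseteq \ker\theta$ should follow since for $x$ with $\lfix x <_\RR e$ we have $\lfix x \ne e$, hence $f_e x = f_e f_{\lfix x} x = 0$ by orthogonality of the $f_i$ (using that $f_e x = f_e f_{\lfix x} x$, which comes from $f_{\lfix x} x = x + \text{lower}$ and re-expanding — this is exactly the content of the Key Lemma before Corollary~\ref{corollary.bx.triang} showing $(\lfix x, \rfix x)$ is the unique pair with $f_i x$, $x f_j$ having nonzero coefficient on $x$, iterated down the $\JJ$-order). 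For the reverse inclusion, a dimension count suffices once we know $\theta$ is surjective and $\K R_<(e)\subseteq\ker\theta$: $\dim\operatorname{im}\theta = \dim P_e = |R_=(e)| = \dim\K R(e) - |R_<(e)| = \dim\K R(e) - \dim\K R_<(e) \le \dim\K R(e) - \dim\ker\theta$... wait, that gives the inequality the wrong way unless $\K R_<(e) = \ker\theta$; so in fact $\dim\ker\theta = \dim\K R(e) - \dim P_e = |R_<(e)| = \dim\K R_<(e)$, and combined with $\K R_<(e)\subseteq\ker\theta$ we get equality. Finally, the combinatorial action formula is immediate from the quotient description together with the submodule check in the first paragraph: $x\cdot m = xm$ in $\K R(e)/\K R_<(e)$, which is the class of $xm$, equal to $xm$ if $\lfix{xm}=e$ (so $xm\in R_=(e)$) and $0$ if $\lfix{xm}<_\RR e$ (so $xm\in R_<(e)$), these being the only two cases since $\lfix{xm}\le_\RR\lfix x = e$ always.
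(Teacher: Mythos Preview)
Your approach is genuinely different from the paper's, and it has a gap in the kernel computation.

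\textbf{The paper's route.} The paper never builds an explicit map between $f_e\K\tMonoid$ and $A:=\K R(e)/\K R_<(e)$. Instead it shows directly that $A/\rad A \cong S_e$: for $x\in R_=(e)\setminus\{e\}$, the idempotent $x^\omega$ satisfies $\lfix{x^\omega}=x^\omega\le_\RR x<_\RR e$, so $x^\omega\in R_<(e)$; hence in $A$ one has $x = x - x^\omega = e(x-x^\omega)\in \rad A$. This forces $A/\rad A$ to be one-dimensional, spanned by the class of $e$, visibly isomorphic to $S_e$. Then the abstract lifting property of projectives produces a surjection $P_e\twoheadrightarrow A$, which is an isomorphism by the dimension count $\dim P_e = |R_=(e)| = \dim A$.

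\textbf{The gap in your route.} Your map $\theta:\K R(e)\to f_e\K\tMonoid$, $x\mapsto f_e x$, is indeed a surjective module map (your unitriangularity argument on $R_=(e)$ is the clean way to see this; the detour through $f_e e = f_e$ is unnecessary and not obviously true). The problem is the inclusion $\K R_<(e)\subseteq\ker\theta$, i.e.\ $f_e x = 0$ for $x\in R_<(e)$. Your justification ``$f_e x = f_e f_{\lfix x} x = 0$ by orthogonality, iterating down the $\JJ$-order'' does not work: writing $f_{\lfix x}x = x + \sum_{y<_\JJ x} c_y y$ and substituting gives $f_e x = -\sum c_y\, f_e y$, but the lower terms $y$ may well satisfy $\lfix y = e$, and then $f_e y$ has leading term $y$ and the iteration stalls rather than terminating at zero. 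Equivalently, expanding $x=\sum d_{z,x}b_z$ in the $b$-basis, one gets $f_e x = \sum_{z\le_\JJ x,\ \lfix z = e} d_{z,x}\,b_z$; for $x\in R_<(e)$ with $\lfix x = e'<_\JJ e$, there is no a priori reason why no $z\le_\JJ e'$ with $\lfix z = e$ contributes. The Key Lemma only controls the \emph{top} coefficient, not the whole element. So as written you have not shown $\ker\theta\supseteq\K R_<(e)$, and without that the dimension count does not pin down the kernel. The paper's projectivity argument sidesteps this entirely by never needing to identify $\ker\theta$.
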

\begin{proof}
  By Proposition~\ref{proposition.lfix.decreasing}, $R(e)$ and
  $R_<(e)$ are two ideals in the monoid, so that $A := \K R(e)/\K
  R_<(e)$ is a right $\tMonoid$-module. In order to show that $A$ is
  isomorphic to $P_e$, we first show that $A/\rad A$ is isomorphic to
  $S_e$ and then use projectivity and dimension counting to conclude
  the statement.

  We claim that
  \begin{equation}
    \label{equation.radA}
    \K (R_=(e) \backslash \{e\})\subseteq \rad A\,.
  \end{equation}
  Take indeed $x\in R_=(e) \backslash \{e\}$. Then, $x^\omega$ is
  in $\K R_<(e)$ since $\lfix{x^\omega}=x^\omega \leq_\RR x <_\RR e$. If
  follows that, in $A$, $x=x-x^\omega=e(x-x^\omega)$ which, by
  Proposition~\ref{proposition.basis.radical}, is in $\rad A$.

  Since $\rad A\subset A$, the inclusion in \eqref{equation.radA} is
  in fact an equality, and $A / \rad A$ is isomorphic to $S_e$. Then,
  by the definition of projectivity, any isomorphism from $S_e =
  P_e/\rad P_e$ to $A/\rad A$ extends to a surjective morphism from
  $P_e$ to $A$ which, by dimension count, must be an isomorphism.
\end{proof}

\begin{figure}
  \includegraphics[width=\textwidth]{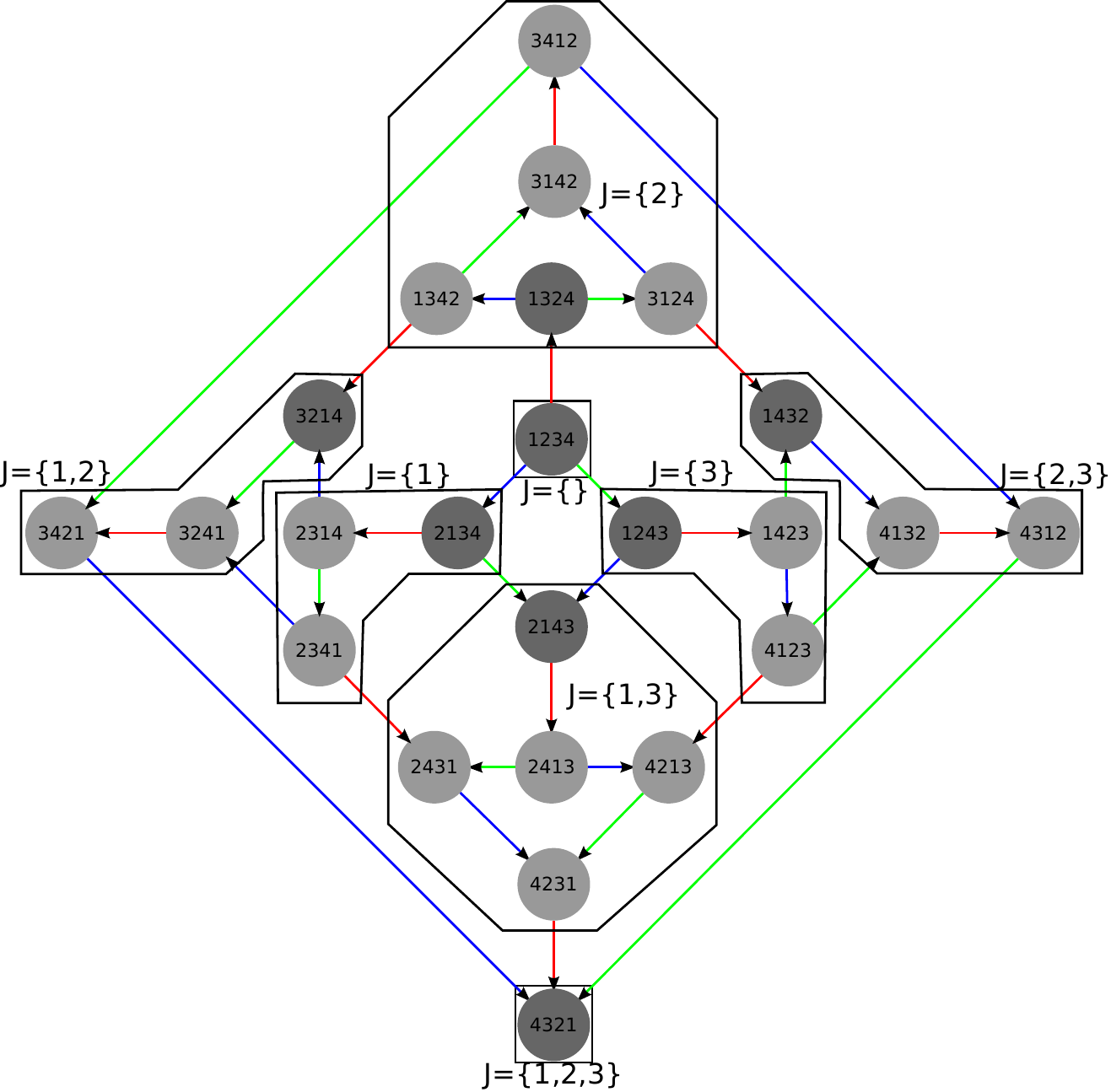}
  \caption{The decomposition of $H_0(\sg[4])$ into indecomposable
    right projective modules. This decomposition follows the partition
    of $\sg[4]$ into left descent classes, each labelled by its
    descent set $J$.  The blue, red, and green lines indicate the
    action of $\pi_1, \pi_2,$ and $\pi_3$ respectively.  The darker circles
    indicate idempotent elements of the monoid. }
  \label{h0s4.projectives}
\end{figure}

\begin{example}[Representation theory of $H_0(W)$, continued]
  \label{example.zero.hecke.projectives}
  The right projective modules of $H_0(W)$ are combinatorial, and
  described by the decomposition of the right order along left descent
  classes, as illustrated in Figure~\ref{h0s4.projectives}. Namely,
  let $P_J$ be the right projective module of $H_0(W)$ corresponding
  to the idempotent $\pi_J$. Its basis $b_w$ is indexed by the
  elements of $w$ having $J$ as left descent set. The action of
  $\pi_i$ coincides with the usual right action, except that
  $b_w.\pi_i=0$ if $w.\pi_i$ has a strictly larger left descent set than
  $w$.

  Here we reproduce Norton's construction of $P_J$~\cite{Norton.1979},
  as it is close to an explicit description of the isomorphism in the
  proof of Theorem~\ref{theorem.projective_modules}. First, notice
  that the elements $\{\pi_i^-=(1-\pi_i) \mid i\in I\}$ are idempotent
  and satisfy the same Coxeter relations as the $\pi_i$.  Thus, the
  set $\{\pi_i^-\}$ generates a monoid isomorphic to $H_0(W)$. For each
  $J\subseteq I$, let $\longest_J^-$ be the longest element in the
  parabolic submonoid associated to $J$ generated by the $\pi_i^-$
  generators, and $\longest_J^+=\longest_J$. For each subset
  $J\subseteq I$, let $\hat{J}=I\setminus J$. Define
  $f_J=\longest_{\hat{J}}^-\longest_J^+$. Then, $f_J \pi_w=0$ if
  $J\subset D_L(w)$. It follows that the right module $f_J H_0(W)$ is
  isomorphic to $P_J$ and its basis $\{ f_J\pi_w\suchthat D_L(w)=J\}$
  realizes the combinatorial module of $P_J$.

  One should notice that the elements
  $\longest_{\hat{J}}^-\longest_J^+$ are, in general, neither
  idempotent nor orthogonal. Furthermore,
  $\longest_{\hat{J}}^-\longest_J^+H_0(W)$ is not a submodule of
  $\pi_J H_0(W)$ as in the proof of
  Theorem~\ref{theorem.projective_modules}.

  The description of left projective modules is symmetric.
\end{example}


\subsection{Factorizations}
\label{ss.factorizations}

It is well-known that the notion of factorization $x=uv$ and of
irreducibility play an important role in the study of $\JJ$-trivial monoids
$\tMonoid$. For example, the irreducible elements of $\tMonoid$ form the
unique minimal generating set of $\tMonoid$~\cite{Doyen.1984,Doyen.1991}. In this section, we further
refine these notions, in order to obtain in the next section a combinatorial
description of the quiver of the algebra of $\tMonoid$.

\bigskip

Let $x$ be an element of $\tMonoid$, and $e:=\lfix{x}$ and
$f:=\rfix{x}$. By Proposition~\ref{proposition.aut}, if $x=uv$ is a
factorization of $x$ such that $eu=e$ (or equivalently $e \leq_\JJ
u$), then $u\in\laut{x}$, that is $ux=x$. Similarly on the right side,
$vf = f$ implies that $xv = x$. The existence of such trivial
factorizations for any element of $\tMonoid$, beyond the usual
$x=1x=x1$, motivate the introduction of refinements of the usual
notion of proper factorizations.
\begin{definition}
  Take $x\in \tMonoid$, and let $e:=\lfix{x}$ and $f:=\rfix{x}$. A
  factorization $x=uv$ is
  \begin{itemize}
  \item \emph{proper} if $u \neq x$ and $v \neq x$;
  \item \emph{non-trivial} if $eu \neq e$ and $vf \neq f$ (or equivalently $e
    \not\le_\JJ u$ and $f \not \le_\JJ v$, or $u\notin \laut{x}$ and
    $v\notin \raut{x}$);
  \item \emph{compatible} if $u$ and $v$ are non-idempotent and
    \begin{equation*}
      \lfix{u} = e\,,\quad \rfix{v} = f\,\quad\text{and}\quad
      \rfix{u} =\lfix{v}\,.
  \end{equation*}
  \end{itemize}
\end{definition}
\begin{example}
Among the factorizations of $\pi_2\pi_1\pi_3\pi_2$ in $H_0(\sg[4])$, the
following are non-proper and trivial:
\begin{equation*}
(\id, \pi_2\pi_1\pi_3\pi_2)\quad
(\pi_2, \pi_2\pi_1\pi_3\pi_2)\quad
(\pi_2\pi_1\pi_3\pi_2, \id)\quad
(\pi_2\pi_1\pi_3\pi_2, \pi_2)\,.
\end{equation*}
The two following factorizations are proper and trivial:
\begin{equation*}
(\pi_2, \pi_1\pi_3\pi_2)\quad
(\pi_2\pi_1\pi_3, \pi_2)\,.
\end{equation*}
Here are the non-trivial and incompatible factorizations:
\begin{equation*}
\begin{array}{ccc}
(\pi_2\pi_1, \pi_3\pi_2) &
(\pi_2\pi_3, \pi_1\pi_2) &
(\pi_2\pi_1, \pi_1\pi_3\pi_2) \\
(\pi_2\pi_3, \pi_1\pi_3\pi_2) &
(\pi_2\pi_1\pi_3, \pi_1\pi_2) &
(\pi_2\pi_1\pi_3, \pi_3\pi_2)\,.
\end{array}
\end{equation*}
The only non-trivial and compatible factorization is:
\begin{equation*}
(\pi_2\pi_1\pi_3, \pi_1\pi_3\pi_2)\,.
\end{equation*}
\end{example}

\begin{lemma}
  \label{lemma.factorization.nontrivial.proper}
  Any non-trivial factorization is also proper.
\end{lemma}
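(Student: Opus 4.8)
The plan is to prove the contrapositive: if a factorization $x=uv$ fails to be proper, then it fails to be non-trivial. So suppose $x=uv$ with $u=x$ or $v=x$, and set $e:=\lfix{x}$, $f:=\rfix{x}$ as in the definition.

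First I would treat the case $u=x$. Then $x=uv=xv$, so $v\in\raut{x}$, which by Proposition~\ref{proposition.aut} means $f=\rfix{x}\le_\JJ v$. Applying Lemma~\ref{lemma.j.idemp} to the idempotent $f$ and the element $v$ gives $vf=f$. Hence the factorization is not non-trivial, since the condition $vf\neq f$ is violated. The case $v=x$ is symmetric: then $x=uv=ux$, so $u\in\laut{x}$, hence $e=\lfix{x}\le_\JJ u$, and Lemma~\ref{lemma.j.idemp} (applied to the idempotent $e$) yields $eu=e$, again violating non-triviality.

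Combining the two cases, any non-proper factorization is trivial, which is exactly the contrapositive of the claim. I do not expect any real obstacle here: the only ingredient is the already-recorded translation between the membership conditions $u\in\laut{x}$, $v\in\raut{x}$ and the relations $eu=e$, $vf=f$ via Lemma~\ref{lemma.j.idemp}, together with the characterization of $\raut{x}$ and $\laut{x}$ in Proposition~\ref{proposition.aut}; everything else is a one-line manipulation of the equation $x=uv$.
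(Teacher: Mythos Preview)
Your proof is correct and follows essentially the same contrapositive route as the paper: assuming the factorization is not proper (say $u=x$), you deduce $v\in\raut{x}$ and hence triviality. The only cosmetic difference is that you push through Lemma~\ref{lemma.j.idemp} to reach $vf=f$, whereas the paper stops at the equivalent condition $\rfix{x}\le_\JJ v$ already recorded in the definition of non-trivial.
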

\begin{proof}
  Indeed by contraposition, if $x=xv$ then $v\in\raut{x}$ and therefore
  $\rfix{x} \le_\JJ v$. The case $x=vx$ can be proved similarly.
\end{proof}

\begin{lemma}
  If $x$ is an idempotent, $x$ admits only trivial factorizations.
\end{lemma}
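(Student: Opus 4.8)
The plan is to reduce the whole statement to Lemma~\ref{lemma.idem_factor}. First I would compute the left and right symbols of an idempotent $x$: since $xx = x$, the idempotent $x$ belongs to $\{e\in\idempMon \suchthat xe = x\}$, and it is the $\JJ$-minimum of this set because $xe = x$ forces $x \leq_\JJ e$; hence by~\eqref{eq.rfix} one gets $\rfix{x} = x$, and symmetrically by~\eqref{eq.lfix} one gets $\lfix{x} = x$. So with the notation of the definition, $e = f = x$.

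Next, given an arbitrary factorization $x = uv$, I would apply Lemma~\ref{lemma.idem_factor} with $a = u$ and $b = v$ (legitimate since $e := x = ab$ is idempotent). This yields $x = xu = xv = ux = vx$. In particular $xu = x$, that is, $eu = e$ where $e = \lfix{x} = x$.

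Finally I would invoke the definition of non-triviality: a factorization $x = uv$ is non-trivial exactly when $eu \neq e$ \emph{and} $vf \neq f$. Since we have just shown $eu = e$, no factorization of $x$ can be non-trivial, so every factorization of $x$ is trivial, which is the claim. The only thing to watch is the bookkeeping — reading ``trivial'' as the negation of ``non-trivial'' (the disjunction $eu = e$ or $vf = f$) and keeping in mind that $\lfix{x} = \rfix{x} = x$ for idempotent $x$; once Lemma~\ref{lemma.idem_factor} is in hand there is no genuine obstacle.
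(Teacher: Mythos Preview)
Your proof is correct and follows essentially the same approach as the paper: both first observe that $\lfix{x}=\rfix{x}=x$ for an idempotent $x$, and then deduce $xu=x$ (hence $eu=e$) from a factorization $x=uv$. The only cosmetic difference is that you cite Lemma~\ref{lemma.idem_factor} directly, whereas the paper inlines the relevant $\JJ$-order squeeze $x\leq_\JJ xu\leq_\JJ x$ that underlies that lemma.
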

\begin{proof}
  Indeed if $x$ is idempotent then $x=\rfix{x}=\lfix{x}$. Then from $x=uv$,
  one obtains that $x=xuv$. Therefore $x \le_\JJ xu \le_\JJ x$ and therefore $x=xu$.
\end{proof}

\begin{lemma}
  Any compatible factorization is non-trivial.
\end{lemma}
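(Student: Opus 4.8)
The plan is to establish both defining conditions of non-triviality, namely $eu\neq e$ and $vf\neq f$ where $e=\lfix{x}$ and $f=\rfix{x}$, directly from the non-idempotency clause in the definition of compatibility. First I would recall that, by~\eqref{eq.lfix}, the left symbol $\lfix{u}$ is an idempotent satisfying $\lfix{u}\cdot u=u$; since compatibility of $x=uv$ asserts $\lfix{u}=e$, this yields $eu=u$. Now suppose the factorization were trivial on the left, i.e.\ $eu=e$. Combining the two equalities gives $u=eu=e$, so $u$ would coincide with the idempotent $e$, contradicting the requirement in the definition of compatibility that $u$ be non-idempotent. Hence $eu\neq e$.

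The argument on the right is completely symmetric: compatibility gives $\rfix{v}=f$, and by~\eqref{eq.rfix} the idempotent $\rfix{v}$ satisfies $v\cdot\rfix{v}=v$, so $vf=v$. Were $vf=f$, we would obtain $v=vf=f$, making $v$ idempotent and again contradicting compatibility. Thus $vf\neq f$ as well, and the factorization $x=uv$ satisfies both conditions defining non-triviality.

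I do not expect any genuine obstacle here: the statement is a bookkeeping consequence of the definitions. The only point to keep in mind is that the left and right symbols come with the built-in identities $\lfix{u}\cdot u=u$ and $v\cdot\rfix{v}=v$ (equations~\eqref{eq.lfix} and~\eqref{eq.rfix}, which follow from Proposition~\ref{proposition.aut}); everything else is an immediate substitution. Equivalently, one can phrase the conclusion without passing to a contradiction, by observing that $eu=u$ differs from $e$ precisely because $u$, unlike $e$, is not idempotent, and symmetrically that $vf=v\neq f$.
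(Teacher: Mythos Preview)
Your proof is correct and follows essentially the same approach as the paper: from $\lfix{u}=e$ deduce $eu=u$, then use that $u$ is non-idempotent (hence $u\neq e$) to conclude $eu\neq e$, and argue symmetrically on the right. The paper phrases it directly rather than by contradiction, exactly as in your final sentence.
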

\begin{proof}
  Let $x=uv$ be a compatible factorization. Then $\lfix{u} = e$ implies that
  $eu=u$. Since $u$ is not idempotent it cannot be equal to $e$ so that $eu
  \neq e$. The same holds on the other side.
\end{proof}

We order the factorizations of $x$ by the product $\JJ$-order: Suppose that
$x=uv=u'v'$. Then we write $(u, v) \leq_\JJ (u', v')$ if and only if $u\leq_\JJ
u'$ and $v\leq_\JJ v'$.
\begin{lemma}
  If $x = uv$ is a non-trivial factorization which is minimal for the product
  $\JJ$-order, then it is compatible.
\end{lemma}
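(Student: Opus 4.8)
The plan is to argue by contradiction: assume $x = uv$ is a non-trivial factorization, minimal for the product $\JJ$-order, but not compatible, and manufacture a strictly smaller non-trivial factorization. Write $e = \lfix{x}$ and $f = \rfix{x}$. Compatibility fails in one of five ways: $u$ is idempotent, $v$ is idempotent, $\lfix{u} \neq e$, $\rfix{v} \neq f$, or $\rfix{u} \neq \lfix{v}$. I would handle these in turn, in each case replacing $(u,v)$ by a $\JJ$-smaller pair whose product is still $x$ and which is still non-trivial, contradicting minimality.

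First I would dispose of the idempotent cases. If $u$ is idempotent, then since $x = uv$ we have $ux = uuv = uv = x$, so $u \in \laut{x}$, i.e.\ $e \leq_\JJ u$, which directly contradicts non-triviality ($eu \neq e$ forces $e \not\leq_\JJ u$ by Lemma~\ref{lemma.j.idemp}); symmetrically for $v$ idempotent. So $u$ and $v$ are automatically non-idempotent, and the remaining content is the three ``fix'' conditions. For $\lfix{u} = e$: by Proposition~\ref{proposition.lfix.decreasing}, $\operatorname{lfix}$ is decreasing for $\leq_\RR$, and since $x = uv \leq_\RR u$ we get $\lfix{x} \leq_\RR \lfix{u}$, that is $e \leq_\RR \lfix{u}$. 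Now set $u' := \lfix{u}\, u$. Then $u' = u$ as elements? No — $\lfix{u} u = u$ by definition of $\lfix{u}$, so that replacement is trivial. Instead I would replace $u$ by $eu$: we have $eu \cdot v = exv \cdot$? Not quite; rather $e x = x$ since $e = \lfix{x}$, but $e(uv) = x$ does not immediately factor through $u$. The correct move is to observe $x = ex = e(uv) = (eu)v$, giving a new factorization $(eu, v)$ with $eu \leq_\JJ u$. If $eu <_\JJ u$ this is strictly smaller; I must check it is still non-trivial, i.e.\ $e(eu) \neq e$ — but $e(eu) = eu$, and if $eu = e$ then $u \in \laut{x}$, contradicting non-triviality of $(u,v)$. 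So $(eu,v)$ is non-trivial, and by minimality $eu = u$, i.e.\ $e \leq_\JJ \lfix{u}$; combined with $\lfix{u} \leq_\JJ \lfix{x} = e$ (from $\lfix$ decreasing in $\RR$-order, hence in $\JJ$-order) this gives $\lfix{u} = e$. The symmetric argument on the right gives $\rfix{v} = f$.

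The last and most delicate condition is $\rfix{u} = \lfix{v}$. Here I would let $g := \rfix{u}$ and $h := \lfix{v}$, so $ug = u$ and $hv = v$, whence $x = uv = ug v = u (gh) v$ — wait, that needs $gv = ghv$, which needs $g h v = g v$, i.e.\ $gh \cdot v = g \cdot v$; this holds if we instead factor as $x = u v = u (hv) = (uh) v$, and then consider the element $uh$: by Proposition~\ref{proposition.lfix.decreasing} applied on the appropriate side, $\rfix{uh} \leq_\LL \rfix{h}$? This is the step I expect to be the main obstacle — threading the $\operatorname{lfix}$/$\operatorname{rfix}$ inequalities through the middle of the factorization to produce a genuinely $\JJ$-smaller non-trivial refinement $(u', v')$ with $u' v' = x$ and $\rfix{u'} = \lfix{v'}$. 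The natural candidate is to push the idempotent $u^\omega \cdot$? no; rather, set $k := (gh)^\omega = g \star h = g \wedge_\JJ h$ (Theorem~\ref{theorem.star}), note $uk = u$ and $kv = v$ since $k \leq_\JJ g$ and $k \leq_\JJ h$ give $uk = u(\rfix{u}\text{-type element})$ using that $\raut{u} = \{w : \rfix{u} \leq_\JJ w\}$ — but $k \leq_\JJ g = \rfix{u}$ goes the wrong way. So instead I would use $g h$ itself (not its idempotent power): $x = u v = (u g)(h v) = u (g h) v$, and then factor $x = u' v'$ with $u' = u(gh)^{?}$ choosing the split point so that the common value $\rfix{u'} = \lfix{v'}$ is forced, using minimality to collapse any slack. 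I expect that once the two outer conditions $\lfix{u} = e$, $\rfix{v} = f$ are in hand, a careful application of minimality to the factorization $x = u \cdot (\text{something}) \cdot v$ — inserting $\lfix{v}$ or $\rfix{u}$ and invoking Lemma~\ref{lemma.j.idemp} together with Proposition~\ref{proposition.aut} — pins down $\rfix{u} = \lfix{v}$, completing the proof.
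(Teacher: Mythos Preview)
Your treatment of the first two conditions is essentially the paper's argument: replace $(u,v)$ by $(eu,v)$ (the paper uses $(eu,vf)$ in one stroke), check non-triviality, and invoke minimality to get $eu=u$; then combine with Proposition~\ref{proposition.lfix.decreasing} to conclude $\lfix{u}=e$. One small slip: you write ``$eu=u$, i.e.\ $e\leq_\JJ\lfix{u}$'', but $eu=u$ means $e\in\laut{u}$, which gives $\lfix{u}\leq_\JJ e$; it is the decreasing property of $\operatorname{lfix}$ along $\leq_\RR$ that gives the reverse inequality $e\leq_\JJ\lfix{u}$. You have both inequalities, just with swapped justifications. The non-idempotence of $u$ and $v$ then falls out immediately, as you noted.

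The genuine gap is the middle condition $\rfix{u}=\lfix{v}$. Your exploratory attempts (inserting $gh$, passing to $(gh)^\omega$, trying to split $u(gh)v$) do not converge, and you end with a hand-wave. The clean move, which you are circling but never land on, is much simpler than anything you try: for any idempotent $g$ with $ug=u$, consider the factorization $x=u(gv)$. The first component is unchanged, so $eu\neq e$ still holds; for the second, $gvf\leq_\JJ vf<_\JJ f$ (strict because $(u,v)$ is non-trivial), so $(gv)f\neq f$. Thus $(u,gv)$ is non-trivial, and minimality forces $gv=v$. By the symmetric argument, $gv=v$ forces $ug=u$. Hence the idempotents $g$ satisfying $ug=u$ are exactly those satisfying $gv=v$; taking $g=\rfix{u}$ and $g=\lfix{v}$ in turn gives $\rfix{u}=\lfix{v}$. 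No meets, no $\omega$-powers, no splitting of a triple product---just the same minimality trick you already used for the outer conditions, applied once more with a single idempotent inserted in the middle.
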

\begin{proof}
  Let $x = uv$ be a minimal non-trivial factorization. Then $(eu, vf)$ with
  $e=\lfix{x}$ and $f=\rfix{x}$ is a
  factorization of $x$ which is also clearly non-trivial. By minimality we
  must have that $u=eu$ and $v=vf$. On the other hand, $\lfix{u}x = \lfix{u}uv
  = uv = x$, so that $e=\lfix{x}\leq_\JJ\lfix{u}$ and therefore
  $e=\lfix{u}$. This in turn implies that $u$ is non-idempotent since it is
  different from its left fix. The same holds on the right side.

  It remains to show that $\rfix{u} =\lfix{v}$.  If $g$ is an idempotent such
  that $ug=u$, then $x=u(gv)$ is a non-trivial factorization, because $gvf\leq_\JJ
  vf <_\JJ f$ so that $gvf \neq f$. Therefore by minimality, $gv=v$. By symmetry
  $ug=u$ is equivalent to $gv=v$.
\end{proof}

Putting together these two last lemmas we obtain:
\begin{proposition}
  Take $x\in \tMonoid$. Then the following are equivalent:
  \begin{enumerate}
  \item $x$ admits a non-trivial factorization;
  \item $x$ admits a compatible factorization.
  \end{enumerate}
\end{proposition}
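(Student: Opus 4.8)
The plan is to prove the equivalence of the two statements by combining the lemmas already established just above. The implication $(2)\Rightarrow(1)$ is immediate: the lemma stating that any compatible factorization is non-trivial shows directly that if $x$ admits a compatible factorization $x=uv$, then this same factorization is non-trivial. So no work is needed there beyond citing that lemma.

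For the implication $(1)\Rightarrow(2)$, suppose $x$ admits a non-trivial factorization $x=uv$. The set of non-trivial factorizations of $x$, ordered by the product $\JJ$-order (with $(u,v)\le_\JJ(u',v')$ iff $u\le_\JJ u'$ and $v\le_\JJ v'$), is then non-empty. Since $\tMonoid$ is finite, this poset has a minimal element $x=u_0v_0$. By the lemma asserting that a non-trivial factorization which is minimal for the product $\JJ$-order is compatible, the factorization $x=u_0v_0$ is compatible. Hence $x$ admits a compatible factorization, establishing $(1)\Rightarrow(2)$.

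The proof is therefore essentially a one-line deduction from the two immediately preceding lemmas together with finiteness of $\tMonoid$. There is no real obstacle: the only subtlety to keep in mind is that the minimal element must be chosen within the (non-empty) subposet of \emph{non-trivial} factorizations rather than among all factorizations of $x$ — one cannot simply minimize over all factorizations, since the trivial ones $(1,x)$ and $(x,1)$ would interfere. Since finiteness guarantees that a minimal element of the non-empty set of non-trivial factorizations exists, this point causes no trouble.

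\begin{proof}
  If $x=uv$ is a compatible factorization, then it is non-trivial by the lemma above, giving $(2)\Rightarrow(1)$. Conversely, suppose $x$ admits a non-trivial factorization. Since $\tMonoid$ is finite, the non-empty set of non-trivial factorizations of $x$ has a minimal element $x=uv$ for the product $\JJ$-order. By the preceding lemma, this minimal non-trivial factorization is compatible, which proves $(1)\Rightarrow(2)$.
\end{proof}
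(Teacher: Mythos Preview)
Your proof is correct and follows exactly the paper's approach: it simply combines the two preceding lemmas (compatible $\Rightarrow$ non-trivial, and minimal non-trivial $\Rightarrow$ compatible), with finiteness of $\tMonoid$ ensuring a minimal non-trivial factorization exists. The paper's own proof is in fact just the sentence ``Putting together these two last lemmas we obtain:'', so your version is a slightly more explicit rendering of the same argument.
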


\begin{definition}
  An element is called \emph{irreducible} if it admits no proper factorization.
  The set of all irreducible elements of a monoid $\tMonoid$ is denoted by
  $\irr$.

  An element is called \emph{c-irreducible} if it admits
  no non-trivial factorization. The set of all c-irreducible elements of a
  monoid $\tMonoid$ is denoted by $\cirr$.

  We also denote by $\quiv$ the set of c-irreducible non-idempotent elements. 
\end{definition}

\begin{remark}
  \label{remark.factorization.irr.cirr}
  By Lemma~\ref{lemma.factorization.nontrivial.proper}, $\irr\subseteq
  \cirr$. In particular $\cirr$ generates $\tMonoid$.
\end{remark}

\bigskip
\subsection{The Ext-quiver}
\label{ss.quiver}

The goal of this section is to give a combinatorial description of the
quiver of the algebra of a $\JJ$-trivial monoid. We start by recalling
some well-known facts about algebras and quivers.\medskip

Recall that a quiver $Q$ is a directed graph where loops and multiple
arrows between two vertices are allowed. The path algebra $\K Q$ of
$Q$ is defined as follows. A path in $Q$ is a sequence of arrows $a_n
a_{n-1} \cdots a_3 a_2 a_1$ such that the head of $a_{i+1}$ is equal to
the tail of $a_i$. The product of the path algebra is defined by
concatenating paths if tail and head matches and by zero
otherwise. Let $F$ denote the ideal in $\K Q$ generated by the arrows of
$Q$. An ideal $I \subseteq \K Q$ is said to be \emph{admissible} if
there exists an integer $m\geq2$ such that $F^m \subseteq I \subseteq
F^2$.  An algebra is called \emph{split basic} if and only if all the
simple $A$-modules are one-dimensional. The relevance of quivers comes
from the following theorem:
\begin{theorem}[See e.g.~\cite{Auslander.Reiten.Smaloe.1997}]
  For any finite-dimensional split basic algebra $A$, there is a
  unique quiver $Q$ such that $A$ is isomorphic to $\K Q/I$ for some
  admissible ideal $I$.
\end{theorem}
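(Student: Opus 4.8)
This is the classical Gabriel-type structure theorem for finite-dimensional algebras, so the plan is the standard one: extract the quiver $Q$ from $A$ by intrinsic data, construct a surjective algebra morphism $\K Q\to A$, recognize its kernel as an admissible ideal, and then observe that the data defining $Q$ depends only on the isomorphism class of $A$, which yields uniqueness. Since $A$ is finite dimensional it is Artinian, so $\rad A$ is nilpotent, say $(\rad A)^N=0$, and $A/\rad A$ is semisimple; as $A$ is split basic, every simple $A$-module $S$ has $\End_A(S)=\K$, and Artin--Wedderburn then forces $A/\rad A\cong\K^r$, where $r$ is the number of isomorphism classes of simple modules. Lifting the $r$ primitive idempotents of $A/\rad A$ along the nilpotent ideal $\rad A$ yields a complete system of primitive orthogonal idempotents $e_1,\dots,e_r$ with $1=e_1+\dots+e_r$; these are the vertices of $Q$, and one declares the number of arrows from $i$ to $j$ to be $\dim_\K e_j\bigl(\rad A/(\rad A)^2\bigr)e_i$.

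For the morphism, send the vertex $i$ to $e_i$, and for each pair $(i,j)$ choose elements of $e_j(\rad A)e_i$ whose classes form a $\K$-basis of $e_j\bigl(\rad A/(\rad A)^2\bigr)e_i$, sending the arrows $i\to j$ to them; since a path of $Q$ is a composable word in vertices and arrows, this extends uniquely to an algebra morphism $\Psi\colon\K Q\to A$. Surjectivity is the Nakayama step: if $V\subseteq\rad A$ is the span of the chosen arrow images, then $V+(\rad A)^2=\rad A$, hence inductively $(\rad A)^k=V^k+(\rad A)^{k+1}$, and nilpotency gives $\rad A=V+V^2+\dots+V^{N-1}\subseteq\im\Psi$; combined with $e_1,\dots,e_r\in\im\Psi$ and the vector space decomposition $A=\K e_1\oplus\dots\oplus\K e_r\oplus\rad A$, this yields $\im\Psi=A$. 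Set $I:=\ker\Psi$, so that $A\cong\K Q/I$.

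To see $I$ is admissible, note $\Psi(F)\subseteq\rad A$ because arrows map into $\rad A$ and vertices map to idempotents; hence $\Psi(F^N)\subseteq(\rad A)^N=0$, i.e.\ $F^N\subseteq I$. Conversely, the induced map $\K Q/F^2\to A/(\rad A)^2$ is surjective (because $\Psi$ is) and both sides have dimension $r$ plus the number of arrows, so it is an isomorphism; therefore the kernel of the composite $\K Q\to A\to A/(\rad A)^2$ is exactly $F^2$, and since this composite annihilates $I$ we conclude $I\subseteq F^2$. (When $\rad A=0$, $A$ is semisimple, $Q$ has only vertices, and $I=F^2=0$.)

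For uniqueness, suppose $A\cong\K Q'/I'$ with $I'$ admissible. Then $\rad(\K Q'/I')$ equals the arrow ideal $F'$ modulo $I'$: it is nilpotent since $F'^m\subseteq I'$, the quotient $(\K Q'/I')/(F'/I')\cong\K Q'/F'$ is semisimple (a product of copies of $\K$), and a nilpotent ideal with semisimple quotient is the radical. Using $I'\subseteq F'^2$ one gets $\rad/\rad^2\cong F'/F'^2$, and then for the idempotents $e'_i$ coming from the vertices of $Q'$ the dimension $\dim e'_j(\rad/\rad^2)e'_i$ equals the number of arrows $i\to j$ in $Q'$, while the vertices themselves are recovered from $A/\rad A\cong\K^{r'}$. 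Hence $Q'$ is determined by the isomorphism class of $A$; applied to the presentation constructed above, this recipe returns $Q$, so $Q'\cong Q$. I expect the main obstacles to be the Nakayama-type surjectivity argument together with the clean identification $\ker(\K Q\to A/(\rad A)^2)=F^2$ needed for $I\subseteq F^2$, and, in the uniqueness half, the verification that $\rad(\K Q'/I')$ is genuinely $F'/I'$ — which is precisely where both inclusions of the admissibility condition are used.
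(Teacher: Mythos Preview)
The paper does not prove this theorem; it is quoted as background from \cite{Auslander.Reiten.Smaloe.1997}, and immediately afterwards the paper simply records the recipe it will use: the vertices of $Q$ are indexed by a complete system of primitive orthogonal idempotents, and the number of arrows from $v_i$ to $v_j$ is $\dim f_i\big(\rad A/\rad^2 A\big)f_j$. Your argument is the standard Gabriel construction (lift idempotents, choose arrow representatives in $\rad A$ over $(\rad A)^2$, push through Nakayama for surjectivity, then squeeze $I$ between $F^N$ and $F^2$, and recover $Q$ from $\rad/\rad^2$ for uniqueness) and is correct; there is nothing in the paper to compare it against beyond that one-line description, with which your construction agrees up to the usual left/right convention on which side of $e_j(\rad/\rad^2)e_i$ is the source of the arrow.
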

In other words, the quiver $Q$ can be seen as a first order
approximation of the algebra $A$. Note however that the ideal $I$ is
not necessarily unique.\medskip

The quiver of a split basic $\K$-algebra $A$ can be computed as
follows: Let $\{f_i\mid i\in E\}$ be a complete system of primitive
orthogonal idempotents. There is one vertex $v_i$ in $Q$ for each
$i\in E$. If $i,j \in E$, then the number of arrows in $Q$ from $v_i$
to $v_j$ is $\dim f_i\big(\rad A/\mathord{\rad^2 A}\big)f_j$. This
construction does not depend on the chosen system of idempotents.

\begin{theorem}
  Let $\tMonoid$ be a $\JJ$-trivial monoid. The quiver of the algebra
  of $\tMonoid$ is the following:
  \begin{itemize}
  \item There is one vertex $v_e$ for each idempotent $e\in\idempMon$.
  \item There is an arrow from $v_{\lfix{x}}$ to $v_{\rfix{x}}$ for
    every c-irreducible element $x\in\quiv$.
  \end{itemize}
\end{theorem}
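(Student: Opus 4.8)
The plan is to compute $\dim f_{e}\bigl(\rad\K\tMonoid/\rad^2\K\tMonoid\bigr)f_{f}$ for each pair of idempotents $e,f\in\idempMon$ and show it equals the number of c-irreducible non-idempotent elements $x\in\quiv$ with $\lfix{x}=e$ and $\rfix{x}=f$. The starting point is Corollary~\ref{corollary.bx.triang}: the elements $b_x = f_{\lfix{x}} x f_{\rfix{x}}$ form a basis of $\K\tMonoid$ adapted to the Peirce decomposition $\K\tMonoid = \bigoplus_{e,f} f_e\K\tMonoid f_f$, and $b_x = x + \sum_{y<_\JJ x} c_y y$ is unitriangular with respect to $\JJ$-order. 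Combined with Proposition~\ref{proposition.basis.radical}, which says $\{x-x^\omega\mid x\notin\idempMon\}$ is a basis of $\rad\K\tMonoid$, one sees that $\{b_x \mid x\in\tMonoid\setminus\idempMon\}$ together with $\{b_e = f_e \mid e\in\idempMon\}$ is a basis of $\K\tMonoid$ with the $b_x$ ($x$ non-idempotent) spanning $\rad\K\tMonoid$ inside $f_e\K\tMonoid f_f$ with $e=\lfix{x}$, $f=\rfix{x}$. So $\dim f_e(\rad\K\tMonoid)f_f = |C_{e,f}\setminus\idempMon|$, and the task reduces to identifying, modulo $\rad^2\K\tMonoid$, which $b_x$ become dependent — equivalently, computing a basis of $f_e(\rad^2\K\tMonoid)f_f$ in terms of the $b_x$.

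The key step is: $\rad^2\K\tMonoid$ is spanned by products $b_u\,b_v$ with $u,v$ non-idempotent, and I would show that for such $u,v$ the product $b_u b_v$, reduced in the $b$-basis, has the form $b_{uv} + (\text{lower terms in }\JJ\text{-order, again }b_x\text{'s with }x\text{ non-idempotent})$ whenever $uv$ is non-idempotent with $\lfix{uv}=\lfix{u}$, $\rfix{uv}=\rfix{v}$, $\rfix{u}=\lfix{v}$ — i.e.\ exactly when $(u,v)$ is a compatible factorization of $x:=uv$ — and that $b_u b_v$ lands in $\rad^2$ with appropriate fix-values otherwise (using $f_{\lfix{u}}f_{\rfix{u}}=\delta$ orthogonality to see $b_u b_v = f_{\lfix u} u f_{\rfix u} f_{\lfix v} v f_{\rfix v}$ vanishes unless $\rfix u = \lfix v$, and that the block is $f_{\lfix u}\,\cdot\,f_{\rfix v}$). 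Then, by the Proposition relating non-trivial and compatible factorizations, an element $x$ is a nonzero contributor to $f_e(\rad^2)f_f$ modulo lower terms precisely when $x$ admits a compatible — equivalently non-trivial — factorization, i.e.\ when $x\notin\quiv$. A triangularity/induction argument on $\JJ$-order then shows $\{b_x \bmod \rad^2 \mid x\in C_{e,f},\ x\notin\idempMon\}$ spans $f_e(\rad/\rad^2)f_f$ with the $b_x$ for $x$ c-irreducible non-idempotent forming a basis: the reducible ones are expressed via their compatible factorizations plus strictly $\JJ$-smaller terms, which are handled inductively.

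The main obstacle I anticipate is the bookkeeping in that last triangular elimination: showing precisely that every non-idempotent $x$ admitting a non-trivial factorization has $b_x \equiv \sum (\text{strictly }\JJ\text{-smaller }b_y) \pmod{\rad^2\K\tMonoid}$, and conversely that the c-irreducible $b_x$ are genuinely independent modulo $\rad^2$ — the latter because any relation would, by unitriangularity, have to involve a $\JJ$-maximal $x$ whose $b_x$ lies in $\rad^2$, forcing a compatible factorization of $x$ and contradicting c-irreducibility. One must also be careful that "lower terms" arising from $b_u b_v - b_{uv}$ and from the lifting corrections $c_{x,e}$ in $f_e = e + \sum_{x<_\JJ e}c_{x,e}x$ stay within $\rad^2\K\tMonoid$ and within the correct Peirce block; this follows because $f_e$ differs from $e$ by radical elements and $\rad\cdot\rad\subseteq\rad^2$. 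Once the triangular system is set up correctly, counting c-irreducible non-idempotents in $C_{e,f}$ gives the number of arrows $v_e\to v_f$, which is the claim.
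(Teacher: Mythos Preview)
Your overall strategy---work in the Peirce decomposition with the $b_x$ basis and exploit $\JJ$-triangularity---is sound, and the spanning direction is essentially the paper's own Proposition: for each non-idempotent $x$ admitting a non-trivial factorization $x=uv$, one has $(u-u^\omega)(v-v^\omega)=x+\sum_{y<_\JJ x}c_y y$, which by $\JJ$-triangularity yields at least $|\tMonoid\setminus(\idempMon\cup\quiv)|$ linearly independent elements of $\rad^2\K\tMonoid$.

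The gap is in your independence argument. You write that a dependence among the $b_x$ (for $x\in\quiv$) modulo $\rad^2$ would, ``by unitriangularity,'' force some $\JJ$-maximal $x$ to have $b_x\in\rad^2$, and that this in turn would force a compatible factorization of $x$. Neither step is justified. A relation $\sum_{x\in\quiv}\alpha_x b_x\in\rad^2$ only places the $\JJ$-maximal $b_{x_0}$ in $\rad^2+\operatorname{span}\{b_y:y\in\quiv,\,y\ne x_0\}$, not in $\rad^2$ alone; and even if you had $b_{x_0}\in\rad^2$, you have no mechanism for extracting a factorization of $x_0$ from an arbitrary element of $\rad^2$. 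Concretely, $\rad^2$ is spanned by products $(a-a^\omega)(b-b^\omega)$, and while you can show each such product vanishes when the factorization $(a,b)$ of $ab$ is trivial and equals $ab+(\text{lower})$ when it is non-trivial, the ``lower'' terms---namely $a^\omega b$, $ab^\omega$, $a^\omega b^\omega$---may themselves be c-irreducible. So $\rad^2$ need not sit inside $\operatorname{span}\{x-x^\omega:x\notin\quiv\}$, and the triangular elimination you sketch does not close.

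The paper supplies the missing inequality $\dim(\rad/\rad^2)\geq|\quiv|$ by a genuinely different device: for each $x\in\quiv$ it constructs an explicit two-dimensional module $M_x$ (a quotient of $e\tMonoid/I_x$ with basis $\epsilon_x,\xi_x$) on which $x-x^\omega$ acts nontrivially, sending $\epsilon_x\mapsto\xi_x$. Since any element of $\rad^2$ kills every module with $\rad^2=0$, a linear combination $\sum_{x\in\quiv}c_x(x-x^\omega)$ lying in $\rad^2$ would act by zero on every $M_{x_0}$; taking $x_0$ $\JJ$-maximal with $c_{x_0}\ne0$ then gives a contradiction. This module construction is the key idea your sketch is missing; once both lower bounds are in hand, the dimension count $\dim\rad=|\tMonoid\setminus\idempMon|$ forces both to be equalities, and the Peirce refinement (your final paragraph, and the paper's Corollary following the dimension count) finishes the identification of the arrows.
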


This theorem follows from Corollary~\ref{corollary.quiver_arrow} below.

\begin{lemma}
  \label{lemma.factorization_ex}
  Let $x\in \quiv$ and set $e=\lfix{x}$ and $f=\rfix{x}$.
  Recall that, by definition, whenever $x = uv$, then either $eu = e$ or $vf = f$. Then,
  \begin{equation}
    [x,e[_\RR = \{ u \in \tMonoid \suchthat eu = u \ne e \text{ and } uf = x \}.
  \end{equation}
\end{lemma}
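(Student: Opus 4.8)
The plan is to prove the two inclusions of the claimed set equality separately, relying only on the defining relations $ex=x$ and $xf=x$ for the left/right symbols $e=\lfix{x}$, $f=\rfix{x}$, together with the c-irreducibility of $x$ --- which, as the statement recalls, says precisely that any factorization $x=uv$ has $eu=e$ or $vf=f$ --- and the fact that $\le_\RR$ is a genuine partial order on $\tMonoid$ (it is $\RR$-trivial, being $\JJ$-trivial). First I would record the harmless observation that $x\le_\RR e$ (from $x=ex$) and $x\neq e$ (since $x$ is non-idempotent while $e$ is idempotent), so that $[x,e[_\RR$ is a nonempty interval containing $x$.

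For the inclusion $\supseteq$, I would take $u$ with $eu=u$, $u\neq e$ and $uf=x$. Then $u=eu$ gives $u\le_\RR e$, which together with $u\neq e$ and antisymmetry of $\le_\RR$ gives $u<_\RR e$; and $x=uf$ gives $x\le_\RR u$. Hence $u\in[x,e[_\RR$. This direction is immediate.

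For the inclusion $\subseteq$, I would take $u\in[x,e[_\RR$, so $x\le_\RR u<_\RR e$. Writing $u=ew$ for some $w$ and using $e^2=e$ yields $eu=u$, and $u\neq e$ is part of $u<_\RR e$. The only thing left is $uf=x$. Since $x\le_\RR u$, I can write $x=uv$ for some $v\in\tMonoid$; this is a factorization of $x$, so c-irreducibility forces $eu=e$ or $vf=f$. The branch $eu=e$ cannot occur: combined with the already established $eu=u$ it would give $u=e$, contradicting $u<_\RR e$. So $vf=f$, and then
\[
  uf = u(vf) = (uv)f = xf = x
\]
by associativity, $uv=x$, and $xf=x$. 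Hence $u$ lies in the right-hand set, completing the argument.

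I expect the one genuinely substantive step to be this last one: recognizing that the relation $x=uv$ coming from $x\le_\RR u$ is exactly the kind of factorization to which c-irreducibility applies, and that ruling out the alternative $eu=e$ is forced by the strictness $u<_\RR e$. Everything else is routine bookkeeping with the $\RR$-preorder and the idempotents $e$ and $f$; in particular this lemma needs neither the lifted idempotents $f_e$ nor any M\"obius inversion, only the definitions of $\lfix{x}$, $\rfix{x}$ (Proposition~\ref{proposition.aut}) and the definition of c-irreducibility.
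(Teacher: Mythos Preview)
Your proof is correct and follows essentially the same approach as the paper's: both inclusions are handled the same way, with the substantive step being that for $u\in[x,e[_\RR$ one writes $x=uv$, rules out the branch $eu=e$ of c-irreducibility via $eu=u\neq e$, and deduces $vf=f$. The paper makes the cosmetic choice of first replacing $v$ by $vf$ (so that $vf=v$) and then concluding $v=f$ outright, but your direct computation $uf=u(vf)=(uv)f=xf=x$ is the same argument.
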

\begin{proof}
  Obviously, $\{ u \in \tMonoid \suchthat eu = u \ne e \text{ and } uf = x \} \subseteq [x,e[_\RR$.
  Now take $u\in [x,e[_\RR$. Then, $u=ea$ for some $a\in M$ and hence $eu=eea =ea= u\ne e$.
  Furthermore, we can choose $v$ such that $x = uv$ with $vf = v$. Since $x$ admits
  no non-trivial factorization, we must have $v=f$.
\end{proof}

\begin{proposition}
  Take $x\in\quiv$ and let $e:=\lfix{x}$ and $f:=\rfix{x}$. Then, there
  exists a combinatorial module $M_x$ with basis $\epsilon=\epsilon_x,
  \xi=\xi_x$ and action given by
  \begin{align}
    \epsilon \cdot m &:=
    \begin{cases}
      \epsilon & \text{if $m\in [e,1]_\RR$}\\
      \xi      & \text{if $m\in [x,1]_\RR \setminus [e,1]_\RR$}\\
      0        & \text{otherwise,}
    \end{cases}
    \qquad \text{and}\\
    \xi \cdot m &:=
    \begin{cases}
      \xi      & \text{if $m\in [f,1]_\RR$}\\
      0        & \text{otherwise.}
    \end{cases}
  \end{align}
  This module of dimension $2$ is indecomposable, with composition factors
  given by $[e] + [f]$.
\end{proposition}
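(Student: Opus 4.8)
**The plan is to verify directly that the prescribed formulas define a module, that this module has the claimed composition factors, and that it is indecomposable.**

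First I would check that $M_x$ is a well-defined right $\tMonoid$-module, i.e. that $(\epsilon\cdot m)\cdot m' = \epsilon\cdot(mm')$ and similarly for $\xi$. For the $\xi$ row this is immediate: $[f,1]_\RR$ is exactly $\raut{x}$ extended — no, more precisely $\{m \mid fm = f\} = \raut{f}$ by Proposition~\ref{proposition.aut}, which is a submonoid, so $\xi\cdot m$ is the simple module $S_f$ (using $f\leq_\JJ m \Leftrightarrow fm=f$ via Lemma~\ref{lemma.j.idemp}). For the $\epsilon$ row, the key identities to check are: (i) if $m\in[e,1]_\RR$ and $m'\in[e,1]_\RR$ then $mm'\in[e,1]_\RR$ — clear since $em=e$, $em'=e$ give $emm'=e$; (ii) if $m\in[e,1]_\RR$ and $m'\in[x,1]_\RR\setminus[e,1]_\RR$, then we need $mm'\in[x,1]_\RR\setminus[e,1]_\RR$, so that $\epsilon\cdot(mm')=\xi$; here $em=e$ and $xm'=x$ give $xmm' = emm' $... one must use $ex=x$ (since $e=\lfix{x}$) to get $x = ex = emx$ — wait, I want $xmm'$: from $em=e$ we get $xm = exm$, hmm, this needs care. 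The cleaner route: $x\leq_\RR e$ and $m\in[e,1]_\RR$ means $em=e$; then $x m = x m$ and since $x\leq_\RR e$, $x=ea$ for some $a$, so $xm = eam$... this does not obviously simplify. Instead use: $\epsilon\cdot m = \epsilon$ already forces (by the $\tMonoid$-action on $S_e$, which is a quotient behavior) $em=e$; then for the composite I should instead track whether $x(mm') = x$ i.e. $mm'\in\raut{x}$, and whether $e(mm')=e$. We have $e(mm') = (em)m' = em' $, and $em'\ne e$ since $m'\notin[e,1]_\RR$; and $x(mm') = (xm)m'$, and $xm$: since $em=e$ and $x=\lfix{x}\,x = ex$, we get $xm = exm$; hmm I still need $xm=x$, which holds iff $m\in\raut{x}$. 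But $[e,1]_\RR\subseteq\raut{x}$ is false in general — wait, $em=e$ does give $xm = (ex)m$... no. Let me instead note $x\leq_\RR e$ so $\raut{e}\subseteq\raut{x}$? If $em=e$ then $x=ea$ gives $xm=eam$, not obviously $x$. Actually $\lfix{x}=e$ gives $e\in\laut{x}$ i.e. $ex=x$, and separately, for $m$ with $em=e$: is $xm=x$? Not necessarily. So the correct reading of the action is that $[x,1]_\RR$ should be interpreted via $\raut{x}$, and one must check $[e,1]_\RR\subseteq [x,1]_\RR$, i.e. $em=e\Rightarrow xm=x$; this holds because $x = xe$ (as $\rfix{}$... no). Hmm — $\lfix{x}=e$ gives $ex=x$ not $xe=x$. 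I will resolve this by using instead that $x\in eM$ so $x=ex$, and $\raut{e}=\{m:em=e\}$; then for $m\in\raut{e}$, $xm = exm$. This is where the subtlety lies, and I would handle it by invoking $x\leq_\RR e$ together with Lemma~\ref{lemma.factorization_ex} / Proposition~\ref{proposition.aut} carefully, or by rereading $[x,1]_\RR$ as $\{m : x\le_\RR xm\}$-type condition; the intervals here are in $\le_\RR$ on $\tMonoid$, and $[x,1]_\RR = \{m : x \le_\RR m \text{ and } m \le_\RR 1\}=\{m: x\le_\RR m\}$, so $m\in[x,1]_\RR$ iff $x = m u$ for some $u$ — reading it as a left factor of... actually $\le_\RR$: $x\le_\RR m$ iff $x=mu$. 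So $[x,1]_\RR=\{m: x=mu \text{ some } u\}$. Then $[e,1]_\RR\subseteq[x,1]_\RR$ because $x=ex$ means $x = e\cdot x$ with $e\le_\RR$ — no wait we need $x\le_\RR e$, i.e. $x = ea$: yes! $x=ex$ gives $x = e\cdot x$ so $x\le_\RR e$, hence any $m$ with $e\le_\RR m$ satisfies $x\le_\RR m$. Good, so the inclusion is fine.

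Second, granting the module is well-defined, I would read off the composition factors. The subspace $\K\xi$ is a submodule isomorphic to $S_f$ (the action matches Proposition~\ref{proposition.simple} since $\{m:fm=f\}=\{m:f\le_\JJ m\}$ and $[f,1]_\RR$ consists of those $m$ with $f\le_\RR m$, which for idempotent $f$ equals $\{m: fm=f\}$ by Lemma~\ref{lemma.idem_factor}/Lemma~\ref{lemma.j.idemp}). The quotient $M_x/\K\xi$ is spanned by the image of $\epsilon$, with $\epsilon\cdot m = \epsilon$ iff $m\in[e,1]_\RR = \{m: em=e\}$ and $0$ otherwise, which is exactly $S_e$. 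Hence the composition factors are $[e]+[f]$.

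Third, indecomposability. Since $M_x$ is $2$-dimensional with a unique simple submodule unless it splits, I would argue: any idempotent endomorphism $p$ of $M_x$ must preserve the unique submodule $\K\xi$ (it is the $\rad\tMonoid$-socle, or simply the only $1$-dimensional submodule unless $e\cong f$ and the extension is trivial). To rule out splitting, I must show there is no submodule complementary to $\K\xi$, equivalently no $m$-stable line through a vector $\epsilon + c\xi$ on which $\tMonoid$ acts by the character of $S_e$. Pick $m\in[x,1]_\RR\setminus[e,1]_\RR$ (this set is nonempty: $x$ itself is in it, since $x\le_\RR x$ trivially, but $x\notin[e,1]_\RR$ as $ex=x\ne e$ because $x$ is non-idempotent — good, this is exactly where $x\in\quiv$, i.e. $x$ c-irreducible and non-idempotent, enters). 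For such $m$, $\epsilon\cdot m=\xi$ while $\xi\cdot m = 0$ or $\xi$; if $(\epsilon+c\xi)m$ were a scalar multiple of $\epsilon+c\xi$ we would need the $\epsilon$-component to be preserved, but $\epsilon\cdot m = \xi$ has zero $\epsilon$-component, forcing the scalar to be $0$, i.e. $\xi\cdot m=0$ too and then $(\epsilon+c\xi)m=\xi\ne 0$, contradiction. Hence no complement, and $M_x$ is indecomposable. The main obstacle I anticipate is precisely the bookkeeping in the first step — verifying the module axioms, in particular confirming the inclusion $[e,1]_\RR\subseteq[x,1]_\RR$ and the compatibility on the "$\xi$" output branch $\epsilon\cdot(mm')=\xi$ when $\epsilon\cdot m=\xi$ and $m'\in[f,1]_\RR$ — this last needs $x\le_\RR mm'$ given $x\le_\RR m$ and $fm'=f$, together with $x f = x$ (i.e. $f=\rfix{x}$, so $xf=x$), which should close the argument but requires threading $\rfix{x}$ through correctly.
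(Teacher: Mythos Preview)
Your arguments for the composition factors and for indecomposability are correct and in fact fill in details the paper leaves implicit. The gap is entirely in step~1, and it is a real one: the module axioms cannot be verified by the direct case analysis you sketch.

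Concretely, take $\tMonoid=H_0(\sg[3])$ and $x=\pi_1\pi_2$, so $e=\pi_1$, $f=\pi_2$. Then $[e,1]_\RR=\{1,\pi_1\}$ and $[x,1]_\RR=\{1,\pi_1,\pi_1\pi_2\}$. With $m=\pi_1\in[e,1]_\RR$ and $m'=\pi_2\notin[x,1]_\RR$ the stated formulas give $(\epsilon\cdot m)\cdot m'=\epsilon\cdot\pi_2=0$, while $\epsilon\cdot(mm')=\epsilon\cdot(\pi_1\pi_2)=\xi$. So the formulas, read literally, do not define a module; the correct condition for $\epsilon\cdot m=\xi$ is $em\in[x,e[_\RR$ (equivalently $x\le_\RR em$ and $em\ne e$), not $m\in[x,1]_\RR\setminus[e,1]_\RR$. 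Your case analysis is chasing a moving target, and the cases you flag as ``bookkeeping'' do not close.

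The paper sidesteps this by not verifying the axioms directly. It realizes $M_x$ concretely as a subquotient of the right regular module: first $\K eM/\K I_x$ with $I_x:=eM\setminus[x,e]_\RR$ (a right ideal), which is automatically a module with basis indexed by $[x,e]_\RR$; then collapse all of $[x,e[_\RR$ to a single basis vector $\xi$. The only thing left to check is that this collapse respects the $\tMonoid$-action, i.e.\ that for $u\in[x,e[_\RR$ the condition $uy\in[x,e[_\RR$ depends only on $y$ --- and is in fact equivalent to $y\in[f,1]_\RR$. This is exactly where the c-irreducibility of $x$ enters, via Lemma~\ref{lemma.factorization_ex}: every $u\in[x,e[_\RR$ satisfies $uf=x$, so $f\le_\RR y$ gives $uy\ge_\RR uf=x$; conversely $uy\in[x,e[_\RR$ forces $(uy)f=x$, and then the factorization $x=u\cdot(yf)$ must be trivial on the right (since $eu=u\ne e$), whence $yf=f$. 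Your proposal never isolates this role of c-irreducibility, which is the crux of why $M_x$ exists at all.
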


\begin{proof}
  We give a concrete realization of $M_x$. Let $I_x := e\tMonoid \setminus
  [x,e]_\RR$. This is a right ideal, and we endow the interval $[x,e]_\RR$
  with the quotient structure of $e\tMonoid / I_x$.  The second step is to
  further quotient this module by identifying all elements in $[x,e[_\RR$. Namely, define
  \begin{equation}
    \begin{split}
      \Theta: [x,e]_\RR &\rightarrow M_x\\
      e       &\mapsto \epsilon\\
      u       &\mapsto \xi \quad \text{for $u\in [x,e[_\RR.$}
    \end{split}
  \end{equation}
  It remains to prove that this map is compatible with the right action of
  $\tMonoid$. This boils down to checking that, for $u\in [x,e[_\RR$ and $y\in
  \tMonoid$:
  \begin{equation}
    uy \in [x,e[_\RR \quad \Longleftrightarrow \quad y \in [f,1]_\RR \; .
  \end{equation}
  Recall that, by Lemma~\ref{lemma.factorization_ex}, $uf=x$. Hence,
  for $y\in [f,1]_\RR$, $uy \ge_\RR uf = x$. Also, since $u\in [x,e[_\RR$ we have that
  $uy<_\RR e$. 
  Now take $y$ such that $uy\in [x,e[_\RR$, and let $v=yf$. Then $uv = uyf = x$, while
  $v=vf$. Therefore, since $x$ is c-irreducible, $v=f$.
\end{proof}

\begin{corollary}\label{corollary.irr.free}
  The family $(x-x^\omega)_{x\in\quiv}$ is free modulo $\rad^2\K\tMonoid$.
\end{corollary}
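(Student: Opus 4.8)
The plan is to test a putative relation $\alpha:=\sum_{x\in\quiv}\lambda_x(x-x^\omega)\in\rad^2\K\tMonoid$ against the two-dimensional indecomposable modules $M_{x_0}$ constructed in the preceding proposition, exploiting that each of these is annihilated by $\rad^2\K\tMonoid$. Indeed, since $M_{x_0}$ is indecomposable of dimension $2$ with the submodule $\K\xi_{x_0}\cong S_{\rfix{x_0}}$, its radical is exactly $\K\xi_{x_0}$; hence $M_{x_0}\cdot\rad\K\tMonoid=\K\xi_{x_0}$ and so $M_{x_0}\cdot\rad^2\K\tMonoid=\K\xi_{x_0}\cdot\rad\K\tMonoid=\rad(S_{\rfix{x_0}})=0$. (Concretely, the inclusion $M_{x_0}\cdot\rad\K\tMonoid\subseteq\K\xi_{x_0}$ and the vanishing $\K\xi_{x_0}\cdot\rad\K\tMonoid=0$ can also be read off directly from the computation below applied to the spanning set $(y-y^\omega)_{y\notin\idempMon}$ of the radical.)

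Now suppose $\alpha\in\rad^2\K\tMonoid$ with not all $\lambda_x$ zero, and choose $x_0$ that is $\le_\JJ$-maximal among $\{x\in\quiv\suchthat\lambda_x\neq0\}$; put $e:=\lfix{x_0}$. Then $\epsilon_{x_0}\cdot\alpha=0$. The heart of the argument is to evaluate $\epsilon_{x_0}\cdot(x-x^\omega)$ for every $x\in\quiv$ from the explicit action on $M_{x_0}$. Writing $A(y)$ for the condition $y\ge_\RR e$ and $B(y)$ for $y\ge_\RR x_0$, one first records the elementary facts (each a two-line argument from $x^\omega=x^N$ for $N\gg0$, from the identity $ex_0=x_0$, and from Lemmas~\ref{lemma.idem_factor} and~\ref{lemma.j.idemp} together with $\JJ$-triviality): $A(x)\Leftrightarrow A(x^\omega)$ and $B(x^\omega)\Rightarrow B(x)$ for all $x$, while for $x_0$ itself $A(x_0)$, $A(x_0^\omega)$ and $B(x_0^\omega)$ all fail and $B(x_0)$ holds. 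Feeding these into the three-way case split defining the module (namely $\epsilon_{x_0}\cdot m$ equals $\epsilon_{x_0}$, $\xi_{x_0}$, or $0$ according to whether $A(m)$, $\lnot A(m)\wedge B(m)$, or $\lnot A(m)\wedge\lnot B(m)$ holds), one obtains
\[
  \epsilon_{x_0}\cdot(x-x^\omega)=
  \begin{cases}
    \xi_{x_0} & \text{if } \lnot A(x),\ B(x),\ \lnot B(x^\omega),\\
    0 & \text{otherwise;}
  \end{cases}
\]
in particular $\epsilon_{x_0}\cdot(x_0-x_0^\omega)=\xi_{x_0}$.

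Let $T$ be the set of $x\in\quiv$ satisfying $\lnot A(x)$, $B(x)$ and $\lnot B(x^\omega)$; it contains $x_0$. Then $0=\epsilon_{x_0}\cdot\alpha=\bigl(\sum_{x\in T}\lambda_x\bigr)\xi_{x_0}$, so $\sum_{x\in T}\lambda_x=0$. But every $x\in T$ satisfies $B(x)$, i.e. $x\ge_\RR x_0$, hence $x\ge_\JJ x_0$; by maximality of $x_0$, any such $x$ with $\lambda_x\neq0$ must equal $x_0$. Thus the sum collapses to $\lambda_{x_0}=0$, contradicting the choice of $x_0$. Hence $(x-x^\omega)_{x\in\quiv}$ is free modulo $\rad^2\K\tMonoid$.

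The main obstacle I anticipate is the bookkeeping for the elementary facts: $e=\lfix{x_0}$ is a fixed idempotent while $x$ ranges over all of $\quiv$, so one must check carefully that $\lfix x$ and $\rfix x$ play no role, and that for the idempotent $e$ the conditions $m\ge_\RR e$ and $em=e$ (and $me=e$) all coincide --- this is exactly where Lemma~\ref{lemma.idem_factor} together with Lemma~\ref{lemma.j.idemp} is used. Everything else is a finite case analysis plus the maximality trick on $\le_\JJ$.
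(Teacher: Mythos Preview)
Your argument is correct and follows essentially the same route as the paper: pick $x_0$ $\le_\JJ$-maximal among the quiver elements with nonzero coefficient, act on $\epsilon_{x_0}$ in the two-dimensional module $M_{x_0}$ (which is killed by $\rad^2$), and use that any $x$ contributing a nonzero $\epsilon_{x_0}\cdot(x-x^\omega)$ must satisfy $x\ge_\RR x_0$, hence $x\ge_\JJ x_0$, forcing $x=x_0$ by maximality. Your treatment is in fact a bit more explicit than the paper's---you spell out why $\rad^2$ annihilates $M_{x_0}$ and carry out the full case analysis on $A(x),B(x),B(x^\omega)$---whereas the paper simply remarks that any element of $\rad^2$ acts by zero on two-dimensional modules and that $\epsilon_{x_0}\cdot x=0$ whenever $x\not\ge_\JJ x_0$.
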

\begin{proof}
  We use a triangularity argument: If some $y\in\K\tMonoid$ lies in
  $\rad^2\K\tMonoid$ it must act by zero on all modules without square
  radical. In particular it must act by zero on all $2$-dimensional modules.
  Suppose that
  \begin{equation}
    \sum_{x\in\quiv} c_x(x-x^\omega)
  \end{equation}
  with $c_x\in\K$ acts by zero on all the previously constructed modules
  $M_x$. Suppose that some $c_x$ is nonzero and choose such an $x_0$ maximal
  in $\JJ$-order. Consider the module $M := M_{x_0}$. 
  Since $x_0\in Q(M)$, $x_0$ is not idempotent so that 
  $x_0^\omega \leq_\JJ x_0 <_\JJ \rfix{x_0}$.
  As a consequence
  \begin{equation}
    \epsilon_{x_0}\cdot x_0 = \xi_{x_0}
    \qquad\text{and}\qquad
    \epsilon_{x_0}\cdot x_0^{\omega} = 0\,.
  \end{equation}
  Moreover, if $x$ is not bigger than $x_0$ in $\JJ$-order, then
  $x$ is also not bigger than $x_0$ in $\RR$-order, so that $\epsilon_{x_0}\cdot x = 0$. Therefore
  \begin{equation}
    \epsilon_{x_0} \cdot \left(\sum_{x\in\quiv} c_x(x-x^\omega)\right)
    = c_{x_0} \xi_{x_0}
  \end{equation}
  which must vanish in contradiction with the assumption.
\end{proof}

We now show that the square radical $\rad^2\K\tMonoid$ is at least as
large as the number of factorizable elements:
\begin{proposition}
  Suppose that $x=uv$ is a non-trivial factorization of $x$. Then
  \begin{equation}
    (u-u^\omega)(v-v^\omega) = x + \sum_{y<_\JJ x} c_y y
  \end{equation}
  for some scalars $c_y \in \K$.
\end{proposition}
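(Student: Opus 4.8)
The plan is to expand the product $(u-u^\omega)(v-v^\omega) = uv - u^\omega v - u v^\omega + u^\omega v^\omega$ and to show that, apart from the leading term $uv = x$, every term is $<_\JJ x$. The point of the hypothesis that the factorization is \emph{non-trivial} is precisely that it rules out the dangerous cases where one of these other three terms could again be equal to $x$.

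First I would treat $u^\omega v$. Since $u^\omega$ is idempotent and $u^\omega \leq_\JJ u$, certainly $u^\omega v \leq_\JJ uv = x$. I claim the inequality is strict. If instead $u^\omega v = x$, then applying $\lfix{x}=e$ on the left gives $e u^\omega v = u^\omega v$; but more to the point, $u^\omega \in \laut{x}$ would follow (from $u^\omega v = x = uv$ one would need a little care), so let me argue directly: from $u^\omega v = x$ we get $x \leq_\JJ u^\omega$, hence $e = \lfix{x} \leq_\JJ u^\omega \leq_\JJ u$, i.e.\ $eu = e$ by Lemma~\ref{lemma.j.idemp} — contradicting non-triviality ($eu\neq e$). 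Symmetrically, $u v^\omega = x$ would force $f = \rfix{x} \leq_\JJ v^\omega \leq_\JJ v$, hence $vf=f$, again contradicting non-triviality; so $u v^\omega <_\JJ x$. Finally $u^\omega v^\omega \leq_\JJ u^\omega v <_\JJ x$ by the first case (or $\leq_\JJ u v^\omega <_\JJ x$ by the second). Thus each of the three non-leading terms, once expanded in the monoid basis, is a linear combination of elements $y$ with $y \leq_\JJ$ that term $<_\JJ x$, so every monomial appearing is $<_\JJ x$; collecting, $(u-u^\omega)(v-v^\omega) = x + \sum_{y<_\JJ x} c_y y$.

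The one genuinely delicate point is justifying, in the case $u^\omega v = x$, the passage to $x \leq_\JJ u^\omega$ and then to $eu = e$: here I use $x = u^\omega v \leq_\JJ u^\omega$ directly from the definition of $\leq_\JJ$, and then Lemma~\ref{lemma.j.idemp} (with $e\in\idempMon$, $u^\omega \in \tMonoid$, $e \leq_\JJ u^\omega$) to get $e = e u^\omega$, whence $e = e u^\omega \cdot u^{\,\text{(rest)}}$... more cleanly: $e u = e u^\omega$ would need $u^\omega$ related to $u$, which it is not in general. So instead I argue $e \leq_\JJ u^\omega \leq_\JJ u$ gives $eu \leq_\JJ u^\omega \cdot (\text{something}) \leq_\JJ e$ combined with $e \leq_\JJ eu$ always failing — the robust route is: $e \leq_\JJ u$ together with $eu \leq_\JJ e$ (always true, $M$ left-right ordered) and $e \leq_\JJ u \Rightarrow$ by Lemma~\ref{lemma.j.idemp} applied to the idempotent $e$ and element $u$, $eu = e$. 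That is the clean statement, and it is exactly the negation of $eu \neq e$. So this step reduces to a single invocation of Lemma~\ref{lemma.j.idemp}, which I expect to be the main (and only) subtlety; everything else is the routine expansion and the observation that $\JJ$-order is compatible with multiplication and the monoid basis is $\JJ$-triangular in the sense already used repeatedly (e.g.\ in Corollary~\ref{corollary.bx.triang}).
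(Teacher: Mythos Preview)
Your approach is correct and essentially identical to the paper's: expand the product, observe $uv=x$, and derive a contradiction with non-triviality from either $u^\omega v = x$ or $uv^\omega = x$. Your final route via $e\leq_\JJ u^\omega\leq_\JJ u$ and Lemma~\ref{lemma.j.idemp} to conclude $eu=e$ is in fact slightly cleaner than the paper's (which passes through $u\lfix{x}\in\laut{x}$ to reach the equivalent conclusion $u\lfix{x}=\lfix{x}$).

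One expository wrinkle worth tightening: the implication ``$x\leq_\JJ u^\omega$, hence $e=\lfix{x}\leq_\JJ u^\omega$'' is not valid as a general statement about $\JJ$-order. The step you actually want (and which you in fact sketched first before abandoning it) is
\[
u^\omega v = x \ \Longrightarrow\ u^\omega x = u^\omega(u^\omega v)=u^\omega v = x \ \Longrightarrow\ u^\omega\in\laut{x}\ \Longrightarrow\ \lfix{x}\leq_\JJ u^\omega,
\]
using that $\lfix{x}$ is the $\JJ$-minimum of $\laut{x}$ (Proposition~\ref{proposition.aut}). From there your argument proceeds correctly. The meandering second paragraph can simply be deleted once this is stated cleanly.
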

\begin{proof}
  We need to show that $u^\omega v$ and $uv^\omega$ are both different from
  $x$. Suppose that $u^\omega v = x$. Then $u^\omega x=x$ so that $\lfix{x} \leq_\JJ u^\omega$.  
  Since $u \lfix{x} \in \laut{x}$, we have $\lfix{x} \le_\JJ u \lfix{x} \le_\JJ \lfix{x}$. 
  Thus $u \lfix{x} = \lfix{x}$ contradicting the non-triviality of the factorization $uv$.
  The same reasoning shows that $uv^\omega <_\JJ x$.
\end{proof}

\begin{corollary}
  \label{corollary.quiver}
  The family $(x-x^\omega)_{x\in\quiv}$ is a basis of
  $\rad\K\tMonoid/\rad^2\K\tMonoid$.
\end{corollary}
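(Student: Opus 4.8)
The plan is to combine the two results just established into a dimension count. By Corollary~\ref{corollary.irr.free}, the family $(x-x^\omega)_{x\in\quiv}$ is linearly independent modulo $\rad^2\K\tMonoid$, hence its image in $\rad\K\tMonoid/\rad^2\K\tMonoid$ is free; so it suffices to show this image is also spanning, i.e. that
\begin{equation*}
  \dim \bigl(\rad\K\tMonoid/\rad^2\K\tMonoid\bigr) \le |\quiv|\,.
\end{equation*}
By Proposition~\ref{proposition.basis.radical}, $\rad\K\tMonoid$ has basis $(x-x^\omega)_{x\in\tMonoid\setminus\idempMon}$, so the non-idempotent elements of $\tMonoid$ split into those that are c-irreducible (there are $|\quiv|$ of them) and those that admit a non-trivial factorization. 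The goal is therefore to prove that for every non-idempotent $x$ admitting a non-trivial factorization, the class of $x-x^\omega$ already lies in $\rad^2\K\tMonoid$; equivalently, that $(x-x^\omega)_{x\in\quiv}$ spans modulo $\rad^2$.

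First I would argue by induction on $x$ in the $\JJ$-order (starting from the $\JJ$-minimal elements, i.e. $0$ if it exists). If $x$ is idempotent, $x-x^\omega = 0$. If $x$ is c-irreducible and non-idempotent, then $x-x^\omega$ is one of our claimed spanning vectors, nothing to do. Otherwise $x$ admits a non-trivial factorization $x=uv$; by Lemma~\ref{lemma.factorization.nontrivial.proper} it is proper, so $u <_\JJ x$ is false in general — but crucially $u\ne x$ and $v\ne x$, and since $xy\le_\JJ x,y$ we get $u,v \ge_\JJ x$, with equality excluded, so actually $u >_\JJ x$ or incomparable is impossible: from $x=uv$ we have $x\le_\JJ u$ and $x\le_\JJ v$ with $u\ne x\ne v$, hence $x<_\JJ u$ and $x<_\JJ v$. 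Now by the Proposition immediately preceding this corollary,
\begin{equation*}
  (u-u^\omega)(v-v^\omega) = x + \sum_{y<_\JJ x} c_y\, y
\end{equation*}
with $c_y\in\K$. The left-hand side is a product of two elements of $\rad\K\tMonoid$, hence lies in $\rad^2\K\tMonoid$. Rearranging, $x \equiv -\sum_{y<_\JJ x} c_y\, y \pmod{\rad^2}$, and likewise $x^\omega$ is idempotent so $x^\omega - (x^\omega)^\omega = 0$; more to the point, we want $x - x^\omega$ modulo $\rad^2$, and since each $y$ appearing has $y <_\JJ x$, we can replace $y$ by $(y - y^\omega) + y^\omega$ and apply the induction hypothesis to each $y - y^\omega$ with $y<_\JJ x$ (each is a $\K$-combination of the $\quiv$-vectors modulo $\rad^2$), while the idempotent terms $y^\omega$ contribute nothing to $\rad$ at all — being idempotents, $y^\omega - (y^\omega)^\omega = 0$, so $y^\omega$ itself is a basis vector of the complement of $\rad$, not of $\rad$, and in the expression for $x - x^\omega \in \rad$ the idempotent parts must cancel. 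I would make this last cancellation precise by noting that $\phi$ kills $\rad$, so applying $\phi$ to the congruence forces the idempotent contributions to match those of $x^\omega$, and subtracting leaves a genuine element of $\rad$ expressed via strictly smaller $(y-y^\omega)$'s plus the product term in $\rad^2$.

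The main obstacle is bookkeeping the idempotent-versus-radical split cleanly in that induction step: the identity from the preceding Proposition is stated in the monoid basis, not the $(x-x^\omega)$ basis, so one must carefully pass between the two and verify that the idempotent terms $y^\omega$ do not obstruct the conclusion. The clean way is: work in the quotient $\rad\K\tMonoid/\rad^2\K\tMonoid$ directly, where the images of $(x-x^\omega)_{x\in\tMonoid\setminus\idempMon}$ span; show by $\JJ$-order induction that the image of $x-x^\omega$ for factorizable $x$ lies in the span of $\{x'-x'^\omega : x' <_\JJ x\} \cup \{\text{images of }\quiv\text{-vectors}\}$, using that $x \equiv x - x^\omega$ modulo lower terms and idempotents (idempotents $e$ satisfy $e - e^\omega = 0$) together with the $\rad^2$ identity. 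Combined with Corollary~\ref{corollary.irr.free} giving freeness, this yields that $(x-x^\omega)_{x\in\quiv}$ is a basis, as claimed.
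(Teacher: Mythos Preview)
Your approach is correct, but the paper takes a shorter route. You prove spanning by a $\JJ$-order induction, using the identity $(u-u^\omega)(v-v^\omega) = x + \sum_{y<_\JJ x} c_y\, y$ to express each factorizable $x - x^\omega$ modulo $\rad^2\K\tMonoid$ in terms of strictly $\JJ$-smaller $(y-y^\omega)$'s; the idempotent cancellation you worry about does go through cleanly, since $\K\tMonoid = \rad\K\tMonoid \oplus \K\idempMon$ as vector spaces (Proposition~\ref{proposition.basis.radical}) forces the idempotent part of any element of $\rad^2\K\tMonoid$ to vanish. The paper instead reads that same triangularity identity as a linear-independence statement: the elements $(u-u^\omega)(v-v^\omega)$, one for each non-idempotent admitting a non-trivial factorization, have distinct $\JJ$-leading terms and hence are linearly independent inside $\rad^2\K\tMonoid$, giving $\dim\rad^2\K\tMonoid \geq |\tMonoid|-|\idempMon|-|\quiv|$. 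Together with $\dim(\rad\K\tMonoid/\rad^2\K\tMonoid)\geq |\quiv|$ from Corollary~\ref{corollary.irr.free} and $\dim\rad\K\tMonoid = |\tMonoid|-|\idempMon|$, a one-line dimension count forces both inequalities to be equalities. Your induction yields an explicit rewriting of each $(x-x^\omega)$ in the claimed basis; the paper's count is less constructive but sidesteps the bookkeeping entirely.
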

\begin{proof}
  By Corollary~\ref{corollary.irr.free} we know that
  $\rad\K\tMonoid/\rad^2\K\tMonoid$ is at least of dimension
  $\card(\quiv)$. We just showed that $\rad^2\K\tMonoid$ is at least of
  dimension $\card(\tMonoid) - \card(\idempMon) - \card(\quiv)$. Therefore all
  those inequalities must be equalities.
\end{proof}

We conclude by an explicit description of the arrows of the quiver as
elements of the monoid algebra.
\begin{corollary}
\label{corollary.quiver_arrow}
  For all idempotents $i,j\in\idempMon$, the family $(f_i(x-x^\omega)f_j)$
  where $x$ runs along the set of non-idempotent c-irreducible elements such
  that $\lfix{x}=i$ and $\rfix{x} = j$ is a basis for $f_i \rad \K\tMonoid f_j$
  modulo $\rad^2\K\tMonoid$.
\end{corollary}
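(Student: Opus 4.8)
The plan is to transport the basis from Corollary~\ref{corollary.quiver} through the Peirce decomposition afforded by the orthogonal idempotents $(f_e)_{e\in\idempMon}$, and to use the labelling lemma that attaches to each monoid element $x$ its symbols $\lfix{x}$ and $\rfix{x}$. First I would recall that $1=\sum_{e}f_e$ with the $f_e$ orthogonal idempotents, so that any $\tMonoid$-bimodule, and in particular $\rad\K\tMonoid/\rad^2\K\tMonoid$, decomposes as a direct sum $\bigoplus_{i,j\in\idempMon} f_i\bigl(\rad\K\tMonoid/\rad^2\K\tMonoid\bigr)f_j$. By Corollary~\ref{corollary.quiver}, the classes of $(x-x^\omega)_{x\in\quiv}$ form a basis of the whole space $\rad\K\tMonoid/\rad^2\K\tMonoid$; hence it suffices to show that, after applying the projector $y\mapsto f_iyf_j$, the element $x-x^\omega$ lands (up to lower-order terms) in the $(i,j)$-summand exactly when $\lfix{x}=i$ and $\rfix{x}=j$, and is negligible otherwise.

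The key computation is the triangularity of $f_i(x-x^\omega)f_j$ with respect to $\JJ$-order, which I would extract from the proof of the labelling Lemma (the one preceding Corollary~\ref{corollary.bx.triang}) together with Corollary~\ref{corollary.bx.triang} itself. Concretely: for $x\in\quiv$, since $x$ is not idempotent we have $x^\omega<_\JJ x$, so $x-x^\omega = x + (\text{terms}<_\JJ x)$. Multiplying on the left by $f_i$ and on the right by $f_j$ and using that $f_i = e_i + \sum_{z<_\JJ e_i}c_z z$ (and similarly for $f_j$), every term produced is $\le_\JJ x$, and the coefficient of $x$ itself in $f_i x f_j$ is $1$ precisely when $i=\lfix{x}$ and $j=\rfix{x}$ — this is exactly the content of the labelling Lemma, which says $(\lfix{x},\rfix{x})$ is the unique pair $(i,j)$ for which $f_i x$ and $x f_j$ have nonzero (hence $1$) coefficient on $x$ — and is $0$ (coefficient on $x$) for every other pair. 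Thus for $x\in\quiv$ with $\lfix{x}=i$, $\rfix{x}=j$ we get $f_i(x-x^\omega)f_j = x + \sum_{y<_\JJ x}c'_y y$, a unitriangular expression in the $b$-basis sense.

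Now I would run the standard triangularity/change-of-basis argument modulo $\rad^2$. The family $\{f_i(x-x^\omega)f_j : x\in\quiv,\ \lfix{x}=i,\ \rfix{x}=j\}$, ranging over all pairs $(i,j)$, is obtained from the basis $\{(x-x^\omega) : x\in\quiv\}$ of $\rad\K\tMonoid/\rad^2\K\tMonoid$ by a matrix which, when the $x$'s are listed in a linear extension of $\JJ$-order, is unitriangular (diagonal entries $1$ from the above, off-diagonal entries supported on strictly $\JJ$-smaller elements, which modulo $\rad^2$ still expand in the $(y-y^\omega)$ basis with $y<_\JJ x$ — here one uses Corollary~\ref{corollary.quiver} once more to rewrite any element of $\rad\K\tMonoid$ modulo $\rad^2$ in that basis, noting the triangularity is preserved). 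Hence the full family is again a basis of $\rad\K\tMonoid/\rad^2\K\tMonoid$; intersecting with the $(i,j)$-Peirce component, which is spanned by those $f_i y f_j$, shows that for fixed $i,j$ the sub-family indexed by $x\in\quiv$ with $\lfix{x}=i$, $\rfix{x}=j$ is a basis of $f_i\bigl(\rad\K\tMonoid/\rad^2\K\tMonoid\bigr)f_j = f_i\rad\K\tMonoid f_j$ modulo $\rad^2\K\tMonoid$, which is the claim. The main obstacle is bookkeeping the triangularity correctly across the two reductions (multiplying by the $f$'s, and re-expanding lower terms modulo $\rad^2$ in the $(y-y^\omega)$ basis) so that the global change-of-basis matrix is genuinely invertible; once the $\JJ$-order linear extension is fixed, this is routine, and the dimension count from Corollary~\ref{corollary.quiver} guarantees no slack.
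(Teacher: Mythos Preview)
Your proposal is correct and follows essentially the same approach as the paper: both arguments rest on the unitriangularity $f_i(x-x^\omega)f_j = x + \sum_{y<_\JJ x}c'_y\,y$ derived from Corollary~\ref{corollary.bx.triang} together with $x^\omega<_\JJ x$, and then invoke Corollary~\ref{corollary.quiver} for the dimension count. The paper's proof is simply a compressed version of yours, stating only the triangularity and leaving the Peirce decomposition and change-of-basis bookkeeping implicit.
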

\begin{proof}
  By Corollary~\ref{corollary.bx.triang}, one has $(f_i x f_j) = x + \sum_{y <_\JJ x} c_y
  y$. Since $x^\omega <_\JJ x$, such a triangularity must also hold for
  $(f_i(x-x^\omega)f_j)$.
\end{proof}

\begin{remark}
  By Remark~\ref{remark.factorization.irr.cirr} a $\JJ$-trivial monoid
  $\tMonoid$ is generated by (the labels of) the vertices and the
  arrows of its quiver.
\end{remark}

\begin{lemma}
  \label{lemma.quiver}
  If $x$ is in the quiver, then it is of the form $x=epf$ with $p$
  irreducible, $e = \lfix{x}$, and $f = \rfix{x}$.
  Furthermore, if $p$ is idempotent, then $x=ef$.
\end{lemma}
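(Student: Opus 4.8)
The statement has two parts. For the first, I would start from the fact that $x \in \quiv$ is c-irreducible and non-idempotent. By Remark~\ref{remark.factorization.irr.cirr} we have $\irr \subseteq \cirr$, and $\cirr$ generates $\tMonoid$, so I can write $x$ as a product of irreducible elements. The key observation is that the left and right ``fix'' idempotents give canonical trivial factorizations: if $x = uv$ then by c-irreducibility either $eu = e$ or $vf = f$, where $e = \lfix{x}$, $f = \rfix{x}$. The plan is to exploit that $ex = x = xf$ (which holds since $\lfix{x} x = x$ and $x \rfix{x} = x$ by Proposition~\ref{proposition.aut}), so that $x = exf$ trivially. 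Now take any factorization of $x$ into irreducibles $x = p_1 p_2 \cdots p_k$; I want to argue that after stripping off the ``head'' absorbed by $e$ and the ``tail'' absorbed by $f$, exactly one irreducible factor $p$ survives, giving $x = e p f$.

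More concretely, I would proceed as follows. Write $x = p_1 \cdots p_k$ with each $p_i$ irreducible; we may assume $k$ is minimal. Consider the partial products: let $i$ be minimal such that $e p_1 \cdots p_i \neq e$ (such $i$ exists unless $x = e$, contradicting non-idempotency); then $e p_1 \cdots p_{i-1} = e$, so setting $u = p_1 \cdots p_i$ and $v = p_{i+1}\cdots p_k$ we have $eu = e(e p_1\cdots p_{i-1}) p_i = e p_i$, and more to the point $x = uv$ with $eu = ep_i \ne e$ forces $vf = f$ by c-irreducibility. Symmetrically, grouping from the right, there is a minimal-from-the-right index giving a single ``essential'' factor. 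The delicate point is to show these two reductions can be performed simultaneously and leave precisely one irreducible factor $p$, so that $x = e p f$ with $p$ irreducible; I expect to argue that $e p_1 \cdots p_{i-1} = e$ and $p_{i+1} \cdots p_k f = f$ for the appropriate indices, reducing $x = e p_i f$, and that $p_i$ is still irreducible (it is one of the original irreducible factors). I would use Lemma~\ref{lemma.idem_factor} and Lemma~\ref{lemma.j.idemp} freely to manipulate these idempotent absorptions.

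For the second part, suppose additionally that $p$ is idempotent. Then $x = epf$ with $e, p, f$ all idempotent. I would like to conclude $x = ef$. Since $e = \lfix{x}$, we have $e \le_\JJ x \le_\JJ p$ (as $x = epf$ gives $x \le_\JJ p$), and similarly $f \le_\JJ x \le_\JJ p$; more usefully, $ep \le_\JJ e$ and $ep \le_\JJ p$. The point is that $x = epf$ is a factorization $x = (ep)f$ or $x = e(pf)$; if $p$ is idempotent, one checks that this factorization is trivial on both sides, which combined with the structure forces collapse. Concretely: since $ex = x$, write $x = e \cdot pf$; I want $e(pf) = e$ to fail or to deduce the factorization $x = (ep) \cdot f$ has $ep$ absorbed. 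Actually the cleanest route: $x = epf$ and $e \le_\JJ p$ would give $ep = e$ by Lemma~\ref{lemma.j.idemp} provided $e \le_\JJ p$; we have $e = \lfix{x} \le_\JJ x = epf \le_\JJ p$, hence indeed $ep = e$ by Lemma~\ref{lemma.j.idemp} (since $p$ is idempotent and $e \le_\JJ p$ forces $ep = e$). Therefore $x = epf = ef$, as claimed.

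\textbf{Main obstacle.} The hard part will be the simultaneous left-and-right reduction in the first part: showing that after absorbing the redundant prefix into $e$ and the redundant suffix into $f$, \emph{exactly one} irreducible factor remains (not zero, not two or more), and that this does not require re-associating in a way that breaks irreducibility of the surviving factor. I anticipate handling this by a careful induction on the number of irreducible factors, repeatedly applying c-irreducibility to peel off one factor at a time and using Lemma~\ref{lemma.idem_factor} to absorb it into $e$ or $f$, stopping when a single essential factor remains. The idempotent case is then a short extra argument via Lemma~\ref{lemma.j.idemp}, as sketched above.
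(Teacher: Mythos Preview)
Your first-part argument is essentially correct and close to the paper's, though you are making it harder than necessary. Once you pick the minimal $i$ with $e p_1\cdots p_i \ne e$, you already have $e p_1\cdots p_{i-1}=e$, and c-irreducibility of the factorization $x=(p_1\cdots p_i)(p_{i+1}\cdots p_k)$ immediately forces $p_{i+1}\cdots p_k\, f = f$; hence $x = e p_i f$ with $p_i$ irreducible. There is no ``delicate simultaneous reduction'' to perform: the single left-to-right sweep finishes the job. The paper phrases the same idea slightly differently: it writes $x=eyf$ (starting with $y=x$), and whenever $y=uv$ is a proper factorization, c-irreducibility yields $eu=e$ or $vf=f$, so $y$ may be replaced by $v$ or $u$; iterating in the finite monoid terminates with $y=p$ irreducible.

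Your second-part argument, however, contains a genuine error. The chain ``$e=\lfix{x}\le_\JJ x = epf \le_\JJ p$'' is false at the first step: from $ex=x$ one gets $x\le_\RR e$, hence $x\le_\JJ e$, \emph{not} $e\le_\JJ x$. In fact $e\le_\JJ x$ combined with $x\le_\JJ e$ would force $e=x$ by $\JJ$-triviality, contradicting that $x$ is non-idempotent. Concretely, in $H_0(\sg[3])$ take $x=\pi_1\pi_2=e\,p\,f$ with $e=\pi_1$, $p=\pi_2$, $f=\pi_2$: here $ep=\pi_1\pi_2\ne e$, so $e\not\le_\JJ p$, and your route cannot conclude. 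The paper instead uses $p=pp$ to write $x=(ep)(pf)$; c-irreducibility then gives $ep=e$ \emph{or} $pf=f$, and in either case $x=ef$. You need this disjunction: it is not always the left side that collapses.
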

\begin{proof}
  Since $x=ex=xf$, one can always write $x$ as $x=eyf$. Assume that $y$
  is not irreducible, and write $y=uv$ with $u,v<_\JJ y$. Then, since
  $x$ is in the quiver, one has either $eu=e$ or $vf=f$, and
  therefore $x=euf$ or $x=evf$. Repeating the process inductively
  eventually leads to $x=epf$ with $p$ irreducible.

  Assume further that $p$ is an idempotent. Then, $x = (e p) (p f)$
  and therefore $ep=e$ or $pf=f$. In both cases, $x=ef$.
\end{proof}
\begin{corollary}
\label{corollary.jidem_quiver}
  In a $\JJ$-trivial monoid generated by idempotents, the
  quiver is given by a subset of all products $ef$ with $e$ and $f$
  idempotents such that $e$ and $f$ are respectively the left and
  right symbols of $ef$.
\end{corollary}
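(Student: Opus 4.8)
The plan is to read the statement off directly from Lemma~\ref{lemma.quiver}, once one observes that in a monoid generated by idempotents every irreducible element is itself idempotent. By the description of the quiver established just above, each arrow corresponds to an element $x\in\quiv$, i.e.\ a non-idempotent c-irreducible element, the arrow running from $v_{\lfix{x}}$ to $v_{\rfix{x}}$. So it suffices to show that every such $x$ can be written as $x=ef$ with $e,f\in\idempMon$ and $e=\lfix{ef}$, $f=\rfix{ef}$.

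First I would fix $x\in\quiv$ and set $e:=\lfix{x}$ and $f:=\rfix{x}$. Lemma~\ref{lemma.quiver} then provides a factorization $x=epf$ with $p$ irreducible. The key step is to argue that $p$ is idempotent whenever $\tMonoid$ is generated by idempotents. One clean way: write $p$ as a (possibly empty) product $p=e_1\cdots e_k$ of idempotent generators; if $k=0$ then $p=1\in\idempMon$, and if $k\ge 1$ then irreducibility of $p$ applied to $p=e_1(e_2\cdots e_k)$ forces $p=e_1$ or $p=e_2\cdots e_k$, so by induction on $k$ one gets $p=e_i$ for some $i$, hence $p\in\idempMon$. (Alternatively one may invoke the fact from \cite{Doyen.1984,Doyen.1991} that $\irr$ is the unique minimal generating set of $\tMonoid$, hence contained in the generating set $\idempMon$.) Once $p$ is known to be idempotent, the last sentence of Lemma~\ref{lemma.quiver} gives $x=ef$; and since $e=\lfix{x}$ and $f=\rfix{x}$, this reads $e=\lfix{ef}$ and $f=\rfix{ef}$, as claimed. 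This realizes the set of arrows of the quiver as a subset of $\{ef\suchthat e,f\in\idempMon,\ \lfix{ef}=e,\ \rfix{ef}=f\}$.

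I do not expect any genuine obstacle: the entire content is Lemma~\ref{lemma.quiver} together with the elementary remark on irreducibles in an idempotent-generated monoid. The only minor points of care are the edge case $p=1$ (harmless, since $1$ is idempotent) and the fact that the inclusion is not asserted to be an equality — a product $ef$ of the required form may itself be idempotent (in which case $v_{ef}$ is a vertex, not the source of an arrow) or may admit a non-trivial factorization (in which case $ef\notin\quiv$), so in general only a subset of such products actually labels arrows of the quiver.
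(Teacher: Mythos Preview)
Your proposal is correct and follows essentially the same route as the paper: the corollary is an immediate consequence of Lemma~\ref{lemma.quiver} once one notes that in an idempotent-generated monoid every irreducible element is idempotent. The paper leaves this last observation implicit (relying on the Doyen fact that $\irr$ is the unique minimal generating set), whereas you spell it out via a short induction on the length of an idempotent word for $p$; both are fine and equivalent.
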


\subsection{Examples of Cartan matrices and quivers}
\label{ss.quiver.examples}

We now use the results of the previous sections to describe the Cartan
matrix and quiver of several monoids. Along the way, we discuss
briefly some attempts at describing the radical filtration, and
illustrate how certain properties of the monoids (quotients,
(anti)automorphisms, ...) can sometimes be exploited.

\subsubsection{Representation theory of $H_0(W)$ (continued)}

We start by recovering the description of the quiver of the $0$-Hecke
algebra of Duchamp-Hivert-Thibon~\cite{Duchamp_Hivert_Thibon.2002} in
type $A$ and of Fayers~\cite{Fayers.2005} in general type. We further
refine it by providing a natural indexing of the arrows of the
quiver by certain elements of $H_0(W)$.
\begin{proposition}
  \label{proposition.quiver.hecke}
  The quiver elements $x\in\quiv$ are exactly the products
  $x=\longest_J\longest_K$ where $J$ and $K$ are two incomparable
  subsets of $I$ such that, for any $j\in J\setminus K$ and $k\in
  K\setminus J$, the generators $\pi_j$ and $\pi_k$ do not commute.
\end{proposition}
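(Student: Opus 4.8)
The plan is to derive the statement from Corollary~\ref{corollary.jidem_quiver} together with a description, in terms of parabolic subgroups, of the products $\longest_J\longest_K$ inside $H_0(W)$. Since $H_0(W)$ is generated by the idempotents $\pi_i$, Corollary~\ref{corollary.jidem_quiver} tells us that every $x\in\quiv$ has the form $x=\longest_J\longest_K$ with $\longest_J=\lfix x$, $\longest_K=\rfix x$ and $x$ non-idempotent, and that, conversely, such a product belongs to $\quiv$ precisely when it is c-irreducible and non-idempotent. Recalling $\lfix{\pi_w}=\longest_{D_L(w)}$ and $\rfix{\pi_w}=\longest_{D_R(w)}$ from Example~\ref{example.zero.hecke} and writing $\pi_u\pi_v=\pi_{u*v}$ for the Demazure product on $W$, the symbol conditions become $D_L(w_J*w_K)=J$ and $D_R(w_J*w_K)=K$. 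The easy half: since $\pi_i\longest_J=\longest_J$ exactly for $i\in D_L(w_J)=J$, one gets $D_L(w_J*w_K)\supseteq J$ and, symmetrically, $D_R(w_J*w_K)\supseteq K$; and if $K\subseteq J$ (resp. $J\subseteq K$), then a reduced word of $w_J*w_K$ is a subword of a reduced word of $w_J$ followed by one of $w_K$, hence lies in $W_J$ (resp. $W_K$) while dominating $w_J$ (resp. $w_K$) in Bruhat order $\leq_B$, so it equals $w_J$ (resp. $w_K$) and $\longest_J\longest_K$ is idempotent. So the symbol conditions force $J,K$ incomparable, and the proposition reduces to proving, for incomparable $J,K$, that $\longest_J\longest_K\in\quiv$ if and only if $\pi_j\pi_k\neq\pi_k\pi_j$ for all $j\in J\setminus K$ and $k\in K\setminus J$.

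To analyze $x=\longest_J\longest_K$ I would use the standard decomposition of $w_K$ as a product $w_K=a\,b$ with $a\in W_J$ and $b$ the minimal-length element of the right coset $W_Jw_K$, so that $\ell(w_K)=\ell(a)+\ell(b)$ and $\ell(w_Jb)=\ell(w_J)+\ell(b)$; then $\longest_J\longest_K=\pi_{w_J}\pi_a\pi_b=\pi_{w_J}\pi_b$. Thus a reduced word of $x$ is any reduced word of $w_J$ followed by any reduced word of $b$; its content is $J\cup\cont(b)\subseteq J\cup K$ (content being preserved by the defining relations of $H_0(W)$); and $D_L(b)$ is a nonempty subset of $K\setminus J$ (nonempty since $w_K\notin W_J$, contained in $\cont(b)\subseteq K$, and disjoint from $J$ by minimality of $b$). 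The mirror form $x=\pi_{b'}\pi_{w_K}$ with $D_R(b')$ a nonempty subset of $J\setminus K$ holds on the other side. Two consequences: first, once one knows (under the non-commuting hypothesis) that $D_L(x)=J$, non-idempotency is automatic, since an idempotent $x$ would equal $\longest_{\cont(x)}=\longest_{J\cup K}$, forcing $\lfix x=\longest_{J\cup K}\neq\longest_J$; second, c-irreducibility becomes concrete, since a factorization $x=uv$ is non-trivial precisely when $\cont(u)\not\subseteq J$ and $\cont(v)\not\subseteq K$ (because $\longest_J u=\longest_J\iff\cont(u)\subseteq J$), so $x$ is reducible exactly when one can realize $x=u*v$ with a left factor meeting $K\setminus J$ and a right factor meeting $J\setminus K$.

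The heart of the proof, and the step I expect to be the main obstacle, is then the genuinely Coxeter-combinatorial equivalence, for incomparable $J,K$, between ``some $j\in J\setminus K$ commutes with some $k\in K\setminus J$'' and ``$x=\longest_J\longest_K$ is reducible or has $D_L(x)\supsetneq J$''. In the forward direction one uses the commutation $s_js_k=s_ks_j$ to transpose an occurrence of $s_k$ (produced by the $b$-part of the normal form, since $k\in K\setminus J\subseteq\cont(b)$) past an occurrence of $s_j$ (available because $s_j\in D_R(w_J)$) inside a suitably rewritten reduced word of $x$, and then cuts between them; the delicate point is that the transposition may only become legal after rewriting the word, which is exactly where the rank-two relation between $s_j$ and $s_k$ is needed. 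For the converse I would localize: the symbols and the c-irreducibility of $\longest_J\longest_K$ depend only on the parabolic submonoid $H_0(W_{J\cup K})$, and the non-commuting hypothesis is a statement about the Coxeter diagram of $W_{J\cup K}$ between $J\setminus K$ and $K\setminus J$; the claim is that, without a commuting cross-pair, no letter of $K\setminus J$ can be moved, in any reduced word of $w_J*w_K$, to the left of the whole $J\setminus K$-part, which simultaneously prevents an enlarged left descent set and a non-trivial factorization (and dually on the right). Turning this ``no untangling without a commutation'' heuristic into a proof --- presumably via a heap / commutation-class analysis of reduced words in $W_{J\cup K}$, or an explicit length computation there --- is the technical crux; everything else is bookkeeping with Corollary~\ref{corollary.jidem_quiver}, Proposition~\ref{proposition.lfix.decreasing}, and the normal form above.
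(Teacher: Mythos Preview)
Your framework is sound and in several places more careful than the paper's own write-up (you explicitly flag the symbol conditions $D_L(x)=J$, $D_R(x)=K$, which the paper leaves implicit). But both directions of what you call the ``heart of the proof'' can be done much more cheaply than you suggest, and the converse direction---which you leave as an unproved heuristic---has a short argument you are missing.

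\textbf{Forward direction.} There is no need to rewrite reduced words or worry about when a transposition ``becomes legal''. If $j\in J\setminus K$ and $k\in K\setminus J$ commute, simply observe that
\[
\longest_J\longest_K \;=\; \longest_J\,\pi_j\pi_k\,\longest_K \;=\; \longest_J\,\pi_k\pi_j\,\longest_K,
\]
since $\pi_j$ is absorbed by $\longest_J$ on the left and $\pi_k$ by $\longest_K$ on the right. Now cut between $\pi_k$ and $\pi_j$: the pair $(\longest_J\pi_k,\ \pi_j\longest_K)$ is a non-trivial factorization because $k\notin J$ gives $\longest_J\pi_k\neq\longest_J$ and $j\notin K$ gives $\pi_j\longest_K\neq\longest_K$. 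No coset normal form needed.

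\textbf{Converse direction.} Here is the trick that replaces your proposed heap analysis. Suppose $x=\longest_J\longest_K=uv$ is a non-trivial factorization. Since $\cont(u)\cup\cont(v)=\cont(x)\subseteq J\cup K$ and $\longest_Ju\neq\longest_J$, there is $k\in\cont(u)\cap(K\setminus J)$; dually there is $j\in\cont(v)\cap(J\setminus K)$. Then $u\leq_\BB\pi_k$ and $v\leq_\BB\pi_j$ in reversed Bruhat order, so by the two-sided compatibility of $\leq_\BB$,
\[
\longest_J\longest_K \;=\; \longest_J\,uv\,\longest_K \;\leq_\BB\; \longest_J\,\pi_k\pi_j\,\longest_K \;\leq_\BB\; \longest_J\longest_K,
\]
forcing $\longest_J\pi_k\pi_j\longest_K=\longest_J\longest_K$. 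Now the left-hand word can be rewritten to the right-hand word using the defining relations of $H_0(W)$. But applying $\pi_i^2=\pi_i$ or any braid relation of length $\geq 3$ to a word preserves the property ``some occurrence of $\pi_k$ lies to the left of some occurrence of $\pi_j$'' (check this locally for each relation, with the $\{j,k\}$-braid case handled by $m\geq 3$). Since $\longest_J\longest_K$, written as a reduced word for $w_J$ followed by one for $w_K$, has every $\pi_k$ to the right of every $\pi_j$, the rewriting must at some point use the commutation $\pi_k\pi_j=\pi_j\pi_k$. Hence $s_j$ and $s_k$ commute.

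This Bruhat-squeeze-then-rewrite argument is the missing idea; once you have it, your worries about the symbol conditions also dissolve, since any putative extra left descent $m\in K\setminus J$ would give the non-trivial factorization $(\pi_m,\ x)$ in the sense above, and hence a commuting pair.
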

\begin{proof}
  Recall that the idempotents of $H_0(W)$ are exactly the $\longest_J$
  for all subsets $J$ and that by Corollary~\ref{corollary.jidem_quiver}, the
  c-irreducible elements are among the products $\longest_J\longest_K$.

  First of all if $J\subseteq K$ then
  $\longest_J\longest_K=\longest_K\longest_J=\longest_K$ so that, for
  $\longest_J\longest_K$ to be c-irreducible, $J$ and $K$ have to be
  incomparable. Now suppose that there exists some $j\in J\setminus K$ and
  $k\in K\setminus J$ such that $\pi_j\pi_k=\pi_k\pi_j$. Then
  \begin{equation}
    \longest_J\longest_K=\longest_J\pi_j\pi_k\longest_K=\longest_J\pi_k\pi_j\longest_K\,.
  \end{equation}
  But since $k\notin J$, one has $\longest_J\pi_k\neq\longest_J$. Similarly,
  $\pi_j\longest_K\neq\longest_K$. This implies that $(\longest_J\pi_k,\pi_j\longest_K)$ is a
  non-trivial factorization of $\longest_J\longest_K$.

  Conversely, suppose that there exists a non-trivial factorization
  $\longest_J\longest_K = uv$. Since $\longest_Ju\neq\longest_J$,
  there must exist some $k \in K\backslash J$ such that
  $u\le_\JJ\pi_k$ (or equivalently $\pi_k$ appears in some and
  therefore any reduced word for $u$). Similarly, one can find some
  $j\in J\backslash K$ such that $v\le_\JJ\pi_j$.
  Then, for $<_\BB$ as defined in~\eqref{equation.bruhat}, that is
  reversed Bruhat order, we have
  \begin{displaymath}
    \longest_J\longest_K = \longest_Juv\longest_K \leq_\BB
    \longest_J\pi_k\pi_j\longest_K \leq_\BB \longest_J\longest_K\, ,
  \end{displaymath}
  and therefore $\longest_J\pi_k\pi_j\longest_K =
  \longest_J\longest_K$. Hence the left hand side of this equation can
  be rewritten to its right hand side using a sequence of applications
  of the relations of $H_0(W)$. Notice that using $\pi_i^2=\pi_i$ or
  any non trivial braid relation preserves the condition that there
  exists some $\pi_k$ to the left of some $\pi_j$. Hence rewriting
  $\longest_J\pi_k\pi_j\longest_K$ into $\longest_J\longest_K$
  requires using the commutation relation $\pi_k\pi_j=\pi_j\pi_k$ at
  some point, as desired.
\end{proof}

\subsubsection{About the radical filtration}

Proposition~\ref{proposition.quiver.hecke} suggests to search for a
natural indexing by elements of the monoid not only of the quiver,
but of the full \emph{Loewy filtration}.
\begin{problem}
  \label{problem.radical.filtration}
  Find some statistic $r(m)$ for $m \in \tMonoid$ such that, for any
  two idempotents $i,j$ and any integer $k$,
  \begin{multline}
    \dim f_i\big(\rad^k A /\rad^{k+1} A\big) f_j =\\
    \card\{m\in\tMonoid\mid r(m) = k,\ \lfix{m}=i,\ \rfix{m}=j\}\,.
  \end{multline}
\end{problem}

\begin{table}
\begin{equation*}
  \begin{array}{|c|c|}
    \hline
    Type & \text{Generating series} \\
    \hline
    A_1  & 2 \\
    A_2  & 2q + 4 \\
    A_3  & 6q^2 + 10q + 8 \\
    A_4  & 10q^4 + 24q^3 + 38q^2 + 32q + 16 \\
    A_5  & 14q^7 + 48q^6 + 72q^5 + 144q^4 + 172q^3 + 150q^2 + 88q + 32 \\
    \hline
    B_2  & 2q^2 + 2q + 4 \\
    B_3  & 6q^4 + 10q^3 + 14q^2 + 10q + 8 \\
    B_4  & 12q^8 + 24q^7 + 46q^6 + 60q^5 + 76q^4 + 64q^3 + 54q^2 + 32q + 16 \\
    \hline
    D_2  & 4 \\
    D_3  & 6q^2 + 10q + 8 \\
    D_4  & 6q^6 + 12q^5 + 20q^4 + 38q^3 + 62q^2 + 38q + 16 \\
    \hline
    H_3  & 6q^8 + 10q^7 + 14q^6 + 18q^5 + 22q^4 + 18q^3 + 14q^2 + 10q + 8\\
    \hline
    I_5  & 2q^3 + 2q^2 + 2q + 4 \\
    I_6  & 2q^4 + 2q^3 + 2q^2 + 2q + 4 \\
    I_n  & 2q^{n-2} + \dots + 2q^2 + 2q + 4\\
    \hline
  \end{array}
  \end{equation*}
  \caption{The generating series
    $\sum_k \dim \big(\rad^k A/\rad^{k+1}A\big)\ q^k$ for
    the $0$-Hecke algebras $A=\K H_0(W)$ of the
    small Coxeter groups.}
  \label{table.radical_filtration}
\end{table}
Such a statistic is not known for $H_0(W)$, even in type $A$. Its
expected generating series for small Coxeter group is shown in
Table~\ref{table.radical_filtration}. Note that all the coefficients
appearing there are even. This is a general fact:
\begin{proposition}
  Let $W$ be a Coxeter group and $H_0(W)$ its $0$-Hecke monoid. Then, for any
  $k$, the dimension $d^k := \dim \rad^k \K H_0(W)$ is an even number.
\end{proposition}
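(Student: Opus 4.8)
The plan is to exploit the natural involutive automorphism of $\K H_0(W)$ given on generators by $\pi_i\mapsto 1-\pi_i$, together with a trace computation. Since the dimensions $d^k=\dim\rad^k\K H_0(W)$ do not depend on the ground field, we may assume $\operatorname{char}\K\neq 2$; and since the claim is vacuous when $W$ is trivial, we may assume $I\neq\emptyset$.

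First, recall from Example~\ref{example.zero.hecke.projectives} that the elements $\pi_i^-:=1-\pi_i$ are idempotent and satisfy the same braid relations as the $\pi_i$. Hence $\pi_i\mapsto 1-\pi_i$ extends to an algebra endomorphism $\Psi$ of $\K H_0(W)$, and since $\Psi^2(\pi_i)=1-(1-\pi_i)=\pi_i$ it is in fact an involutive automorphism. As an automorphism $\Psi$ preserves the radical, hence every power $\rad^k:=\rad^k\K H_0(W)$, and therefore descends to an involution $\overline\Psi$ of the semisimple quotient $\K H_0(W)/\rad$ and of each Loewy layer $L_j:=\rad^j/\rad^{j+1}$.

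The heart of the argument is to show that $\operatorname{tr}(\Psi\mid\rad^k)=0$ for every $k$. Since the radical is nilpotent and all the $\rad^j$ are $\Psi$-stable, additivity of the trace along the filtration gives $\operatorname{tr}(\Psi\mid\rad^k)=\sum_{j\geq k}\operatorname{tr}(\overline\Psi\mid L_j)$, so it suffices to prove that $\overline\Psi$ is traceless on each layer. Now $\K H_0(W)/\rad$ is the algebra of the semilattice of idempotents, with primitive idempotents $g_{\longest_J}$ indexed by the subsets $J\subseteq I$, and any algebra automorphism of it merely permutes these primitive idempotents. Tracking the induced action on the simple modules identifies this permutation: $S_{\longest_J}$ is the character $\pi_i\mapsto 1$ for $i\in J$ and $\pi_i\mapsto 0$ otherwise, so $S_{\longest_J}\circ\Psi$ is the character $\pi_i\mapsto 1$ for $i\notin J$, i.e. $S_{\longest_{I\setminus J}}$; hence $\overline\Psi$ acts on the $g_{\longest_J}$ by complementation $J\mapsto I\setminus J$, which has no fixed point because $I\neq\emptyset$. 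Finally, each layer $L_j$ is a bimodule over $\K H_0(W)/\rad$, so it decomposes as $L_j=\bigoplus_{J,K} g_{\longest_J}L_j g_{\longest_K}$, and $\overline\Psi$, being compatible with the complementation automorphism on both sides, carries the $(J,K)$-summand onto the $(I\setminus J,\,I\setminus K)$-summand. No pair $(J,K)$ is fixed, so in a basis adapted to this decomposition the matrix of $\overline\Psi$ has vanishing diagonal blocks, whence $\operatorname{tr}(\overline\Psi\mid L_j)=0$.

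To conclude: $\Psi$ restricts to an involution of $\rad^k$, so over our field of characteristic $\neq 2$ we get $\rad^k=V^+\oplus V^-$ with $V^{\pm}$ the $(\pm1)$-eigenspaces, and $0=\operatorname{tr}(\Psi\mid\rad^k)=\dim V^+-\dim V^-$; therefore $d^k=\dim\rad^k=2\dim V^+$ is even. The steps most in need of care are the well-definedness of $\Psi$, which rests on the braid relations satisfied by the $1-\pi_i$ already recorded in Example~\ref{example.zero.hecke.projectives}, and the identification of the induced permutation of idempotents as complementation (hence fixed-point-free); everything else is formal.
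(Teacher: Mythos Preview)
Your proof is correct and rests on the same key ingredient as the paper's: the involutive algebra automorphism $\Psi:\pi_i\mapsto 1-\pi_i$, together with the observation that the induced permutation $J\mapsto I\setminus J$ on simple modules (equivalently, on primitive idempotents of the semisimple quotient) is fixed-point-free when $I\neq\emptyset$.

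The difference lies only in how this is exploited. The paper argues directly that $\Psi$ exchanges the indecomposable projective modules $P_J$ and $P_{I\setminus J}$, hence exchanges $\rad^k P_J$ with $\rad^k P_{I\setminus J}$; since $\rad^k\K H_0(W)=\bigoplus_J \rad^k P_J$ and no $J$ is self-complementary, the summands pair off and the total dimension is even. Your route is more indirect: you pass to the Loewy layers $L_j=\rad^j/\rad^{j+1}$, use the (canonical) Peirce decomposition over the commutative semisimple quotient to see that $\overline\Psi$ has vanishing diagonal blocks on each $L_j$, sum the traces to get $\operatorname{tr}(\Psi\mid\rad^k)=0$, and then split $\rad^k$ into $\pm1$-eigenspaces. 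This is a legitimate alternative, and the detour through the semisimple quotient neatly sidesteps the issue that in $\K H_0(W)$ itself the primitive idempotents are not canonical (so $\Psi(f_J)$ need not literally equal $f_{I\setminus J}$). The price is the appeal to field-independence of $\dim\rad^k$ in order to assume $\operatorname{char}\K\neq 2$; the paper's direct pairing of projective summands works uniformly over any field and avoids this reduction.
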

\begin{proof}
  This is a consequence of the involutive algebra automorphism $\theta\ :\
  \pi_i \mapsto 1-\pi_i$. This automorphism exchanges the eigenvalues 0 and 1
  for the idempotent $\pi_i$. Therefore it exchanges the projective module
  $P_J$ associated to the descent set $J$ (see
  Example~\ref{example.zero.hecke} for the definition of $P_J$) with the
  projective module $P_{\overline J}$ associated to the complementary descent
  set $\overline J = I \setminus J$. As a consequence it must exchange $\rad^k P_J$ and
  $\rad^k P_{\overline J}$ which therefore have the same dimensions. Since
  there is no self-complementary descent set, $d^k = \sum_{J\subset I} \rad^k
  P_J$ must be even.
\end{proof}

Also, as suggested by Table~\ref{table.radical_filtration},
Problem~\ref{problem.radical.filtration} admits a simple solution for
$H_0(I_n)$.
\begin{proposition}
  Let $W$ be the $n$-th dihedral group (type $I_n$) and $\K H_0(W)$ its
  $0$-Hecke algebra. 
  Define $a_k = \pi_1\pi_2\pi_1\pi_2\cdots$ and
  $b_k = \pi_2\pi_1\pi_2\pi_1\cdots$ where both words are of length
  $k$. Recall that the longest element of $H_0(W)$ is $\omega = a_n =
  b_n$. Then, for all $k>0$, the set
  \begin{equation}
    R_k := \{a_i - \omega,\ b_i - \omega\mid k<i<n\}
  \end{equation}
  is a basis for $\rad^k \K H_0(W)$. In particular, defining the
  statistic $r(w):=\len(w)-1$, one obtains that the family
  $$\{a_{k+1}-\omega,b_{k+1}-\omega\}$$ 
  for $0<k<n-1$ is a basis of $\rad^k \K
  H_0(W)/\rad^{k+1} \K H_0(W)$.
\end{proposition}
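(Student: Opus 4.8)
The plan is to proceed by a two-step argument: first establish the claim about $\rad^k$ directly by computing the radical filtration of the dihedral $0$-Hecke algebra, and then deduce the quotient statement by dimension counting. The key structural facts we will exploit are that the elements of $H_0(I_n)$ are exactly $1$, $\omega$, and the $2(n-1)$ proper alternating words $a_i, b_i$ for $1 \le i \le n-1$ (with $a_0 = b_0 = 1$ and $a_n = b_n = \omega$ the only coincidences), that the two idempotents $\pi_1 = a_1$ and $\pi_2 = b_1$ together with $1$ and $\omega = \longest_I$ are the four idempotents, and that by Proposition~\ref{proposition.basis.radical} the family $\{x - x^\omega \mid x \text{ non-idempotent}\}$ is a basis of $\rad \K H_0(W)$. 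Since $a_i^\omega = b_i^\omega = \omega$ for $i \ge 2$, this already gives that $\{a_i - \omega, b_i - \omega \mid 2 \le i \le n-1\} = R_1$ is a basis of $\rad \K H_0(I_n)$, which is the $k=1$ case.

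For general $k$, I would argue by showing both $R_k \subseteq \rad^k$ and that $R_k$ spans $\rad^k$ modulo lower considerations. For the inclusion $R_k \subseteq \rad^k$: observe that $a_i - \omega = (a_1 - \omega)(b_1-\omega)(a_1-\omega)\cdots$ up to the appropriate length, more precisely that a product of $i-1$ suitably chosen factors $(\pi_j - \omega)$ telescopes. Concretely, one checks the elementary identities $(\pi_1 - \omega)(a_{i-1} - \omega) = a_i - \omega$ and $(\pi_2-\omega)(b_{i-1}-\omega)=b_i-\omega$ for the relevant ranges of $i$, using that $\pi_1 a_{i-1} = a_i$ when the leading letter of $a_{i-1}$ is $\pi_2$ (which holds for $i-1 \ge 1$), that $\pi_1 \omega = \omega$, and that $\omega a_{i-1} = \omega = \omega\omega$. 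Iterating, $a_i - \omega$ is a product of $i-1$ elements of $\rad \K H_0(I_n)$ (one initial factor $\pi_1 - \omega$ or $\pi_2-\omega$, then alternating), hence lies in $\rad^{i-1}$; since $i - 1 \ge k$ exactly when $i > k$, this gives $R_k \subseteq \rad^k$. The reverse count: $R_k$ has cardinality $2(n-1-k)$ for $0 < k < n-1$ (and is empty for $k \ge n-1$), and we know $\dim \rad \K H_0(I_n) = 2(n-2)$; so it suffices to show $\dim \rad^k \le 2(n-1-k)$, equivalently $\dim (\rad^k/\rad^{k+1}) \le 2$ for each $k \ge 1$. For $k=1$ this is $\card(\quiv) = 2$ by Corollary~\ref{corollary.quiver} — indeed the only c-irreducible non-idempotents are $\pi_1\pi_2\cdots$ and $\pi_2\pi_1\cdots$ of length $2$, i.e. $a_2$ and $b_2$, by Proposition~\ref{proposition.quiver.hecke} applied with $J = \{1\}, K=\{2\}$. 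For higher $k$ one can either invoke that the radical is generated in degree one (so $\dim \rad^{k+1} \ge \dim \rad^k - (\text{number of new generators})$) together with an explicit check that $\rad^k$ is spanned by the $a_i - \omega, b_i - \omega$ with $i > k$ — which follows from the product formulas above since any product of $k$ basis elements of $\rad$ is, by the telescoping identities and the relations $(\pi_j - \omega)\omega = 0$, a scalar combination of such elements with $i \ge k+1$.

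Once $R_k$ is shown to be a basis of $\rad^k$ for all $k$, the final assertion is immediate: $\rad^k/\rad^{k+1}$ is spanned by the images of $R_k \setminus R_{k+1} = \{a_{k+1} - \omega, b_{k+1} - \omega\}$ for $0 < k < n-1$, and these two images are linearly independent because $R_{k+1}$ is a basis of $\rad^{k+1}$ and $R_k = R_{k+1} \sqcup \{a_{k+1}-\omega, b_{k+1}-\omega\}$ is a basis of $\rad^k$. The statistic $r(w) = \ell(w) - 1$ then records precisely the radical layer, since $a_{k+1}$ and $b_{k+1}$ have length $k+1$. I expect the main obstacle to be the bookkeeping in the telescoping step — carefully verifying the product identities $(\pi_j - \omega)(\text{alternating word} - \omega) = (\text{longer alternating word} - \omega)$ at the boundary cases (small $i$, and $i = n-1$ where the next word would be $\omega$ itself), and confirming that no other products of radical basis elements produce anything outside the span of $\{a_i - \omega, b_i - \omega : i > k\}$; this amounts to a finite but slightly delicate case analysis using only the defining relations $\pi_i^2 = \pi_i$ and $\underbrace{\pi_1\pi_2\cdots}_n = \underbrace{\pi_2\pi_1\cdots}_n$ of $H_0(I_n)$.
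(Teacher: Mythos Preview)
Your overall strategy---establish $R_k\subseteq\rad^k$ via product identities and then bound $\dim\rad^k$ from above---is the same as the paper's, but the specific identities you rely on do not do the job.

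First, the formula $(\pi_1-\omega)(a_{i-1}-\omega)=a_i-\omega$ is false as written: by definition $a_{i-1}=\pi_1\pi_2\pi_1\cdots$ begins with $\pi_1$ for every $i\ge2$, so $\pi_1 a_{i-1}=a_{i-1}$, not $a_i$. (Your parenthetical ``the leading letter of $a_{i-1}$ is $\pi_2$'' is simply wrong.)

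Second, and more importantly, even after repairing the identity---say to $(\pi_1-\omega)(b_{i-1}-\omega)=a_i-\omega$, which does hold---the factor $\pi_1-\omega$ is \emph{not} in $\rad\K H_0(W)$. The generators $\pi_1,\pi_2$ are idempotents, so by Proposition~\ref{proposition.basis.radical} they survive in the semi-simple quotient, and $\pi_j-\omega$ has nonzero image there. Hence expressing $a_i-\omega$ as a product of $i-1$ alternating factors $(\pi_j-\omega)$ does not place it in $\rad^{i-1}$; your inclusion $R_k\subseteq\rad^k$ is therefore unproved. The same problem infects your spanning argument for the upper bound, since products of genuine radical basis elements $(a_i-\omega)(b_j-\omega)$ are never computed.

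The paper's fix is to multiply by $a_2-\omega$ and $b_2-\omega$, which \emph{are} in the radical. The four identities
\begin{align*}
(a_2-\omega)(a_j-\omega)&=a_{j+2}-\omega, & (a_2-\omega)(b_j-\omega)&=a_{j+1}-\omega,\\
(b_2-\omega)(b_j-\omega)&=b_{j+2}-\omega, & (b_2-\omega)(a_j-\omega)&=b_{j+1}-\omega
\end{align*}
then give both directions at once: they show $a_i-\omega,b_i-\omega\in\rad^{i-1}$ by induction inside the radical, and, since $\rad^{k+1}=\rad\cdot\rad^k$, they show that products of the $R_1$-basis with the $R_k$-basis land in the span of $R_{k+1}$.
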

Note that if $k<n-1$ then $\omega$ belongs to $\rad^{k+1} \K H_0(W)$. One can
therefore take $\{a_{k+1},b_{k+1}\}$ as a basis.
\begin{proof}
  The case $k=1$ follows from
  Proposition~\ref{proposition.basis.radical}, and by
  Proposition~\ref{corollary.quiver} the quiver is given by
  $a_2-\omega$ and $b_2-\omega$. The other cases are then proved by
  induction, using the following relations:
  \begin{alignat*}{2}
    (a_2 - \omega)(a_j - \omega) &= a_{j+2} - \omega&\qquad
    (a_2 - \omega)(b_j - \omega) &= a_{j+1} - \omega\\
    (b_2 - \omega)(b_j - \omega) &= b_{j+2} - \omega&
    (b_2 - \omega)(a_j - \omega) &= b_{j+1} - \omega.\qedhere
  \end{alignat*}
\end{proof}

A natural approach to try to define such a statistic $r(m)$ is to use
iterated compatible factorizations. For example, one can define a new
product $\bullet$, called the \emph{compatible product} on
$\tMonoid\cup\{0\}$, as follows:
\begin{equation*}
  x \bullet y =
  \begin{cases}
    xy & \text{if $\lfix{x} = \lfix{xy}$ and
                  $\rfix{y} = \rfix{xy}$ and $\rfix{x}=\lfix{y}$,}\\
                0 & \text{otherwise.}
  \end{cases}
\end{equation*}
However this product is usually not associative. Take for example $x =
\pi_{14352}$, $y = \pi_{31254}$ and $z = \pi_{25314}$ in
$H_0(\sg[5])$. Then, $xy= \pi_{41352}$, $yz= \pi_{35214}$ and $xyz =
\pi_{45312}$. The following table shows the left and right descents of
those elements:
\begin{equation*}
  \begin{array}{r|c|c}
                    & \text{left}  & \text{right}   \\\hline
    x=\pi_{14352}    &   \{2,3\}    &    \{2,4\}     \\
    y=\pi_{31254}    &   \{2,4\}    &    \{1,4\}     \\
    z=\pi_{25314}    &   \{1,4\}    &    \{2,3\}     \\
   xy=\pi_{41352}    &   \{2,3\}    &    \{1,4\}     \\
   yz=\pi_{35214}    &   \{1,2,4\}  &    \{2,3\}     \\
  xyz=\pi_{45312}    &   \{2,3\}    &    \{2,3\}     \\
  \end{array}
\end{equation*}
Consequently $(x\bullet y)\bullet z = (xy) \bullet z = xyz$ whereas $y\bullet
z = 0$ and therefore $x\bullet (y\bullet z) = 0$. 

Due to the lack of associativity there is no immediate definition for
$r(m)$ as the ``length of the longest compatible factorization'', and our
various attempts to define this concept all failed for the $0$-Hecke
algebra in type $D_4$.  \bigskip

\subsubsection{Nondecreasing parking functions}

We present, without proof, how the description of the Cartan matrix of
$\NDPF_n$
in~\cite{Hivert_Thiery.HeckeSg.2006,Hivert_Thiery.HeckeGroup.2007}
fits within the theory, and derive its quiver from that of
$H_0(\sg[n])$.

\begin{proposition}
  The idempotents of $\NDPF_n$ are characterized by their image sets,
  and there is one such idempotent for each subset of $\{1,\dots,n\}$
  containing $1$. For $f$ an element of $\NDPF_n$, $\rfix f$ is given
  by the image set of $f$, whereas $\lfix f$ is given by the set of
  all lowest point in each fiber of $f$; furthermore, $f$ is
  completely characterized by $\lfix f$ and $\rfix f$.

  The Cartan matrix is $0,1$, with $c_{I,J}=1$ if
  $I=\{i_1<\cdots<i_k\}$ and $J=\{j_1<\cdots<j_k\}$ are two subsets of
  the same cardinality $k$ with $i_l\leq j_l$ for all $l$.
\end{proposition}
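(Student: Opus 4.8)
The plan is to do everything by explicit bookkeeping with intervals on the chain $\{1<\dots<n\}$, in the right-action convention of the paper: $f\in\NDPF_n=\OR(\{1<\dots<n\})$ is an order preserving regressive selfmap, and $fg$ means ``first $f$, then $g$''. For $S\subseteq\{1,\dots,n\}$ with $1\in S$, write $e_S$ for the ``round down to $S$'' map $i\mapsto\max\{s\in S\suchthat s\le i\}$; it is order preserving, regressive, idempotent, with image $S$. The first step is to verify these are \emph{all} the idempotents. If $e=e^2$ then $\im e=\fix(e)=:S$ (each point of the image is fixed, by idempotence) and $e(1)=1$ forces $1\in S$; for any $i$ one has $e(i)\le e_S(i)$ since $e(i)\in S$ and $e(i)\le i$, while $e(i)\ge e(e_S(i))=e_S(i)$ by order preservation together with $e_S(i)\in S=\fix(e)$; hence $e=e_S$. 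This yields the bijection between $\idempMon$ and the subsets of $\{1,\dots,n\}$ containing $1$. I would also record, by applying Lemma~\ref{lemma.j.idemp} to $e_S$ and $e_T$, that $e_S\le_\JJ e_T$ iff $e_Se_T=e_S$ iff $e_T$ fixes $S$ iff $S\subseteq T$; so the lattice $(\idempMon,\star)$ of Theorem~\ref{theorem.star} is the lattice of subsets of $\{1,\dots,n\}$ containing $1$ under inclusion.

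Next I would compute the two symbols via \eqref{eq.rfix} and \eqref{eq.lfix}. Since $fe=f$ says exactly that $e$ fixes $\im f$ pointwise, i.e.\ $\im f\subseteq\fix(e_S)=S$, the $\JJ$-smallest such idempotent is $e_{\im f}$ (legitimate because $f(1)=1$, so $1\in\im f$); hence $\rfix f=e_{\im f}$. Dually, $ef=f$ says $f(e(x))=f(x)$ for all $x$, i.e.\ $e$ never pushes a point out of its own $f$-fiber. As $f$ is order preserving each fiber is an interval; writing $\ell_s:=\min f^{-1}(s)$, the key observation is that $e_S$ preserves all fibers iff $\{\ell_s\suchthat s\in\im f\}\subseteq S$: necessity by testing $x=\ell_s$, where $e_S(\ell_s)\le\ell_s$ must stay in the fiber of $s$, forcing $e_S(\ell_s)=\ell_s\in S$; sufficiency because $\ell_{f(x)}\le x$ and $\ell_{f(x)}\in S$ give $e_S(x)\ge\ell_{f(x)}$, keeping $x$ in its fiber. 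Hence $\lfix f=e_{L(f)}$, where $L(f):=\{\ell_s\suchthat s\in\im f\}$ is precisely the set of lowest points of the fibers of $f$ (and $1=f(1)$ lies in its own fiber, so $1\in L(f)$, making $e_{L(f)}$ legitimate). This analysis of $\laut{f}$ --- equivalently, of which $e_S$ fix $f$ on the left --- is the one place calling for a genuine, if short, argument; the rest is routine.

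Finally I would reconstruct $f$ from $(\lfix f,\rfix f)$ and read off the Cartan matrix, identifying idempotents with subsets as above. Given $I:=L(f)=\{i_1<\dots<i_k\}$ and $J:=\im f=\{j_1<\dots<j_k\}$ --- of equal size, both being the number of fibers --- the fibers are forced to be the intervals $[i_1,i_2-1],\dots,[i_{k-1},i_k-1],[i_k,n]$, and since $f$ is nondecreasing the $\ell$-th of these must take value $j_\ell$; so $f$ is determined by $(\lfix f,\rfix f)$, giving $|C_{I,J}|\le 1$ for the sets $C_{i,j}$ of the Cartan formula $c_{i,j}=|C_{i,j}|$ (Section~\ref{ss.cartan}). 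Conversely, for subsets $I,J$ of equal size $k$, the function built by this recipe lies in $\NDPF_n$ iff it is regressive, the binding constraint sitting at the left end of each fiber, namely $j_\ell=f(i_\ell)\le i_\ell$ for all $\ell$; when this holds the $j_\ell$ are distinct so no two fibers merge, and one checks the fiber-minima are exactly $I$ and the image exactly $J$. Feeding this into $c_{i,j}=|C_{i,j}|$ gives the $0/1$ Cartan matrix, the $(I,J)$-entry being $1$ exactly when $|I|=|J|$ and these regressivity inequalities hold --- the stated description up to the orientation of the chain, since in the isomorphic ``extensive'' realization of $\NDPF_n$ from the Remark in Section~\ref{sec:bgnot} one gets fiber-\emph{maxima} and the inequalities reverse. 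The quiver of $\K\NDPF_n$ can then be transported from that of $\K H_0(\sg[n])$ (Proposition~\ref{proposition.quiver.hecke}) through the quotient $H_0(\sg[n])\twoheadrightarrow\NDPF_n$. The only real obstacle is keeping the two chain orientations straight; all the representation-theoretic content has been pre-packaged into \eqref{eq.rfix}, \eqref{eq.lfix}, and $c_{i,j}=|C_{i,j}|$.
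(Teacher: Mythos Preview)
Your argument is correct and complete. The paper itself does not prove this proposition: it is explicitly presented ``without proof'' as a summary of earlier results, so there is no proof in the paper to compare against. What you have written is, in effect, the specialization to a chain of the general machinery the paper later develops in Section~\ref{sec:NPDF} for $\OR(P)$: your $e_S$ is the idempotent $\sup_S$ of Lemma~\ref{lemma.sup} and Proposition~\ref{proposition.idempotents.OO}, and your computation of the two symbols matches the ``Description of left and right symbols'' lemma there (join-closure of the image for $\rfix{}$, join-closure of the fiber minima for $\lfix{}$), with the simplification that on a chain every subset containing $1$ is automatically join-closed. So your route and the paper's general framework agree.

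You are also right to flag the orientation issue: under the regressive convention (so that $1\in\im f$), your reconstruction forces $j_\ell\le i_\ell$, the reverse of the inequality $i_l\le j_l$ printed in the proposition; a quick sanity check with $f=\pi_1\pi_2\in\NDPF_3$ gives $\lfix f\leftrightarrow\{1,3\}$ and $\rfix f\leftrightarrow\{1,2\}$, confirming your direction. Your attribution of this to the two isomorphic realizations of $\NDPF_n$ (regressive versus extensive) is the correct diagnosis. One minor note: the closing sentence about transporting the quiver is extraneous here --- the quiver of $\NDPF_n$ is the subject of the \emph{next} proposition in the paper, not this one.
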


\begin{proposition}
  Let $\tMonoid$ be a $\JJ$-trivial monoid generated by
  idempotents. Suppose that $N$ is a quotient of $\tMonoid$ such that
  $\idempMon[N] = \idempMon$. Then, the quiver of $N$ is a subgraph
  of the quiver of $\tMonoid$.
\end{proposition}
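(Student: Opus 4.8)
The plan is to identify the quivers of $M$ and $N$ via the combinatorial description of Corollary~\ref{corollary.jidem_quiver}. A quotient of a $\JJ$-trivial monoid is again $\JJ$-trivial (the class is closed under quotients, as recalled in Section~\ref{sec:bgnot}), and $N$, being the image of $M$ under the quotient morphism $\theta$, is generated by the images of the idempotent generators of $M$, hence by idempotents; so Corollary~\ref{corollary.jidem_quiver} applies to both $M$ and $N$. In each of the two monoids the vertices of the quiver are the idempotents, and there is an arrow $v_e\to v_f$ precisely when the product $ef$ is non-idempotent, c-irreducible, and satisfies $\lfix{ef}=e$ and $\rfix{ef}=f$; in particular there is at most one arrow between any given ordered pair of vertices. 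Since $M$ is finite, $\theta$ maps $\idempMon$ onto $\idempMon[N]$, and the hypothesis $\idempMon[N]=\idempMon$ means that $\theta$ restricts to a bijection between the two sets of idempotents, hence between the two vertex sets. It therefore suffices to prove that whenever the quiver of $N$ has an arrow $v_{\bar e}\to v_{\bar f}$, arising from the quiver element $\bar e\bar f$ (so that $\bar e=\lfix{\bar e\bar f}$ and $\bar f=\rfix{\bar e\bar f}$), the product $ef$ of the preimage idempotents $e,f\in\idempMon$ is a quiver element of $M$; the induced map on arrows is then automatically injective, arrows being determined by their endpoints.

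First I would verify the three local conditions for $ef$ in $M$. Non-idempotency is clear, since $\theta(ef)=\bar e\bar f$ is non-idempotent. For the left symbol, $e\,(ef)=ef$ gives $\lfix{ef}\le_\JJ e$; conversely, with $h:=\lfix{ef}$ we have $h\,(ef)=ef$, hence $\theta(h)\,\bar e\bar f=\bar e\bar f$, so $\bar e=\lfix{\bar e\bar f}\le_\JJ\theta(h)$, while $h\le_\JJ e$ yields $\theta(h)\le_\JJ\bar e$; antisymmetry of $\le_\JJ$ in $N$ forces $\theta(h)=\bar e=\theta(e)$, and injectivity of $\theta$ on idempotents gives $h=e$. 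Symmetrically $\rfix{ef}=f$.

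The heart of the argument is c-irreducibility of $ef$ in $M$, and the main obstacle is that $\theta$ need not be injective, so a non-trivial factorization of $ef$ in $M$ might collapse to a trivial one in $N$. The remedy is the observation that a \emph{non-idempotent} $x\in M$ with $\theta(x)=\theta(h)$ for some $h\in\idempMon$ must satisfy $hx=xh=h$: indeed $\theta(x^\omega)=\theta(x)^\omega=\theta(h)$, so $x^\omega=h$ by injectivity of $\theta$ on idempotents, and $x^\omega x=x\,x^\omega=x^\omega$. Now suppose $ef$ is not c-irreducible. By the equivalence between admitting a non-trivial factorization and admitting a compatible one, $ef=uv$ for some compatible factorization, so $u,v$ are non-idempotent, $\lfix u=e$, and $\rfix v=f$. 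If $\theta(u)=\bar e=\theta(e)$, the observation gives $eu=e$, contradicting $eu=u\ne e$; hence $\theta(u)\ne\bar e$, and likewise $\theta(v)\ne\bar f$. Then $\bar e\,\theta(u)=\theta(eu)=\theta(u)\ne\bar e$ and $\theta(v)\,\bar f=\theta(vf)=\theta(v)\ne\bar f$, so $\bar e\bar f=\theta(u)\,\theta(v)$ is a non-trivial factorization of $\bar e\bar f$ in $N$ --- contradicting that $\bar e\bar f$ is a quiver element of $N$, hence c-irreducible. Therefore $ef$ is c-irreducible, so $ef\in\quiv$, it yields the arrow $v_e\to v_f$ of the quiver of $M$, and the proof is complete.
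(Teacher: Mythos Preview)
Your proof is correct and follows essentially the same route as the paper's: identify the vertex sets via the bijection on idempotents, observe that left and right symbols are preserved, and show that a factorization of $ef$ in $M$ descends to a factorization of $\bar e\bar f$ in $N$, contradicting c-irreducibility there. The paper compresses this into two sentences (``$\operatorname{lfix}$ and $\operatorname{rfix}$ are the same in $M$ and $N$'' and ``any compatible factorization in $M$ is still a compatible factorization in $N$''), while you supply the details the paper omits --- in particular the verification, via $u^\omega=e$, that $\theta(u)\ne\bar e$ so that the image factorization remains non-trivial. That step is genuinely needed and not entirely obvious from the paper's terse statement, so your version is a faithful and more complete rendering of the intended argument.
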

Note that the hypothesis implies that $\tMonoid$ and $N$ have the same
generating set.
\begin{proof}
  It is easy to see that $\operatorname{lfix}$ and $\operatorname{rfix}$ are
  the same in $\tMonoid$ and $N$. Moreover, any compatible factorization in
  $\tMonoid$ is still a compatible factorization in $N$.
\end{proof}

As a consequence one recovers the quiver of $\NDPF_n$:
\begin{proposition}
  The quiver elements of $\NDPF_n$ are the products $\pi_{J\cup
    \{i\}}\pi_{J\cup \{i+1\}}$ where $J\subset\{1,\dots, n-1\}$ and
  $i,i+1\notin J$.
\end{proposition}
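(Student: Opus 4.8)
The plan is to push Proposition~\ref{proposition.quiver.hecke} through the quotient map $\pi_i \mapsto \pi_i$ from $H_0(\sg[n])$ onto $\NDPF_n$, using the preceding proposition on quivers of quotients. First I would check its hypotheses: $\NDPF_n$ is $\JJ$-trivial and generated by the idempotents $\pi_i$, and is a quotient of $H_0(\sg[n])$ via $\pi_i\mapsto\pi_i$. Both monoids have exactly $2^{n-1}$ idempotents (the $\longest_J$ for $J\subseteq I=\{1,\dots,n-1\}$ on one side; one per subset of $\{1,\dots,n\}$ containing $1$ on the other, by the preceding proposition on $\NDPF_n$), and the quotient map carries idempotents to idempotents and is onto; hence it restricts to a bijection $\idempMon[H_0(\sg[n])]\to\idempMon[\NDPF_n]$, so $\idempMon[\NDPF_n]=\idempMon[H_0(\sg[n])]$ in the required sense and $\quiv[\NDPF_n]$ is a subgraph of $\quiv[H_0(\sg[n])]$. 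Therefore every element of $\quiv[\NDPF_n]$ is the image of some product $\longest_J\longest_K$ with $J,K$ incomparable subsets of $I$ such that $\pi_j$ and $\pi_k$ do not commute for all $j\in J\setminus K$, $k\in K\setminus J$; in type $A$ this means $|j-k|=1$ for all such pairs.

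Next comes a purely combinatorial step. Writing $L=J\cap K$, the condition ``$|j-k|=1$ for all $j\in J\setminus K$, $k\in K\setminus J$'' says that $J\setminus K$ and $K\setminus J$ span a complete bipartite subgraph of the path $1-2-\cdots-(n-1)$; the only complete bipartite subgraphs of a path are a single edge and a claw with two leaves, so either $\{J\setminus K, K\setminus J\}=\{\{i\},\{i+1\}\}$ with $i,i+1\notin L$, or $\{J\setminus K, K\setminus J\}=\{\{i\},\{i-1,i+1\}\}$ with $i-1,i,i+1\notin L$. Thus every element of $\quiv[\NDPF_n]$ is the image of one of the four products $\longest_{L\cup\{i\}}\longest_{L\cup\{i+1\}}$, $\longest_{L\cup\{i+1\}}\longest_{L\cup\{i\}}$, $\longest_{L\cup\{i\}}\longest_{L\cup\{i-1,i+1\}}$, $\longest_{L\cup\{i-1,i+1\}}\longest_{L\cup\{i\}}$.

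Then I would use the extra relation $\pi_i\pi_{i-1}\pi_i=\pi_{i-1}\pi_i$ that cuts $\NDPF_n$ out of $H_0(\sg[n])$ to collapse the ``unwanted'' shapes. A direct manipulation (checked on generators, absorbing factors via the relation) shows that in $\NDPF_n$ the reverse-oriented edge product $\longest_{L\cup\{i+1\}}\longest_{L\cup\{i\}}$ equals the idempotent $\longest_{L\cup\{i,i+1\}}$, hence bears no arrow; and each claw product equals one of the listed edge products $\pi_{J'\cup\{i'\}}\pi_{J'\cup\{i'+1\}}$ after absorbing its isolated leaf into the common part $L$ (for instance, $\longest_{L\cup\{i\}}\longest_{L\cup\{i-1,i+1\}}=\longest_{L\cup\{i-1,i\}}\longest_{L\cup\{i-1,i+1\}}$), or is idempotent. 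This yields the inclusion $\quiv[\NDPF_n]\subseteq\{\pi_{J\cup\{i\}}\pi_{J\cup\{i+1\}}\mid J\subseteq\{1,\dots,n-1\},\ i,i+1\notin J\}$.

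For the reverse inclusion I would show that each $x:=\pi_{J\cup\{i\}}\pi_{J\cup\{i+1\}}$ with $i,i+1\notin J$ is a non-idempotent c-irreducible element of $\NDPF_n$ with $\lfix{x}=\pi_{J\cup\{i\}}$ and $\rfix{x}=\pi_{J\cup\{i+1\}}$. Using the functional realization of $\NDPF_n$ together with the preceding proposition on $\NDPF_n$ (an element is determined by its image set $=\rfix$ and by the set of fiber-minima $=\lfix$), one computes $x$ explicitly: its image set and fiber-minima give the asserted $\rfix$ and $\lfix$, and $\lfix{x}\neq\rfix{x}$ forces $x$ non-idempotent; c-irreducibility follows because a minimal non-trivial factorization would be compatible, $x=uv$ with $\rfix u=\lfix v$ a proper intermediate idempotent, which one rules out via the fiber structure. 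Distinct pairs $(J,i)$ yield distinct elements, since $(J\cup\{i\})\triangle(J\cup\{i+1\})=\{i,i+1\}$ recovers $i$ and then $J$, so these elements biject with the arrows. The main obstacle is the third and fourth steps: both the collapse of the reverse-oriented and claw products in $\NDPF_n$ and the c-irreducibility of the surviving elements are hands-on verifications, and they are cleanest through the defining relation $\pi_i\pi_{i-1}\pi_i=\pi_{i-1}\pi_i$ and the functional/$(\lfix,\rfix)$ model rather than by lifting factorizations back to $H_0(\sg[n])$.
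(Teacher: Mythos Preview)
Your proposal follows the paper's skeleton: invoke the quiver-of-quotients proposition, specialize Proposition~\ref{proposition.quiver.hecke} to type $A$ to obtain the four shapes of Hecke quiver elements, discard three of them, and keep $\pi_{J\cup\{i\}}\pi_{J\cup\{i+1\}}$. Your combinatorial classification of the four shapes via complete bipartite subgraphs of the path is exactly the paper's list.

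The difference lies in how the three shapes are discarded and how the surviving one is shown c-irreducible. The paper writes down, for each discarded shape, an explicit factorization in $\NDPF_n$ and declares it non-trivial; you instead observe that these shapes \emph{collapse} under the quotient---the reversed edge $\pi_{J\cup\{i+1\}}\pi_{J\cup\{i\}}$ to the idempotent $\pi_{J\cup\{i,i+1\}}$, and each claw to a surviving edge product with one outer leaf absorbed into $J$. Your route is in fact the more accurate one here: since the reversed edge becomes idempotent and the claws coincide with surviving type-one elements in $\NDPF_n$, the factorizations the paper writes down are actually trivial there (idempotents admit only trivial factorizations, and a c-irreducible element admits none non-trivial). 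The proposition is unaffected, because those elements are still correctly excluded from or already present in the list for precisely the reasons you give, but your collapse argument explains what is really going on. For the reverse inclusion, the paper's one-liner---any non-trivial factorization of $\pi_{J\cup\{i\}}\pi_{J\cup\{i+1\}}$ in $\NDPF_n$ would already be non-trivial in $H_0(\sg[n])$, since $\operatorname{lfix}$ and $\operatorname{rfix}$ are preserved by the quotient---is shorter than your direct verification in the functional $(\lfix{\,\cdot\,},\rfix{\,\cdot\,})$ model; you might prefer to adopt it. Your distinctness check via the symmetric difference $\{i,i+1\}$ is a nice touch the paper leaves implicit.
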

\begin{proof}
  Recall that $\NDPF_n$ is the quotient of $H_0(\sg[n])$ by the
  relation $\pi_i\pi_{i+1}\pi_i = \pi_{i+1}\pi_i$, via the quotient
  map $\pi_i\mapsto \pi_i$. For $J$ a subset of $\{1,\dots,n-1\}$,
  define accordingly $\pi_J$ in $\NDPF_n$ as the image of $\pi_J$ in
  $H_0(\sg[n])$. Specializing
  Proposition~\ref{proposition.quiver.hecke} to type $A_{n-1}$, one
  obtains that there are four types of quiver elements:
  \begin{itemize}
  \item $\pi_{J\cup \{i\}}\pi_{J\cup \{i+1\}}$ where $J\subset\{1,\dots, n-1\}$ and
  $i,i+1\notin J$,
  \item $\pi_{J\cup \{i+1\}}\pi_{J\cup \{i\}}$ where $J\subset\{1,\dots, n-1\}$ and
    $i,i+1\notin J$, 
  \item $\pi_{K\cup \{i, i+2\}}\pi_{K\cup \{i+1\}}$ where $K\subset\{1,\dots,
    n-1\}$ and $i,i+1,i+2\notin K$,
  \item $\pi_{K\cup \{i+1\}}\pi_{K\cup \{i, i+2\}}$ where $K\subset\{1,\dots,
    n-1\}$ and $i,i+1,i+2\notin K$.
  \end{itemize}
  One can easily check that the three following factorizations are non-trivial:
  \begin{itemize}
  \item $\pi_{J\cup \{i+1\}}\pi_{J\cup \{i\}} =
    (\pi_{J\cup  \{i+1\}}\pi_i,\ \pi_{i+1}\pi_{J\cup \{i\}})$,
  \item $\pi_{K\cup \{i, i+2\}}\pi_{K\cup \{i+1\}} =
    (\pi_{K\cup \{i, i+2\}}\pi_{i+1},\ \pi_{i+2}\pi_{K\cup \{i+1\}})$,
  \item $\pi_{K\cup \{i+1\}}\pi_{K\cup \{i, i+2\}} =
    (\pi_{K\cup \{i+1\}}\pi_{i},\ \pi_{i+1}\pi_{K\cup \{i, i+2\}})$.
  \end{itemize}
  Conversely, any non-trivial factorization of $\pi_{J\cup
    \{i\}}\pi_{J\cup \{i+1\}}$ in $\NDPF_n$ would have been non-trivial in the
  Hecke monoid.
\end{proof}
\bigskip

\subsubsection{The incidence algebra of a poset}

We show now that we can recover the well-known representation theory of the
incidence algebra of a partially ordered set.

Let $(P, \leq)$ be a partially ordered set. Recall that the incidence algebra of
$P$ is the algebra $\K P$ whose basis is the set of pairs $(x,y)$ of
comparable elements $x\leq y$ with the product rule
\begin{equation}\label{equation.incidence}
  (x,y) (z,t) =
  \begin{cases}
    (x,t) & \text{if $y = z$,}\\
    0     & \text{otherwise.}
  \end{cases}
\end{equation}
The incidence algebra is very close to the algebra of a monoid except that
$0$ and $1$ are missing. We therefore build a monoid by adding $0$ and $1$ artificially
and removing them at the end:
\newcommand{\Zero}{\operatorname{Zero}}
\newcommand{\One}{\operatorname{One}}
\begin{definition}
  Let $(P, \leq)$ be a partially ordered set. Let $\Zero$ and $\One$ be two
  elements not in $P$. The incidence monoid of $P$ is the monoid $M(P)$, whose
  underlying set is
  \begin{equation*}
    M(P) := \{(x,y)\in P \mid x\leq y\} \cup \{\Zero, \One\}\,,
  \end{equation*}
  with the product rule given by Equation~\ref{equation.incidence} plus $\One$
  being neutral and $\Zero$ absorbing.
\end{definition}
\begin{proposition}
  Define an order $\preceq$ on $M(P)$ by
  \begin{equation}
    (x,y) \preceq (z,t)
    \quad\text{if and only if}\quad
    x\leq z\leq t\leq y\,,
  \end{equation}
  and $\One$ and $\Zero$ being the largest and the smallest element, respectively.
  The monoid $M(P)$ is left-right ordered for $\preceq$ and thus
  $\JJ$-trivial.
\end{proposition}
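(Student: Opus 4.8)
The plan is to verify directly that $M(P)$ is left-right ordered for $\preceq$ and then invoke Proposition~\ref{proposition.ordered} to deduce $\JJ$-triviality. First I would check that $\preceq$ really is a partial order on $M(P)$. Reflexivity of $(x,y)\preceq(x,y)$ is the chain $x\le x\le y\le y$; antisymmetry holds because $(x,y)\preceq(z,t)$ and $(z,t)\preceq(x,y)$ force $x\le z\le x$ and $y\le t\le y$, hence $x=z$ and $y=t$; transitivity follows by concatenating the defining chains $x\le z\le t\le y$ and $z\le u\le v\le t$ into $x\le z\le u\le v\le t\le y$. Adjoining $\One$ as top element and $\Zero$ as bottom element preserves all three properties.

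Next I would show that $ab\preceq a$ and $ab\preceq b$ for all $a,b\in M(P)$. The cases involving $\One$ or $\Zero$ are immediate: if $a=\One$ then $ab=b\preceq\One=a$ and $ab=b\preceq b$, symmetrically if $b=\One$, and if either factor is $\Zero$ then $ab=\Zero$, which lies below everything. The only genuine case is $a=(x,y)$, $b=(z,t)$ with $x\le y$ and $z\le t$. If $y\neq z$ then $ab=\Zero\preceq a,b$. If $y=z$, then $ab=(x,t)$, which is a legitimate element of $M(P)$ since $x\le y=z\le t$, and one checks $(x,t)\preceq(x,y)$ via $x\le x\le y\le t$ (using $y=z\le t$) and $(x,t)\preceq(z,t)=(y,t)$ via $x\le y\le t\le t$ (using $x\le y$ and $y=z\le t$). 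Thus $M(P)$ is simultaneously right ordered and left ordered for $\preceq$, i.e. left-right ordered.

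Finally, Proposition~\ref{proposition.ordered} states that a left-right ordered monoid is exactly a $\JJ$-trivial one, which gives the claim. The argument involves no real obstacle; the only point demanding care is keeping track of the direction of the inequalities in the definition of $\preceq$ (the product $(x,y)(z,t)=(x,t)$ \emph{enlarges} the interval from both $[x,y]$ and $[z,t]$, and a larger interval is larger for $\preceq$) together with the bookkeeping of the $\One$ and $\Zero$ edge cases.
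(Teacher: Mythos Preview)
Your proof is correct and follows exactly the approach the paper has in mind: the paper's own proof is the single line ``This is trivial by the product rule,'' and you have simply unpacked that triviality case by case, then invoked Proposition~\ref{proposition.ordered}. One small slip in your closing parenthetical: a \emph{larger} interval corresponds to a \emph{smaller} element for $\preceq$ (since $(x,y)\preceq(z,t)$ means $[z,t]\subseteq[x,y]$), not a larger one---but your actual inequality checks are all in the right direction, so this does not affect the argument.
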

\begin{proof}
  This is trivial by the product rule.
\end{proof}
One can now use all the results on $\JJ$-trivial monoids to obtain the
representation theory of $M(P)$. One gets back to $\K P$ thanks to the
following result.
\begin{proposition}
  As an algebra, $\K M(P)$ is isomorphic to
  $\K\One\ \oplus\ \K P\ \oplus\ \K \Zero$.
\end{proposition}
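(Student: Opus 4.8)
The plan is to produce, inside $\K M(P)$, three pairwise orthogonal central idempotents $\epsilon_{\One}$, $\epsilon_P$, $\epsilon_{\Zero}$ summing to the identity $\One$, and to identify the corresponding block algebras $\epsilon_{\One}\K M(P)$, $\epsilon_P\K M(P)$, $\epsilon_{\Zero}\K M(P)$ with $\K$, $\K P$, and $\K$ respectively; the statement then follows since $\K M(P)=\bigoplus_{\bullet}\epsilon_{\bullet}\K M(P)$ as algebras.

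Two of these idempotents are essentially forced. Since $\Zero$ is absorbing in $M(P)$, one has $m\Zero=\Zero m=\Zero$ for all $m\in M(P)$, so $\Zero$ is a central idempotent and $\Zero\K M(P)=\K\Zero\cong\K$; this will be the $\K\Zero$ summand. For the incidence part, the natural candidate is the unit $e:=\sum_{x\in P}(x,x)$ of $\K P$. The subtlety the proof must handle is that in $\K M(P)$, unlike in $\K P$, a ``clashing'' product $(x,x)(x',x')$ with $x\neq x'$ equals the \emph{monoid element} $\Zero$ rather than the scalar $0$; consequently $e^2=e+(|P|^2-|P|)\,\Zero\neq e$, so $e$ is not idempotent. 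The remedy is to work with $\bar e:=\sum_{x\in P}\bigl((x,x)-\Zero\bigr)=e-|P|\,\Zero$. I would check by direct computation that $\bar e$ is central (using $e(z,t)=(z,t)+(|P|-1)\Zero=(z,t)e$ and $e\Zero=|P|\Zero=\Zero e$) and that $\bar e^{\,2}=\bar e$ (expand, using $e\Zero=|P|\Zero$ and $\Zero^2=\Zero$). Then set $\epsilon_{\Zero}:=\Zero$, $\epsilon_P:=\bar e$, and $\epsilon_{\One}:=\One-\Zero-\bar e$; orthogonality and $\epsilon_{\One}+\epsilon_P+\epsilon_{\Zero}=\One$ are then immediate from $\bar e\Zero=\Zero\bar e=0$ and $\bar e^{\,2}=\bar e$, and all three are central.

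It remains to identify the three blocks. First I would note that $\{(x,y)-\Zero \suchthat x\leq y\}\cup\{\One-\Zero\}\cup\{\Zero\}$ is a basis of $\K M(P)$, being obtained from the monoid basis by a unitriangular change of coordinates. Computing the right action of monoid elements gives $\bar e\,\One=\bar e$, $\bar e\,\Zero=0$, and $\bar e\,(x,y)=(x,y)-\Zero$, so $\bar e\K M(P)$ is the linear span of $\{(x,y)-\Zero\suchthat x\leq y\}$; similarly $\epsilon_{\One}(x,y)=0$, $\epsilon_{\One}\Zero=0$, $\epsilon_{\One}\One=\epsilon_{\One}$, so $\epsilon_{\One}\K M(P)=\K\,\epsilon_{\One}\cong\K$, the $\K\One$ summand. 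Finally, the map $(x,y)-\Zero\mapsto(x,y)$ from $\bar e\K M(P)$ to $\K P$ is a linear isomorphism, and it is multiplicative because
\begin{equation*}
  \bigl((x,y)-\Zero\bigr)\bigl((z,t)-\Zero\bigr)=(x,y)(z,t)-\Zero=
  \begin{cases}(x,t)-\Zero & \text{if }y=z,\\ 0 & \text{if }y\neq z,\end{cases}
\end{equation*}
which matches the product rule~\eqref{equation.incidence} of $\K P$ under this identification. Hence $\bar e\K M(P)\cong\K P$, completing the decomposition.

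The only genuine obstacle is the $\Zero$-bookkeeping just described: recognizing that the obvious idempotent $e$ is spoiled by the absorbing element and pinning down the correction term $\bar e=e-|P|\Zero$. Everything after that — centrality, orthogonality, and the three block identifications — reduces to short computations with the explicit product rule. (Equivalently, one may first quotient by the two-sided ideal $\K\Zero$, in which $\Zero$ becomes $0$ and $e$ is genuinely idempotent; since $\Zero$ is a central idempotent, $\K\Zero$ is a direct algebra summand and this is merely a repackaging of the same argument.)
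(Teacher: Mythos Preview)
Your proof is correct and, in fact, more careful than the paper's own argument. The paper's proof is a two-line sketch: it asserts that the elements $(x,x)$ are orthogonal idempotents in $\K M(P)$, sets $e:=\sum_{x\in P}(x,x)$, and concludes that $\K P\cong e(\K M(P))e$. As you correctly diagnose, this is not literally true in $\K M(P)$: for $x\neq y$ the product $(x,x)(y,y)$ equals the monoid element $\Zero$, not the scalar $0$, so the $(x,x)$ are not orthogonal and $e$ is not idempotent. Your corrected idempotent $\bar e=e-|P|\,\Zero$ is precisely what makes the argument go through, and your identification of the three blocks is clean and complete.

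The paper's proof is best read charitably as your parenthetical alternative: first split off the direct summand $\K\Zero$ (since $\Zero$ is a central idempotent), and work in the quotient $\K M(P)/\K\Zero$ where $\Zero$ becomes $0$ and $e$ is genuinely idempotent. But the paper does not say this, and taken at face value its computation of $e^2$ is wrong. Your version makes the $\Zero$-bookkeeping explicit and is the more rigorous of the two; conceptually, however, both arguments are the same Peirce decomposition along $\Zero$, $\bar e$, and $\One-\Zero-\bar e$.
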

\begin{proof}
  In the monoid algebra $\K M(P)$, the elements $(x,x)$ are orthogonal
  idempotents. Thus $e:=\sum_{x\in P} (x,x)$ is itself an idempotent and it is
  easily seen that $\K P$ is isomorphic to $e(\K M(P))e$.
\end{proof}
One can then easily deduce the representation theory of $\K P$:
\begin{proposition}
  Let $(P, \leq)$ be a partially ordered set and $\K P$ its incidence algebra.
  Then the Cartan matrix $C = (c_{x,y})_{x,y\in P}$ of $\K P$ is indexed by
  $P$ and given by
  \begin{equation*}
    c_{x,y} =
    \begin{cases}
      1 & \text{if $x\leq y\,,$}\\
      0 & \text{otherwise.}
    \end{cases}
  \end{equation*}
  The arrows of the quiver are $x\to y$ whenever $(x,y)$ is a cover in $P$,
  that is, $x \le y$ and there is no $z$ such that $x\le z\le y$.
\end{proposition}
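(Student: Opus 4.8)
The plan is to apply to the $\JJ$-trivial monoid $M(P)$ the combinatorial descriptions of the Cartan matrix and of the quiver obtained in Sections~\ref{ss.cartan} and~\ref{ss.quiver}, and then to extract the data of the direct summand $\K P$ via the isomorphism $\K M(P)\cong \K\One\oplus\K P\oplus\K\Zero$ just established. Since the Cartan matrix of a direct sum of algebras is block-diagonal along the summands and the quiver is the disjoint union of the quivers of the summands, it suffices to identify which vertices (equivalently which simple modules $S_e$, $e\in\idempMon[M(P)]$) belong to the $\K P$-summand and to compute the corresponding block.

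First I would record the idempotents: from the product rule, $\idempMon[M(P)]=\{(x,x)\mid x\in P\}\cup\{\One,\Zero\}$, so there are $|P|+2$ simple modules. Next, for a non-idempotent element $(x,y)$ (that is $x<y$), a direct inspection of the product rule gives $\laut{(x,y)}=\{(x,x),\One\}$ and $\raut{(x,y)}=\{(y,y),\One\}$, whence $\lfix{(x,y)}=(x,x)$ and $\rfix{(x,y)}=(y,y)$ (on the idempotents $\lfix$ and $\rfix$ are of course the element itself). In particular $\lfix$ and $\rfix$ take values in $\{(x,x)\mid x\in P\}$ on every non-idempotent, and a one-line computation on $S_{(x,x)}=\K\epsilon_{(x,x)}$ shows that $\Zero$ and $\One-\sum_{z\in P}(z,z)$ act by zero on it; hence the $|P|$ modules $S_{(x,x)}$ are exactly the simple modules of the $\K P$-summand, while $S_\One$ and $S_\Zero$ are the simples of the two one-dimensional summands. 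Consequently the Cartan matrix of $\K P$ is the block of the Cartan matrix of $\K M(P)$ supported on the vertices $\{(x,x)\mid x\in P\}$, and likewise the quiver of $\K P$ is the full subquiver on those vertices.

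For the Cartan matrix, the theorem of Section~\ref{ss.cartan} gives $c_{(x,x),(y,y)}=|C_{(x,x),(y,y)}|$ with $C_{(x,x),(y,y)}=\{m\in M(P)\mid \lfix m=(x,x),\ \rfix m=(y,y)\}$; by the computation above this set equals $\{(x,y)\}$ when $x\le y$ and is empty otherwise, which gives the announced $0/1$ matrix. For the quiver I would determine $\quiv[M(P)]$, the non-idempotent c-irreducibles: the factorizations of $(x,y)$ (with $x<y$) not involving $\Zero$ are $(x,y)=(x,b)(b,y)$ for $x\le b\le y$, together with $(x,y)=\One\,(x,y)=(x,y)\,\One$; the latter two and the extremal cases $b=x$, $b=y$ are trivial, whereas $x<b<y$ yields a non-trivial factorization because then $(x,x)(x,b)\ne(x,x)$ and $(b,y)(y,y)\ne(y,y)$. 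Hence $(x,y)$ is c-irreducible if and only if there is no $z$ with $x<z<y$, i.e. if and only if $(x,y)$ is a cover in $P$; the corresponding arrow runs from $v_{(x,x)}=v_{\lfix{(x,y)}}$ to $v_{(y,y)}=v_{\rfix{(x,y)}}$, namely $x\to y$.

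The only slightly delicate point is the bookkeeping around the artificial elements $\One$ and $\Zero$: one must check that they contribute only the two trivial $1\times 1$ diagonal Cartan blocks and the two isolated quiver vertices $v_\One$, $v_\Zero$ with no incident arrows, so that deleting them is exactly the passage from $\K M(P)$ to $\K P$. This is immediate from the fact, noted above, that $\lfix$ and $\rfix$ land in $\{(x,x)\mid x\in P\}$ on every non-idempotent of $M(P)$, since vertices, arrows, and Cartan entries are all governed entirely by these two statistics. I do not expect any genuine obstacle beyond this routine verification.
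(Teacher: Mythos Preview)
Your proposal is correct and follows essentially the same approach as the paper: compute $\lfix{(x,y)}=(x,x)$ and $\rfix{(x,y)}=(y,y)$, read off the Cartan entries as $|C_{(x,x),(y,y)}|$, and identify the c-irreducible non-idempotents as the covers via their factorizations $(x,y)=(x,z)(z,y)$. The paper's proof is a two-line version of yours; your additional care in verifying that the artificial idempotents $\One$ and $\Zero$ contribute only isolated $1\times1$ blocks (so that the passage from $\K M(P)$ to $\K P$ is clean) is a point the paper leaves implicit.
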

\begin{proof}
  Clearly $\lfix{x,y} = (x,x)$ and $\rfix{x,y} = (y,y)$. Moreover, the
  compatible factorizations of $(x,y)$ are exactly $(x,z)(z,y)$ with $x<z<y$.
\end{proof}
\bigskip

\subsubsection{Unitriangular Boolean matrices}

Next we consider the monoid of unitriangular Boolean matrices $\unitribool_n$.
\begin{remark}
  The idempotents of $\unitribool_n$ are in bijection with the posets admitting
  $1,\dots,n$ as linear extension (sequence A006455 in~\cite{Sloane}).

  Let $m\in \unitribool_n$ and $g$ be the corresponding digraph. Then
  $m^\omega$ is the transitive closure of $g$, and $\lfix g$ and
  $\rfix g$ are given respectively by the largest ``prefix'' and
  ``postfix'' of $g$ which are posets: namely, $\lfix g$ (resp. $\rfix
  g$) correspond to the subgraph of $g$ containing the edges $i\edge
  j$ (resp. $j\edge k$) of $g$ such that $i\edge k$ is in $g$
  whenever $j\edge k$ (resp. $i\edge j$) is.
\end{remark}

Figure~\ref{fig:unitribool4} displays the Cartan matrix and quiver of
$\unitribool_4$; as expected, their nodes are labelled by the 40
subposets of the chain. This figure further suggests that they are
acyclic and enjoy a certain symmetry, properties which we now prove in general.

\begin{figure}
  \centering
  \fbox{\includegraphics[width=\textwidth]{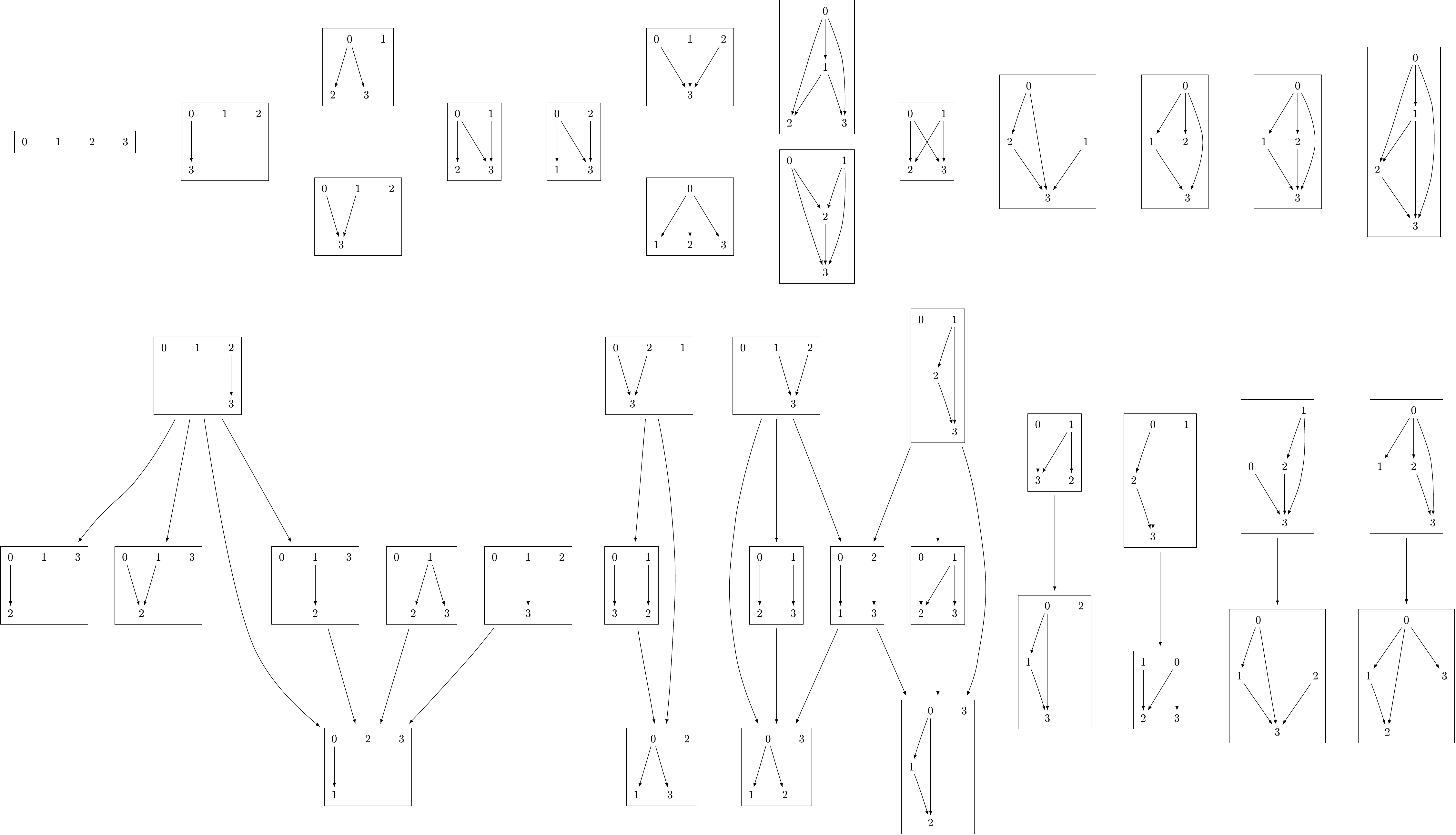}}
  \fbox{\includegraphics[width=\textwidth]{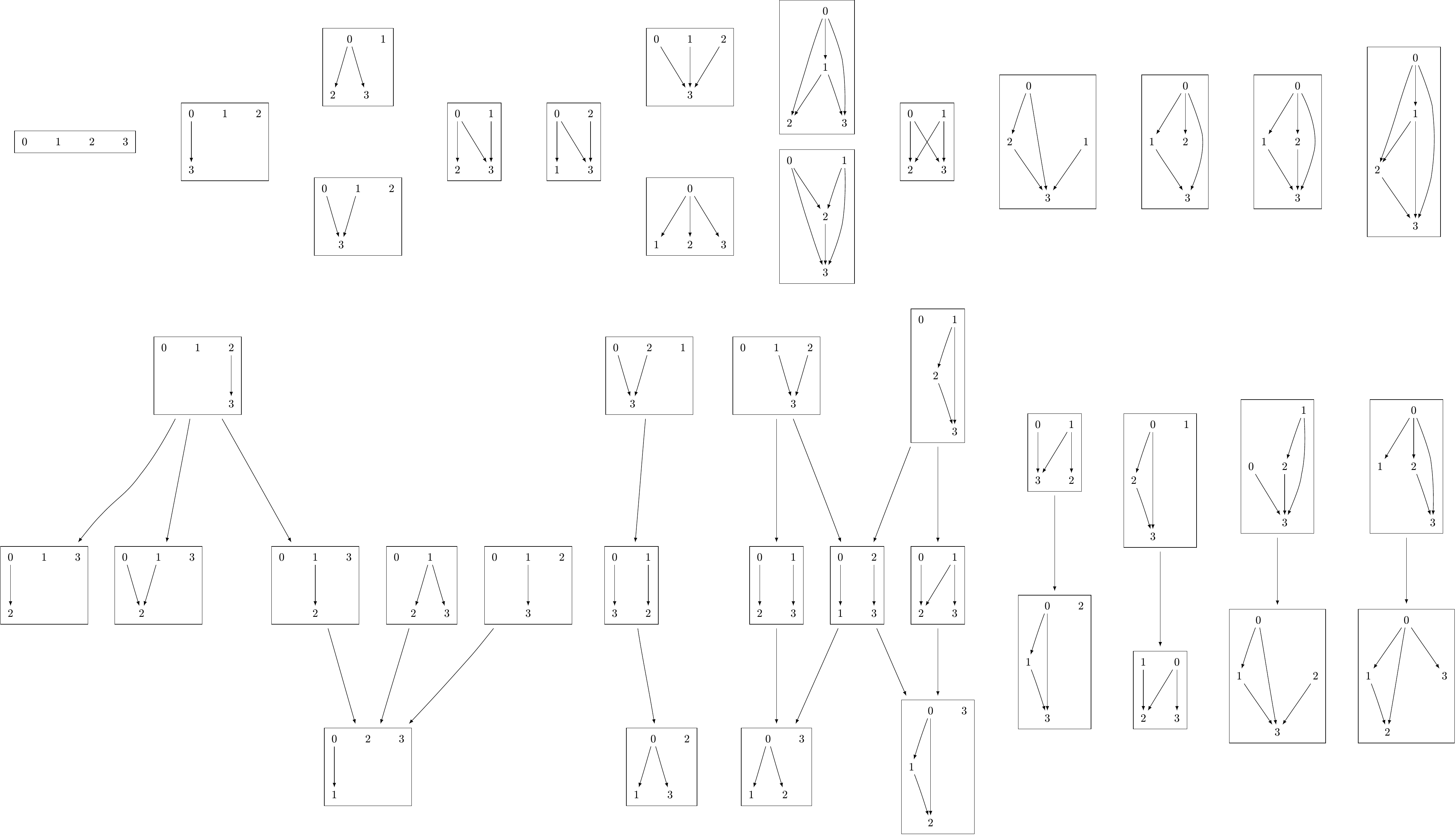}}
  \caption{On top, the Cartan matrix (drawn as a graph) and at the
    bottom the quiver of $\unitribool_4$. The edge labels have not
    been drawn for readability; for the quiver, they can be recovered
    as the product of two vertices.
    Those pictures have been produced automatically by \texttt{Sage},
    \texttt{dot2tex}, and \texttt{graphviz}, up to a manual
    reorganization of the connected components using
    \texttt{inkscape}.}
  \label{fig:unitribool4}
\end{figure}

The monoid $\unitribool_n$ admits a natural antiautomorphism $\phi$;
it maps an upper triangular Boolean matrix to its transpose along the
second diagonal or, equivalently, relabels the vertices of the
corresponding digraph by $i\mapsto n-i$ and then takes the dual.
\begin{proposition}
  The Cartan matrix of $\unitribool_n$, seen as a graph, and its quiver are
  preserved by the non-trivial antiautomorphism induced by $\phi$.
\end{proposition}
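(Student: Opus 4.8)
The plan is to derive the statement from a general principle about antiautomorphisms, so I would first record the behaviour of an arbitrary antiautomorphism $\psi$ of a $\JJ$-trivial monoid $\tMonoid$, and only then specialize to $\psi=\phi$. That $\phi$ really is an antiautomorphism of $\unitribool_n$ — in particular a well-defined self-map preserving unitriangularity — is immediate from the matrix identity $\phi(mm')=\phi(m')\phi(m)$, or from the digraph description; moreover $\phi^2=\id$, and $\phi$ is non-trivial on idempotents for $n\ge 3$.

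The three facts I would establish about $\psi$ are: (i) $\psi$ is an automorphism of the poset $(\tMonoid,\le_\JJ)$ — since $x=uyv$ iff $\psi(x)=\psi(v)\psi(y)\psi(u)$ — and in particular restricts to a bijection of $\idempMon$ (as $\psi(e)\psi(e)=\psi(ee)=\psi(e)$); (ii) $\psi$ exchanges the two symbols, $\psi(\lfix{x})=\rfix{\psi(x)}$ and $\psi(\rfix{x})=\lfix{\psi(x)}$, which follows from $ux=x\iff\psi(x)\psi(u)=\psi(x)$ (giving $\psi(\laut{x})=\raut{\psi(x)}$) together with Proposition~\ref{proposition.aut} (the left symbol is the $\le_\JJ$-minimum of $\laut{x}$) and fact (i) (an order isomorphism carries minima to minima); (iii) $\psi$ preserves the set $\quiv$ of non-idempotent c-irreducible elements. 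For (iii) one uses that $\psi^{-1}$ is again an antiautomorphism, so the factorizations of $\psi(x)$ are precisely the pairs $(\psi(v),\psi(u))$ with $x=uv$; a factorization $x=uv$ is non-trivial exactly when $\lfix{x}\not\le_\JJ u$ and $\rfix{x}\not\le_\JJ v$, and (i)--(ii) turn this condition verbatim into the non-triviality of $\psi(x)=\psi(v)\psi(u)$; hence $x$ is c-irreducible iff $\psi(x)$ is, with non-idempotency preserved by (i).

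It remains to read off the conclusion. By the combinatorial description of the Cartan matrix, $\psi$ restricts for all $i,j\in\idempMon$ to a bijection $C_{i,j}\to C_{\psi(j),\psi(i)}$ (if $\lfix{x}=i$ and $\rfix{x}=j$ then $\lfix{\psi(x)}=\psi(j)$ and $\rfix{\psi(x)}=\psi(i)$ by (ii)), so $c_{i,j}=c_{\psi(j),\psi(i)}$; this says exactly that relabelling the vertices of the Cartan graph by $e\mapsto\psi(e)$ and reversing every arrow is an isomorphism of that edge-weighted graph. Likewise, by the combinatorial description of the quiver and fact (iii), $\psi$ sends the arrow $v_{\lfix{x}}\to v_{\rfix{x}}$ attached to $x\in\quiv$ to the arrow $v_{\lfix{\psi(x)}}\to v_{\rfix{\psi(x)}}$, i.e.\ to $v_{\psi(\rfix{x})}\to v_{\psi(\lfix{x})}$, so the same relabel-and-reverse operation is an isomorphism of the quiver (compatibly with the edge labels, which are determined by their endpoints). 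Taking $\psi=\phi$ and using $\phi^2=\id$ exhibits this as the reflective, involutive symmetry visible in Figure~\ref{fig:unitribool4}. Since none of this uses features of $\unitribool_n$ beyond $\phi$ being a genuine antiautomorphism, the only points needing care are the bookkeeping in (iii) — that ``non-trivial'' is self-dual under $\psi$ — and being explicit that an antiautomorphism ``preserves'' a directed graph in the sense of inducing an isomorphism onto its opposite, the arrow reversal being forced because $\phi$ reverses rather than preserves products.
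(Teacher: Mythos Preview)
Your proof is correct and follows essentially the same approach as the paper: both argue that any antiautomorphism of a $\JJ$-trivial monoid swaps left and right symbols and preserves the notion of c-irreducibility, from which the symmetry of the Cartan graph and quiver follows. The paper's proof is a terse two-line remark to this effect; your version is a careful elaboration of the same ideas, and in fact your formula $\psi(\lfix{x})=\rfix{\psi(x)}$ is more accurate than the paper's $\lfix{\phi(x)}=\rfix{x}$, which tacitly suppresses the action of $\phi$ on idempotents. Your explicit attention to what ``preserved'' means for a directed graph under an antiautomorphism (relabel-and-reverse) is also a point the paper leaves implicit.
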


\begin{proof}
  Remark that any antiautomorphism $\phi$ flips $\operatorname{lfix}$
  and $\operatorname{rfix}$:
  \begin{displaymath}
    \lfix{\phi(x)} = \rfix x \qquad \text{and} \qquad \rfix{\phi(x)}=\lfix x\,,
  \end{displaymath}
  and that the definition of $c$-irreducible is symmetric.
\end{proof}

Fix an ordering of the pairs $(i,j)$ with $i<j$ such that $(i,j)$
always comes before $(j,k)$ (for example using lexicographic order).
Compare two elements of $\unitribool_n$ lexicographically by writing
them as bit vectors along the chosen enumeration of the pairs $(i,j)$.
\begin{proposition}
  The Cartan matrix of $\unitribool_n$ is unitriangular with respect
  to the chosen order, and therefore its quiver is acyclic.
\end{proposition}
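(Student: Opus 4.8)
The plan is to reason directly with the combinatorial descriptions of $\lfix{x}$ and $\rfix{x}$ recalled in the Remark above. Write $g$ for the acyclic digraph of $x\in\unitribool_n$, with all edges going from smaller to larger vertices. Then $\lfix{x}$ is the subgraph of $g$ consisting of the edges $i\edge j$ such that $i\edge k\in g$ whenever $j\edge k\in g$, and $\rfix{x}$ the subgraph of the edges $j\edge k$ such that $i\edge k\in g$ whenever $i\edge j\in g$; in particular both are subgraphs of $g$, and $x$ is idempotent precisely when $g$ is transitively closed, in which case $\lfix{x}=\rfix{x}=x$. I will prove two facts, each by an infinite-descent argument inside the finite acyclic graph $g$: (1) if $\lfix{x}=\rfix{x}$ then $x$ is idempotent; and (2) if $\lfix{x}\neq\rfix{x}$ then $\lfix{x}<_\lex\rfix{x}$.

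For (1), suppose $e:=\lfix{x}=\rfix{x}$ were a proper subgraph of $g$, and pick an edge $a\edge b$ of $g$ not in $e$. Since $a\edge b\notin\lfix{x}$, there is a vertex $c$ with $b\edge c\in g$ and $a\edge c\notin g$; then $b\edge c\notin\rfix{x}=e$, witnessed by $a\edge b\in g$ together with $a\edge c\notin g$. As $b\edge c\in g\setminus\lfix{x}$, there is $c'$ with $c\edge c'\in g$ and $b\edge c'\notin g$; then $c\edge c'\notin\rfix{x}=e$, witnessed by $b\edge c\in g$ and $b\edge c'\notin g$; and so on. This produces an infinite strictly increasing chain $b<c<c'<\cdots$ of vertices, which is impossible. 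Hence $g$ is transitively closed and $x=\lfix{x}$, so the diagonal entry $c_{e,e}$ of the Cartan matrix, which counts the $x$ with $\lfix{x}=\rfix{x}=e$, equals $1$.

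For (2), let $(a,b)$ be the first pair, in the chosen enumeration, on which the bit vectors of $\lfix{x}$ and $\rfix{x}$ differ, and assume for contradiction that $a\edge b\in\lfix{x}$ while $a\edge b\notin\rfix{x}$. By iterating the defining property of the enumeration (that $(i,j)$ precedes $(j,k)$), every pair $(p,q)$ with $q\le a$ precedes $(a,b)$, hence $\lfix{x}$ and $\rfix{x}$ agree on all such edges. Now $a\edge b\in g$ but $a\edge b\notin\rfix{x}$, so there is $d<a$ with $d\edge a\in g$ and $d\edge b\notin g$; then $d\edge a\notin\lfix{x}$ (witnessed by $a\edge b\in g$, $d\edge b\notin g$), hence $d\edge a\notin\rfix{x}$ by the agreement just noted. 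So there is $d'<d$ with $d'\edge d\in g$ and $d'\edge a\notin g$; then $d'\edge d\notin\lfix{x}$ (witnessed by $d\edge a\in g$, $d'\edge a\notin g$), hence $d'\edge d\notin\rfix{x}$; iterating yields an infinite strictly decreasing chain $\cdots<d'<d<a$, impossible. Therefore $a\edge b\in\rfix{x}\setminus\lfix{x}$, i.e.\ $\lfix{x}<_\lex\rfix{x}$.

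Ordering the idempotents of $\unitribool_n$ increasingly by $\le_\lex$, fact (2) shows that $c_{i,j}\neq0$ with $i\neq j$ forces $i<_\lex j$, while fact (1) gives $c_{i,i}=1$; hence the Cartan matrix is (upper) unitriangular for this order. The quiver is then acyclic: each arrow runs from $v_{\lfix{x}}$ to $v_{\rfix{x}}$ for some non-idempotent c-irreducible element $x$, and by facts (1) and (2) one has $\lfix{x}<_\lex\rfix{x}$ strictly, so every arrow strictly raises the $\le_\lex$-position of its endpoint and no directed cycle can exist. The main obstacle is fact (2): tracking which witness edge to use at each step of the descent, and, above all, verifying that all the auxiliary pairs $(d,a),(d',d),\dots$ precede the first-difference pair $(a,b)$ in the enumeration — this is the only place where the hypothesis that $(i,j)$ precedes $(j,k)$ is used, and once it is secured, finiteness and acyclicity of $g$ close both facts at once.
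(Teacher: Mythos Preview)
Your proof is correct. The paper takes a shorter route: instead of two separate infinite-descent arguments, it assumes $g$ is not idempotent, picks the \emph{smallest} edge $j\edge k$ in $g\setminus\rfix{g}$, and observes that the witnessing edge $i\edge j$ (with $i\edge j\in g$, $i\edge k\notin g$) lies in $\rfix{g}$ by minimality of $(j,k)$ but not in $\lfix{g}$; since $\rfix{g}$ agrees with $g$ on all positions preceding $(j,k)$ and $\lfix{g}\subseteq g$, any earlier difference must also have $\rfix{g}=1$, $\lfix{g}=0$. This single extremal argument yields both of your facts~(1) and~(2) at once. Your descent arguments are a valid alternative and have the merit of making explicit the transitive use of the enumeration hypothesis (your observation that every pair $(p,q)$ with $q\le a$ precedes $(a,b)$), a point the paper leaves to the reader. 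The trade-off is concision: the minimal-counterexample trick avoids the iteration entirely.
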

\begin{proof}
  We prove that, if $e=\lfix g$ and $f=\rfix g$, then $e\leq f$, with
  equality if and only if $g$ is idempotent.

  If $g$ is idempotent, then $e=f=g$, and we are done. Assume now that
  $g$ is not idempotent, so that $e\ne g$ and $f\ne g$. Take the
  smallest edge $j\edge k$ which is in $g$ but not in $f$. Then,
  there exists $i<j$ such that $i\edge k$ is not in $g$ but
  $i\edge j$ is. Therefore $i\edge j$ is not in $e$, whereas by
  minimality it is in $f$. Hence, $f>e$, as desired.
\end{proof}

Looking further at Figure~\ref{fig:unitribool4} suggests that the
quiver is obtained as the transitive reduction of the Cartan matrix;
we checked on computer that this property still holds for $n=5$ and $n=6$.

\bigskip

\subsubsection{$\JJ$-trivial monoids built from quivers}

We conclude with a collection of examples showing in particular that any
quiver can be obtained as quiver of a finite $\JJ$-trivial monoid.

\begin{example}
  Consider a finite commutative idempotent $\JJ$-trivial monoid, that
  is a finite lattice $L$ endowed with its meet operation. Denote
  accordingly by $0$ and $1$ the bottom and top elements of
  $L$. Extend $L$ by a new generator $p$, subject to the relations
  $pep=0$ for all $e$ in $L$, to get a $\JJ$-trivial monoid $M$ with
  elements given by $L\uplus \{ e p f \suchthat e,f \in L \}$.

  Then, the quiver of $M$ is a complete digraph: its vertices are the
  elements of $L$, and between any two elements $e$ and $f$ of $L$,
  there is a single edge which is labelled by $epf$.
\end{example}

\newcommand{\efedge}{{e\!\stackrel{l}{\rightarrow}\!\! f}}
\begin{example}
  Consider any finite quiver $G=(E,Q)$, that is a digraph, possibly
  with loops, cycles, or multiple edges, and with distinct labels on
  all edges. We denote by $\efedge$ an edge in $Q$ from $e$ to $f$
  with label $l$.

  Define a monoid $M(G)$ on the set $E\uplus Q \uplus \{0,1\}$ by the
  following product rules:
  \begin{xalignat*}{2}
    e^2    &\ =\  e && \text{for all $e \in E$,}\\
    e\ \efedge  &\ =\  \efedge && \text{for all $\efedge \in Q$,}\\
    \efedge\ f &\ =\  \efedge && \text{for all $\efedge \in Q$,}
  \end{xalignat*}
  together with the usual product rule for $1$, and all other products
  being $0$.  In other words, this is the quotient of the path monoid
  $P(G)$ of $G$ (which is $\JJ$-trivial) obtained by setting $p=0$ for
  all paths $p$ of length at least two.

  Then, $M(G)$ is a $\JJ$-trivial monoid,
  and its quiver is $G$ with $0$ and $1$ added as extra isolated
  vertices. Those extra vertices can be eliminated by considering
  instead the analogous quotient of the path algebra of $G$
  (i.e. setting $0_{M(G)}=0_\K$ and $1_{M(G)}=\sum_{g\in E} g$).
\end{example}

\begin{example}
  Choose further a lattice structure $L$ on $E\cup \{0,1\}$. Define a
  $\JJ$-trivial monoid $M(G,L)$ on the set $E\uplus Q \uplus
  \{0,1\}$ by the following product rules:
  \begin{xalignat*}{2}
    e f     &\ =\  e\meet_L f && \text{for all $e,f \in E$,}\\
    \efedge\  f' &\ =\  \efedge     && \text{for all $\efedge\in Q$ and $f'\in E$ with $f\leq_L f'$,}\\
    e'\ \efedge &\ =\  \efedge      && \text{for all $\efedge\in Q$ and $e'\in E$ with $e\leq_L e'$,}
  \end{xalignat*}
  together with the usual product rule for $1$, and all other products
  being $0$. Note that the monoid $M(G)$ of the previous example is
  obtained by taking for $L$ the lattice where the vertices of $G$
  form an antichain. Then, the semi-simple quotient of $M(G,L)$ is
  $L$ and its quiver is $G$ (with $0$ and $1$ added as extra isolated
  vertices).
\end{example}
\begin{example}
  \renewcommand{\efedge}{e\,\edge f}

  We now assume that $G=(E,Q)$ is a simple quiver. Namely, there are
  no loops, and between two distinct vertices $e$ and $f$ there is at
  most one edge which we denote by $\efedge$ for short. Define a
  monoid structure $M'(G)$ on the set $E\uplus Q\uplus \{0,1\}$ by the
  following product rules:
  \begin{xalignat*}{2}
    e^2    &\ =\  e  && \text{for all $e \in E$,}\\
    e f    &\ =\  \efedge && \text{for all $\efedge \in Q$,}\\
    e\ \efedge &\ =\  \efedge && \text{for all $\efedge \in Q$,}\\
    \efedge\  f &\ =\  \efedge && \text{for all $\efedge \in Q$,}
  \end{xalignat*}
  together with the usual product rule for $1$, and all other products
  being $0$.

  Then, $M'(G)$ is a $\JJ$-trivial monoid generated by the
  idempotents in $E$ and its quiver is $G$ (with $0$ and $1$ added as
  extra isolated vertices).
\end{example}

\begin{exercise}
  Let $L$ be a lattice structure on $E\cup \{0,1\}$. Find
  compatibility conditions between $G$ and $L$ for the existence of a
  $\JJ$-trivial monoid generated by idempotents having $L$ as
  semi-simple quotient and $G$ (with $0$ and $1$ added as extra
  isolated vertices) as quiver.
\end{exercise}

\subsection{Implementation and complexity}
\label{subsection.implementation_complexity}

The combinatorial description of the representation theoretical
properties of a $\JJ$-trivial monoid (idempotents, Cartan
matrix, quiver) translate straightforwardly into algorithms. Those
algorithms have been implemented by the authors, in the open source
mathematical system \texttt{Sage}~\cite{Sage}, in order to support
their own research. The code is publicly available from the
\texttt{Sage-Combinat} patch server~\cite{Sage-Combinat}, and is being
integrated into the main \texttt{Sage} library and generalized to
larger classes of monoids in collaboration with other
\texttt{Sage-Combinat} developers. It is also possible to delegate all
the low-level monoid calculations (Cayley graphs, $\JJ$-order, ...) to
the blazingly fast $C$ library \texttt{Semigroupe} by Jean-Éric
Pin~\cite{Semigroupe}.

We start with a quick overview of the complexity of the algorithms.
\begin{proposition}
  In the statements below, $M$ is a $\JJ$-trivial monoid of
  cardinality $n$, constructed from a set of $m\leq n$ generators
  $s_1,\dots,s_m$ in some ambient monoid. The product in the ambient
  monoid is assumed to be $O(1)$. All complexity statements are upper
  bounds, with no claim for optimality. In practice, the number of
  generators is usually small; however the number of idempotents,
  which condition the size of the Cartan matrix and of the quiver, can
  be as large as $2^m$.
  \begin{enumerate}[(a)]
  \item \label{complexity.cayley} Construction of the left / right
    Cayley graph: $O(nm)$ (in practice it usually requires little more
    than $n$ operations in the ambient monoid);
  \item \label{complexity.sorting} Sorting of elements according to
    $\JJ$-order: $O(nm)$;
  \item \label{complexity.idempotents} Selection of idempotents:
    $O(n)$;
  \item \label{complexity.symbols} Calculation of all left and right
    symbols: $O(nm)$;
  \item \label{complexity.cartan_matrix} Calculation of the Cartan
    matrix: $O(nm)$;
  \item \label{complexity.quiver} Calculation of the quiver: $O(n^2)$.
  \end{enumerate}
\end{proposition}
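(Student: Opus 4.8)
I would treat the six items in turn, exhibiting for each an algorithm and bounding its number of operations; all the genuine content sits in the facts behind items~(b) and~(d), the rest being bookkeeping. For~(a) one builds $\tMonoid$ from its generators by the standard breadth-first closure: keep the elements already produced in a hash table, and on popping an element $x$ form the $m$ products $xs_1,\dots,xs_m$ (each $O(1)$ in the ambient monoid), enqueueing the new ones and recording the edges $x\edge xs_i$; this performs $O(nm)$ products and look-ups and yields the right Cayley graph, and symmetrically the left one, so the total is $O(nm)$. For~(c) it suffices to test $x^2=x$ for each of the $n$ elements, which is $O(n)$.

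For~(b), the point is that $\le_\JJ$ is exactly the reachability relation of the digraph on $\tMonoid$ with edge set $\{x\edge xs_i\}\cup\{x\edge s_ix\}$, a graph with $n$ vertices and at most $2nm$ edges that is already available from~(a): each such edge decreases $\le_\JJ$ because $\tMonoid$ is left-right ordered, and conversely if $y=uxv$ then writing $u$ and $v$ as words in the generators exhibits a directed path from $x$ to $y$. Since $\tMonoid$ is $\JJ$-trivial, $\le_\JJ$ is antisymmetric, so the only directed cycles are loops; discarding the loops and running a topological sort produces a linear extension of $\JJ$-order in time $O(n+nm)=O(nm)$.

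Item~(d) carries the real argument. Put $J:=\{\,i\suchthat xs_i=x\,\}$. I would first show that $\raut{x}$ is generated by $\{s_i\suchthat i\in J\}$: if $xu=x$ with $u=s_{i_1}\cdots s_{i_k}$, then $x\ge_\JJ xs_{i_1}\ge_\JJ xs_{i_1}s_{i_2}\ge_\JJ\cdots\ge_\JJ xu=x$ forces $xs_{i_1}=x$, and iterating on the shorter relation $x=xs_{i_2}\cdots s_{i_k}$ gives $i_1,\dots,i_k\in J$. Now set $P:=\prod_{i\in J}s_i$ in any fixed order; then $P^\omega$ is idempotent, lies in $\raut{x}$ since $xP=x$, and satisfies $P^\omega\le_\JJ P\le_\JJ s_i$ for every $i\in J$, so by Lemma~\ref{lemma.j.idemp} we get $s_iP^\omega=P^\omega=P^\omega s_i$ for all $i\in J$. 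Hence $P^\omega v=P^\omega$ for every word $v$ in the $s_i$ with $i\in J$, that is for every $v\in\raut{x}$, and a final use of Lemma~\ref{lemma.j.idemp} gives $P^\omega\le_\JJ v$. Thus $P^\omega$ is the $\JJ$-minimum of $\raut{x}$, which by Proposition~\ref{proposition.aut} means $\rfix{x}=\bigl(\prod_{i\in J}s_i\bigr)^{\omega}$, and symmetrically for $\lfix{x}$ with $\{i\suchthat s_ix=x\}$. Algorithmically one first precomputes $x^\omega$ for all $x$ in one bottom-up pass through $\JJ$-order, using $x^\omega=x$ when $x^2=x$ and $x^\omega=(x^2)^\omega$ otherwise — already known, since $x^2<_\JJ x$ when $x$ is not idempotent — at cost $O(n)$; then for each $x$ one reads $J$ off the right Cayley graph in $O(m)$, follows $|J|\le m$ of its edges to locate $P$ as an element of $\tMonoid$, and looks up $P^\omega$ in $O(1)$. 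Over all $x$ and both sides this is $O(nm)$.

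For~(e), once $\lfix{x}$ and $\rfix{x}$ are known one sweeps through $\tMonoid$ incrementing the entry $c_{\lfix{x},\rfix{x}}$, which by the combinatorial description of the Cartan matrix computes it in $O(n)$ extra time, hence $O(nm)$. For~(f), iterate over the $n^2$ ordered pairs $(u,v)$: form $x=uv$ and test whether $\lfix{x}\,u\ne\lfix{x}$ and $v\,\rfix{x}\ne\rfix{x}$, which by Lemma~\ref{lemma.j.idemp} is exactly the condition that $(u,v)$ be a non-trivial factorization of $x$; this is $O(1)$ per pair with the precomputed symbols. Flag every $x$ arising this way; the unflagged non-idempotents are exactly $\quiv$, and one adds the arrow $v_{\lfix{x}}\to v_{\rfix{x}}$ for each, so the quiver is obtained in $O(n^2)$. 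The only step that is not pure bookkeeping is the identification of $\raut{x}$ with $\langle\,s_i\suchthat xs_i=x\,\rangle$ and the resulting closed form for $\rfix{x}$: without it one is tempted to evaluate each $\omega$-power by repeated squaring, at an extra logarithmic cost that would yield only $O(nm+n\log n)$ for~(d). As everywhere in the paper, none of these algorithms depend on the ground field.
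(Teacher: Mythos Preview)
Your proof is correct, and for items (a), (b), (c), (e), (f) it proceeds along essentially the same lines as the paper: Cayley-graph closure, topological sort of the two-sided Cayley graph, brute-force idempotent test, a linear sweep to fill the Cartan matrix, and an $O(n^2)$ sieve over pairs $(u,v)$ to detect non-trivial (equivalently, compatible) factorizations.

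The genuine difference is at item (d). The paper records for each $x$ the bit-vector $r(x)=(\delta_{xs_i,x})_i$, asserts without argument that this determines $\raut{x}$, then builds a binary prefix tree over the idempotents keyed by their own bit-vectors and looks up $r(x)$ in that tree to read off $\rfix{x}$ (implicitly using $r(x)=r(\rfix{x})$). You instead \emph{prove} the structural fact underlying both approaches---that $\raut{x}$ is generated by $\{s_i\suchthat xs_i=x\}$, via the $\JJ$-trivial squeeze $x\ge_\JJ xs_{i_1}\ge_\JJ\cdots\ge_\JJ xu=x$---and deduce the closed form $\rfix{x}=\bigl(\prod_{i\in J}s_i\bigr)^\omega$. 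Your algorithm then tabulates all $\omega$-powers in one bottom-up pass along the linear extension from~(b), using $x^\omega=(x^2)^\omega$ when $x$ is not idempotent, and reads off each symbol by following at most $m$ Cayley edges and one table lookup. Both routes hit $O(nm)$ for the same reason---the symbol depends only on the $m$ bits $\delta_{xs_i,x}$---but yours supplies the justification the paper leaves implicit and trades the prefix tree for an $\omega$-power table.
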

\begin{proof}
  \ref{complexity.cayley}: See~\cite{Froidure_Pin.1997}

  \ref{complexity.sorting}: This is a topological sort calculation for
  the two sided Cayley graph which has $n$ nodes and $2nm$ edges.

  \ref{complexity.idempotents}: Brute force selection.

  For each of the following steps, we propose a simple algorithm
  satisfying the claimed complexity.

  \ref{complexity.symbols}: Construct, for each element $x$ of the
  monoid, two bit-vectors $l(x)=(l_1,\dots,l_m)$ and
  $r(x)=(r_1,\dots,r_m)$ with $l_i=\delta_{s_ix,x}$ and
  $r_i=\delta_{xs_i,x}$. This information is trivial to extract in
  $O(nm)$ from the left and right Cayley graphs, and could typically
  be constructed as a side effect of \ref{complexity.cayley}. Those
  bit-vectors describe uniquely $\laut{x}$ and $\raut{x}$. From that,
  one can recover all $\lfix{x}$ and $\rfix{x}$ in $O(nm)$: as a
  precomputation, run through all idempotents $e$ of $M$ to construct
  a binary prefix tree $T$ which maps $l(e)=r(e)$ to $e$; then, for
  each $x$ in $M$, use $T$ to recover $\lfix{x}$ and $\rfix{x}$ from
  the bit vectors $l(x)$ and $r(x)$.

  \ref{complexity.cartan_matrix}: Obviously $O(n)$ once all left and
  right symbols have been calculated; so $O(nm)$ altogether.

  \ref{complexity.quiver}: A crude algorithm is to compute all
  products $xy$ in the monoid, check whether the factorization is
  compatible, and if yes cross the result out of the quiver (brute
  force sieve). This can be improved by running only through the
  products $xy$ with $\rfix{x}=\lfix{y}$; however this does not change
  the worst case complexity (consider a monoid with only $2$
  idempotents $0$ and $1$, like $\N^m$ truncated by any ideal
  containing all but $n-2$ elements, so that $\lfix{x}=\rfix{x}=1$ for
  all $x\ne 0$).
\end{proof}

We conclude with a sample session illustrating typical calculations,
using \texttt{Sage 4.5.2} together with the \texttt{Sage-Combinat}
patches, running on \texttt{Ubuntu Linux 10.5} on a \texttt{Macbook
  Pro 4.1}. Note that the interface is subject to minor changes before
the final integration into Sage. The authors will gladly provide help
in using the software.

We start by constructing the $0$-Hecke monoid of the symmetric group
$W=\sg[4]$, through its action on $W$:
\begin{sageexample}
  sage: W = SymmetricGroup(4)
  sage: S = semigroupe.AutomaticSemigroup(W.simple_projections(), W.one(),
  ...                  by_action = True, category=FiniteJTrivialMonoids())
  sage: S.cardinality()
  24
\end{sageexample}
We check that it is indeed $\JJ$-trivial, and compute its $8$ idempotents:
\begin{sageexample}
  sage: S._test_j_trivial()
  sage: S.idempotents()
  [[], [1], [2], [3], [1, 3], [1, 2, 1], [2, 3, 2], [1, 2, 1, 3, 2, 1]]
\end{sageexample}
Here is its Cartan matrix and its quiver:
\begin{sageexample}
    sage: S.cartan_matrix_as_graph().adjacency_matrix(), S.quiver().adjacency_matrix()
    (
    [0 0 0 0 0 0 0 0]  [0 0 0 0 0 0 0 0]
    [0 0 1 0 1 1 0 0]  [0 0 1 0 1 1 0 0]
    [0 1 0 0 1 0 0 0]  [0 1 0 0 0 0 0 0]
    [0 0 0 0 0 0 0 0]  [0 0 0 0 0 0 0 0]
    [0 1 1 0 0 0 0 0]  [0 1 0 0 0 0 0 0]
    [0 1 0 0 0 0 1 1]  [0 1 0 0 0 0 1 1]
    [0 0 0 0 0 1 0 1]  [0 0 0 0 0 1 0 0]
    [0 0 0 0 0 1 1 0], [0 0 0 0 0 1 0 0]
    )
\end{sageexample}

In the following example, we check that, for any of the
$318$ posets $P$ on $6$ vertices, the Cartan matrix $m$ of the monoid
$\OR(P)$ of order preserving nondecreasing functions on $P$ is
unitriangular. To this end, we check that the digraph having $m-1$ as
adjacency matrix is acyclic.
\begin{sageexample}
sage: from sage.combinat.j_trivial_monoids import *
sage: @parallel
...def check_cartan_matrix(P):
...    return DiGraph(NDPFMonoidPoset(P).cartan_matrix()-1).is_directed_acyclic()
sage: time all(res[1] for res in check_cartan_matrix(list(Posets(6))))
CPU times: user 5.68 s, sys: 2.00 s, total: 7.68 s
Wall time: 255.53 s
True
\end{sageexample}
Note: the calculation was run in parallel on two processors, and the
displayed CPU time is just that of the master process, which is not
much relevant. The same calculation on a eight processors machine
takes about 71 seconds.

We conclude with the calculation of the representation theory of a
larger example (the monoid $\unitribool_n$ of unitriangular Boolean
matrices). The current implementation is far from optimized: in
principle, the cost of calculating the Cartan matrix should be of the
same order of magnitude as generating the monoid. Yet, this
implementation makes it possible to explore routinely, if not
instantly, large Cartan matrices or quivers that were completely out
of reach using general purpose representation theory software.
\begin{sageexample}
  M = semigroupe.UnitriangularBooleanMatrixSemigroup(6)
  Loading Sage library. Current Mercurial branch is: combinat
  sage: time M.cardinality()
  CPU times: user 0.14 s, sys: 0.02 s, total: 0.16 s
  Wall time: 0.16 s
  32768
  sage: time M.cartan_matrix()
  CPU times: user 27.50 s, sys: 0.09 s, total: 27.59 s
  Wall time: 27.77 s
  4824 x 4824 sparse matrix over Integer Ring
  sage: time M.quiver()
  CPU times: user 512.73 s, sys: 2.81 s, total: 515.54 s
  Wall time: 517.55 s
  Digraph on 4824 vertices
\end{sageexample}
Figure~\ref{fig:unitribool4} displays the results in the case $n=4$.

\section{Monoid of order preserving regressive functions on a poset $P$}
\label{sec:NPDF}

In this section, we discuss the monoid $\OR(P)$ of order preserving
regressive functions on a poset $P$.  Recall that this is the monoid
of functions $f$ on $P$ such that for any $x\leq y \in P$, $x.f\leq x$ and 
$x.f\leq y.f$.

In Section~\ref{sub:combIdem}, we discuss constructions for idempotents in 
$\OR(P)$ in terms of the image sets of the idempotents, as well as methods for 
obtaining $\lfix f$ and $\rfix f$ for any given function $f$.  In Section~\ref{sub:cartNDPF}, 
we show that the Cartan matrix for $\OR(P)$ is upper uni-triangular with respect to the lexicographic order associated to any linear 
extension of $P$.  In Section~\ref{sub:ndpfSemis}, we specialize to $\OR(L)$ where $L$ is a 
meet semi-lattice, describing a minimal generating set of idempotents.  Finally, in 
Section~\ref{sub:ndpfOrthIdem}, we describe a simple construction for a set of orthogonal 
idempotents in $\NDPF_N$, and present a conjectural construction for orthogonal idempotents 
for $\OR(L)$.

\subsection{Combinatorics of idempotents}
\label{sub:combIdem}

The goal of this section is to describe the idempotents in $\OR(P)$ using
order considerations. We begin by giving the definition of joins, even in the setting when
the poset $P$ is not a lattice.

\begin{definition}
  Let $P$ be a finite poset and $S\subseteq P$. Then $z\in P$ is called \emph{a
    join} of $S$ if $x \leq z$ holds for any $x\in S$, and $z$ is minimal with
  that property.

  We denote $\joins(S)$ the set of joins of $S$, and $\joins(x,y)$ for short
  if $S=\{x,y\}$. If $\joins(S)$ (resp. $\joins(x,y)$) is a singleton (for
  example because $P$ is a lattice) then we denote $\bigvee S$ (resp. $x\vee y$)
  the unique join.  Finally, we define $\joins(\emptyset)$ to be the set of minimal elements in $P$.
\end{definition}

\begin{lemma}
  \label{lemma.fix}
  Let $P$ be some poset, and $f\in \OR(P)$. If $x$ and $y$ are fixed
  points of $f$, and $z$ is a join of $x$ and $y$, then $z$ is a fixed
  point of $f$.
\end{lemma}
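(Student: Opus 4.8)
The plan is to play the two defining properties of $\OR(P)$ off against the minimality built into the notion of a join. First I would record, using regressiveness of $f$, that $z.f \leq z$. Next, since $x \leq z$ and $f$ is order preserving, applying $f$ gives $x.f \leq z.f$; as $x$ is a fixed point this reads $x \leq z.f$, and symmetrically $y \leq z.f$. Hence $z.f$ is an upper bound of $\{x,y\}$.

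Now I would combine the two facts: $z.f$ is an upper bound of $\{x,y\}$ and $z.f \leq z$. Since $z$ was chosen to be \emph{minimal} among upper bounds of $\{x,y\}$ — this is precisely the definition of an element of $\joins(x,y)$ — the inequality $z.f \leq z$ cannot be strict, and therefore $z.f = z$, which is the claim.

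I do not anticipate any real obstacle; the proof is essentially this three-line chase. The only point worth a word of care is that $\joins(x,y)$ consists of minimal upper bounds rather than of a least upper bound (the poset $P$ need not be a lattice), but it is exactly minimality, not leastness, that the argument invokes, so nothing is lost in the general poset setting.
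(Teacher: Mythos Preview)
Your proof is correct and matches the paper's own argument essentially line for line: use order preservation and the fixed-point hypotheses to get $x \leq z.f$ and $y \leq z.f$, use regressiveness for $z.f \leq z$, and conclude by minimality of $z$ among upper bounds. Your closing remark that only minimality (not the existence of a least upper bound) is needed is exactly the right observation for the general poset setting.
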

\begin{proof}
  Since $x\leq z$ and $y\leq z$, one has $x=x.f \leq z.f$ and $y=y.f \leq z.f$.
  Since furthermore $z.f\le z$, by minimality of $z$ the equality $z.f = z$ must hold.
\end{proof}

\begin{lemma}
  \label{lemma.sup}
  Let $I$ be a subset of $P$ which contains all the minimal elements
  of $P$ and is stable under joins. Then, for any $x\in P$, the set
  $\{y\in I\suchthat y\leq x\}$ admits a unique maximal element which
  we denote by $\sup_I(x)\in I$. Furthermore, the map $\sup_I:
  x\mapsto \sup_I(x)$ is an idempotent in $\OR(P)$.
\end{lemma}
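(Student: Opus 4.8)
The plan is to define $\sup_I(x)$ as the greatest element of the down-set $S_x := \{y\in I \suchthat y\leq x\}$; the bulk of the argument is to show that this greatest element exists, after which the three defining properties of an idempotent of $\OR(P)$ fall out by unwinding definitions.

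First I would check $S_x\neq\emptyset$: since $P$ is finite, $x$ lies above some minimal element $m$ of $P$, and $m\in I$ by hypothesis, so $m\in S_x$. As $S_x$ is a finite nonempty poset it has a maximal element, and the heart of the proof is that it has only one. Given two maximal elements $y_1,y_2$ of $S_x$, the element $x$ is an upper bound of $\{y_1,y_2\}$, so by finiteness there is an upper bound $z$ of $\{y_1,y_2\}$ which is minimal among those lying below $x$; a short argument then shows that $z$ is minimal among \emph{all} upper bounds of $\{y_1,y_2\}$, i.e.\ $z\in\joins(y_1,y_2)$. Stability of $I$ under joins gives $z\in I$, and $z\leq x$ gives $z\in S_x$; maximality of $y_1$ and $y_2$ now forces $y_1=z=y_2$. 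Since a finite poset with a unique maximal element has that element as a greatest element, we may set $\sup_I(x):=\max S_x$, which by construction lies in $I$.

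The remaining verifications are routine. Regressiveness is immediate: $\sup_I(x)\in S_x$, so $\sup_I(x)\leq x$. If $x\leq x'$ then $S_x\subseteq S_{x'}$, hence $\sup_I(x)\in S_{x'}$ and therefore $\sup_I(x)\leq\sup_I(x')$, which is the order-preserving property. For idempotency, observe that for any $e\in I$ one has $e\in S_e$ and trivially $e=\max S_e$, so $\sup_I(e)=e$; applying this to $e=\sup_I(x)\in I$ gives $\sup_I(\sup_I(x))=\sup_I(x)$. (In fact this shows $I$ is exactly the fixed-point set of $\sup_I$, in line with Lemma~\ref{lemma.fix}.) The one delicate point is the uniqueness of the maximal element of $S_x$ — precisely the step where a minimal upper bound below $x$ must be recognized as a genuine join so that the hypothesis on $I$ applies; everything else is formal.
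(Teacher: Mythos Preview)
Your proof is correct and follows essentially the same approach as the paper's: both establish uniqueness of the maximal element of $S_x$ by producing a join of two putative maximal elements inside $S_x$, and then read off regressiveness, order-preservation, and idempotency from the definition of $\sup_I$. Your write-up is in fact a bit more careful than the paper's --- you explicitly check $S_x\neq\emptyset$ and spell out why a minimal upper bound of $\{y_1,y_2\}$ lying below $x$ is automatically a genuine join (if $w<z\leq x$ then $w<x$), a point the paper leaves implicit.
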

\begin{proof}
   For the first statement, suppose for some $x \not \in I$ there are two 
   maximal elements $y_1$ and $y_2$ in $\{y\in I\suchthat y\leq x\}$.  
   Then the join $y_1 \wedge y_2 < x$, since otherwise $x$ would be a join of $y_1$ 
   and $y_2$, and thus $x\in I$ since $I$ is join-closed.  But this contradicts the maximality 
   of $y_1$ and $y_2$, so the first statement holds.

  Using that $\sup_I(x)\leq x$ and $\sup_I(x)\in I$, $e:=\sup_I$ is
  a regressive idempotent by construction. Furthermore, it is is order
  preserving: for $x\leq z$, $x.e$ and $z.e$ must be comparable or
  else there would be two maximal elements in $I$ under $z$.  Since
  $z.e$ is maximal under $z$, we have $z.e\geq x.e$.
\end{proof}

Reciprocally, all idempotents are of this form:
\begin{lemma}
  \label{lemma.idem_image}
  Let $P$ be some poset, and $f\in \OR(P)$ be an idempotent. Then the
  image $\im(f)$ of $f$ satisfies the following:
  \begin{enumerate}
  \item \label{item.min} All minimal elements of $P$ are contained in $\im(f)$.
  \item \label{item.fix} Each $x \in \im(f)$ is a fixed point of $f$.
  \item \label{item.join} The set $\im(f)$ is stable under joins: if
    $S\subseteq \im(f)$ then $\joins(S)\subseteq \im(f)$ .
  \item \label{item.image} For any $x\in P$, the image $x.f$ is the upper bound $\sup_{\im(f)}(x)$.
  \end{enumerate}
\end{lemma}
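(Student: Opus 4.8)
The plan is to prove the four assertions in the order stated, since each one builds on the previous ones together with Lemmas~\ref{lemma.fix} and~\ref{lemma.sup}. For~\eqref{item.min}: if $x$ is a minimal element of $P$, then regressiveness gives $x.f \leq x$, and minimality forces $x.f = x$, so $x = x.f \in \im(f)$. For~\eqref{item.fix}: any $x\in\im(f)$ is of the form $x = y.f$ for some $y\in P$, and since $f$ is idempotent, $x.f = (y.f).f = y.f = x$.

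For~\eqref{item.join}, I would distinguish two cases. If $S = \emptyset$, then by convention $\joins(\emptyset)$ is the set of minimal elements of $P$, which lies in $\im(f)$ by~\eqref{item.min}. If $S\neq\emptyset$, let $z\in\joins(S)$. By~\eqref{item.fix} every $x\in S$ is fixed by $f$, so $x = x.f \leq z.f$ by the order-preserving property, while $z.f\leq z$ by regressiveness; thus $z.f$ is an upper bound of $S$ lying weakly below $z$, and the minimality of $z$ among such upper bounds forces $z.f = z$. Hence $z$ is a fixed point, so $z = z.f\in\im(f)$. This step is essentially Lemma~\ref{lemma.fix} with the two-element set replaced by an arbitrary $S$, plus the empty-join convention handled separately.

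Finally, for~\eqref{item.image}: by~\eqref{item.min} and~\eqref{item.join}, the set $I := \im(f)$ contains all minimal elements of $P$ and is stable under joins, so Lemma~\ref{lemma.sup} applies and $\sup_I(x)$ is the unique maximal element of $\{y\in I\suchthat y\leq x\}$. Now $x.f$ belongs to that set, since $x.f\in\im(f)$ and $x.f\leq x$ by regressiveness; and for any $y\in I$ with $y\leq x$, using~\eqref{item.fix} and order-preservation, $y = y.f\leq x.f$. Hence $x.f$ is simultaneously a member of $\{y\in I\suchthat y\leq x\}$ and an upper bound for it, so it is the maximum, i.e.\ $x.f = \sup_I(x)$. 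All the arguments are short; I do not expect a genuine obstacle, and the only thing to be careful about is the logical bookkeeping — invoking Lemma~\ref{lemma.sup} only after~\eqref{item.min} and~\eqref{item.join} are in place, and treating the empty-join convention in~\eqref{item.join} as a separate case.
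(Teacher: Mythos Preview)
Your proof is correct and follows essentially the same approach as the paper: items~\eqref{item.min} and~\eqref{item.fix} are proved identically, item~\eqref{item.join} via Lemma~\ref{lemma.fix} (you are just slightly more explicit about the empty-join convention), and item~\eqref{item.image} by showing that $x.f$ lies in $\{y\in\im(f)\mid y\leq x\}$ and bounds it above. The only cosmetic difference is that you explicitly invoke Lemma~\ref{lemma.sup} to justify that $\sup_{\im(f)}(x)$ is well-defined, whereas the paper leaves this implicit.
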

\begin{proof}  
  Statement~\eqref{item.min} follows from the fact that $x.f\le x$ so that minimal elements
  must be fixed points and hence in $\im(f)$.

  For any $x=a.f$, if $x$ is not a fixed point then $x.f=(a.f).f\neq a.f$, contradicting the 
  idempotence of $f$.  Thus, the second statement holds.
  
  Statement~\eqref{item.join} follows directly from the second statement and Lemma~\ref{lemma.fix}.
  
  If $y\in \im(f)$ and $y\leq x$ then $y = y.f \leq x.f$.  Since this holds for every element of 
  $\{y \in \im(f) \mid y\leq x \}$ and $x.f$ is itself in this set, statement~\eqref{item.image} holds.
\end{proof}

Thus, putting together Lemmas~\ref{lemma.sup}
and~\ref{lemma.idem_image} one obtains a complete description of the
idempotents of $\OR(P)$.
\begin{proposition}
  \label{proposition.idempotents.OO}
  The idempotents of $\OR(P)$ are given by the maps $\sup_I$, where $I$
  ranges through the subsets of $P$ which contain the minimal elements
  and are stable under joins.
\end{proposition}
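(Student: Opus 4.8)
The plan is simply to splice together the two preceding lemmas, checking along the way that the parametrization by $I$ is a genuine bijection. First I would record that Lemma~\ref{lemma.sup} already provides one direction: for every subset $I\subseteq P$ containing the minimal elements of $P$ and stable under joins, the map $\sup_I$ is a well-defined idempotent of $\OR(P)$. Thus every map of the claimed form is indeed an idempotent of $\OR(P)$.

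For the converse, I would take an arbitrary idempotent $f\in\OR(P)$ and apply Lemma~\ref{lemma.idem_image}: its image $I:=\im(f)$ contains all minimal elements of $P$ by~\eqref{item.min} and is stable under joins by~\eqref{item.join}, so $I$ is one of the admissible index sets; and~\eqref{item.image} says precisely that $x.f=\sup_{I}(x)$ for all $x\in P$, i.e.\ $f=\sup_I$. Hence every idempotent of $\OR(P)$ arises as $\sup_I$ for an admissible $I$, which establishes the proposition as stated.

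To make this an honest description rather than a redundant list, I would also note that the assignment $I\mapsto\sup_I$ is injective, since one recovers $I$ as the image of $\sup_I$: on one hand $\sup_I(x)\in I$ for every $x$ by construction, and on the other hand, for $y\in I$ the set $\{z\in I\suchthat z\leq y\}$ has $y$ itself as maximum, so $\sup_I(y)=y$; hence $\im(\sup_I)=I$. Combined with the two directions above, this exhibits $I\mapsto\sup_I$ as a bijection from the admissible subsets of $P$ onto the idempotents of $\OR(P)$. There is no real obstacle here: the argument is a direct citation of Lemmas~\ref{lemma.sup} and~\ref{lemma.idem_image}, and the only point needing a word of care is the injectivity remark, which pins down that the correspondence is exact.
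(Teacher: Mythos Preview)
Your proof is correct and follows exactly the paper's approach: the paper's proof is simply the one-line remark that combining Lemmas~\ref{lemma.sup} and~\ref{lemma.idem_image} yields the description. Your write-up merely spells this out explicitly and adds the (useful but straightforward) injectivity observation that $\im(\sup_I)=I$, which the paper leaves implicit.
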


For $f\in \OR(P)$ and $y\in P$, let $f^{-1}(y)$ be the fiber of $y$ under $f$, that is, the set of
all $x\in P$ such that $x.f=y$.
\begin{definition}
Given $S$ a subset of a finite poset $P$, set $C_0(S)=S$ and $C_{i+1}(S)=C_i(S) \cup 
\{x\in P \mid x \text{ is a join of some elements in } C_i(S) \}$.  Since $P$ is finite, there exists some $N$ 
such that $C_N(S)=C_{N+1}(S)$.  The \emph{join closure} is defined as this stable set, 
and denoted $C(S)$.  A set is \emph{join-closed} if $C(S)=S$. Define
\[
F(f) := \bigcup_{y\in P} \{ x\in f^{-1}(y) \mid x \text{ minimal in } f^{-1}(y) \}
\]
to be the collection of minimal points in the fibers of $f$.
\end{definition}

\begin{corollary}
Let $X$ be the join-closure of the set of minimal points of $P$.  Then $X$ is fixed by every 
$f\in \OR(P)$.
\end{corollary}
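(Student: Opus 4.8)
The plan is to show that every element of $X$ is in fact a fixed point of $f$; this is the natural reading of ``$X$ is fixed by $f$'' and extends statement~\eqref{item.min} of Lemma~\ref{lemma.idem_image} from idempotents to arbitrary elements of $\OR(P)$. I would argue by induction on the stages $C_i(S)$ of the join-closure construction, where $S$ denotes the set of minimal elements of $P$, so that $X = C(S) = C_N(S)$ for $N$ large enough.

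For the base case, if $x \in C_0(S) = S$ is a minimal element of $P$, then regressiveness gives $x.f \leq_P x$, and minimality of $x$ forces $x.f = x$; hence every element of $C_0(S)$ is fixed. For the inductive step, assume every element of $C_i(S)$ is a fixed point of $f$ and take $z \in C_{i+1}(S)$. If $z \in C_i(S)$ we are done, so assume $z$ is a join of some set $T \subseteq C_i(S)$. For each $t \in T$ we have $t \leq_P z$, hence $t = t.f \leq_P z.f$ since $f$ is order preserving and $t$ is fixed; thus $z.f$ is an upper bound of $T$. Since $z.f \leq_P z$ by regressiveness and $z$ is minimal among the upper bounds of $T$, we conclude $z.f = z$. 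This is precisely the argument of Lemma~\ref{lemma.fix}, applied to an arbitrary (rather than two-element) set of fixed points. Induction then gives that every element of $X$ is fixed by $f$.

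There is no serious obstacle here. The only point requiring a moment's care is that the join-closure operation permits joins of arbitrary finite subsets, so one must observe that the two-element reasoning of Lemma~\ref{lemma.fix} carries over verbatim to a set $T$ of fixed points — only order-preservation, regressiveness, and minimality of the join are used. Note also that, in contrast with Lemmas~\ref{lemma.sup} and~\ref{lemma.idem_image} and Proposition~\ref{proposition.idempotents.OO}, idempotence of $f$ is never invoked, which is why the statement holds for every $f \in \OR(P)$.
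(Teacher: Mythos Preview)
Your proof is correct and is exactly the argument the paper has in mind: the corollary is stated without proof because it follows immediately from regressiveness (minimal elements are fixed) together with Lemma~\ref{lemma.fix} (joins of fixed points are fixed), iterated along the stages $C_i(S)$ of the join-closure. Your observation that the two-element case of Lemma~\ref{lemma.fix} extends verbatim to arbitrary finite sets of fixed points is the only detail worth noting, and you handle it correctly.
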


\begin{lemma}[Description of left and right symbols]
For any $f\in \OR(P)$, there exists a minimal idempotent $f_r$ whose image set is $C(\im(f))$, 
and $f_r=\rfix{f}$.  There also exists a minimal idempotent $f_l$ whose image set is $C(F(f))$, 
and $f_l = \lfix{f}$.
\end{lemma}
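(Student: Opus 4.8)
The plan is to locate $\rfix{f}$ and $\lfix{f}$ explicitly among the idempotents of $\OR(P)$. By Proposition~\ref{proposition.idempotents.OO} these are exactly the maps $\sup_I$ with $I$ a subset of $P$ containing all minimal elements and stable under joins; call such an $I$ \emph{admissible}. Note that $\sup_I$ has image set $I$ and fixed-point set $I$, so distinct admissible sets give distinct idempotents. The first step is to check that $\le_\JJ$, restricted to the idempotents of $\OR(P)$, is simply containment of index sets: for admissible $I, I'$, Lemma~\ref{lemma.j.idemp} gives $\sup_I \le_\JJ \sup_{I'}$ iff $\sup_I = \sup_I \sup_{I'}$; and since $\sup_I \sup_{I'}$ sends $x$ to $\sup_{I'}(\sup_I(x))$, this equals $\sup_I(x)$ for all $x$ precisely when $\sup_{I'}$ fixes every element of $\im(\sup_I) = I$, i.e. when $I \subseteq I'$. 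Hence, by~\eqref{eq.rfix} and~\eqref{eq.lfix}, computing $\rfix{f}$ and $\lfix{f}$ reduces to finding the smallest admissible index set satisfying the appropriate fixing condition. Recall finally that the join-closure $C(S)$ is the smallest join-closed set containing $S$, and is admissible as soon as $S$ contains all minimal elements of $P$.

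For the right symbol, write $e = \sup_I$ with $I$ admissible. Then $fe$ sends $x$ to $\sup_I(x.f)$, so $fe = f$ holds exactly when $\sup_I$ fixes every point of $\im(f)$, i.e. when $\im(f) \subseteq I$. Since $f$ is regressive, $\im(f)$ contains all minimal elements of $P$, so the smallest admissible $I$ with $\im(f) \subseteq I$ is $C(\im(f))$. Therefore $\rfix{f}$ is the unique idempotent with image set $C(\im(f))$, namely $\sup_{C(\im(f))}$.

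For the left symbol, again write $e = \sup_I$. Now $ef$ sends $x$ to $(\sup_I(x)).f$, so $ef = f$ iff $(\sup_I(x)).f = x.f$ for every $x$; I claim this is equivalent to $F(f) \subseteq I$. If $ef = f$, take $m \in F(f)$, minimal in the fiber $f^{-1}(y)$ where $y = m.f$; then $(\sup_I(m)).f = m.f = y$ with $\sup_I(m) \le m$, so $\sup_I(m)$ lies in $f^{-1}(y)$ below the minimal element $m$, forcing $\sup_I(m) = m$ and hence $m \in I$. Conversely, assume $F(f) \subseteq I$, fix $x \in P$, and set $y := x.f$. The set $\{z \in f^{-1}(y) \suchthat z \le x\}$ is finite and nonempty (it contains $x$), so it has a minimal element $m$, which is moreover minimal in all of $f^{-1}(y)$; thus $m \in F(f) \subseteq I$, so $m \le \sup_I(x) \le x$, and applying the order-preserving map $f$ squeezes $y = m.f \le (\sup_I(x)).f \le x.f = y$. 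Hence $ef = f$. Since $F(f)$ contains every minimal element of $P$ (such an $m$ is the least element of its fiber $f^{-1}(m)$), the smallest admissible $I$ with $F(f) \subseteq I$ is $C(F(f))$, and therefore $\lfix{f}$ is the unique idempotent with image set $C(F(f))$, namely $\sup_{C(F(f))}$.

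The computations of $fe$ and $ef$, the identification of $\le_\JJ$ with set containment, and the checks that $C(\im(f))$ and $C(F(f))$ are admissible are all routine. The one point requiring some care is the converse half of the left-symbol equivalence: one has to produce, inside a prescribed fiber of $f$, a minimal element lying below the given point $x$ and then exploit order-preservation of $f$ to pin down the value $(\sup_I(x)).f$. I expect that to be the only real obstacle, and it is dispatched by the elementary finite-poset argument above.
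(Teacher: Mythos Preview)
Your proof is correct and follows essentially the same line as the paper's: both identify $\rfix{f}$ via the condition $\im(f)\subseteq I$ and $\lfix{f}$ via $F(f)\subseteq I$, and both handle the converse for the left symbol by locating a minimal fiber element $m\le x$, sandwiching $\sup_I(x)$ between $m$ and $x$, and applying order-preservation of $f$. Your write-up is more explicit in two places where the paper is terse: you spell out that $\le_\JJ$ on idempotents is containment of image sets, and you justify carefully why a minimal element of $\{z\in f^{-1}(y)\mid z\le x\}$ is already minimal in the whole fiber, which the paper simply asserts.
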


\begin{proof}
The $\rfix{f}$ must fix every element of $\im(f)$, and the image of $\rfix{f}$ must be join-closed
by Lemma~\ref{lemma.idem_image}.
$f_r$ is the smallest idempotent satisfying these requirements, and is thus the $\rfix{f}$.

Likewise, $\lfix{f}$ must fix the minimal elements of each fiber of $f$, and so must fix all of $C(F(f))$.  
For any $y \not \in F(f)$, find $x\leq y$ such that $x.f=y.f$ and $x\in F(f)$.  Then 
$x= x.f_l \leq y.f_l \leq y$.
For any $z$ with $x\leq z \leq y$, we have $x.f\leq z.f \leq y.f=x.f$, so $z$ is in the 
same fiber as $y$.  Then we have $(y.f_l).f =y.f$, so $f_l$ fixes $f$ on the left.  
Minimality then ensures that $f_l=\lfix{f}$.
\end{proof}

Let $P$ be a poset, and $P'$ be the poset obtained by removing a
maximal element $x$ of $P$. Then, the following rule holds:
\begin{proposition}[Branching of idempotents]
  Let $e=\sup_I$ be an idempotent in $\OR(P')$. If $I\subseteq P$ is
  still stable under joins in $P$, then there exist two idempotents in
  $\OR(P)$ with respective image sets $I$ and $I\cup \{x\}$.
  Otherwise, there exists an idempotent in $\OR(P)$ with image set
  $I\cup \{x\}$. Every idempotent in $\OR(P)$ is uniquely obtained by
  this branching.
\end{proposition}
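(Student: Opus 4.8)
The plan is to recast the statement at the level of \emph{index sets}. By Proposition~\ref{proposition.idempotents.OO} (together with Lemma~\ref{lemma.idem_image}), for any finite poset $Q$ the assignment $I\mapsto\sup_I$ is a bijection from the \emph{admissible} subsets of $Q$ — those $I\subseteq Q$ containing the minimal elements of $Q$ and stable under joins in $Q$ — onto the idempotents of $\OR(Q)$, with inverse $f\mapsto\im(f)$. Writing $P'=P\setminus\{x\}$ with $x$ maximal in $P$, the proposition becomes the purely order-theoretic claim that the admissible subsets of $P$ are obtained from those of $P'$ exactly by the stated branching.

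The first, decisive step is to compare joins in $P$ with joins in $P'$. Since $x$ is maximal, no element of $P'$ lies strictly below $x$, so for every $S\subseteq P'$ a minimal upper bound of $S$ in $P'$ is still one in $P$, and a minimal upper bound of $S$ in $P$ lying in $P'$ is one in $P'$; hence
\begin{equation*}
  \joins_{P'}(S)=\joins_P(S)\setminus\{x\},\qquad\text{so}\qquad
  \joins_P(S)\in\bigl\{\joins_{P'}(S),\ \joins_{P'}(S)\cup\{x\}\bigr\},
\end{equation*}
the second case occurring exactly when $x$ is a minimal upper bound of $S$ in $P$. Taking $S=\emptyset$ records $\min(P')\subseteq\min(P)\subseteq\min(P')\cup\{x\}$. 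From this I would extract two closure facts. First, if $I$ is admissible in $P'$ then $I\cup\{x\}$ is admissible in $P$: it contains $\min(P)\subseteq\min(P')\cup\{x\}\subseteq I\cup\{x\}$, and for every $S\subseteq I\cup\{x\}$ one has $\joins_P(S)\subseteq I\cup\{x\}$ — by the displayed formula when $x\notin S$, and by maximality of $x$ (which forces any upper bound of $S$ to equal $x$) when $x\in S$. Second, if $J$ is admissible in $P$ then $J\setminus\{x\}$ is admissible in $P'$: for $S\subseteq J\setminus\{x\}$ one has $\joins_{P'}(S)=\joins_P(S)\cap P'\subseteq J\setminus\{x\}$, and $\min(P')\subseteq\min(P)\cap P'\subseteq J\setminus\{x\}$.

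With these in hand I would consider the map $\Phi$ on idempotents of $\OR(P)$ defined by $\sup_J\mapsto\sup_{J\setminus\{x\}}$; the second closure fact says it takes values in the idempotents of $\OR(P')$. For an admissible $I\subseteq P'$, the fibre $\Phi^{-1}(\sup_I)$ consists of the admissible $J\subseteq P$ with $J\setminus\{x\}=I$, i.e. $J\in\{I,\ I\cup\{x\}\}$. By the first closure fact $I\cup\{x\}$ is always admissible in $P$, so the fibre always contains $\sup_{I\cup\{x\}}$; and $I$ itself is admissible in $P$ precisely when it is stable under joins in $P$ (the minimal-elements condition being then automatic, since if $x$ were both maximal and minimal it would lie in $\joins_P(\emptyset)$ and hence would have to lie in $I\subseteq P'$, which is impossible). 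This is exactly the asserted dichotomy — two idempotents with image sets $I$ and $I\cup\{x\}$ when $I$ is stable under joins in $P$, and only $\sup_{I\cup\{x\}}$ otherwise — and since the fibres of $\Phi$ partition the idempotents of $\OR(P)$, every such idempotent arises from a unique admissible $I\subseteq P'$.

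The part requiring the most care is not any single computation but the uniform treatment of the minimal elements, in particular the degenerate case where $x$ is an isolated point of $P$: there $\min(P)=\min(P')\cup\{x\}$, no admissible subset of $P'$ is stable under joins in $P$, and one is always in the second branch. Folding the empty join $\joins(\emptyset)=\min(\cdot)$ into the meaning of ``stable under joins'' makes this case come out of the general argument for free, rather than needing a separate discussion.
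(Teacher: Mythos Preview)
Your argument is correct and is precisely the ``straightforward reasoning on the subsets $I$ which contain the minimal elements and are stable under joins, in $P$ and in $P'$'' that the paper invokes as its entire proof; you have simply written it out in full. One verbal slip: ``no element of $P'$ lies strictly \emph{below} $x$'' should read ``strictly \emph{above} $x$'' (this is what maximality of $x$ gives, and what your displayed identity $\joins_{P'}(S)=\joins_P(S)\setminus\{x\}$ actually uses); the rest is unaffected.
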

\begin{proof}
  This follows from straightforward reasoning on the subsets $I$ which contain the
  minimal elements and are stable under joins, in $P$ and in $P'$.
\end{proof}

\subsection{The Cartan matrix for $\OR(P)$ is upper uni-triangular}
\label{sub:cartNDPF}

We have seen that the left and right fix of an element of $\OR(P)$
can be identified with the subsets of $P$ closed under joins. We put a
total order $\leq_\lex$ on such subsets by writing them as bit vectors
along a linear extension $p_1,\dots,p_n$ of $P$, and comparing those
bit vectors lexicographically.

\begin{proposition}
  Let $f\in \OR(P)$. Then, $\im(\lfix f) \leq_\lex \im(\rfix f)$, with
  equality if and only if $f$ is an idempotent.
\end{proposition}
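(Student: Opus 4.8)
The plan is to combine the description of the left and right symbols just obtained (so $\im(\rfix f)=C(\im f)$ and $\im(\lfix f)=C(F(f))$) with an inductive analysis of how these two join‑closed subsets of $P$ sit inside $P$. First I would record three elementary facts: a $\le_P$‑minimal element $p$ of $P$ satisfies $p.f\le_P p$, hence $p.f=p$; by Lemma~\ref{lemma.fix} the set $\operatorname{Fix}(f):=\{x\in P\suchthat x.f=x\}$ of fixed points is stable under joins; and $\operatorname{Fix}(f)\subseteq\im f\subseteq C(\im f)$ while $\operatorname{Fix}(f)$ also contains all minimal elements of $P$. In particular $\operatorname{Fix}(f)\subseteq C(F(f))\cap C(\im f)$.

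The crux is the following claim: \emph{for every $z\in C(F(f))$ with $z.f\ne z$ there exists $z'<_P z$ with $z'\in C(\im f)\setminus C(F(f))$.} I would prove this by induction on $z$ along $\le_P$ (well founded since $P$ is finite; minimal elements of $P$ are fixed, so the base case is vacuous). If $z\notin F(f)$, then $z$ is produced from $C(F(f))$ by a nontrivial join, say $z\in\joins(S)$ with $S\subseteq C(F(f))$ and $s<_P z$ for all $s\in S$; since $\operatorname{Fix}(f)$ is join‑closed and $z\notin\operatorname{Fix}(f)$, some $s_0\in S$ is not fixed, and the induction hypothesis applied to $s_0$ yields $z'<_P s_0<_P z$. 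If instead $z\in F(f)$, set $y:=z.f<_P z$; then $y\in\im f\subseteq C(\im f)$, and $y$ is not fixed, for otherwise $y\in f^{-1}(y)$ would lie strictly below $z$, contradicting that $z$ is a minimal element of $f^{-1}(y)$. Now either $y\notin C(F(f))$, and we take $z'=y$; or $y\in C(F(f))$, and $y$ is a non‑fixed element of $C(F(f))$ with $y<_P z$, so the induction hypothesis applied to $y$ gives $z'<_P y<_P z$ with $z'\in C(\im f)\setminus C(F(f))$. This last case is exactly why the claim must be stated for \emph{all} non‑fixed elements of $C(F(f))$ and not merely those outside $C(\im f)$; getting this strengthening right is the only delicate point.

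Granting the claim, the proposition follows by bookkeeping. If $z\in C(F(f))\setminus C(\im f)$ then $z\notin\operatorname{Fix}(f)$, so the claim provides $z'<_P z$ in $C(\im f)\setminus C(F(f))$; since $<_P$ refines the chosen linear extension, $z'$ precedes $z$ in it. Hence the first coordinate on which the bit vectors of $C(F(f))$ and $C(\im f)$ differ cannot be one where $C(F(f))$ carries the $1$; equivalently $\im(\lfix f)=C(F(f))\le_\lex C(\im f)=\im(\rfix f)$, with equality exactly when $C(F(f))=C(\im f)$. Finally I would check that $C(F(f))=C(\im f)$ is equivalent to $f$ being idempotent: if $f$ is idempotent then each fibre $f^{-1}(y)$ has minimum $y$, so $F(f)=\im f=\operatorname{Fix}(f)$ is join‑closed and $C(F(f))=\operatorname{Fix}(f)=C(\im f)$; conversely, if $f$ is not idempotent, choose $y\in\im f$ with $y.f\ne y$ and let $z$ be the minimum of $f^{-1}(y)$, so $z\in F(f)\subseteq C(F(f))$ with $z.f=y\ne z$, and the claim forces $C(\im f)\setminus C(F(f))\ne\emptyset$. (Should the induction become awkward, an alternative is to induct on $|P|$ by deleting a maximal element and using the branching‑of‑idempotents description, but the direct argument above seems cleaner.)
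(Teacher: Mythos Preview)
Your argument is correct, and it takes a genuinely different route from the paper's. The paper fixes a linear extension $p_1,\dots,p_n$ from the outset and inducts on $k$, maintaining the invariant $(H_k)$ that whenever $\im(\lfix f)$ and $\im(\rfix f)$ agree on $\{p_1,\dots,p_k\}$, the restriction of $f$ to $\{p_1,\dots,p_k\}$ is already an idempotent with that common image; the first index of disagreement is then analyzed case by case. You instead prove an intrinsic poset statement---for every non-fixed $z\in C(F(f))$ there is a witness $z'<_P z$ lying in $C(\im f)\setminus C(F(f))$---by well-founded induction on $\le_P$, and only at the very end convert ``$<_P$-below'' into ``earlier coordinate'' via the linear extension. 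Your approach cleanly decouples the structural content from the choice of linear extension and lets the equality characterization fall out of the same claim; the price is the need to strengthen the induction hypothesis to all non-fixed elements of $C(F(f))$ (not just those outside $C(\im f)$), exactly as you flag. The paper's approach stays tied to the lexicographic order throughout, which avoids that strengthening but makes the argument less portable.
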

\begin{proof}
  Let $n=|P|$ and $p_1,\ldots,p_n$ a linear extension of $P$.
  For $k\in \{0,\dots,n\}$ set respectively $L_k=\im(\lfix f) \cap
  \{p_1,\dots,p_k\}$ and $R_k = \im(\rfix f) \cap \{p_1,\dots,p_k\}$.

  As a first step, we prove the property $(H_k)$: if $L_k=R_k$ then
  $f$ restricted to $\{p_1,\dots,p_k\}$ is an idempotent with image
  set $R_k$. Obviously, $(H_0)$ holds. Take now $k>0$ such that
  $L_k=R_k$; then $L_{k-1}=R_{k-1}$ and we may use by induction
  $(H_{k-1})$.

  Case 1: $p_k\in F(f)$, and is thus the smallest point in its fiber.  This implies that
  $p_k\in L_k$, and by assumption, $L_k=R_k$. 
  By $(H_{k-1})$, $p_k.f<_\lex p_k$ gives a contradiction:
  $p_k.f\in R_{k-1}$, and therefore $p_k.f$ is in the same fiber as
  $p_k$. Hence $p_k.f=p_k$.

  Case 2: $p_k \in C(F(f))=\im(\lfix{f})$, but $p_k \not \in F(f)$.   Then $p_k$ is a join of 
  two smaller elements $x$ and $y$ of $L_k=R_k$; in particular, $p_k\in R_k$. By induction, 
  $x$ and $y$ are fixed by $f$, and therefore $p_k.f=p_k$ by Lemma~\ref{lemma.fix}.

  Case 3: $p_k\not \in C(F(f))=\im(\lfix f)$; then $p_k$ is not a minimal element
  in its fiber; taking $p_i<_\lex p_k$ in the same fiber, we have $(p_k.f).f =
  (p_i.f).f = p_i.f = p_k.f$. Furthermore, $R_k=R_{k-1} =
  \{p_1,\dots,p_{k-1}\}.f = \{p_1,\dots,p_k\}.f$.

  In all three cases above, we deduce that $f$ restricted to
  $\{p_1,\dots,p_k\}$ is an idempotent with image set $R_k$, as desired.

  If $L_n=R_n$, we are done. Otherwise, take $k$ minimal such that
  $L_k\ne R_k$. Assume that $p_k\in L_k$ but not in $R_k$. In
  particular, $p_k$ is not a join of two elements $x$ and $y$ in
  $L_{k-1}=R_{k-1}$; hence $p_k$ is minimal in its fiber, and by the
  same argument as in Case 3 above, we get a contradiction.
\end{proof}

\begin{corollary}
  The Cartan matrix of $\OR(P)$ is upper uni-triangular with respect to the
  lexicographic order associated to any linear extension of $P$.
\end{corollary}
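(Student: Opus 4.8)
The plan is to derive the corollary directly from the proposition that immediately precedes it, using the general description of the Cartan matrix established in Section~\ref{ss.cartan}. Recall from that section that the Cartan invariant $c_{i,j}$ of a $\JJ$-trivial monoid equals $|C_{i,j}|$, where $C_{i,j} = \{x \mid \lfix{x}=i,\ \rfix{x}=j\}$. For the monoid $\OR(P)$, both $\lfix{f}$ and $\rfix{f}$ are idempotents, hence (by Proposition~\ref{proposition.idempotents.OO}) are determined by their image sets, which are the join-closed subsets of $P$ containing the minimal elements. We index the rows and columns of the Cartan matrix by such subsets, ordered by $\leq_\lex$ relative to a fixed linear extension $p_1,\dots,p_n$ of $P$.

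First I would note that the diagonal entries are all $1$: by the proposition, $\im(\lfix f) = \im(\rfix f)$ forces $f$ to be an idempotent, and conversely an idempotent $e$ clearly satisfies $\lfix e = \rfix e = e$; so the only $f$ contributing to $c_{I,I}$ is the unique idempotent with image set $I$. Next, for the off-diagonal entries, the proposition tells us that any $f \in \OR(P)$ satisfies $\im(\lfix f) \leq_\lex \im(\rfix f)$, with strict inequality when $f$ is not idempotent. Thus $C_{I,J}$ can only be nonempty when $I \leq_\lex J$. Writing the Cartan matrix with rows indexed by $\lfix$-data and columns by $\rfix$-data, this says precisely that $c_{I,J} = 0$ whenever $I >_\lex J$, i.e. the matrix is upper triangular, and combined with the diagonal computation it is upper uni-triangular. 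Since $\leq_\lex$ was built from an arbitrary linear extension of $P$, this holds for every linear extension.

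There is essentially no obstacle here: the entire content has been front-loaded into the preceding proposition, and the corollary is a bookkeeping translation via the formula $c_{i,j} = |C_{i,j}|$. The one point requiring a word of care is the orientation convention — one must check that "$\im(\lfix f) \leq_\lex \im(\rfix f)$" corresponds to entries on or above the diagonal under the chosen indexing of rows (by $\lfix$) and columns (by $\rfix$), which is exactly the convention used in Section~\ref{ss.cartan}. Hence the proof is just:

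\begin{proof}
  By the theorem of Section~\ref{ss.cartan}, the Cartan matrix of $\OR(P)$ has entries
  $c_{I,J} = |C_{I,J}|$ where $C_{I,J} = \{f\in\OR(P)\suchthat \im(\lfix f)=I,\ \im(\rfix f)=J\}$,
  the indices $I,J$ ranging over the join-closed subsets of $P$ containing the minimal elements
  (Proposition~\ref{proposition.idempotents.OO}), ordered by $\leq_\lex$ along a linear extension of $P$.
  By the previous proposition, $C_{I,J}=\emptyset$ unless $I\leq_\lex J$, so the matrix is upper
  triangular. Moreover $C_{I,I}$ consists exactly of the idempotents $f$ with image set $I$, of which
  there is precisely one, so the diagonal entries equal $1$. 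Hence the Cartan matrix is upper
  uni-triangular, for any linear extension of $P$.
\end{proof}
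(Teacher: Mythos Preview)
Your proposal is correct and matches the paper's approach exactly: the paper states this corollary without proof, treating it as immediate from the preceding proposition together with the combinatorial description of the Cartan matrix from Section~\ref{ss.cartan}. Your write-up simply makes explicit the two-line argument the paper leaves to the reader, including the observation that $C_{I,I}$ contains only the unique idempotent with image set $I$.
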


\begin{problem}
  Find larger classes of monoids where this property still holds.
  Note that this fails for the $0$-Hecke monoid which is a submonoid
  of an $\OR(B)$ where $B$ is Bruhat order.
\end{problem}

\subsection{Restriction to meet semi-lattices}
\label{sub:ndpfSemis}

For the remainder of this section, let $L$ be a \emph{meet semi-lattice} and we consider 
the monoid $\OR(L)$. Recall that $L$ is a meet semi-lattice if every pair of elements $x,y\in L$
has a unique meet.

For $a\geq b$, define an idempotent $e_{a,b}$ in $\OR(L)$ by:
\[
    x.e_{a,b} =
    \begin{cases}
        x\wedge b & \text{if $x\leq a$,}\\
        x         & \text{otherwise.}
    \end{cases}
\]

\begin{remark}
  \label{remark.oo.eab}
  The function $e_{a,b}$ is the (pointwise) largest element of $\OR(L)$ such that $a.f=b$.

  For $a\geq b\geq c$, $e_{a,b} e_{b,c} = e_{a,c}$. In the case where
  $L$ is a chain, that is $\OR(L)=\NDPF_{|L|}$, those idempotents
  further satisfy the following braid-like relation: $e_{b,c} e_{a,b}
  e_{b,c} = e_{a,b} e_{b,c} e_{a,b} = e_{a,c}$.
\end{remark}
\begin{proof}
  The first statement is clear. Take now $a\geq b\geq c$ in a
  meet semi-lattice. For any $x\leq a$, we have $x.e_{a,b}=x\wedge b
  \leq b,$ so $x.(e_{a,b}e_{b,c}) = x\wedge b \wedge c = x \wedge c$,
  since $b\ge c$.  On the other hand, $x.e_{a,c} = x\wedge c$, which
  proves the desired equality.

  Now consider the braid-like relation in $\NDPF_{|L|}$.  Using the previous
  result, one gets that $e_{b,c} e_{a,b} e_{b,c}=e_{b,c} e_{a,c}$ and $e_{a,b}
  e_{b,c} e_{a,b}=e_{a,c} e_{a,b}$.  For $x> a$, $x$ is fixed by
  $e_{a,c}$, $e_{a,b}$ and $e_{b,c}$, and is thus fixed by the
  composition. The other cases can be checked analogously.
\end{proof}

\begin{proposition}
  The family $(e_{a,b})_{a,b}$, where $(a,b)$ runs through the covers of $L$, minimally generates the idempotents of $\OR(L)$.
\end{proposition}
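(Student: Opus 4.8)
The plan is to prove two things: first, that every idempotent $e=\sup_I$ of $\OR(L)$ can be written as a product of the $e_{a,b}$ with $(a,b)$ a cover of $L$; and second, that none of these generators can be omitted, so that the generating set is minimal. For the first part, recall from Proposition~\ref{proposition.idempotents.OO} that $I$ contains all minimal elements of $L$ and is stable under joins (in a meet semi-lattice, closed under the meets that happen to be joins). I would proceed by induction on $|L \setminus I|$. If $I = L$ then $e = \id$, which is the empty product. Otherwise pick a maximal element $a$ of $L \setminus I$, and let $b = \sup_I(a)$, the largest element of $I$ below $a$; note $a$ covers $b$ or at least $a > b$, and I would first argue that by factoring $e_{a,b}$ into covers along a saturated chain from $a$ down to $b$ (using $e_{a,c}e_{c,b} = \ldots$ -- more precisely the relation $e_{a,b} = e_{a,c}e_{c,b}$ from Remark~\ref{remark.oo.eab} applied repeatedly) we may assume $(a,b)$ is itself a cover. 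The key claim is then that $e = e_{a,b} \cdot \sup_{I \cup \{a\}}$: applying $e_{a,b}$ first sends $a \mapsto b$ and fixes everything else, and then $\sup_{I\cup\{a\}}$ maps each $x$ to the largest element of $I \cup \{a\}$ below it; one checks that the composite sends $x$ to $\sup_I(x)$ in all cases (the only subtlety is $x$ with $\sup_{I\cup\{a\}}(x) = a$, i.e. $x \geq a$ but $x$ has no element of $I$ strictly between; then $x.e_{a,b}$-then-$\sup_{I\cup\{a\}}$ needs care, and here one uses that $x \geq a$ implies $x.e_{a,b} = x$ when $x \not\leq a$, while if $x \leq a$ with $x\neq a$ then $\sup_I(x) = \sup_{I\cup\{a\}}(x)$ already). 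Since $I \cup \{a\}$ is again stable under joins and contains the minimal elements (adding a maximal element cannot create new join obstructions by the branching argument implicit in the Branching of idempotents proposition), the induction hypothesis applies to $\sup_{I\cup\{a\}}$, completing the generation statement.

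For minimality, I would show that if $(a,b)$ is a cover of $L$, then $e_{a,b}$ cannot be written as a product of other idempotents of $\OR(L)$, hence in particular not as a product of the remaining $e_{a',b'}$. The cleanest route: $e_{a,b}$ is irreducible in the monoid. Suppose $e_{a,b} = fg$ with $f, g \in \OR(L)$. Since $e_{a,b}$ is idempotent, by Lemma~\ref{lemma.idem_factor} we have $e_{a,b} = e_{a,b}f = g e_{a,b}$, and $e_{a,b}$ fixes $b$ (as $b \in \im(e_{a,b})$) while $a.e_{a,b} = b \neq a$. From $e_{a,b} = e_{a,b} f$ we get $a.e_{a,b} = (a.e_{a,b}).f = b.f$, so $b.f = b$, i.e. $b$ is a fixed point of $f$; similarly analyze $g$. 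Now $a.e_{a,b} = b$ forces $(a.f).g = b$; since $f$ is regressive and order-preserving and fixes $b$, $a.f$ is either $a$ or lies weakly below $a$ and weakly above... — the point to extract is that at least one of $f, g$ must already move $a$ strictly downward to something $\geq b$, and being the *pointwise largest* such function (Remark~\ref{remark.oo.eab}), $e_{a,b}$ dominates it; combined with $e_{a,b} = fg \leq f$ and $e_{a,b} \leq g$ pointwise (as $fg \leq f \wedge g$ in the pointwise order on regressive functions, since $g$ regressive gives $x.f.g \leq x.f$ and $f$ regressive + $g$ order-preserving gives $x.f.g \leq x.g$), we get $f = e_{a,b}$ or $g = e_{a,b}$. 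Hence $e_{a,b}$ is irreducible, so it belongs to the unique minimal generating set $\irr[\OR(L)]$ (Remark~\ref{remark.factorization.irr.cirr} and the fact that irreducibles form the unique minimal generating set, cited at the start of Section~\ref{ss.factorizations}), which shows it cannot be dropped.

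The main obstacle I anticipate is the bookkeeping in the generation step: verifying the identity $e = e_{a,b}\cdot \sup_{I\cup\{a\}}$ pointwise requires a careful case split according to whether a given $x$ is $\leq a$, is $\geq a$, or is incomparable to $a$, and within "$\leq a$" whether $x = a$ or $x < a$; one must use that $L$ is a meet semi-lattice (so $x \wedge b$ makes sense) and that $a$ was chosen maximal in $L\setminus I$ (so nothing strictly above $a$ is affected and $\sup_{I\cup\{a\}}$ agrees with $\sup_I$ off the "cone above $a$ that misses $I$"). I would also need the small lemma that factoring $e_{a,b}$ through a saturated chain reduces to the cover case — this is just iterating $e_{a,c}e_{c,b} = e_{a,b}$ from Remark~\ref{remark.oo.eab}, which is already proved there. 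The irreducibility argument in part two is short once one notes $fg \leq f\wedge g$ pointwise and invokes the maximality characterization of $e_{a,b}$, so I expect that half to go smoothly.
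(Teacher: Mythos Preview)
Your approach is essentially the paper's: both peel off one non-fixed point $a$ at a time to reduce to a larger image set, and both invoke the pointwise-maximality characterization of $e_{a,b}$ for minimality. The cosmetic difference is that the paper factors on the right, writing $f = g \cdot e_{a,a.f}$ where $g$ agrees with $f$ except at $a$ (which it fixes), while you factor on the left as $\sup_I = e_{a,b}\cdot\sup_{I\cup\{a\}}$. Your irreducibility argument for minimality is more explicit than the paper's one-liner but rests on the same maximality fact.

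One genuine slip to fix: you assert that $e_{a,b}$ ``sends $a\mapsto b$ and fixes everything else,'' but this is false outside the chain case. For $x\le a$ with $x\not\le b$ (which can occur in a meet semi-lattice even when $(a,b)$ is a cover---take the diamond with $a$ the top), $e_{a,b}$ sends $x$ to $x\wedge b\ne x$. Your factorization $\sup_I = e_{a,b}\cdot\sup_{I\cup\{a\}}$ is nonetheless correct, but the verification for $x\le a$ runs differently: $x.e_{a,b}=x\wedge b$, and since $\sup_I(x)\le x$ and $\sup_I(x)\le\sup_I(a)=b$ one gets $\sup_I(x)\le x\wedge b\le x$, whence $\sup_I(x\wedge b)=\sup_I(x)$; as $x\wedge b<a$ this also equals $\sup_{I\cup\{a\}}(x\wedge b)$. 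So the correct case split is simply $x\le a$ versus $x\not\le a$; in the latter, maximality of $a$ in $L\setminus I$ forces either $x\in I$ (if $x>a$) or $a\not\le x$ (if incomparable), so $\sup_{I\cup\{a\}}(x)=\sup_I(x)$ directly. The ``subtlety'' you flagged (the case $\sup_{I\cup\{a\}}(x)=a$) is not the real issue.
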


\begin{proof}
Given $f$ idempotent in $\OR(L)$, we can factorize $f$ as a product of the idempotents $e_{a,b}$.  Take a 
linear extension of $L$, and recursively assume that $f$ is the identity on all 
elements above some least element $a$ of the linear extension.  
Then define a function $g$ by: 
\[
x.g= \begin{cases}
	a & \text{if $x=a$,} \\
	x.f & \text{otherwise.}
	\end{cases}
\]
We claim that $f = ge_{a,a.f},$ and $g \in \OR(L)$.  There are a number of cases that must be checked:
\begin{itemize}
\item Suppose $x<a$.  Then $x.ge_{a,a.f} = (x.f).e_{a,a.f} = x.f \wedge a.f = x.f$, since $x<a$ implies $x.f<a.f$.
\item Suppose $x>a$.  Then $x.ge_{a,a.f} = (x.f).e_{a,a.f} = x.e_{a,a.f} = x=x.f$, since $x$ is fixed by $f$ by assumption.
\item Suppose $x$ not related to $a$, and $x.f\leq a.f$.  Then $x.ge_{a,a.f} = (x.f).e_{a,a.f} = x.f$.
\item Suppose $x$ not related to $a$, and $a.f\leq x.f\leq a$.  By the idempotence of $f$ we have $a.f=a.f.f\le x.f.f\le a.f$, 
so $x.f=a.f$, which reduces to the previous case.
\item Suppose $x$ not related to $a$, but $x.f\leq a$.  Then by idempotence of $f$ we have $x.f=x.f.f\leq a.f$, 
reducing to a previous case.
\item For $x$ not related to $a$, and $x.f$ not related to $a$ or $x.f>a$, we have $x.f$ fixed by $e_{a,a.f}$, which 
implies that $x.ge_{a,a.f}=x.f$.
\item Finally for $x=a$ we have $a.ge_{a,a.f} = a.e_{a,a.f}=a\wedge a.f=a.f$.
\end{itemize}
Thus, $f = ge_{a,a.f}$.

For all $x\le a$, we have $x.f\le a.f\le a$, so that $x.g\le a.g=a$.  For all
$x>a$, we have $x$ fixed by $g$ by assumption, and for all other $x$, the
$\OR(L)$ conditions are inherited from $f$.  Thus $g$ is in $\OR(L)$.

For all $x\neq a$, we have $x.g=x.f=x.f.f$.  Since all $x>a$ are fixed by $f$, there is no $y$ such that $y.f=a$.  Then $x.f.f=x.g.g$ for all $x\neq a$.  Finally, $a$ is fixed by $g$, so $a=a.g.g$.  Thus $g$ is idempotent.

Applying this procedure recursively gives a factorization of $f$ into a composition of functions $e_{a,a.f}$.  We can further refine this factorization using Remark~\ref{remark.oo.eab} on each $e_{a,a.f}$ by
$e_{a,a.f}=e_{a_0,a_1}e_{a_1,a_2}\cdots e_{a_{k-1},a_k}$, where $a_0=a$, $a_k=a.f$, and
$a_i$ covers $a_{i-1}$ for each $i$.  Then we can express $f$ as a product of functions $e_{a,b}$ where $a$ covers $b$.

This set of generators is minimal because $e_{a,b}$ where $a$ covers $b$ is the pointwise largest function in $\OR(L)$ mapping $a$ to $b$.
\end{proof}

As a byproduct of the proof, we obtain a canonical factorization of any idempotent $f\in \OR(L)$.

\begin{example}
The set of functions $e_{a,b}$ do not in general generate $\OR(L)$.  Let $L$ be the Boolean lattice on three elements.  Label the nodes of $L$ by triples $ijk$ with $i,j,k\in \{0,1\}$, and $abc\geq ijk$ if $a\leq i, b\leq j, c\leq k$.

Define $f$ by $f(000)=000$, $f(100)=110, f(010)=011, f(001)=101$, and $f(x)=111$ for all other $x$.  Simple inspection shows that $f\neq ge_{a,a.f}$ for any choice of $g$ and $a$.  
\end{example}

\subsection{Orthogonal idempotents}
\label{sub:ndpfOrthIdem}

For $\{1,2,\ldots, N\}$ a chain, one can explicitly write down
orthogonal idempotents for $\NDPF_N$. Recall that the minimal
generators for $\NDPF_N$ are the elements $\pi_i =e_{i+1,i}$ and that
$\NDPF_N$ is the quotient of $H_0(\sg[n])$ by the extra relation
$\pi_i\pi_{i+1}\pi_i = \pi_{i+1}\pi_i$, via the quotient map
$\pi_i\mapsto \pi_i$. By analogy with the $0$-Hecke algebra, set
$\pi_i^+=\pi_i$ and $\pi_i^-=1-\pi_i$.

We observe the following relations, which can be checked easily.
\begin{lemma}\label{lem:ndpfrels}
Let $k=i-1$.  Then the following relations hold:
    \begin{enumerate}
      \item $\pi_{i-1}^+ \pi_i^+ \pi_{i-1}^+ = \pi_i^+ \pi_{i-1}^+$,
      \item $\pi_{i-1}^- \pi_i^- \pi_{i-1}^- = \pi_{i-1}^- \pi_i^-$,
      \item $\pi_i^+ \pi_{i-1}^- \pi_i^+ = \pi_i^+ \pi_{i-1}^-$,
      \item $\pi_i^- \pi_{i-1}^+ \pi_i^- = \pi_{i-1}^+ \pi_i^-$,
      \item $\pi_{i-1}^+ \pi_i^- \pi_{i-1}^+ = \pi_i^- \pi_{i-1}^+$,
      \item $\pi_{i-1}^- \pi_i^+ \pi_{i-1}^- = \pi_{i-1}^- \pi_i^+$.
    \end{enumerate}
\end{lemma}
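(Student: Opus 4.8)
The plan is to reduce all six identities to the presentation of $\NDPF_N$ recalled above, by expanding the ``minus'' generators. Set $a:=\pi_{i-1}$ and $b:=\pi_i$, so that in $\K\NDPF_N$ one has $\pi_{i-1}^+=a$, $\pi_i^+=b$, $\pi_{i-1}^-=1-a$ and $\pi_i^-=1-b$. The only facts needed are the idempotency $a^2=a$, $b^2=b$, together with the braid-like relations of $\NDPF_N$, namely $ba=bab=aba$ (equivalently $\pi_i\pi_{i-1}=\pi_i\pi_{i-1}\pi_i=\pi_{i-1}\pi_i\pi_{i-1}$). Each of the six relations then becomes a short computation in the free associative algebra on $a,b$ modulo these rewriting rules.

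Concretely, relation~(1) is literally $aba=ba$, one of the defining relations. For relation~(3), $b(1-a)b=b^2-bab=b-ba=b(1-a)$ using $b^2=b$ and $bab=ba$; relation~(5) is the mirror computation $a(1-b)a=a^2-aba=a-ba=(1-b)a$. Relations~(4) and~(6) each require expanding a product of two binomials: $(1-b)a(1-b)=(a-ba)(1-b)=a-ab-ba+bab=a-ab=a(1-b)$, and $(1-a)b(1-a)=(b-ab)(1-a)=b-ba-ab+aba=b-ab=(1-a)b$. Finally relation~(2) uses the double expansion $(1-a)(1-b)(1-a)=(1-a-b+ab)(1-a)=1-a-b+ab+ba-aba=1-a-b+ab=(1-a)(1-b)$. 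In every case the only cancellations invoked are $aba=ba$ and $bab=ba$ (plus idempotency).

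Thus there is essentially no obstacle: the whole content sits in the identities $\pi_i\pi_{i-1}\pi_i=\pi_{i-1}\pi_i\pi_{i-1}=\pi_i\pi_{i-1}$, which hold in $\NDPF_N$ by the presentation recalled above, or equivalently by a direct check on the chain $\{1,\dots,N\}$, on which $\NDPF_N$ acts faithfully: each of $\pi_{i-1}\pi_i\pi_{i-1}$, $\pi_i\pi_{i-1}\pi_i$ and $\pi_i\pi_{i-1}$ sends $i$ and $i+1$ to $i-1$ and fixes everything else. The one point requiring a little care is bookkeeping: the products do not commute, so they must be expanded in the right order, and one must use only relations valid in $\NDPF_N$ rather than the weaker ones holding in $H_0(\sg[n])$, where $\pi_i\pi_{i-1}\pi_i\neq\pi_i\pi_{i-1}$.
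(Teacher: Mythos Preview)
Your proof is correct and is exactly the direct verification the paper has in mind; the paper itself offers no argument beyond ``which can be checked easily,'' so your expansion in terms of $a=\pi_{i-1}$, $b=\pi_i$ using $a^2=a$, $b^2=b$ and $aba=bab=ba$ is precisely the intended computation spelled out in full.
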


\begin{definition}
  Let $D$ be a \emph{signed diagram}, that is an assignment of a $+$
  or $-$ to each of the generators of $\NDPF_N$. By abuse of notation,
  we will write $i\in D$ if the generator $\pi_i$ is assigned a $+$
  sign.  Let $P=\{ P_1, P_2, \ldots, P_k\}$ be the partition of the
  generators such that adjacent generators with the same sign are in
  the same set, and generators with different signs are in different
  sets.  Set $\epsilon(P_i)\in \{+,-\}$ to be the sign of the subset $P_i$.
  Let $\longest_{P_i}^{\epsilon(P_i)}$ be the longest element in the generators in
  $P_i$, according to the sign in $D$.  Define:
\begin{itemize}
  \item $L_D := \longest_{P_1}^{\epsilon(P_1)}\,\longest_{P_2}^{\epsilon(P_2)}\,\cdots \,\longest_{P_k}^{\epsilon(P_k)}$, 
  \item $R_D := \longest_{P_k}^{\epsilon(P_k)}\,\longest_{P_k-1}^{\epsilon(P_{k-1})}\,\cdots \,\longest_{P_1}^{\epsilon(P_1)}$, 
  \item and $C_D := L_DR_D$.
\end{itemize}
\end{definition}

\begin{example}
\label{example.D}
Let $D=++++---++$.  Then $P=\{ \{1,2,3,4\}, \{5,6,7\}, \{8,9\}  \}$, and the associated 
long elements are: $\longest_{P_1}^+=\pi_4^+ \pi_3^+ \pi_2^+ \pi_1^+$, 
$\longest_{P_2}^-=\pi_5^- \pi_6^- \pi_7^-$, and $\longest_{P_3}^+=\pi_9^+ \pi_8^+$. Then
\begin{equation*}
\begin{split}
	L_D &= \longest_{P_1}^+ \longest_{P_2}^- \longest_{P_3}^+ 
	= (\pi_4^+ \pi_3^+ \pi_2^+ \pi_1^+) (\pi_5^- \pi_6^- \pi_7^-) (\pi_9^+ \pi_8^+),\\
	R_D &= \longest_{P_3}^+ \longest_{P_2}^- \longest_{P_1}^+
	= (\pi_9^+ \pi_8^+) (\pi_5^- \pi_6^- \pi_7^-) (\pi_4^+ \pi_3^+ \pi_2^+ \pi_1^+).
\end{split}
\end{equation*}
\end{example}

The elements $C_D$ are the images, under the natural quotient map from
the $0$-Hecke algebra, of the \emph{diagram demipotents} constructed
in~\cite{Denton.2010.FPSAC,Denton.2010}. An element $x$ of an algebra
is \emph{demipotent} if there exists some finite integer $n$ such that
$x^n=x^{n+1}$ is idempotent. It was shown
in~\cite{Denton.2010.FPSAC,Denton.2010} that, in the $0$-Hecke
algebra, raising the diagram demipotents to the power $N$ yields a set
of primitive orthogonal idempotents for the $0$-Hecke algebra. It
turns out that, under the quotient to $\NDPF_N$, these elements $C_D$
are right away orthogonal idempotents, which we prove now.

\begin{remark}
  \label{remark.fix_i}
  Fix $i$, and assume that $f$ is an element in the monoid generated
  by $\pi^-_{i+1},...,\pi^-_N$ and $\pi^+_{i+1},...,\pi^+_N$. Then,
  applying repeatedly Lemma~\ref{lem:ndpfrels} yields
  $$\pi^-_i f \pi^-_i = \pi^-_i f \qquad \text{and} \qquad \pi^+_i f \pi^+_i = f \pi^+_i\,.$$
\end{remark}

The following proposition states that the elements $C_D$ are also the
images of Norton's generators of the projective modules of the
$0$-Hecke algebra through the natural quotient map to $\NDPF_N$.
\begin{proposition}
  \label{proposition.norton_ndpf}
  Let $D$ be a signed diagram. Then,
  \begin{displaymath}
    C_D = \prod_{i=1,\dots,n,\  i \not\in D} \pi^-_i  \quad \prod_{i=n,\dots,1,\  i\in D} \pi^+_i\,.
  \end{displaymath}
  In other words $C_D$ reduces to one of the following two forms:
  \begin{itemize}
  \item $C_D = (\longest_{P_1}^-\longest_{P_3}^-\cdots \longest_{P_{2k\pm 1}}^-) (\longest_{P_2}^+\longest_{P_4}^+\cdots \longest_{P_{2k}}^+)$, or
  \item $C_D = (\longest_{P_2}^-\longest_{P_4}^-\cdots \longest_{P_{2k}}^-) (\longest_{P_1}^+\longest_{P_3}^+\cdots \longest_{P_{2k\pm 1}}^+)$.
  \end{itemize}
\end{proposition}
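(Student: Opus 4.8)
The plan is to prove the identity by induction on the number $n$ of generators of $\NDPF_N$, peeling off the lowest generator $\pi_1$. Write $\sigma:=\epsilon_1\in\{+,-\}$ for the sign attached to $\pi_1$ by $D$, let $P_1=\{1,\dots,m\}$ be its block, and let $D^{(1)}$ be the signed diagram obtained by deleting the index $1$; note that $D^{(1)}$ lives naturally in the parabolic submonoid generated by $\pi_2,\dots,\pi_n$, which is again (isomorphic to) a monoid $\NDPF$, so the induction hypothesis applies to it. A first, purely bookkeeping, step is to record that the claimed right-hand side satisfies
\[
  \mathrm{RHS}(D)=
  \begin{cases}
    \mathrm{RHS}(D^{(1)})\,\pi_1^+ & \text{if }\sigma=+,\\
    \pi_1^-\,\mathrm{RHS}(D^{(1)}) & \text{if }\sigma=-,
  \end{cases}
\]
which is immediate since $\pi_1^{\pm}$ occurs exactly once in $\mathrm{RHS}(D)$, at the extreme end of the relevant factor ($\pi_1^+$ is the last factor of the decreasing product $\prod_{i\in D}\pi_i^+$ when $1\in D$; $\pi_1^-$ is the first factor of the increasing product $\prod_{i\notin D}\pi_i^-$ when $1\notin D$).

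Next I would establish the matching recursion for $C_D=L_DR_D$. Using that the longest element of a consecutive block factors as a word at its lowest index, namely $\longest_{P_1}^+=\longest_{P_1\setminus\{1\}}^+\,\pi_1^+$ and $\longest_{P_1}^-=\pi_1^-\,\longest_{P_1\setminus\{1\}}^-$ (with the convention $\longest_{\emptyset}=1$), together with the commutation $\pi_1\pi_j=\pi_j\pi_1$ for $j\ge 3$, one gets: when $\sigma=+$ one always has $R_D=R_{D^{(1)}}\pi_1^+$, whereas $L_D=L_{D^{(1)}}\pi_1^+$ if $m\ge 2$ but $L_D=\pi_1^+L_{D^{(1)}}$ if $m=1$; symmetrically, when $\sigma=-$ one always has $L_D=\pi_1^-L_{D^{(1)}}$, whereas $R_D=\pi_1^-R_{D^{(1)}}$ if $m\ge 2$ but $R_D=R_{D^{(1)}}\pi_1^-$ if $m=1$. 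In every case, expanding $C_D=L_DR_D$ produces one ``interior'' occurrence of $\pi_1^\sigma$ flanked on both sides by words involving only $\pi_2^{\pm},\dots,\pi_n^{\pm}$.

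The crux is then to eliminate that interior $\pi_1^\sigma$ using Remark~\ref{remark.fix_i} with $i=1$: since $L_{D^{(1)}}$, $R_{D^{(1)}}$ and $C_{D^{(1)}}$ are all products of the $\pi_j^{\pm}$ with $j\ge 2$, we have $\pi_1^+f\,\pi_1^+=f\,\pi_1^+$ and $\pi_1^-f\,\pi_1^-=\pi_1^-f$ for each such $f$. Applying this once collapses $C_D$ to $C_{D^{(1)}}\pi_1^+$ (if $\sigma=+$), respectively $\pi_1^-C_{D^{(1)}}$ (if $\sigma=-$), uniformly in $m$. Comparing with the recursion for $\mathrm{RHS}$ and invoking $C_{D^{(1)}}=\mathrm{RHS}(D^{(1)})$ closes the induction; the base case $n=0$ (empty diagram, all three of $L_D,R_D,C_D$ and $\mathrm{RHS}$ equal to $1$) is trivial, and $n=1$ is immediate from $(\pi_1^{\pm})^2=\pi_1^{\pm}$.

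The one genuine subtlety, and the point to treat carefully in the write-up, is the seam between $P_1$ and $P_2$ when $m=|P_1|=1$: there $\pi_1$ does not commute past the next block, so it cannot be slid to the outer end of $L_D$ (or $R_D$) by commutation/braid moves alone, and a naive peeling would leave an extra, apparently un-cancelable $\pi_1^\sigma$. It is precisely Remark~\ref{remark.fix_i} that absorbs this extra factor and makes the two subcases $m\ge 2$ and $m=1$ yield the same recursion, so I would spell both subcases out explicitly rather than hide the distinction. One could note that the anti-automorphism $\pi_i\mapsto\pi_{n-i}$ of $\NDPF_N$ interchanges the $+$ and $-$ longest elements and might in principle halve the bookkeeping over $\sigma$; however, it sends the identity $L_DR_D=\mathrm{RHS}$ to the identity with the roles of $L_D$ and $R_D$ swapped rather than to itself, so it is cleanest to carry both signs in parallel.
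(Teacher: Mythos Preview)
Your argument is correct and follows exactly the same route as the paper: induction peeling off $\pi_1^\sigma$, reducing to the parabolic $\NDPF$ on $\pi_2,\dots,\pi_n$, and using Remark~\ref{remark.fix_i} to absorb the interior copy of $\pi_1^\sigma$. You have simply made explicit the case distinction $m=1$ versus $m\ge 2$ (and the commutation moving $\pi_1^\sigma$ past blocks $P_2,P_3,\dots$) that the paper leaves implicit when it writes $C_D=\pi_1^\sigma C_E\pi_1^\sigma$ directly.
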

\begin{proof}
  Let $D$ be a signed diagram. If it is of the form $-E$, where $E$ is
  a signed diagram for the generators $\pi_2,\dots,\pi_{N-1}$, then
  using Remark~\ref{remark.fix_i},
  $$C_D = \pi^-_1 C_E \pi^-_1 = \pi^-_1 C_E\,.$$
  Similarly, if it is of the form $+E$, then:
  $$C_D = \pi^+_1 C_E \pi^+_1 = C_E \pi^+_1\,.$$
  Using induction on the isomorphic copy of $\NDPF_{N-1}$ generated by
  $\pi_2,\dots,\pi_{N-1}$ yields the desired formula.
\end{proof}

\begin{proposition}
  \label{proposition.idempotents.ndpf}
  The collection of all $C_D$ forms a complete set of orthogonal
  idempotents for $\NDPF_N$.
\end{proposition}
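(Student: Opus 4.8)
The plan is to establish, all at once by induction on $N$, that (i) each $C_D$ is idempotent, (ii) the $C_D$ are pairwise orthogonal, and (iii) $\sum_D C_D = 1$, and then to deduce from a dimension count against the semisimple quotient that the family is complete (consists of primitive idempotents). The inductive engine will be the recursion hidden in the proof of Proposition~\ref{proposition.norton_ndpf}: writing a signed diagram on the generators $\pi_1,\dots,\pi_{N-1}$ as $D = \delta E$, where $\delta\in\{+,-\}$ is the sign of $\pi_1$ and $E$ is a signed diagram for the isomorphic copy of $\NDPF_{N-1}$ generated by $\pi_2,\dots,\pi_{N-1}$, one has $C_{-E} = \pi_1^- C_E$ and $C_{+E} = C_E\pi_1^+$, with $C_E$ supported on $\pi_2,\dots,\pi_{N-1}$. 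The only other ingredients needed are Remark~\ref{remark.fix_i}, which gives $\pi_1^- C_E\pi_1^- = \pi_1^- C_E$ and $\pi_1^+ C_E\pi_1^+ = C_E\pi_1^+$, and the trivial identities $\pi_1^+\pi_1^- = \pi_1^-\pi_1^+ = 0$ coming from $\pi_1^2 = \pi_1$.

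Granting this, the three properties fall out mechanically. For idempotency with $D = -E$: $C_D^2 = \pi_1^- C_E\,\pi_1^- C_E = \pi_1^- C_E^2 = \pi_1^- C_E = C_D$, using Remark~\ref{remark.fix_i} and the inductive hypothesis; the case $D = +E$ is symmetric, and the base case $N = 1$ is $C_{\emptyset} = 1$. For orthogonality of $C_D$ and $C_{D'}$ with $D = \delta E \ne D' = \delta' E'$: if $\delta = \delta' = -$ then $C_D C_{D'} = \pi_1^- C_E\,\pi_1^- C_{E'} = \pi_1^-(C_E C_{E'})$, which vanishes by induction since $D\ne D'$ forces $E\ne E'$ (the case $\delta=\delta'=+$ is symmetric); if $\delta = +$, $\delta' = -$ then $C_D C_{D'} = C_E\,(\pi_1^+\pi_1^-)\,C_{E'} = 0$ at once; and if $\delta = -$, $\delta' = +$ then $C_D C_{D'} = \pi_1^-(C_E C_{E'})\pi_1^+$, which is $0$ by induction when $E\ne E'$ and, when $E = E'$, equals $\pi_1^- C_E\pi_1^+ = (1-\pi_1)C_E\pi_1 = C_E\pi_1 - \pi_1 C_E\pi_1 = 0$ using idempotency of $C_E$ and Remark~\ref{remark.fix_i}. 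Finally, splitting the sum according to the sign of $\pi_1$, $\sum_D C_D = \pi_1^-\bigl(\sum_E C_E\bigr) + \bigl(\sum_E C_E\bigr)\pi_1^+ = \pi_1^- + \pi_1^+ = 1$, the inner sums being $1$ by induction (the identity of the sub-$\NDPF_{N-1}$ is that of $\NDPF_N$).

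It remains to argue completeness. There are $2^{N-1}$ signed diagrams, and $\NDPF_N$ has exactly $2^{N-1}$ idempotents (one per subset of $\{1,\dots,N\}$ containing $1$; see Proposition~\ref{proposition.idempotents.OO}, a chain being trivially join-stable), so by Corollary~\ref{corollary.triangular-radical} the quotient $\K\NDPF_N/\rad\K\NDPF_N$ is a commutative semisimple algebra of dimension $2^{N-1}$, hence isomorphic to $\K^{2^{N-1}}$. Applying $\phi$ to $1 = \sum_D C_D$ yields an orthogonal decomposition of $1$ in $\K^{2^{N-1}}$; writing each $\phi(C_D)$ as the sum of the coordinate idempotents in a subset $A_D$ of the $2^{N-1}$ coordinates, orthogonality makes the $A_D$ pairwise disjoint and $\sum_D\phi(C_D)=1$ makes them cover all coordinates, so $\sum_D|A_D| = 2^{N-1}$ with exactly $2^{N-1}$ summands forces $|A_D|=1$ for every $D$. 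Thus each $\phi(C_D)$ is a primitive idempotent of the semisimple quotient, $D\mapsto\phi(C_D)$ is a bijection onto them, each $C_D$ is in particular nonzero, and therefore $(C_D)_D$ is a complete family of primitive orthogonal idempotents of $\K\NDPF_N$. There is no genuinely hard step here: the only place calling for care is the mixed orthogonality case $D=-E$, $D'=+E$, where both the idempotency of $C_E$ and Remark~\ref{remark.fix_i} must be invoked, and one must not forget to match the number of idempotents with the number of signed diagrams before concluding primitivity.
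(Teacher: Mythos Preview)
Your proof is correct and follows essentially the same inductive strategy as the paper, peeling off the first sign via $C_D = \pi_1^\epsilon C_E \pi_1^\epsilon$ and using Remark~\ref{remark.fix_i}. The only real difference is in how completeness is obtained: the paper shows $C_D\ne 0$ directly (coefficient~$1$ on the monomial $\prod_{i\in D}\pi_i^+$ from Proposition~\ref{proposition.norton_ndpf}) and then invokes cardinality, whereas you instead prove $\sum_D C_D = 1$ by the same induction and push everything through $\phi$ to the semisimple quotient to force each $\phi(C_D)$ to be a single coordinate idempotent. Both arguments are short; the paper's is slightly more economical since the symmetric form $C_D=\pi_1^\epsilon C_E\pi_1^\epsilon$ lets it treat the mixed-sign orthogonality case ($\pi_1^\epsilon\pi_1^{-\epsilon}=0$) without the extra computation you needed for $\delta=-,\ \delta'=+,\ E=E'$.
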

\begin{proof}
  First note that $C_D$ is never zero; for example, it is clear from
  Proposition~\ref{proposition.norton_ndpf} that the full expansion of
  $C_D$ has coefficient $1$ on $\prod_{i=n,\dots,1,\ i\in D} \pi^+_i$.

  Take now $D$ and $D'$ two signed diagrams. If they differ in the
  first position, it is clear that $C_D C_{D'} =0$. Otherwise, write
  $D = \epsilon E$, and $D' = \epsilon E'$. Then, using
  Remark~\ref{remark.fix_i} and induction,
  \begin{equation*}
    \begin{split}
      C_D C_D' & = \pi^\epsilon_1 C_E \pi^\epsilon_1 \pi^\epsilon_1 C_{E'} \pi^\epsilon_1
      = \pi^\epsilon_1 C_E \pi^\epsilon_1 C_{E'} \pi^\epsilon_1\\
      &= \pi^\epsilon_1 C_E C_{E'} \pi^\epsilon_1
      = \pi^\epsilon_1 \delta_{E,E'} C_E \pi^\epsilon_1
      = \delta_{D,D'} C_D\,.
    \end{split}
  \end{equation*}
  Therefore, the $C_D$'s form a collection of $2^{N-1}$ nonzero
  orthogonal idempotents, which has to be complete by cardinality.
\end{proof}

One can interpret the diagram demipotents for $\NDPF_N$ as branching
from the diagram demipotents for $\NDPF_{N-1}$ in the following way.
For any $C_D=L_DR_D$ in $\NDPF_{N-1}$, the leading term of $C_D$ will
be the longest element in the generators marked by plusses in $D$.
This leading idempotent has an image set which we will denote $\im(D)$
by abuse of notation. Now in $\NDPF_N$ we can associated two
`children' to $C_D$:
\[
C_{D+}=L_D\pi_{N}^+R_D \text{ and } C_{D-}=L_D\pi_{N}^-R_D.
\]
Then we have $C_{D+}+C_{D-}=C_D$, $\im(D+)=\im(D)$ and $\im(D-)=\im(D)\bigcup \{N\}$.
\bigskip

We now generalize this branching construction to any meet semi-lattice
to derive a conjectural recursive formula for a decomposition of the
identity into orthogonal idempotents. This construction relies on the
branching rule for the idempotents of $\OR(L)$, and the existence of
the maximal idempotents $e_{a,b}$ of Remark~\ref{remark.oo.eab}.
\medskip

Let $L$ be a meet semi-lattice, and fix a linear extension of $L$. For
simplicity, we assume that the elements of $L$ are labelled
$1,\dots,N$ along this linear extension. Recall that, by
Proposition~\ref{proposition.idempotents.OO}, the idempotents are
indexed by the subsets of $L$ which contain the minimal elements of
$L$ and are stable under joins. In order to distinguish subsets of
$\{1,\dots,N\}$ and subsets of, say, $\{1,\dots,N-1\}$, even if they
have the same elements, it is convenient to identify them with $+-$
diagrams as we did for $\NDPF_N$. The \emph{valid diagrams} are those
corresponding to subsets which contain the minimal elements and are
stable under joins. A prefix of length $k$ of a valid diagram is still
a valid diagram (for $L$ restricted to $\{1,\dots,k\}$), and they are
therefore naturally organized in a binary prefix tree.

Let $D$ be a valid diagram, $e=\sup_D$ be the corresponding
idempotent. If $L$ is empty, $D=\{\}$, and we set
$L_{\{\}}=R_{\{\}}=1$.  Otherwise, let $L'$ be the meet semi-lattice
obtained by restriction of $L$ to $\{1,\dots,N-1\}$, and $D'$ the
restriction of $D$ to $\{1,\dots,N-1\}$.

\begin{itemize}
\item[Case 1] $N$ is the join of two elements of $\im(D')$ (and in particular,
  $N\in \im(D)$). Then, set $L_D=L_{D'}$ and $R_D=R_{D'}$.
\item[Case 2] $N\in \im(D)$. Then, set $L_D=L_{D'} \pi_{N, N.e}$ and
  $R_D=\pi_{N, N.e}R_{D'}$.
\item[Case 3] $N\not \in \im(D)$. Then, set $L_D=L_{D'} (1-\pi_{N, N.e})$
  and $R_D=(1-\pi_{N, N.e})R_{D'}$.
\end{itemize}
Finally, set $C_D=L_D R_D$.

\begin{remark}[Branching rule]
  Fix now $D'$ a valid diagram for $L'$. If $N$ is the join of two
  elements of $I'$, then $C_{D'}=C_{D'+}$. Otherwise $C_{D'}=C_{D'-} +
  C_{D'+}$.

  Hence, in the prefix tree of valid diagrams, the two sums of all
  $C_D$'s at depth $k$ and at depth $k+1$ respectively
  coincide. Branching recursively all the way down to the root of the
  prefix tree, it follows that the elements $C_D$ form a decomposition
  of the identity. Namely,
  \begin{equation*}
    1 = \sum_{D \text{ valid diagram}} C_D\,.
  \end{equation*}
\end{remark}

\begin{conjecture}
\label{conjecture.demi}
  Let $L$ be a meet semi-lattice. Then, the set $\{C_D\suchthat
  D \text{ valid diagram}\}$ forms a set of demipotent elements for
  $\OR(L)$ which, raised each to a sufficiently high power, yield a set
  of primitive orthogonal idempotents.
\end{conjecture}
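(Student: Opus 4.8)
The plan is to prove the conjecture by induction on $N=|L|$, in the same spirit as the proof for $\NDPF_N$ (Propositions~\ref{proposition.norton_ndpf} and~\ref{proposition.idempotents.ndpf}). The base case $N\le 1$ is trivial. For the inductive step one removes the last element $N$ of the chosen linear extension; since $N$ is then a maximal element of $L$, the set $L'=L\setminus\{N\}$ is a down-set, hence itself a meet semi-lattice, and $1,\dots,N-1$ is a linear extension of it. The first step is the observation that extending a function $f'\in\OR(L')$ by $N.\hat f':=N$ defines an injective monoid homomorphism $\OR(L')\hookrightarrow\OR(L)$, whose image is contained in $\{f\in\OR(L)\mid N.f=N\}$; moreover each generator $e_{a,b}$ of $\OR(L)$ with $a,b\neq N$ lands in this submonoid and restricts to the corresponding generator of $\OR(L')$. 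Consequently $L_{D'},R_{D'},C_{D'}$ for a valid diagram $D'$ of $L'$ live inside the subalgebra $\K\OR(L')\subseteq\K\OR(L)$, so that the inductive hypothesis — the $C_{D'}$ are demipotent and their stable powers form a complete family of primitive orthogonal idempotents of $\K\OR(L')$ — is inherited verbatim by the corresponding subfamily of $\K\OR(L)$.

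The second, crucial, step is to establish the analogue of the absorption relations of Lemma~\ref{lem:ndpfrels} and Remark~\ref{remark.fix_i} for the single new factor $\pi:=e_{N,b}$ (with $b$ the element prescribed by the branching construction; in Case~1 there is no new factor at all) and its complement $1-\pi$: namely that for every $f$ coming from $\K\OR(L')$ one has $\pi f\pi=f\pi$ and $(1-\pi)f(1-\pi)=(1-\pi)f$, or the symmetric variant, depending on which branching case applies. These identities should follow from the explicit rule $x.e_{N,b}=x\wedge b$ for $x\le N$ and $x.e_{N,b}=x$ otherwise, together with the order-preserving and regressive properties of $f$ and the fact that $f$ fixes $N$; the meet semi-lattice hypothesis enters here to guarantee that $x\wedge b$ is well defined and that the relations close up. Granting them, the product $C_DC_{D'}$ is computed exactly as in the proof of Proposition~\ref{proposition.idempotents.ndpf}: peel off the outermost $\pi$ or $1-\pi$, use idempotency of that outer factor, collapse the inner copy via the absorption relations, and invoke the inductive hypothesis on $C_{D'}C_{D''}$. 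This yields orthogonality of the stable powers, and — combined with the branching rule already proved in the text, which gives $\sum_D C_D=1$ — forces $\phi(C_D)=g_{\sup_D}$, so the stable power of $C_D$ is a lift of the primitive idempotent $g_{\sup_D}$. Primitivity and completeness then follow from the bijection between valid diagrams and idempotents (Proposition~\ref{proposition.idempotents.OO}).

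The main obstacle I anticipate is precisely this second step. In the chain case the relations of Lemma~\ref{lem:ndpfrels} hold because consecutive generators generate a copy of $\NDPF_2$ and, crucially, $1-\pi_i$ is again ``$\pi$-like''; for a general meet semi-lattice the complement $1-e_{N,b}$ is not an element of the monoid, $\OR(L)$ is not even generated by the idempotents $e_{a,b}$ (witness the Boolean-lattice example following the proposition that the $e_{a,b}$ minimally generate the idempotents), and one should not expect $C_D$ itself to be idempotent — which is exactly why the conjecture only claims demipotence. Controlling the powers $C_D^k$ then genuinely requires understanding the action of $C_D$ on the combinatorial projective modules $P_e$ of Theorem~\ref{theorem.projective_modules}: one would show that $C_D$ has all eigenvalues in $\{0,1\}$, equivalently $\phi(C_D)$ is idempotent, and that it acts as the identity on its generalized $1$-eigenspace, so that $C_D^k$ stabilizes. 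Establishing this last ``semisimple at eigenvalue $1$'' property uniformly in $L$ — and this is likely where the meet semi-lattice hypothesis is genuinely used, rather than an arbitrary poset — is where I expect the real difficulty to lie. A secondary, more bookkeeping, difficulty is that the branching construction has three cases (Case~1 producing no new factor, so that some parents $C_{D'}$ satisfy $C_{D'}=C_{D'+}$), and the absorption relations together with the induction must be threaded carefully through all of them.
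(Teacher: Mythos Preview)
The statement you are attempting to prove is a \emph{conjecture} in the paper, not a theorem: the paper provides no proof whatsoever. Immediately after stating it, the authors write that the conjecture ``is supported by Proposition~\ref{proposition.idempotents.ndpf}, as well as by computer exploration on all $1377$ meet semi-lattices with at most $8$ elements\ldots''. There is therefore no paper proof to compare your proposal against.

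As an attack on the open problem, your plan is the natural one and you have correctly identified where it breaks: the absorption identities $\pi f\pi=f\pi$ and $(1-\pi)f(1-\pi)=(1-\pi)f$ for $\pi=e_{N,b}$ and $f\in\K\OR(L')$ are the crux, and they do \emph{not} hold in general. Already for small meet semi-lattices one can find $f\in\OR(L')$ and $b<N$ with $e_{N,b}\,f\,e_{N,b}\ne f\,e_{N,b}$ in $\OR(L)$, because the image of $f$ need not interact with the interval $[b,N]$ the way it does in a chain. This is exactly why the authors only conjecture demipotence rather than idempotence: the clean recursive collapse of Proposition~\ref{proposition.idempotents.ndpf} is specific to $\NDPF_N$. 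Your fallback idea---show that $C_D$ acts with spectrum $\{0,1\}$ on each combinatorial projective $P_e$ and is semisimple at eigenvalue $1$---is a reasonable reformulation, but it is a restatement of the difficulty rather than a resolution of it; proving that uniformly in $L$ is precisely the content of the conjecture. In short, your proposal is a sensible outline of how one would \emph{begin}, together with an honest diagnosis of the gap, but it does not close that gap, and neither does the paper.
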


This conjecture is supported by
Proposition~\ref{proposition.idempotents.ndpf}, as well as by computer
exploration on all $1377$ meet semi-lattices with at most $8$ elements
and on a set of meet semi-lattices of larger size which were considered
likely to be problematic by the authors. In all cases, the demipotents
were directly idempotents, which might suggest that
Conjecture~\ref{conjecture.demi} could be strengthened to state that
the collection $\{C_D\suchthat D \text{ valid diagram}\}$ forms
directly a set of primitive orthogonal idempotents for $\OR(L)$.


\bibliographystyle{web-alpha}

\bibliography{main}

\end{document}